\DeclareMathAlphabet{\mathbbm}{U}{bbm}{m}{n}
\definecolor{CadetBlue}{cmyk}{0.62, 0.57, 0.23, 0 }
\definecolor{RoyalBlue}{cmyk}{1, 0.5, 0, 0 }
\definecolor{RedViolet}{cmyk}{0.07, 0.9, 0, 0.34 }
\definecolor{SeaGreen}{cmyk}{0.69, 0, 0.5, 0}
\DeclareMathAlphabet{\mathpzc}{OT1}{pzc}{m}{it}
\newcommand{\A}{\mathbb A}
\newcommand{\R}{\mathbb R}
\newcommand{\C}{\mathbb C}
\newcommand{\K}{\mathbb K}
\newcommand{\N}{\mathbb N}
\newcommand{\Q}{\mathbb Q}
\newcommand{\Z}{\mathbb Z}
\newcommand{\T}{\mathbb T}
\newtheorem{theo}{Theorem}
\newtheorem{lemm}{Lemma}
\newtheorem{prop}{Proposition}
\newtheorem{coro}{Corollary}
\newtheorem*{theob}{Baker's Theorem}
\newtheorem*{lwtheo}{Lindemann-Weierstra\ss\ Theorem}
\newtheorem*{logconj}{Logarithm Conjecture}
\newtheorem*{schconj}{Schanuel Conjecture}
\theoremstyle{definition}
\theoremstyle{remark}
\newtheorem{exam}{Example}
\newtheorem{note}{Note}
\newtheorem{NAnote}{Nonarchimedean Note}
\newcommand{\bast}{{}^{\ast}}
\newcommand{\bbull}{{}^{\bullet}}
\title[]{Diophantine Approximation Groups, Kronecker Foliations and Independence}
\author{T.M. Gendron}
\address{Instituto de Matem\'{a}ticas -- Unidad Cuernavaca, Universidad
Nacional Aut\'{o}noma de M\'{e}xico, Av. Universidad S/N, C.P. 62210
Cuernavaca, Morelos, M\'{E}XICO}
\email{tim@matcuer.unam.mx}
\date{6 August 2012}
\subjclass[2000]{Primary, 11J99, 11U10,  57R30}
\keywords{Diophantine approximation groups, Kronecker foliations, nonstandard arithmetic, Schanuel conjecture}
\begin{document}
\vspace{2cm}
\begin{abstract}  
We introduce
diophantine approximation groups and their associated Kronecker
 foliations, using them to provide new algebraic and geometric characterizations of $K$-linear and algebraic dependence.
As a consequence we find reformulations -- as algebraic and geometric (graph) rigidities -- of the Theorems of Baker and Lindemann-Weierstrass, the Logarithm Conjecture and the Schanuel Conjecture.  There is an Appendix describing diophantine approximation
groups as model theoretic types.
\end{abstract}
 \maketitle

\section*{Introduction}

This is the first of three papers introducing a paradigm within which global algebraic number theory
for $\R$ may be formulated, in such a way as to make possible the synthesis of
algebraic and transcendental number theory into a coherent whole.  
The picture offered here is evolving and not yet definitive in form, but is one consistent with
principles stated or implicit throughout number theory.

The proposed theory is based on the notion of a {\it diophantine approximation} of a real number $\uptheta$:
informally, an integer sequence $\{ n_{i}\}$ for which there is another integer sequence $\{ n^{\perp}_{i}\}$ satisfying
\[ n_{i}\uptheta -n^{\perp}_{i}\rightarrow 0.\]  
Actually we will
identify sequences that are ``eventually equal'' by passing to an ultrapower $\bast\Z$ of $\Z$ (c.f. \cite{BS}), defining
a diophantine approximation as a sequence class $\bast n\in\bast\Z$ satisfying:
\begin{align}\label{lessnaivedefDA} \upvarepsilon (\uptheta)(\bast n ):= \bast n\uptheta -\bast n^{\perp} \in\bast\R_{\upvarepsilon} ,  \end{align}
where $\bast n^{\perp}\in\bast\Z$, $\bast\R_{\upvarepsilon}\subset\bast\R$ is the subgroup of {\it infinitesimals} in a corresponding ultrapower
$\bast\R$ of the reals.

The set of $\bast n$ satisfying (\ref{lessnaivedefDA}) defines 
a subgroup
\[ \bast\Z (\uptheta ) < \bast\Z \] which may be thought of as the analogue notion of 
``principal ideal generated by the denominator of $\uptheta$'',
and the association $ \bast n\mapsto \upvarepsilon (\uptheta)(\bast n)$ defines a homomorphism, the {\it error map}. 
The nonstandard model $\bast\Z$ represents the ``diophantine approximation of the ring of $\R$-integers''.
See \S \ref{ratapprox}.

The group $\bast\Z (\uptheta )$ has an important geometric interpretation: it is the {\it fundamental germ} (a foliated notion of fundamental group, \cite{Ge1}, \cite{Ge2}) 
of the Kronecker foliation $\mathfrak{F}(\uptheta )$.  Specifically,
the real vector space \[ \bbull\R :=\bast\R/
\bast\R_{\upvarepsilon} \]plays the part of foliation universal cover  and \[  \mathfrak{F}(\uptheta )\cong \bbull\R /\bast\Z (\uptheta ) .\]
The group $\bast\Z (\uptheta )$ was used in \cite{CaGe1} to define a notion of modular invariant for quantum tori; in this article,
we view $\mathfrak{F}(\uptheta )$ as the analogue of  ``residue class field'' of $\bast\Z (\uptheta )$.
This is discussed in \S \ref{ResClassGps}.

The question of determining what class of number $\uptheta$ is in terms of properties of  $\bast\Z (\uptheta )$
is one which will be pursued systematically throughout the first two papers in this series.
In particular, in \cite{Ge5}, we will characterize in terms of {\it ideological arithmetic} the sets of
algebraic numbers, badly approximable numbers, well approximable numbers, Liouville numbers and the Mahler classes $S$, $T$, $U$.
In this paper, our purpose is three-fold:
\begin{itemize}
\item Develop the basic theory of diophantine approximation groups of real numbers by rational integers, \S\S  \ref{ratapprox},\ref{ResClassGps},  extending it to:
\begin{itemize}
\item Diophantine approximations groups of real matrices, \S \ref{higher}.
\item Diophantine approximation of real numbers and real matrices in rings of integers, \S \ref {daringint}
\item Diophantine approximation rings (of real numbers, real vectors) in rings of polynomials, \S \ref{polydio}.
\end{itemize}
\item Provide
novel algebraic and geometric readings of $K$-linear and algebraic dependence: 
a dictionary translating: $K$-linear and algebraic independence $\leftrightarrows$ algebraic properties of diophantine
approximations groups $\leftrightarrows$ geometric properties of Kronecker foliations, \S\S \ref{higher}--\ref{polydio}.  
\item Use the dictionary to reformulate classical theorems/conjectures of transcendental number theory
as (graph) rigidities, expressed in terms of diophantine
approximations groups and  Kronecker foliations, \S \ref{rigidity}.
\end{itemize}

The first entry of the dictionary is:

\vspace{3mm}
\noindent\fbox{{\small {\bf Irrationality}}}
\begin{center}
    \begin{tabular}{ | p{2.8cm} | p{2.8cm} | p{3cm} | p{2.8cm}|}
    \hline
    $\quad\quad\quad\quad\uptheta$ & $\quad\quad\quad\;\;\upvarepsilon (\uptheta )$ & $\;\;\quad\quad\;\;\bast\Z (\uptheta )$ & $\quad\quad\quad\;\;\mathfrak{F}(\uptheta )$ \\ \hline
   {\small irrational} & {\small injective}  & {\small contains no $\bast\Z$ ideales} & {\small 1-connected leaves}   \\ \hline
    \end{tabular}
\end{center}

\vspace{3mm}
 

Further entries are obtained by considering diophantine approximation groups of real matrices,
or by extending the approximants to algebraic integers or integer polynomials.  

For a matrix 
$\Uptheta\in M_{r,s}(\R)$ we define the diophantine approximation group \[ \bast\Z^{s}(\Uptheta )\subset \bast\Z^{s}\]
by the condition \[ \Uptheta\bast\boldsymbol n -\bast{\boldsymbol n}^{\perp}\in \bast\R_{\upvarepsilon}^{r},\]
as well as the subgroup \[ \bast\widetilde{\Z}^{s}(\Uptheta )<\bast\Z^{s}(\Uptheta )\]of $\bast{\boldsymbol n}$ for which
$\bast{\boldsymbol n}^{\perp} =0$.  To these groups are associated error maps $\upvarepsilon (\Uptheta)$ and 
$\tilde{\upvarepsilon} (\Uptheta)$ and
a pair of Kronecker foliations, \[ \mathfrak{F}(\Uptheta ),\quad \mathfrak{f}(\Uptheta )\] which foliate $\T^{r+s}$ resp.\ $\T^{s}$.  With these objects $\Q$-linear independence translates as:

\vspace{3mm}

\noindent\fbox{{\small {\bf $\Q$-Independence}}}
 \begin{center}
    \begin{tabular}{ | p{4cm} | p{2.5cm} | p{2.5cm} | p{2.5cm}|}
    \hline
    $\quad\quad\quad\quad\quad\Uptheta$ &  {\small error map}  & {\small DA group} & {\small Foliation} \\ \hline
    {\small inhomogeneously 
     $\Q$ independent columns}   & {\small $ \upvarepsilon (\Uptheta )$ is  injective}  & {\small $\bast\Z^{s}(\Uptheta )$ contains no $\bast\Z$ ideals} & {\small leaves of
     $\mathfrak{F}(\Uptheta )$ are 1-connected}  \\ \hline
     {\small homogeneously 
     $\Q$ independent columns } & {\small $\tilde{\upvarepsilon} (\Uptheta)$ is  injective}  & {\small $\bast\widetilde{\Z}^{s}(\Uptheta )$ contains no $\bast\Z$ ideals} & {\small leaves of
     $\mathfrak{f}(\Uptheta )$ are 1-connected } \\ \hline
    {\small homogeneously 
     $\Q$ independent rows} &   &  &
     {\small $\mathfrak{F}(\Uptheta )$ has dense leaves} \\ \hline
    \end{tabular}
\end{center}

\vspace{3mm}

The connection with classical algebraic number theory is made by means of diophantine approximation
with respect to the ring $\mathcal{O}$ of $K$-integers of a finite extension $K/\Q$.  
 If $\K$ denotes the Minkowski space 
(the archimedean factor of the $K$-adeles), then $\mathcal{O}\subset\K$ is a lattice, and using the diagonal embedding $\R\hookrightarrow \K$ we may define diophantine approximation groups\footnote{The reader should be advised that our notion of approximation differs in an essential way from the 
one which is encountered in the classical Diophantine Approximation by algebraic integers \cite{Bu}, \cite{Sch}, where one considers approximations of $\uptheta\in\R$ by sequences of elements of $K$ with respect to a fixed real place.  The approach adopted here is lattice theoretic:  $\uptheta$, diagonally embedded in $\K$, is approximated by elements of $\mathcal{O}$ only, embedded in $\K$ by {\it all} of the archimedean places.} \[ \bast\mathcal{O}(\uptheta )\supset \bast \Z (\uptheta ),\] where the error
 $\upvarepsilon (\uptheta)(\bast\upalpha )$ belongs to $\bast\K_{\upvarepsilon}$.  
If $K/\Q$ is Galois, the Galois group ${\rm Gal}(K/\Q )$ acts by automorphisms on $\bast\mathcal{O}(\uptheta )$ fixing
$\bast \Z (\uptheta )$: this representation of the Galois theory of algebraic number fields is where the fusion between algebraic and transcendental number theory takes place; it
will be explored systematically in \cite{Ge6}.  

The groups $\bast\mathcal{O}(\uptheta )$ have a
kinship with Diophantine Approximation by conjugates \cite{RoWa}.  
To make this connection concrete it is necessary to pass to the isomorphic group of denominator numerator pairs 
\[ \bast\mathcal{O}^{1,1}(\uptheta )=\{ (\bast\upalpha, \bast \upalpha^{\perp} )|\; \bast\upalpha\in \bast\mathcal{O}(\uptheta )\}\subset\bast\mathcal{O}^{2}.\]  For each archimedean place $\upnu$
we define $\bast \mathcal{O}^{1,1}(\uptheta )_{\upnu}$ as the group of pairs defining a diophantine approximation
in the coordinate $\upnu$ only: then we have the factorization
\[ \bast\mathcal{O}^{1,1}(\uptheta )=\bigcap \bast \mathcal{O}^{1,1}(\uptheta )_{\upnu},\]
so that the elements of $\bast\mathcal{O}^{1,1}(\uptheta )$ consist of approximations by all conjugates at once.
The  $\upnu$-local versions have corresponding Kronecker 
foliations, and consideration of these objects yield new dictionary entries for $K$-independence generalizing those displayed above, \S \ref {daringint}.

Questions concerning transcendentality and more generally of algebraic independence 
are coded by the ring of polynomial diophantine approximations.  If $\boldsymbol \uptheta = (\uptheta_{1},\dots ,\uptheta_{s})$
is a set of real numbers then \[\bast\Z[\boldsymbol X](\boldsymbol \uptheta)\subset \bast\Z[\boldsymbol X]\] is defined as the set of polynomials $F(\boldsymbol X)$
satisfying
\[ F(\boldsymbol \uptheta )\in\bast\R_{\upvarepsilon}.\]  There is an error map $\upvarepsilon^{\rm poly}(\boldsymbol \uptheta)$
and a Kronecker foliation
$\mathfrak{F}^{\rm poly}(\boldsymbol \uptheta)$ having infinite dimensional leaves, and we obtain another
entry regarding algebraic independence:

\vspace{3mm}

\noindent\fbox{{\small {\bf Algebraic Independence}}}
\begin{center}
    \begin{tabular}{ | p{3.5cm} | p{2.7cm} | p{2.7cm} | p{2.7cm} |}
    \hline
    $\quad\quad\quad\quad\quad\boldsymbol \uptheta$ & {\small $\quad\quad\;\;\upvarepsilon^{\rm poly}(\boldsymbol \uptheta)$} & {\small $\quad\quad\bast\Z[\boldsymbol X](\boldsymbol \uptheta)$} & 
    {\small $\quad\quad\mathfrak{F}^{\rm poly}(\boldsymbol \uptheta)$} \\ \hline
    {\small algebraically independent coordinates} & {\small injective}  & {\small contains no $\bast\Z[\boldsymbol X]$ ideals } & {\small 1-connected leaves}  \\ \hline
    \end{tabular}
\end{center}

\vspace{3mm}

\vspace{5mm}




In \S \ref{rigidity}, new rigidity formulations of the theorems of Lindemann-Weierstra\ss\ and Baker as well as the Logarithm and Schanuel conjectures are obtained by running their statements through the rows of the above dictionary entries.  For example, letting 
\[ \mathcal{L} =\{ z\in \C|\; \exp (z)\in\overline{\Q}\}\]  be the $\Q$-vector space of logarithms of algebraic numbers, the real version of the Logarithm conjecture translates as:

\vspace{3mm}
\noindent\fbox{{\small {\bf $\R$ Logarithm Conjecture}}}   For all $\boldsymbol \uptheta\in \mathcal{L}^{n}\cap\R^{n}$
\begin{center}
    \begin{tabular}{ | p{3cm} | p{2.7cm} | p{2.7cm} | p{3cm} |}
    \hline
    {\small Classical formulation} & {\small error maps} & {\small DA groups} & 
    {\small Kronecker foliations} \\ \hline
    {\small algebraically dependent coordinates $\Rightarrow$ $\Q$-dependent coordinates} & {\small $  {\rm Ker}\left( \upvarepsilon^{\rm poly}(\boldsymbol \uptheta ) \right)\not=0$ $\Rightarrow$
 ${\rm Ker}\left( \tilde{\upvarepsilon} (\boldsymbol \uptheta ) \right)\not=0$}  & {\small $\bast\Z[\boldsymbol X](\boldsymbol\uptheta ]$ contains a non-0 $\bast\Z[\boldsymbol X]$-ideal
$\Rightarrow$ $\bast\widetilde{\Z}^{n}(\boldsymbol\uptheta )$ contains a non-0 $\bast\Z^{n}$-ideal}  &  {\small $\mathfrak{F}^{\rm poly}(\boldsymbol \uptheta )$ has non 1-connected
leaves $\Rightarrow$ $\mathfrak{f}(\boldsymbol \uptheta)$
has non 1-connected
leaves}  \\ \hline
    \end{tabular}
\end{center}

\vspace{3mm}




\vspace{3mm}

The Baker and Lindemann-Weierstra\ss\ Theorems have comparable rigidity translations, while the translation of
the Schanuel Conjecture will be
made as a {\it graph rigidity} with respect to the exponential function: the latter implies a family of rigidities indexed by the coordinate projections
of the graph of the exponential.  

As Kronecker foliations are essentially abelian varieties with the additional data of a fixed continuous subgroup,
it is our belief that the geometric columns of these rigidity reformulations will make it possible to view the aforementioned class of statements
in the spirit of Raynaud's Theorem (the Manin-Mumford conjecture) and more generally, the Mordell-Lang conjectures.

The paper ends with a short Appendix in which diophantine approximation groups are interpreted as types:
model theoretic generalizations of ideals.  While this observation is not used in the body proper of the paper,
it nevertheless serves to reinforce our general outlook.

\vspace{5mm} {\bf Acknowledgements:}  I would like to thank J. Baldwin for help in the writing of the Appendix. This paper was supported in part by CONACyT grant 058537
as well as the PAPIIT grant
IN103708.

\section{Nonstandard Structures}\label{nonstd}

Fix a  non-principal ultrafilter $\mathfrak{u}$ on $\N$ \cite{Co}: a maximal subset of $2^{\N}$ that contains only infinite subsets and is closed with respect to intersection and 
upward inclusion (if $X\in\mathfrak{u}$ and
$X\subset Y$ then $Y\in\mathfrak{u}$).  
 Let $G$ be an algebraic structure such as a group, ring or field.
Then we may define an equivalence relation on the set of sequences $G^{\N}$ by
\[ \{g_{i}\}\sim_{\mathfrak{u}} \{h_{i}\} \Leftrightarrow \{ i|\; g_{i} = h_{i}\}\in\mathfrak{u}.\]
The
 ultrapower \cite{BS} is the quotient
\[   \bast G =\bast G_{\mathfrak{u}}:= G^{\N}/\sim_{\mathfrak{u}}.\]

The operations of $G$ extend component-wise to corresponding operations on $\bast G$, and there is 
a canonical monomorphism $G\hookrightarrow\bast G$ given by constant sequences.
If $G$ is a group, ring or field, the same is true of $\bast G$ (this follows from \L o\'{s}'s Theorem \cite{Ho}).
Ultrapowers associated to distinct nonprincipal ultrafilters need not be isomorphic; generally, the problem is one of cardinality, which may be overcome if set-theoretic assumptions such as the GCH are imposed.  This will not concern us in this paper.

We will mainly be interested in two cases: $G=\Z$ or $\R$.  We will often refer to $\bast\Z$, $\bast\R$ as the {\bf {\small nonstandard
integers}} resp.\ the {\bf {\small nonstandard reals}}.  We remark that $\bast\Z$ is not a principal ideal domain \cite{Ch},
and that the cardinality of $\bast\Z$ is that of the continuum
\cite{Ke}.
The field $\bast\R$  
is non-archimedean and contains a local subring 
\[ \bast\R_{\rm fin}  =\big\{ \text{classes of bounded sequences}\big\}\]
whose maximal ideal consists of the infinitesimals
\[  \bast\R_{\upvarepsilon} =\big\{ \text{classes of sequences converging to }0\big\}.\] 
We write 
\[ \bast r\simeq\bast s\] if $\bast r-\bast s\in \bast\R_{\upvarepsilon}$ and
say then that the two nonstandard reals are {\bf {\small asymptotic}} or {\bf {\small infinitesimal}}.  

For any $\bast r \in \bast\R_{\rm fin} $  there is a
unique limit point $r$ which is the limit of a subsequence
 $\{ r_{i}\}|_{i\in X}$ with $X\in\mathfrak{u}$; this $r$ is independent of $\{ r_{i}\}\in \bast r $ and is 
 called the {\bf {\small standard part}} of $\bast r$, denoted
\[ {\sf std}(\bast r).\]

\begin{prop} The standard part map
\begin{align}\label{standardpartepi} {\sf std}: \bast\R_{\rm fin} \longrightarrow\R\end{align}
is a well-defined epimorphism
of rings with kernel $\bast\R_{\upvarepsilon}$.  In particular, it induces an isomorphism between
the residue class field
 $\bast\R_{\rm fin}/\bast\R_{\upvarepsilon}$ and $\R$.
 \end{prop}
\begin{proof} That ${\sf std}$ is an epimorphism is trivial, as is the inclusion
$\bast\R_{\upvarepsilon}\subset {\rm Ker}({\sf std})$.  If $\{r_{i}\}\in \bast r \in {\rm Ker}({\sf std})$ then there exists $X\in\mathfrak{u}$ so that $\{ r_{i}\}|_{i\in X}\rightarrow 0$.  If we complete $\{ r_{i}\}|_{i\in X}$ to an 
$\N$-indexed sequence $\{ r'_{i}\}$ converging to $0$, then $\{ r'_{i}\}\sim_{\mathfrak{u}} \{r_{i}\}$ hence $\bast r\in \bast\R_{\upvarepsilon}$. 
\end{proof}
Thus we see the dual nature of $\R$, which occurs
as both completion and residue class field.   
See \cite{Go}, \cite{HL}, \cite{Ro} for a general discussion of the field of nonstandard reals.


\begin{NAnote}  The restriction of ${\sf std}$ to $\bast\Q_{\rm fin}:=\bast\Q\cap\bast\R_{\rm fin}$ also maps onto $\R$,
with kernel $\bast\Q_{\upvarepsilon}:=\bast\Q\cap \bast\R_{\upvarepsilon}$.  In fact, a similar structure exists in
$\bast\Q$ for the $p$-adic completion
$\hat{\Q}_{p}$ of $\Q$.  Using the $p$-adic absolute value to define finiteness and infinitesimality, we can associate to each $p$
a local ring $\bast \Q_{p,{\rm fin}}\subset\bast\Q$ with maximal ideal $ \bast \Q_{p,\upvarepsilon}$ and
residue class field $\hat{\Q}_{p}$.
We have 
\[ \bast \Q_{p,\upvarepsilon}\subset\bast\Z\subset \bast \Q_{p,{\rm fin}},\]
and the standard part map ${\sf std}_{p}:\bast \Q_{p,{\rm fin}}\rightarrow \hat{\Q}_{p}$ restricted to
$\bast\Z$ gives an epimorphism $\bast\Z\rightarrow \hat{\Z}_{p}$ = the ring of $p$-adic integers (onto since $\Z$ is dense in $\hat{\Z}_{p}$).  Thus, in the nonarchimedean case, where
there is already a satisfactory notion of ring of integers, nothing is gained by passing to $\bast\Q$.  By contrast, 
in the archimedean case, access to $\bast\Z$ -- 
which in the absence of a classical notion of $\R$-integers will {\it ipso facto} assume the role  -- is obtained only by 
transcending $\bast\Q_{\rm fin}$ and working within the realm of the archimedean infinite. 
\end{NAnote}

Define the  {\bf {\small extended reals}} to be the quotient
of $\R$-vector spaces
\[  \bbull\R :=\bast\R/\bast\R_{\upvarepsilon}.\] 
The product in $\bast\R$ does not preserve the relation
 $\simeq$ and so $\bbull\R$ inherits only
a real vector space structure, which, for the time being, is bereft of any particular topology.
The inclusions $\bast\Z,\R\hookrightarrow
\bast\R$ survive the quotient by $\bast\R_{\upvarepsilon}$ and yield inclusions
$\bast\Z, \R\hookrightarrow \bbull\R$: a subring and subfield respectively, whose intersection is $\Z$.
In particular, $\bbull\R = \R+\bast\Z$.

In \S \ref{ResClassGps}, we will regard $\bbull\R$ as a foliated space whose leaves are the cosets of $\R$, equipped with a distinguished complete transversal $\bast\Z$
(a transversal since its intersection with any leaf is countable and discrete in the leaf topology).
This foliated structure will be summarized by the triple 
\[ \hat{\R} := (\bbull\R, \R, \bast\Z ),\] 
the real numbers ``retrofitted
with integers from without''.  

\begin{NAnote}
The image of $\bast\Z$ in $\bbull\hat{\Q}_{p}:=\bast\hat{\Q}_{p}/\bast\hat{\Q}_{p,\upvarepsilon}$ is $\hat{\Z}_{p}\subset\hat{\Q}_{p}$, which does
{\it not} define a transversal for a foliated space structure on $\bbull\hat{\Q}_{p}$.
\end{NAnote}

  \section{Diophantine Approximation Groups of Real Numbers}\label{ratapprox}

Let $\uptheta\in \R$.    The following definition was given in  \cite{Ge1}, \cite{Ge2}.
A {\bf {\small  diophantine approximation}}
of $\uptheta$ is an element $\bast n\in\bast\Z$ for which there exists
$\bast n^{\perp}=\bast n^{\perp_{\uptheta}} \in\bast\Z$ such that
\begin{equation}\label{dioap} \uptheta\bast n\simeq\bast  n^{\perp} .  
\end{equation}
  The {\bf {\small  group of diophantine approximations}}
of $\uptheta$ is 
\[ \bast\Z (\uptheta ):= \big\{ \bast n\in\bast\Z\text{ is a diophantine approximation of }\uptheta \big\}  \]
clearly a subgroup of $\bast\Z$ since the defining infinitesimal equations
(\ref{dioap}) may be added.  

 If we pass to the quotient vector space $\bbull\R$ then the infinitesimal equation
(\ref{dioap}) becomes
 \[ \uptheta\bast n = \bast n^{\perp} \in\bast\Z,\] and
we may identify $\bast\Z (\uptheta )$ in $\bbull\R$ 
as the group
\[  \bast\Z (\uptheta ) = \big( \uptheta\cdot \bast\Z\big) \cap \bast\Z .\] 
Thus in $\bbull\R$
\begin{equation}\label{fracrep}  \uptheta = \bbull\text{-class of }\frac{\bast n^{\perp}}{\bast n} .
\end{equation}
In fact, every $\bbull r\in\bbull\R$ may be represented as a $ \bbull$-class of a quotient of elements
of $\bast\Z$; however, unlike the infinitesimal equations (\ref{dioap}), we cannot manipulate arithmetically the representations (\ref{fracrep}).

\begin{exam}\label{bestapprox} If $\{(p_{i}, q_{i})\}$ is the sequence of best approximations of $\uptheta\in\R-\Q$ (see 
 \cite{Ca}, \cite{Sch}), then the sequence of denominators $\{ q_{i}\}$ defines a class
$  \bast q_{\uptheta} \in \bast\Z (\uptheta ) $. 
e.g.\ if $\uptheta = (1+\sqrt{5})/2$ (the golden mean), then $ \bast q_{\uptheta}$ is the class defined
by the Fibonacci sequence.  
\end{exam}

Consider 
\[  \bast\Z (\uptheta )^{\perp} = \{ \bast n^{\perp}|\; \bast n\in \bast\Z (\uptheta )\},\]
the {\bf {\small group of dual diophantine approximations}}.
Then if $\uptheta\not=0$,
\[ \bast\Z (\uptheta )^{\perp}=\bast\Z(\uptheta^{-1}),\] which may be seen by
multiplying both sides of (\ref{dioap}) by $\uptheta^{-1}$. 
Moreover, the duality map $\bast n\mapsto \bast n^{\perp}$
defines an isomorphism 
\[  \perp=\perp_{\uptheta}: \bast\Z (\uptheta )\longrightarrow \bast\Z (\uptheta )^{\perp}\] for $\uptheta\not=0$.

At times (viz.\ \S \ref{daringint}), it will be useful to remember the dual of each element $\bast n\in\bast\Z (\uptheta)$ and 
 consider instead
the group of denominator numerator pairs:
\[ \bast\Z^{1,1}(\uptheta) = \big\{ (\bast n , \bast n^{\perp})\bigg|\; \bast n\in\bast\Z (\uptheta ) \big\}
\subset\bast\Z^{2}\] 
canonically isomorphic to $\bast\Z (\uptheta )$ via $ 
(\bast n , \bast n^{\perp})\mapsto\bast n$.  Note that if $\uptheta\not=\upeta$ then 
\[\bast\Z^{1,1}(\uptheta)\cap \bast\Z^{1,1}(\upeta) = (0,0).\]

\begin{theo}\label{groupideal} The subgroup $\bast\Z (\uptheta )$ is an ideal of $\bast\Z$
if and only if $\uptheta \in\Q$, and in this case, if $\uptheta = a/b$ (written in lowest terms) then
\[\bast\Z (\uptheta ) =\bast (b)\cong\bast\Z.\]
\end{theo}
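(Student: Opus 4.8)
The plan is to prove the two implications separately; the content sits in the forward direction, and for the reverse direction I will pin down $\bast\Z(\theta)$ explicitly. Throughout I use the characterizations of $\bast\R_{\upvarepsilon}$ and the ultrafilter established above, and that $\bast\Z$ is an integral domain (by \L o\'s).

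Suppose first $\theta = a/b\in\Q$ with $\gcd(a,b)=1$, $b\geq 1$; I claim $\bast\Z(\theta)=\bast(b)$, where $\bast(b)=b\bast\Z$ is the extension to $\bast\Z$ of the principal ideal $(b)\subseteq\Z$. The inclusion $b\bast\Z\subseteq\bast\Z(\theta)$ is immediate: if $\bast n=b\bast m$ then $\theta\bast n=a\bast m\in\bast\Z$, so (\ref{dioap}) holds exactly with $\bast n^{\perp}=a\bast m$. For the reverse inclusion, let $\{n_i\}$ represent $\bast n\in\bast\Z(\theta)$ and $\{n_i^{\perp}\}$ represent a choice of $\bast n^{\perp}$; then $(an_i-bn_i^{\perp})/b\to 0$ along $\mathfrak{u}$, so the set of $i$ with $|(an_i-bn_i^{\perp})/b|<1/b$ lies in $\mathfrak{u}$ — and that inequality forces $an_i=bn_i^{\perp}$, hence $b\mid an_i$, hence $b\mid n_i$ (as $\gcd(a,b)=1$). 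Thus $\bast n\in b\bast\Z$. Finally $b\bast\Z$ is visibly an ideal of $\bast\Z$, and $\bast x\mapsto b\bast x$ is an isomorphism of $\bast\Z$-modules (in particular of abelian groups) from $\bast\Z$ onto $b\bast\Z$ — surjective by definition, injective since $b\neq 0$ in the integral domain $\bast\Z$ — so $\bast(b)\cong\bast\Z$.

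For the converse I argue contrapositively: assuming $\theta\notin\Q$, I will exhibit $\bast q\in\bast\Z(\theta)$ and $\bast m\in\bast\Z$ with $\bast m\bast q\notin\bast\Z(\theta)$, so $\bast\Z(\theta)$ is not an ideal. Take $\bast q=\bast q_\theta$, the class of the denominators $\{q_i\}$ of the convergents $p_i/q_i$ of $\theta$; by Example \ref{bestapprox}, $\bast q\in\bast\Z(\theta)$, and $\bast q\neq 0$ since the $q_i$ are positive. As $\theta$ is irrational, each $\alpha_i:=q_i\theta$ is irrational, so $\{\,m\alpha_i\bmod 1:m\in\Z\,\}$ is dense in $[0,1)$; choose $m_i\in\Z$ with $m_i\alpha_i\bmod 1\in[1/4,3/4]$, i.e.\ with $m_iq_i\theta$ at distance $\geq 1/4$ from the nearest integer, and let $\bast m$ be the class of $\{m_i\}$. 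If $\bast m\bast q$ lay in $\bast\Z(\theta)$, there would be a class $\bast k$, represented by some $\{k_i\}$, with the class of $\{m_iq_i\theta-k_i\}$ lying in $\bast\R_{\upvarepsilon}$; then $|m_iq_i\theta-k_i|<1/4$ for $\mathfrak{u}$-almost all $i$, which is impossible since $m_iq_i\theta$ is at distance $\geq 1/4$ from $\Z$ for every $i$. Hence $\bast m\bast q\notin\bast\Z(\theta)$.

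The step needing care — and the reason the statement has content — is exactly the mechanism used in the converse: the infinitesimal equation $\theta\bast q\simeq\bast q^{\perp}$ may be multiplied only by elements of $\bast\R_{\rm fin}$, never by the unbounded nonstandard integer $\bast m$, and integer multiplication can \emph{amplify} the small errors $q_i\theta-p_i$ into quantities bounded away from $\Z$. Arranging this amplification along a $\mathfrak{u}$-large set of indices (the selection of the $m_i$) is the crux; the $\theta\in\Q$ direction should, by contrast, present no difficulty beyond the observation that a sequence of rationals with bounded denominators tending to $0$ is eventually $0$.
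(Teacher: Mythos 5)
Your proof is correct and follows essentially the same route as the paper: for $\theta$ irrational you start with a nonzero element of $\bast\Z(\theta)$ and multiply by a nonstandard integer whose entries are chosen, via density of $n_i\theta\Z$ in $\T$, to push the products $m_iq_i\theta$ uniformly away from $\Z$; for $\theta=a/b$ you identify $\bast\Z(\theta)$ with $b\bast\Z$. The only divergence is cosmetic and in your favor: you pin down a concrete nonzero witness ($\bast q_\theta$ from the best approximations) rather than taking an arbitrary $\bast n\in\bast\Z(\theta)$, and you fill in the "easy to see" rational case with the $\gcd$ argument and the integral-domain justification for $\bast(b)\cong\bast\Z$.
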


\begin{proof}  See also \cite{Ge1}.  By Kronecker's Theorem, if
$\uptheta\in\R-\Q$ then for any integer $N$, the image in $\T=\R/\Z$ of the set of multiples $N\Z\uptheta$ is dense.  Now $\bast n\in\bast\Z (\uptheta )$ $\Leftrightarrow$
$\bast n$ is represented by $\{n_{i}\}$ for which the image of $\{n_{i}\uptheta\} $ in $\T$ converges to $0$.  For each
$i$ choose $N_{i}$ so that the image of $N_{i}n_{i}\uptheta$ in $\T$ is of distance greater than $1/4$ from $0$.
Then the class $\bast N\bast n$ of $\{ N_{i}n_{i}\}$ does not belong to $\bast\Z (\uptheta )$.  On the other hand, if
$\uptheta =a/b\in\Q$, written in lowest terms, then it is easy to see that $\bast n\in \bast\Z (\uptheta )$ $\Leftrightarrow$
$\bast n\in \bast (b)$, the latter being an ideal in $\bast\Z$.
\end{proof} 

Thus, integral ideals of $\Z$ are recovered as ultrapowers of dual diophantine approximation groups of rational numbers.  In particular if $\uptheta= b\in\Z$ then
$  \bast \Z(b)^{\perp} = \bast (b)$. 

\begin{prop}\label{cardcont}  For all $\uptheta$, $\bast\Z (\uptheta )$ has the cardinality of the continuum.
\end{prop}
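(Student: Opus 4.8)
The plan is to sandwich $|\bast\Z(\theta)|$ between two copies of $2^{\aleph_{0}}$. The upper bound $|\bast\Z(\theta)|\le 2^{\aleph_{0}}$ is free, since $\bast\Z(\theta)\subset\bast\Z$ and $|\bast\Z|=2^{\aleph_{0}}$ (quoted in \S\ref{nonstd}). So everything lies in the lower bound, which I would establish by embedding a set of cardinality $2^{\aleph_{0}}$ into $\bast\Z(\theta)$. (For rational $\theta$ the conclusion is already immediate from Theorem~\ref{groupideal}, where $\bast\Z(\theta)=\bast(b)\cong\bast\Z$; but the construction below handles all $\theta$ uniformly.)

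First fix a strictly increasing sequence $q_{1}<q_{2}<\cdots$ of positive integers, together with integers $p_{i}$, such that $q_{i}\theta-p_{i}\to 0$ as $i\to\infty$: for $\theta\notin\Q$ take the denominators and numerators of the best approximations of $\theta$ as in Example~\ref{bestapprox}, while for $\theta=a/b\in\Q$ in lowest terms take $q_{i}=ib$, $p_{i}=ia$, so that $q_{i}\theta-p_{i}=0$. Now to each \emph{infinitely large} $\bast m=[\{m_{j}\}]\in\bast\Z$ -- i.e.\ one with $\bast m>n$ for every $n\in\N$, so that after adjusting the representative on a set not belonging to $\mathfrak{u}$ we may assume every $m_{j}\ge 1$ -- associate $\Phi(\bast m):=[\{q_{m_{j}}\}]\in\bast\Z$. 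This is well defined, since $\{j\mid m_{j}=m'_{j}\}\subset\{j\mid q_{m_{j}}=q_{m'_{j}}\}$. It takes values in $\bast\Z(\theta)$: given $\delta>0$, choose $N$ with $|q_{i}\theta-p_{i}|<\delta$ for all $i\ge N$; as $\bast m$ is infinitely large, $\{j\mid m_{j}\ge N\}\in\mathfrak{u}$, and on this set $|q_{m_{j}}\theta-p_{m_{j}}|<\delta$, so $\{q_{m_{j}}\theta-p_{m_{j}}\}$ represents an element of $\bast\R_{\upvarepsilon}$, i.e.\ $\theta\,\Phi(\bast m)\simeq[\{p_{m_{j}}\}]\in\bast\Z$ and thus $\Phi(\bast m)\in\bast\Z(\theta)$. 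And $\Phi$ is injective: $\Phi(\bast m)=\Phi(\bast m')$ gives $\{j\mid q_{m_{j}}=q_{m'_{j}}\}\in\mathfrak{u}$, which, since $i\mapsto q_{i}$ is injective, is contained in $\{j\mid m_{j}=m'_{j}\}$, forcing $\bast m=\bast m'$.

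It remains to count the infinitely large elements of $\bast\Z$. Any $\bast m\in\bast\Z$ that is not infinitely large satisfies $|\bast m|\le n$ for some standard $n$, hence has a representative taking finitely many values on a member of $\mathfrak{u}$ and is therefore a standard integer; so $\bast\Z$ is the disjoint union of $\Z$ with its set of infinitely large elements, and since $|\bast\Z|=2^{\aleph_{0}}$ the latter has cardinality $2^{\aleph_{0}}$. Hence $\Phi$ embeds a set of cardinality $2^{\aleph_{0}}$ into $\bast\Z(\theta)$, and $|\bast\Z(\theta)|=2^{\aleph_{0}}$. The step I expect to need genuine care is verifying that $\Phi$ really lands in $\bast\Z(\theta)$ -- this is precisely where the hypothesis that $\bast m$ be infinitely large is used (and where one sees why the naive recipe applied to a standard $m$ would fail for irrational $\theta$) -- together with the injectivity of $\Phi$, which is what dictates taking $\{q_{i}\}$ strictly increasing; the surrounding cardinal arithmetic is routine once $|\bast\Z|=2^{\aleph_{0}}$ is granted.
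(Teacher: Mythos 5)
Your proof is correct, but it takes a genuinely different route from the paper's. The paper injects $(0,1)\setminus\Q$ into $\bast\Z(\theta)$ by a digit-coding device: to $x=0.d_{1}d_{2}\dots$ it assigns a sequence $n_{i}(x)$ chosen from the window $(10^{D_{i}},10^{D_{i}+1})$, where $D_{i}=d_{1}10+\cdots+d_{i}10^{i}$, in such a way that $\|n_{i}(x)\theta\|\to 0$; injectivity then comes from the fact that the windows for distinct $x,y$ separate from some index on. Your argument instead fixes a \emph{single} convergent approximation sequence $\{(q_{i},p_{i})\}$ once and for all and reindexes it by an arbitrary positive infinite hyperinteger --- in effect you are applying the nonstandard extension of $i\mapsto q_{i}$ to the infinite part of $\bast\N$. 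This cleanly separates the analytic input (one sequence with $q_{i}\theta-p_{i}\to 0$, deferred to Example~\ref{bestapprox}) from the set-theoretic input ($2^{\aleph_{0}}$-many infinite hyperintegers), and it never has to manufacture fresh approximants inside continuum-many carefully spaced intervals as the paper does. Both proofs lean on $|\bast\Z|=2^{\aleph_{0}}$; yours uses it for both the upper and the lower bound, the paper's only for the upper bound. One small slip in your closing count: an element of $\bast\Z$ that is not infinitely large in your sense (i.e.\ not $>n$ for every $n\in\N$) need not satisfy $|\bast m|\le n$ for some standard $n$ --- it could be \emph{negatively} infinite --- so $\bast\Z$ is the disjoint union of $\Z$ with both the positive and the negative infinite elements, not with $\Z$ and the positive infinite elements alone. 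Your conclusion survives, since negation is a bijection between the positive and negative infinite parts and $\Z$ is countable, so the positive infinite part still has cardinality $2^{\aleph_{0}}$; but the intermediate claim should be repaired, most economically by working in $\bast\N$ rather than $\bast\Z$ from the start.
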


\begin{proof}  
Since $\bast\Z$ has the cardinality of the continuum it will be enough to define an identification of $X=(0,1)-\Q$ with a subset of $\bast\Z (\uptheta )$.  For $x=0.d_{1}d_{2}\dots \in X$ define
\[ D_{i}=d_{1}10+\cdots +d_{i}10^{i}.\]  We may find for each $i$, $n_{i}(x)\in\Z$ with $10^{D_{i}}<n_{i}(x)<10^{D_{i}+1}$
such that $\{ n_{i}(x)\}$ defines an element $\bast n(x)\in\bast\Z (\uptheta )$. The map $x\mapsto\bast n(x)$ is injective: if
$y=0.d_{1}'d_{2}'\dots $ and
$i_{0}$ is the first index with $d_{i_{0}}\not=d'_{i_{0}}$ then
$ n_{i}(x)\not= n_{i}(y)$ for all $i\geq i_{0}$.  Since $\mathfrak{u}$ contains no finite sets, 
$X=\{ i\geq i_{0}\}\in\mathfrak{u}$ i.e.\ $\bast n(x)\not=\bast n(y)$.
\end{proof}


\begin{NAnote}\label{padic} The analogue of
diophantine approximation groups in the $p$-adic setting simply reproduces the ideal theory of $\hat{\Z}_{p}$. 
Indeed, for $\upxi\in\Q_{p}$, let 
\[ \bast\Z(\upxi )=\{ \bast n\in \bast\Z| \exists
\bast m\in \bast\Z \text{ such that } \bast n\upxi-\bast m\in\bast\hat{\Q}_{p,\upvarepsilon}\}.\]  Since $\Z\subset\hat{\Z}_{p}$ is dense,
it follows that if $\upxi\in\hat{\Z}_{p}$ then $\bast\Z(\upxi )=\bast\Z$.  On the other hand, if $|\upxi |_{p}=p^{m}>1$
then 
$ \bast\Z(\upxi )=\bast (p^{m}) $.  Thus every diophantine approximation group is the ultrapower of a $p$ power ideal in $\Z$: 
a pre-image of an ideal of $\hat{\Z}_{p}$ with respect to the epimorphism ${\sf std}_{p}:\bast\Z\rightarrow \hat{\Z}_{p}$.
\end{NAnote}

Let $\bast n\in\bast\Z (\uptheta )$. The associated {\bf {\small error}} is
\[ \upvarepsilon(\uptheta)(\bast n ):=
\uptheta\bast n - \bast n^{\perp},\]  and the
{\bf {\small error group}}
is 
\[   \bast\R_{\upvarepsilon}(\uptheta ) = \{    \upvarepsilon(\uptheta)(\bast n )|\; \bast n\in  \bast\Z (\uptheta )  \} \subset \bast\R_{\upvarepsilon}.\] 
The following Proposition gives the second column in the irrationality dictionary in the Introduction.
\begin{prop}\label{Qerrortermprop} When $\uptheta\in\Q$, $ \bast\R_{\upvarepsilon}(\uptheta )=0$, otherwise the homomorphism 
\[  \upvarepsilon (\uptheta):\bast\Z
(\uptheta )\rightarrow
 \bast\R_{\upvarepsilon}(\uptheta ),\quad \bast n\mapsto \upvarepsilon (\uptheta) (\bast n)\]  is an isomorphism.
 \end{prop}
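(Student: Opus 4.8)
The plan is to analyze the homomorphism $\upvarepsilon(\theta)\colon \bast\Z(\theta)\to\bast\R_\upvarepsilon(\theta)$ directly from its definition $\bast n\mapsto \theta\bast n-\bast n^\perp$. Surjectivity is immediate by construction: $\bast\R_\upvarepsilon(\theta)$ is \emph{defined} to be the image of this map, so there is nothing to prove there. The content is in the two claims about injectivity. First, when $\theta\in\Q$, write $\theta=a/b$ in lowest terms; by Theorem~\ref{groupideal} we have $\bast\Z(\theta)=\bast(b)$, so every $\bast n\in\bast\Z(\theta)$ is of the form $\bast n=b\bast m$, and then $\theta\bast n=a\bast m\in\bast\Z$ already, so we may take $\bast n^\perp=a\bast m$, giving $\upvarepsilon(\theta)(\bast n)=0$. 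Hence $\bast\R_\upvarepsilon(\theta)=0$ in this case.

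Now suppose $\theta\in\R-\Q$. I would first check that $\bast n^\perp$ is \emph{uniquely} determined by $\bast n$: if $\theta\bast n\simeq \bast n^\perp$ and $\theta\bast n\simeq \bast n^{\perp\prime}$ with both $\bast n^\perp,\bast n^{\perp\prime}\in\bast\Z$, then $\bast n^\perp-\bast n^{\perp\prime}\in\bast\Z\cap\bast\R_\upvarepsilon$. But an element of $\bast\Z$ that is infinitesimal must be represented by an integer sequence converging to $0$ along an ultrafilter set, hence eventually $0$ along that set, hence equal to $0$ in $\bast\Z$. So the duality map $\perp_\theta$ is well-defined (as already asserted in the text preceding Theorem~\ref{groupideal}), and $\upvarepsilon(\theta)(\bast n)=\theta\bast n-\bast n^\perp$ is a genuine homomorphism. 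For injectivity: suppose $\upvarepsilon(\theta)(\bast n)=0$, i.e.\ $\theta\bast n=\bast n^\perp$ in $\bast\R$ (not merely $\simeq$). Passing to a representing sequence, $\theta n_i = n_i^\perp\in\Z$ for all $i$ in some ultrafilter set $X$. Since $\theta$ is irrational, $\theta n_i\in\Z$ forces $n_i=0$ for $i\in X$, whence $\bast n=0$.

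The main obstacle — really the only subtle point — is making sure the ``infinitesimal integer is zero'' argument is airtight, since it underlies both the uniqueness of $\bast n^\perp$ and the injectivity conclusion. The key fact is that $\bast\Z\cap\bast\R_\upvarepsilon=0$: if $\{n_i\}\in\bast\Z$ represents an infinitesimal, then $\{n_i\}|_{i\in X}\to 0$ for some $X\in\mathfrak u$, and an integer sequence converging to $0$ is eventually $0$, so $\{\,i\in X : n_i=0\,\}$ is cofinite in $X$ and hence (since $\mathfrak u$ contains no finite sets and is closed under the relevant operations) lies in $\mathfrak u$, giving $\bast n=0$. With this in hand, the isomorphism statement follows immediately: $\upvarepsilon(\theta)$ is surjective by definition of its target and injective by the displayed argument, so it is an isomorphism of groups.
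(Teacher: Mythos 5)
Your proof is correct and follows essentially the same line of reasoning as the paper: both hinge on the fact that a nonstandard integer which is infinitesimal must vanish, i.e.\ $\bast\Z\cap\bast\R_{\upvarepsilon}=0$, combined with the observation that for irrational $\theta$ the equation $\theta\bast k\in\bast\Z$ forces $\bast k=0$. The only cosmetic difference is that for the rational case you invoke Theorem~\ref{groupideal} to identify $\bast\Z(\theta)=\bast(b)$, whereas the paper argues directly that discreteness of $\Z\theta$ in $\T$ forces $\theta\bast n=\bast n^{\perp}$ exactly; both lead to the same conclusion that the error term vanishes.
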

 
 \begin{proof} The only diophantine approximations of $\uptheta\in\Q$ are those for which $\uptheta = \bast n^{\perp}/\bast n$
 (in $\bast \R$), this because the image of $\Z\uptheta$ in $\T$ is discrete.  On the other hand, if $\uptheta\in\R-\Q$
 then $\upvarepsilon (\uptheta)(\bast m)=\upvarepsilon(\uptheta)(\bast n)$ implies that $\bast m-\bast n\in\bast\Z(\uptheta)$
 satisfies $(\bast m-\bast n)\uptheta \in\bast\Z$, impossible by the irrationality of $\uptheta$.
  \end{proof}
  
Recall that $\uptheta,\upeta\in\R$ are {\bf {\small equivalent}} if there exists $A\in {\sf PGL}(\Z)$ with $A(\uptheta )=\upeta$.

\begin{theo}\label{PGLact} Let $A\in {\sf PGL}(\Z)$ be the class of the matrix
\[   A=\left(\begin{array}{cc}
a & b \\
c & d
\end{array}\right).\]
Then $A$ induces isomorphisms $A:\bast \Z(\uptheta )\longrightarrow \bast\Z (\upeta)$, 
$A^{\perp}:\bast \Z(\uptheta )^{\perp}\longrightarrow \bast\Z (\upeta)^{\perp}$ defined respectively by
$\bast n\mapsto c\bast n^{\perp}+d\bast n$, $\bast n^{\perp}\mapsto a\bast n^{\perp}+b\bast n$.
\end{theo}

\begin{proof} Follows from the calculation:
\begin{align*}
\upeta\cdot A(\bast n)-A^{\perp}(\bast n^{\perp}) & = 
\frac{1}{c\uptheta+d}\bigg\{(a\uptheta+b)\big(c\bast n^{\perp}+d\bast n\big)-(c\uptheta+d)\big(a\bast n^{\perp}+b\bast n \big)  \bigg\} \\
& = \frac{1}{c\uptheta+d} \big(\uptheta\bast n-\bast n^{\perp}\big) \\
& = \frac{\upvarepsilon (\bast n)}{c\uptheta+d} .
\end{align*}

\end{proof}

\begin{note}  The isomorphisms of Theorem \ref{PGLact} are obtained from projecting
the matrix action of $A\in {\rm GL}(2,\Z )$ on $\bast\Z^{1,1}(\uptheta )\subset\bast\Z^{2}$ onto the second resp. first coordinates.
\end{note}
 

One may also exchange the roles of $\R$ and $\bast\Z$ and consider for fixed $\bast m\in \bast\Z$
 the set of $\uptheta\in \R$ for which $\bast m$ defines a diophantine approximation:
\[ \R (\bast m )= \big\{ \uptheta\in\R |\; \bast m \in \bast\Z (\uptheta )  \big\}\supset\Z. \]
Note that $\R (\bast m )$ is a subgroup of $(\R,+)$,  which we call
the {\bf {\small group of real approximations}}
of $\bast m$.   When $\bast m =m\in\Z$
we have that $\R (m ) = \Z [1/m]$, the fractional ideal generated by $1/m$.
Otherwise, if $\Z(\bast m)$ contains some $\uptheta\in\R-\Q$, then $\langle 1,\uptheta \rangle\subset \R(\bast m)$
and therefore $ \Z(\bast m)$  is a dense subgroup of $\R$.  In particular, $\T(\bast m):=\R/\R (\bast m)$ is a quotient of the quantum torus
$\T(\uptheta)=\R/\langle 1,\uptheta \rangle$.  The group 
$\R (\bast m )$ provides another sense in which we may regard $\bast m\in\bast\Z$ as a ``real integer".
In \cite{Ge5}, \cite{Ge6} we will see that these groups are transverse to the $\bast\Z (\uptheta )$, and are equipped
with certain transverse metrics whose consideration is closely tied to the Littlewood conjecture.

The remainder of this section is devoted to the study of {\bf {\small extremal diophantine approximation groups}}.  
In what follows, if $n\Z\subset\Z$ is an ideal, we write $\bast (n\Z)(\uptheta )
=\bast (n\Z) \cap\bast\Z (\uptheta )$.  

For 
any $\uptheta\in\R$ and $N\in \Z$ we have
\begin{align}\label{multiplemapincl} \bast \Z(N\uptheta)\supset \bast\Z (\uptheta )\quad\text{and}\quad\bast \Z(N\uptheta)^{\perp}\subset \bast\Z (\uptheta )^{\perp}.\end{align}
as well as
\begin{align}\label{multiplemapincl2} \bast \Z(\uptheta+N)=\bast\Z (\uptheta )\quad\text{and}\quad\bast \Z(\uptheta+N)^{\perp}\subset \bast\Z (\uptheta )^{\perp}+N \bast \Z(\uptheta).\end{align}
In particular, if we denote by \[ {\sf Aff}(\Q) = \{a(x)=qx+q'|\; q\in\Q^{\ast},q'\in\Q\}\]the affine group over $\Q$, then the collection
\[ \{ \bast \Z(a(\uptheta))\}_{a(x)\in {\sf Aff}(\Q)}\] is directed both inversely and directly with respect to inclusion and we can form
the {\bf {\small initial}} and {\bf {\small final diophantine approximation groups}} as the limits:
\[  \bast\Z(\hat{\uptheta}) := \bigcap_{a(x)\in {\sf Aff}(\Q) } \bast \Z(a(\uptheta)), \quad 
\bast\Z(\check{\uptheta}) :=\bigcup_{a(x)\in {\sf Aff}(\Q) } \bast \Z(a(\uptheta)) .\]
In fact, the subset indexed by the homotheties $\Q^{\ast}\subset {\sf Aff}(\Q)$ (maps of the form $a(x)=qx$) produces a  cofinal family and the above limits 
reduce accordingly e.g. \[ \bast\Z(\hat{\uptheta}) = \bigcap_{q\in\Q^{\ast} } \bast \Z(q\uptheta).\]


By the elementary existence result of simultaneous Diophantine Approximation
(e.g.\ Theorem VI on page 13 of \cite{Ca}), $ \bast\Z(\hat{\uptheta}) $ is non-trivial.
And 
$\bast\Z(\check{\uptheta})$ is a proper subgroup of $\bast\Z$
since it does not contain $\Z$.   

The initial and final diophantine approximation groups may also be defined intrinsically.   If $R,S\subset\bast \R$ are subgroups, let
\begin{align}\label{RSgroups}
 [R|S](\uptheta ) := \{\bast r\in R|\; \exists\; \bast r^{\perp}\in S\text{ such that }\bast r\uptheta \simeq\bast r^{\perp}\}. 
 \end{align}
The set of $\bast r^{\perp}$ forms a group $[R|S](\uptheta )^{\perp}$ that may be identified with $[S|R](\uptheta^{-1})$
for $\uptheta\not=0$.   For $R=S=\bast \Z$ we recover $\bast\Z (\uptheta )$.  In addition
\begin{align}\label{intrinsicmultideal}
\bast\Z (n\uptheta ) = \left[\bast\Z|\bast\Z[1/n]\right](\uptheta ) \quad \text{and}\quad
\bast (n\Z )(\uptheta )=[\bast(n\Z) |\bast\Z](\uptheta).
\end{align}

 Consider the $\Q$-algebra
\[  \bast\Z_{\Q}:=\underset{\longrightarrow}{\lim}\bast\Z [1/n]  \subset\bast\Q .\]

\begin{prop}
$  \bast\Z(\check{\uptheta}) = [\bast\Z|\bast\Z_{\Q}](\uptheta ) $.
 \end{prop}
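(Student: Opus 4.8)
The plan is to verify the two inclusions directly. The only tool required is the fact, already noted, that multiplying an infinitesimal equation by an element of $\R\subset\bast\R_{\rm fin}$ --- here, by a nonzero rational --- produces another infinitesimal equation, since $\bast\R_{\upvarepsilon}$ is an ideal of $\bast\R_{\rm fin}$. I will also use that
\[\bast\Z/_{\Q}=\underset{\longrightarrow}{\lim}\,\bast\Z[1/n]=\bigl\{\bast m/k\ :\ \bast m\in\bast\Z,\ k\in\Z\setminus\{0\}\bigr\},\]
the second equality because this set of ``fractions'' is already a subring of $\bast\Q$ containing $\bast\Z$ and every $1/n$, hence is the directed colimit of the $\bast\Z[1/n]$.

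First I would prove $\bast\Z(\check{\theta})\subseteq[\bast\Z|\bast\Z/_{\Q}](\theta)$. Let $\bast n\in\bast\Z(\check{\theta})$, so $\bast n\in\bast\Z(q\theta)$ for some $q=a/b\in\Q^{\ast}$; then there is $\bast m\in\bast\Z$ with $q\theta\bast n\simeq\bast m$. Multiplying by $q^{-1}=b/a\in\Q^{\ast}$ gives $\theta\bast n\simeq(b\bast m)/a$, and $(b\bast m)/a\in\bast\Z/_{\Q}$ because $b\bast m\in\bast\Z$; hence $\bast n\in[\bast\Z|\bast\Z/_{\Q}](\theta)$. For the reverse inclusion, take $\bast n\in[\bast\Z|\bast\Z/_{\Q}](\theta)$ with dual approximant $\bast n^{\perp}=\bast m/k$, $\bast m\in\bast\Z$, $k\in\Z\setminus\{0\}$, so $\theta\bast n\simeq\bast m/k$. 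Multiplying by $k$ gives $k\theta\bast n\simeq\bast m\in\bast\Z$, which witnesses $\bast n\in\bast\Z(k\theta)\subseteq\bigcup_{q\in\Q^{\ast}}\bast\Z(q\theta)=\bast\Z(\check{\theta})$. This completes the argument.

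Conceptually the statement is just the commutation of the operation $S\mapsto[\bast\Z|S](\theta)$ with the directed colimit defining $\bast\Z/_{\Q}$, once one grants the identity $\bast\Z(n\theta)=[\bast\Z|\bast\Z[1/n]](\theta)$ of (\ref{intrinsicmultideal}) together with the reduction $\bast\Z(\check{\theta})=\bigcup_{n\in\Z\setminus\{0\}}\bast\Z(n\theta)$ --- the latter being exactly the denominator-clearing step carried out above. I do not anticipate a genuine obstacle; the one point meriting care is checking that clearing the denominator of $q$ keeps the dual approximant inside $\bast\Z/_{\Q}$, i.e.\ that $\bast m/q\in\bast\Z/_{\Q}$, which holds because $\bast\Z/_{\Q}$ is a $\Q$-algebra containing $\bast\Z$.
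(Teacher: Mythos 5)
Your proof is correct and takes essentially the same route as the paper's: the paper compresses your two inclusions into a single chain of bi-implications, in each case passing between $\bast n(q\theta)\simeq\bast m$ and $\bast n\theta\simeq\bast\alpha\in\bast\Z/_{\Q}$ by multiplying the infinitesimal relation by a nonzero rational. Your preliminary observation that $\bast\Z/_{\Q}=\{\bast m/k : \bast m\in\bast\Z,\,k\in\Z\setminus\{0\}\}$ is a harmless unpacking the paper leaves implicit.
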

 
 \begin{proof} $\bast n\in \bast\Z(\check{\uptheta})$ $\Leftrightarrow$ there exists $q\in\Q$ and $\bast m\in\bast\Z$ with
 $\bast n(q\uptheta )\simeq \bast m$ $\Leftrightarrow$ there exists $\bast\upalpha\in \bast\Z_{\Q}$ 
 with $\bast n\uptheta\simeq \bast \upalpha$ $\Leftrightarrow$ $\bast n\in  [\bast\Z|\bast\Z_{\Q}](\uptheta )$.
 \end{proof}

 Let $\hat{\Z}$ be the profinite completion of $\Z$,
 and denote by $\bast\hat{\Z}_{\upvarepsilon}<\bast\hat{\Z}$ the subgroup 
 of infinitesimals in its ultrapower.
   Let \[ \bast \Z_{\hat{\upvarepsilon}} = \bast\hat{\Z}_{\upvarepsilon}\cap\bast\Z = \bigcap_{n\in\N}\bast (n\Z ),\]  
 the $\Q$-vector space of elements of $\bast\Z$ divisible by any standard integer.
Define the {\bf {\small profinite diophantine approximation group}} of $\uptheta$ as
\[ \bast \Z_{\hat{\upvarepsilon}}(\uptheta ) :=\bast \Z_{\hat{\upvarepsilon}} \cap \bast\Z (\uptheta ) = \bigcap_{n\in\N} \bast (n\Z)(\uptheta )	.\] 
 Notice that 
 $  \bast \Z_{\hat{\upvarepsilon}}(\uptheta ) = [\bast \Z_{\hat{\upvarepsilon}} | \bast \Z](\uptheta )$. 
 
 \begin{prop} $ \bast\Z(\hat{\uptheta}) =[ \bast \Z|\bast \Z_{\hat{\upvarepsilon}} ](\uptheta )$.  In particular, the image of $ \bast\Z(\hat{\uptheta})$ by the duality map $\perp_{\uptheta}:\bast\Z (\uptheta )\rightarrow \bast\Z (\uptheta^{-1})$ 
 is $\bast\Z_{\hat{\upvarepsilon}}(\uptheta^{-1})= [\bast \Z_{\hat{\upvarepsilon}} | \bast \Z](\uptheta^{-1} )$.  
 \end{prop}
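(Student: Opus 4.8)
The plan is to prove the displayed equality first and then obtain the ``in particular'' clause formally from the duality recorded just after (\ref{RSgroups}); the crux is to convert membership in $\bast\Z(\hat{\theta})$, which is an intersection over all nonzero rationals, into a single divisibility condition on the dual $\bast n^{\perp}$. I would begin by pinning down the right-hand side. Since $\bast\Z\cap\bast\R_{\upvarepsilon}=0$ (an integer-valued sequence converging to $0$ along a member of $\mathfrak{u}$ is eventually $0$ there), every $\bast n\in\bast\Z(\theta)$ has a \emph{unique} dual $\bast n^{\perp}\in\bast\Z$ with $\theta\bast n\simeq\bast n^{\perp}$; consequently, by the definition (\ref{RSgroups}), one has $\bast n\in[\bast\Z|\bast\Z_{\hat{\upvarepsilon}}](\theta)$ if and only if $\bast n\in\bast\Z(\theta)$ and $\bast n^{\perp}\in\bast\Z_{\hat{\upvarepsilon}}=\bigcap_{b\in\N}\bast(b\Z)$, i.e.\ $\bast n^{\perp}\in\bast(b\Z)$ for every standard $b\in\N$.

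Next I would unwind the left-hand side. By definition (cf.\ (\ref{dioap})), for $q\in\Q^{\ast}$ one has $\bast n\in\bast\Z(q\theta)$ exactly when $q\theta\bast n\simeq\bast m$ for some $\bast m\in\bast\Z$. Suppose $\bast n\in\bast\Z(\hat{\theta})$. Taking $q=1$ gives $\bast n\in\bast\Z(\theta)$, so $\theta\bast n\simeq\bast n^{\perp}$; taking $q=1/b$ for a given $b\in\N$ gives $\theta\bast n\simeq b\bast m_{b}$ with $\bast m_{b}\in\bast\Z$, and comparing this with the previous relation (using uniqueness of the dual, together with $\bast\Z\cap\bast\R_{\upvarepsilon}=0$ once more) forces $\bast n^{\perp}=b\bast m_{b}\in\bast(b\Z)$. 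As $b$ was arbitrary, $\bast n^{\perp}\in\bast\Z_{\hat{\upvarepsilon}}$, hence $\bast n\in[\bast\Z|\bast\Z_{\hat{\upvarepsilon}}](\theta)$. Conversely, assume $\bast n\in\bast\Z(\theta)$ with $\bast n^{\perp}\in\bast(b\Z)$ for all $b$, and fix $q=a/b\in\Q^{\ast}$. Writing $\bast n^{\perp}=b\bast k$ with $\bast k\in\bast\Z$ and multiplying $\theta\bast n\simeq\bast n^{\perp}$ by the standard real $a/b$ (legitimate since $\bast\R_{\upvarepsilon}$ is an ideal of $\bast\R_{\rm fin}$ and $a/b\in\R\subset\bast\R_{\rm fin}$) yields $q\theta\bast n\simeq a\bast k\in\bast\Z$; hence $\bast n\in\bast\Z(q\theta)$, and since $q$ was arbitrary, $\bast n\in\bast\Z(\hat{\theta})$. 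This proves $\bast\Z(\hat{\theta})=[\bast\Z|\bast\Z_{\hat{\upvarepsilon}}](\theta)$.

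For the ``in particular'' clause, recall from the text after (\ref{RSgroups}) that $\bast n\mapsto\bast n^{\perp}$ carries $[R|S](\theta)$ onto $[S|R](\theta^{-1})$ (multiply the defining relation $\bast r\theta\simeq\bast r^{\perp}$ by $\theta^{-1}\in\R\subset\bast\R_{\rm fin}$; this is where $\theta\neq0$ enters, which holds since $\theta\in\R-\Q$). Applying this with $R=\bast\Z$ and $S=\bast\Z_{\hat{\upvarepsilon}}$, and invoking the identity $\bast\Z_{\hat{\upvarepsilon}}(\theta^{-1})=[\bast\Z_{\hat{\upvarepsilon}}|\bast\Z](\theta^{-1})$ noted immediately before the statement, we obtain $\perp_{\theta}\big(\bast\Z(\hat{\theta})\big)=[\bast\Z_{\hat{\upvarepsilon}}|\bast\Z](\theta^{-1})=\bast\Z_{\hat{\upvarepsilon}}(\theta^{-1})$, as asserted.

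The one place demanding care is the reduction of the intersection $\bigcap_{q\in\Q^{\ast}}\bast\Z(q\theta)$ to the condition ``$\bast n^{\perp}\in\bast(b\Z)$ for all $b$'': the two directions above show that the non-integer multipliers $q=a/b$ contribute nothing beyond the reciprocals $q=1/b$, which hinges on the stability of $\bast\Z(\theta)$ and of the infinitesimal relation $\simeq$ under multiplication by standard rationals. Everything else is bookkeeping with $\bast\R_{\upvarepsilon}\subset\bast\R_{\rm fin}$ and the triviality of $\bast\Z\cap\bast\R_{\upvarepsilon}$.
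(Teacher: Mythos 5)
Your proof is correct and follows essentially the same route as the paper: both arguments pivot on the observation that $\bast n\in\bast\Z(\hat{\theta})$ if and only if the dual $\bast n^{\perp}$ is divisible by every standard natural number, and then read off the ``in particular'' clause via the duality $\perp_{\theta}$. The only difference is one of explicitness: the paper's proof works only with the multipliers $q=1/N$ ($N\in\N$) and leaves tacit why the full intersection $\bigcap_{q\in\Q^{\ast}}\bast\Z(q\theta)$ reduces to $\bigcap_{N}\bast\Z(\theta/N)$, whereas you spell out (in the converse direction) that for $q=a/b$ the divisibility $\bast n^{\perp}\in\bast(b\Z)$ already yields $q\theta\bast n\simeq a\bast k\in\bast\Z$, so the non-reciprocal rationals impose no further constraint. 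This is a welcome clarification rather than a different argument.
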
 
 \begin{proof} $\bast n\in \bast\Z(\hat{\uptheta})$ $\Leftrightarrow$ $\bast n^{\perp}$ is divisible by all standard
natural numbers.  Indeed, let $\bast n_{N}^{\perp}$ be the dual of $\bast n$ viewed as an element of $\bast\Z (\uptheta /N)$.
Then $\bast n(\uptheta /N)-\bast n^{\perp}_{N}\simeq 0$ for all $N$ $\Leftrightarrow$ $\bast n^{\perp} = \bast n^{\perp}_{N}\cdot N$.
Thus the set of such $\bast n^{\perp}$ is contained in $\bast\Z_{\hat{\upvarepsilon}}(\uptheta^{-1})$.  On the other hand,
if $\bast m\in \bast\Z_{\hat{\upvarepsilon}}(\uptheta^{-1})$, then clearly $\bast m^{\perp}\in \bast\Z(\hat{\uptheta})$.
\end{proof}

 The next result shows in particular that $\bast\Z (\hat{\uptheta })$ and $\Z_{\hat{\upvarepsilon}}(\uptheta )$ are distinct.
 
 \begin{theo} Let $\uptheta\in\R -\Q$.  There exist $\bast p\in\bast\Z (\hat{\uptheta })$
 which are not divisible by any $N\in\N$.  In particular $\bast\Z (\hat{\uptheta })-\Z_{\hat{\upvarepsilon}}(\uptheta )$ and $\Z_{\hat{\upvarepsilon}}(\uptheta )-\bast\Z (\hat{\uptheta })$
 are non empty.
 \end{theo}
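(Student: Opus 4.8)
The plan is to exhibit both kinds of element directly, as classes of explicit integer sequences, using the characterization $\bast\Z(\hat{\theta}) = [\bast\Z\,|\,\bast\Z_{\hat{\upvarepsilon}}](\theta)$ from the preceding Proposition together with the equidistribution of $\{k\theta\}_{k\in\N}$ modulo $1$ (valid because $\theta$ is irrational). I write $\|x\|$ for the distance from $x\in\R$ to the nearest integer, and use freely that a non-principal ultrafilter contains every cofinite subset of $\N$, so that $\bast m\in\bast(N\Z)$ precisely when $N$ divides the $i$-th term of a representative of $\bast m$ for all $i$ in some member of $\mathfrak{u}$.

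\emph{The element $\bast p$.} Set $L_i = i!$. First I would use density of $\{k\theta\bmod 1\}$ in $\T$ to choose, for each $i$, an integer $k_i\geq 1$ with $\|k_i\theta + \theta/L_i\| < 1/(iL_i)$, and put $p_i = 1 + L_i k_i$. Then $p_i\theta = L_i\big(k_i\theta + \theta/L_i\big)$ lies within $1/i$ of a multiple $p_i^{\perp}$ of $L_i = i!$. Writing $\bast p$, $\bast p^{\perp}$ for the classes of $\{p_i\}$, $\{p_i^{\perp}\}$, one has $\bast p\theta\simeq\bast p^{\perp}$, so $\bast p\in\bast\Z(\theta)$; moreover for every $N$ the term $p_i^{\perp}$ is divisible by $N$ once $i\geq N$, so $\bast p^{\perp}\in\bast(N\Z)$ for all $N$, i.e.\ $\bast p^{\perp}\in\bast\Z_{\hat{\upvarepsilon}}$, whence $\bast p\in[\bast\Z\,|\,\bast\Z_{\hat{\upvarepsilon}}](\theta) = \bast\Z(\hat{\theta})$ by the Proposition. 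Since $p_i\equiv 1\pmod{i!}$, for $i\geq N\geq 2$ we get $p_i\equiv 1\pmod N$ and $N\nmid p_i$, so $\bast p\notin\bast(N\Z)$ for every $N\geq 2$; in particular $\bast p\notin\bast\Z_{\hat{\upvarepsilon}}$, hence $\bast p\notin\bast\Z_{\hat{\upvarepsilon}}(\theta)$. This yields the first assertion and gives $\bast\Z(\hat{\theta})\setminus\bast\Z_{\hat{\upvarepsilon}}(\theta)\neq\emptyset$.

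\emph{The element in $\bast\Z_{\hat{\upvarepsilon}}(\theta)\setminus\bast\Z(\hat{\theta})$.} Here I would instead aim the \emph{scaled} orbit at the midpoint of $\T$: since $i!\theta/2$ is irrational, choose $s_i\in\Z$ with $\|s_i(i!\theta/2) - 1/2\| < 1/(2i)$ and set $r_i = i!\,s_i$. Then $r_i\theta = 2\,s_i(i!\theta/2)$ lies within $1/i$ of an \emph{odd} integer $r_i^{\perp}$; let $\bast r$, $\bast r^{\perp}$ be the corresponding classes. As before $\bast r\theta\simeq\bast r^{\perp}$, so $\bast r\in\bast\Z(\theta)$, and $r_i = i!\,s_i$ is divisible by every $N$ for $i\geq N$, so $\bast r\in\bast\Z_{\hat{\upvarepsilon}}$ and hence $\bast r\in\bast\Z_{\hat{\upvarepsilon}}(\theta)$. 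But every $r_i^{\perp}$ is odd, so $\bast r^{\perp}\notin\bast(2\Z)$ and a fortiori $\bast r^{\perp}\notin\bast\Z_{\hat{\upvarepsilon}}$; since for irrational $\theta$ the dual of $\bast r\in\bast\Z(\theta)$ is unique (two duals differ by an element of $\bast\Z\cap\bast\R_{\upvarepsilon} = 0$), the Proposition forces $\bast r\notin\bast\Z(\hat{\theta})$, giving $\bast\Z_{\hat{\upvarepsilon}}(\theta)\setminus\bast\Z(\hat{\theta})\neq\emptyset$.

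The main thing to watch is entirely routine: that the error bound $1/i$ is small enough for the nearest integer to $p_i\theta$ to be a genuine multiple of $i!$, and for the nearest integer to $r_i\theta$ to be genuinely odd, and that every index set invoked is cofinite (hence in $\mathfrak{u}$). There is no real obstacle beyond the observation that one may impose a divisibility constraint on the approximant (by taking $p_i$ in the progression $1 + i!\Z$) and a divisibility constraint on the dual (by aiming $r_i\theta$ at odd integers) independently of the approximation itself — which is exactly the phenomenon distinguishing $\bast\Z(\hat{\theta})$ from $\bast\Z_{\hat{\upvarepsilon}}(\theta)$.
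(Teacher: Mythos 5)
Your proposal is correct, and the first half is essentially the paper's own construction: both take $p_i=1+i!\,k_i$ with $k_i$ chosen (by density of the orbit of $\theta$ in $\T$) so that $p_i\theta$ lands within $o(1)$ of a multiple of $i!$, forcing $\bast p^{\perp}\in\bast\Z_{\hat{\upvarepsilon}}$ while $p_i\equiv 1\pmod{i!}$ kills divisibility of $\bast p$ by every $N\geq 2$. Where you genuinely diverge is the second half. The paper dispatches $\bast\Z_{\hat{\upvarepsilon}}(\theta)\setminus\bast\Z(\hat{\theta})\neq\emptyset$ in one line by ``applying the above argument to dual groups,'' which, unpacked, means: run the first construction for $\theta^{-1}$ to get $\bast p'\in\bast\Z(\hat{\theta^{-1}})$ not divisible by any $N$, and then observe (via the preceding Proposition's identity $\perp_{\theta^{-1}}(\bast\Z(\hat{\theta^{-1}}))=\bast\Z_{\hat{\upvarepsilon}}(\theta)$) that its dual $(\bast p')^{\perp}$ lies in $\bast\Z_{\hat{\upvarepsilon}}(\theta)$ but cannot be in $\bast\Z(\hat{\theta})$ since its own unique dual is $\bast p'\notin\bast\Z_{\hat{\upvarepsilon}}$. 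You instead build the element directly, choosing $r_i=i!\,s_i$ so that $r_i\theta$ is forced to land near an \emph{odd} integer, which makes $\bast r\in\bast\Z_{\hat{\upvarepsilon}}$ by construction while $\bast r^{\perp}\notin\bast(2\Z)\supset\bast\Z_{\hat{\upvarepsilon}}$, and then correctly invokes uniqueness of duals ($\bast\Z\cap\bast\R_{\upvarepsilon}=0$, a fact that holds for any $\theta$, not only irrational ones) to conclude $\bast r\notin[\bast\Z\,|\,\bast\Z_{\hat{\upvarepsilon}}](\theta)=\bast\Z(\hat{\theta})$. Both routes are valid; yours trades the paper's appeal to the duality isomorphism $\perp_{\theta}$ for a second explicit pigeonhole construction, making the asymmetry between divisibility constraints on approximant versus dual entirely concrete and self-contained, at the modest cost of one more elementary density argument.
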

 
 \begin{proof} Since the projection of $\uptheta\Z$ to $\T$ is dense, there exists, for each positive integer $k$, an integer
$n_{k}$ such that $\|\uptheta/k+n_{k}\uptheta \| < 1/k^{2}$, where $\|\cdot\|$ denotes the distance to the nearest integer. Then the nonstandard integer $\bast p$ associated to the sequence 
$\{ 1 + k!n_{k!}\}_{k\geq1}$  belongs to $\bast\Z (\hat{\uptheta })$ and is not divisible by any prime.  
 Applying the above argument to dual groups shows that  $\Z_{\hat{\upvarepsilon}}(\uptheta )-\bast\Z (\hat{\uptheta })\not=\emptyset$
 as well.
\end{proof}
 
 Note that elements of $ \bast\Z(\hat{\uptheta})$, $\bast\Z(\check{\uptheta})$ no longer have well-defined duals: indeed
when we view $\bast n\in\bast\Z (\uptheta)$ in  $\bast \Z(N\uptheta)$ via (\ref{multiplemapincl}) we have $\bast n^{\perp_{N\uptheta}}=
N\bast n^{\perp_{\uptheta}}$.  In particular, there is no longer an action of ${\rm PGL}_{2}(\Z )$ defined on initial and final
groups.  On the other hand we can consider the numerator denominator final group:
\[ \bast\Z^{1,1}(\check{\uptheta}) = \bigg\langle\bigcup_{a(x)\in {\sf Aff}(\Q)} \bast \Z^{1,1}(a(\uptheta ) )   \bigg\rangle\]
where $\langle X\rangle$ means the group generated by $X$; we were not able to define this group as a direct limit since the 
collection
$\{\bast \Z^{1,1}(a(\uptheta ) ) \} $ is not directed.  

\begin{prop}  $\bast\Z^{1,1}(\check{\uptheta})\not=\bast\Z^{2}$. If $\uptheta$ is a quadratic irrationality then 
$\bast\Z^{1,1}(\check{\uptheta})$ is invariant with respect to the natural action of ${\sf GL}_{2}(\Z )$ on $\bast\Z^{2}$.
\end{prop}

\begin{proof} The first statement follows from the fact that the projection of $\bast\Z^{1,1}(\check{\uptheta})$
onto the first factor is $\bast\Z(\hat{\uptheta})\not=\bast\Z$. 
If $\uptheta$ is a quadratic irrationality then
for any $A\in{\sf PGL}_{2}(\Z )$ we have
$A(\uptheta ) = a_{A}(\uptheta)$ for some $a_{A}(x)\in {\sf Aff}(\Q )$.  Then 
\[  A(\bast \Z^{1,1} (\uptheta ) )= \bast \Z ^{1,1}(A(\uptheta )) = \bast \Z ^{1,1}(a_{A}(\uptheta )) \subset\bast\Z^{1,1}(\check{\uptheta}).\]
For the same reason $A(\bast \Z^{1,1} (a(\uptheta ) ))  \subset\bast\Z^{1,1}(\check{\uptheta})$ for any $a(x)\in {\sf Aff}(\Q )$.
\end{proof}

\begin{note} The group \[ \bast\Z^{1,1}(\hat{\uptheta}):=\bigcap _{a(x)\in {\sf Aff}(\Q)} \bast \Z^{1,1}(a(\uptheta ) )\] is trivial since
for any $\uptheta\not=\upeta$, $\bast\Z^{1,1}(\uptheta)\cap \bast\Z^{1,1}(\uptheta)=(0,0)$.
\end{note}










\section{Residue Class Groups and Kronecker Foliations}\label{ResClassGps}

If $\uptheta=a/b\in\Q$, we have $\bast\Z/\bast\Z (\uptheta )=
 \bast\Z/\bast (b)\cong \Z/(b)$, the residue class field
 of $(b)$ when $b$ is a prime.  Similarly, in the $p$-adic setting (see {\it Nonarchimedean Note} \ref{padic}) the quotient $\bast\Z/\bast\Z (\upxi )$ is isomorphic to 
  $\Z/p^{m}\Z$.  The analogue for diophantine approximation groups of irrationals is given by the following  

\begin{theo}\label{circprop}
Let $\uptheta\in\R-\Q$.  Then $\bast\Z/\bast\Z (\uptheta )$ is an abelian group isomorphic to the circle $\T=\R/\Z$. 
The quotient $\bast\Z/(\bast\Z (\uptheta )+\Z)$ is isomorphic to the ``quantum torus'' $\T_{\uptheta}=\R/\langle 1,\uptheta \rangle$. 
\end{theo}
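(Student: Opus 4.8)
The plan is to exhibit both quotients as images of a single homomorphism from $\bast\Z$ to the ordinary circle, and then invoke the isomorphism theorems. First I would build the ``angle map''
\[ \phi_{\theta}:\bast\Z\longrightarrow\T,\qquad \phi_{\theta}(\bast n)={\sf std}_{\T}\!\big(\theta\bast n+\bast\Z\big), \]
where ${\sf std}_{\T}:\bast\T=\bast\R/\bast\Z\to\T$ is the standard part map on the nonstandard circle. Since $\T$ is compact, each class $\{t_{i}\}\in\bast\T$ has a unique $\mathfrak u$-limit in $\T$, and ${\sf std}_{\T}$ is a well-defined epimorphism of groups whose kernel is the image of $\bast\R_{\upvarepsilon}$, namely $(\bast\R_{\upvarepsilon}+\bast\Z)/\bast\Z$; the verification copies the one for (\ref{standardpartepi}), using that $\mathfrak u$-limits exist and are unique in a compact metric space and that the circle metric is translation invariant. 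Concretely, if $\bast n$ is represented by $\{n_{i}\}$ then $\phi_{\theta}(\bast n)$ is the $\mathfrak u$-limit of $\{n_{i}\theta\bmod 1\}$.

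Second I would compute $\ker\phi_{\theta}$. We have $\phi_{\theta}(\bast n)=0$ iff $\theta\bast n+\bast\Z\in\ker({\sf std}_{\T})$, i.e.\ iff $\theta\bast n\simeq\bast m$ for some $\bast m\in\bast\Z$, which is exactly the defining condition (\ref{dioap}) for $\bast n\in\bast\Z(\theta)$. Hence $\ker\phi_{\theta}=\bast\Z(\theta)$. The only substantive point is surjectivity of $\phi_{\theta}$, and this is where $\theta\notin\Q$ enters: given $t\in\T$ with lift $t_{0}\in[0,1)$, the density of the image of $\Z\theta$ in $\T$ (the fact already invoked in the proof of Theorem~\ref{groupideal}) lets us pick $n_{i}\in\Z$ with $n_{i}\theta$ within $1/i$ of $t_{0}$ modulo $1$; then $\{n_{i}\theta\bmod1\}\to t$, so $\bast n=\{n_{i}\}$ satisfies $\phi_{\theta}(\bast n)=t$. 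By the first isomorphism theorem, $\phi_{\theta}$ descends to an isomorphism $\bast\Z/\bast\Z(\theta)\;\xrightarrow{\ \sim\ }\;\T$, which is the first assertion.

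For the second assertion I would track the image of $\Z\leq\bast\Z$. For standard $n$, the value $\phi_{\theta}(n)=n\theta\bmod1$ is already standard, so $\phi_{\theta}(\Z)$ is the image $\overline{\Z\theta}$ of $\Z\theta$ in $\T=\R/\Z$; thus the induced isomorphism carries $(\bast\Z(\theta)+\Z)/\bast\Z(\theta)$ onto $\overline{\Z\theta}$. Passing to quotients (third isomorphism theorem) gives
\[ \bast\Z/(\bast\Z(\theta)+\Z)\;\cong\;\T/\overline{\Z\theta}\;=\;(\R/\Z)\big/\big(\langle1,\theta\rangle/\Z\big)\;=\;\R/\langle1,\theta\rangle\;=\;\T_{\theta}. \]

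The only real obstacle is the surjectivity step, which is Kronecker's density theorem repackaged as ``every angle is hit''; everything else is bookkeeping with the isomorphism theorems together with the routine construction of ${\sf std}_{\T}$. It is worth noting in passing that $\bast\Z(\theta)\cap\Z=0$ for $\theta$ irrational — a nonzero standard multiple of an irrational is not infinitesimally close to an integer — so that $\bast\Z(\theta)+\Z$ is genuinely larger than $\bast\Z(\theta)$, though this is not strictly needed for the argument.
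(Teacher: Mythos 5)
Your proof is correct and is essentially the paper's own argument: your $\phi_{\theta}$ is exactly the map ${\sf s}_{\theta}$ defined in (\ref{circlestandmap}), with kernel $\bast\Z(\theta)$, surjectivity via irrationality (Kronecker density), and then the further quotient by ${\sf s}_{\theta}(\Z)=\overline{\Z\theta}$ yielding $\T_{\theta}$. The only difference is that you spell out the surjectivity argument and the appeal to the isomorphism theorems, which the paper leaves implicit.
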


\begin{proof}  Let $\bast n\in\bast\Z$ and denote by $\overline{\uptheta\bast n}$
the projection of $\uptheta\bast n\in\bast\R$ to $\bast\R/\bast\Z\cong\bast\T$ = the ultrapower of
$\T$.   Since $\T$ is compact, the standard part epimorphism (\ref{standardpartepi})
induces an epimorphism ${\sf std}: \bast\T\rightarrow \T$.
We then define a homomorphism
\begin{align}\label{circlestandmap}  {\sf s}_{\uptheta}:\bast\Z\rightarrow\T,\quad  {\sf s}_{\uptheta}(\bast n )= {\sf std}(\overline{\uptheta\bast n}). 
\end{align}
Since $\uptheta$ is irrational, ${\sf s}_{\uptheta}$ is surjective and has kernel $\bast\Z (\uptheta )$.  If we further quotient
by $\Z$, the result is isomorphic to $\T/{\sf s}_{\uptheta}(\Z )=\T_{\uptheta}$.
\end{proof}

\begin{note} The quantum torus $\T_{\uptheta}$ plays a central role in the Real Multiplication program of Yu. Manin  e.g. see \cite{Man}, \cite{Mar}, \cite{CaGe1}.
\end{note}

Let $\hat{\T} =\underset{\longleftarrow}{\lim} \;\R/n\Z$ be the classical 1-dimensional
solenoid.  Consider also
\[  \check{\T} := \R/(\Q, +) =  \underset{\longrightarrow}{\lim} \;\R/(\Z[1/n]),\] 
 which we call the 
1-dimensional
{\bf {\small dionelos}}.  The dionelos is a non-commutative space (i.e.\ a non Hausdorff quotient that may be studied using
the tools of Noncommutative Geometry) which may
be identified with the leaf space of the adelic lamination 
$ (\A \times \T)/(\Q , + )$.

\begin{coro}\label{resclassgroupsoldio}  $\bast\Z/\bast\Z(\hat{\uptheta}) \cong \hat{\T}$ and  $\bast\Z/\bast\Z(\check{\uptheta}) \cong  \check{\T}$.
\end{coro}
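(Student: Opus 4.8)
The plan is to obtain both isomorphisms from the circle isomorphism ${\sf s}_{\theta}\colon \bast\Z\twoheadrightarrow\T$ of Theorem~\ref{circprop}: for the solenoid by assembling its ``lower level'' analogues ${\sf s}_{\theta/n}$ into a single homomorphism into an inverse limit, and for the dionelos by composing ${\sf s}_{\theta}$ with the quotient $\T\to\T/(\Q/\Z)$.

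For $\hat{\T}=\underset{\longleftarrow}{\lim}\,\R/n\Z$ I would first record that $\bast\Z(\hat{\theta})=\bigcap_{q\in\Q^{\ast}}\bast\Z(q\theta)=\bigcap_{n\in\N}\bast\Z(\theta/n)$: one inclusion is immediate, and for the other, if $(\theta/b)\bast k\simeq\bast m\in\bast\Z$ then $(a/b)\theta\bast k=a\cdot(\theta/b)\bast k\simeq a\bast m\in\bast\Z$, so membership in every $\bast\Z(\theta/n)$ forces membership in every $\bast\Z(q\theta)$. For each $n$ I would then define $\sigma_{n}\colon\bast\Z\to\R/n\Z$ by $\sigma_{n}(\bast k)={\sf std}(\overline{\theta\bast k})$, where $\overline{(\,\cdot\,)}$ now means reduction modulo $n\bast\Z$ and ${\sf std}\colon\bast(\R/n\Z)\to\R/n\Z$ is the standard part epimorphism of the compact group $\R/n\Z$. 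The multiplication-by-$n$ isomorphism $\R/\Z\cong\R/n\Z$ identifies ${\sf s}_{\theta/n}$ with $\sigma_{n}$, so by Theorem~\ref{circprop} (applicable since $\theta/n$ is irrational) each $\sigma_{n}$ is surjective with $\ker\sigma_{n}=\bast\Z(\theta/n)$; and since the transition projections $\R/m\Z\to\R/n\Z$ for $n\mid m$ are continuous and commute with reduction of $\theta\bast k$, the $\sigma_{n}$ assemble into a homomorphism $\sigma\colon\bast\Z\to\hat{\T}$ with $\ker\sigma=\bigcap_{n}\bast\Z(\theta/n)=\bast\Z(\hat{\theta})$. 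The first isomorphism then reduces to the surjectivity of $\sigma$.

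That surjectivity is the one step with real content, and is where simultaneous Diophantine approximation enters. Given $t=(t_{n})\in\hat{\T}$, for each $N$ I would use density of $\theta\Z$ in $\R/N!\,\Z$ (which holds because $\theta/N!$ is irrational) to choose $k_{N}\in\Z$ with $\mathrm{dist}(\theta k_{N},t_{N!})<1/N$ in $\R/N!\,\Z$; since the projection $\R/N!\,\Z\to\R/n\Z$ is distance non-increasing and sends $t_{N!}$ to $t_{n}$ whenever $n\mid N!$, in particular whenever $n\le N$, the class $\bast k$ of $\{k_{N}\}$ satisfies $\sigma_{n}(\bast k)=t_{n}$ for every $n$, the index set $\{N\ge n\}$ being cofinite and hence in $\mathfrak{u}$. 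I expect this diagonal construction of $\bast k$ to be the main obstacle; everything else for $\hat{\T}$ is bookkeeping.

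For the dionelos I would use $\check{\T}=\R/(\Q,+)=\T/(\Q/\Z)$ and let $\pi\colon\T\to\check{\T}$ be the quotient map, so $\pi\circ{\sf s}_{\theta}\colon\bast\Z\to\check{\T}$ is surjective by Theorem~\ref{circprop} and it remains to identify $\ker(\pi\circ{\sf s}_{\theta})$ with $\bast\Z(\check{\theta})=\bigcup_{b\in\N}\bast\Z(b\theta)$ (the union over integers $b$ suffices, again by clearing denominators). If ${\sf s}_{\theta}(\bast k)=a/b+\Z$ then $\theta\bast k\simeq a/b+\bast m$ for some $\bast m\in\bast\Z$, hence $b\theta\bast k\simeq a+b\bast m\in\bast\Z$ and $\bast k\in\bast\Z(b\theta)$. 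Conversely, if $b\theta\bast k\simeq\bast m\in\bast\Z$ then $\theta\bast k\simeq\bast m/b$, and writing $\bast m$ as the class of $\{m_{i}\}$, the residues $m_{i}/b\bmod\Z$ lie in the finite set $\{0,1/b,\dots,(b-1)/b\}$, so a single value $j/b$ occurs on a set in $\mathfrak{u}$ and ${\sf s}_{\theta}(\bast k)=j/b+\Z\in\Q/\Z$; thus $\bast k\in\ker(\pi\circ{\sf s}_{\theta})$. The only mildly delicate point here is this last appeal to the fact that a sequence taking finitely many values is $\mathfrak{u}$-eventually constant. Combining the two inclusions yields $\bast\Z/\bast\Z(\check{\theta})\cong\check{\T}$.
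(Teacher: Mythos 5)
Your proof is correct, and it essentially coincides with the \emph{alternate} argument that the paper sketches in the paragraph immediately following the corollary (the standard part map $\bast\Z\to\hat{\T}$, $\bast n\mapsto {\sf std}(\bast n\theta)$, justified by compactness of $\hat{\T}$). The paper's \emph{primary} proof is a one-line abstraction:
$\bast\Z/\bast\Z(\hat{\theta}) \cong \underset{\longleftarrow}{\lim}\,\bast\Z/\bast\Z(q\theta) \cong \hat{\T}$
and dually for $\check{\T}$. The second isomorphism there is immediate from Theorem~\ref{circprop}, but the first identification, namely $\bast\Z/\bigcap_q\bast\Z(q\theta)\cong \underset{\longleftarrow}{\lim}\,\bast\Z/\bast\Z(q\theta)$, is exactly what needs an argument: the natural map $A/\bigcap A_n\to\underset{\longleftarrow}{\lim}A/A_n$ is always injective but not surjective in general (compare $\Z/\bigcap p^n\Z=\Z$ with $\underset{\longleftarrow}{\lim}\Z/p^n\Z=\Z_p$). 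Your diagonal construction of a $\bast k$ hitting any prescribed $(t_n)\in\hat{\T}$, via density of $\theta\Z$ in each $\R/n\Z$ and the factorial cofinal system, is precisely the content that makes the surjectivity go through, and it is the nonstandard-analytic rephrasing of the paper's appeal to compactness of $\hat{\T}$. For the dionelos direction the two arguments also agree in substance: the paper's filtered-colimit identity $\bast\Z/\bigcup_q\bast\Z(q\theta)\cong\underset{\longrightarrow}{\lim}\bast\Z/\bast\Z(q\theta)$ is automatic (unlike the inverse-limit case), and your computation of $\ker(\pi\circ{\sf s}_\theta)$ via the $\mathfrak{u}$-eventual-constancy of a sequence in $\{0,1/b,\dots,(b-1)/b\}$ is a clean way to make the identification concrete. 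Two small bookkeeping points you handled correctly and that are worth keeping explicit: the reduction of the intersection/union over $\Q^{\ast}$ to one over $\N$ by clearing denominators, and the elementary fact that a finitely-valued sequence is $\mathfrak{u}$-constant.
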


\begin{proof}  By Theorem \ref{circprop}, for all $q\in\Q^{\times}$, $\bast\Z/\bast\Z(q\uptheta)\cong \T$.  Then
$ \bast\Z/\bast\Z (\hat{\uptheta})\cong\underset{\longleftarrow}{\lim} \bast\Z/\bast\Z(q\uptheta) \cong  \hat{\T}$
and $ \bast\Z/\bast\Z(\check{\uptheta}) \cong  \underset{\longrightarrow}{\lim}\bast\Z/\bast\Z(q\uptheta)\cong  \check{\T}$.
\end{proof}

An alternate proof of Corollary \ref{resclassgroupsoldio} in the solenoid case can be made using a standard part map
 $\bast\Z\rightarrow \hat{\T}$, $\bast n\mapsto {\sf std}(\bast n\uptheta )$, where
the latter is defined by taking a $\mathfrak{u}$-recognized limit point of any representative of $\bast n\uptheta$
in $\R\hookrightarrow \hat{\T}$.  Since $\hat{\T}$ is compact this is well defined and gives an epimorphism.


 By Theorem \ref{circprop}, $\bast\Z/\bast\Z (\uptheta )$ does not distinguish $\uptheta\in\R-\Q$, so 
it does not offer a satisfying generalization of residue class field. 
We could instead consider the quotient $\T_{\uptheta}=\bast\Z/(\bast\Z (\uptheta )+\Z)$, 
which does distinguish inequivalent $\uptheta\in\R-\Q$, however
this is still problematic as $\T_{\uptheta}$ is non Hausdorff.  
Our solution will be to regard instead the quotient 
\[ \hat{\R}/\bast\Z (\uptheta ):=(\bbull\R, \R, \bast\Z )/\bast\Z (\uptheta )\]
as providing the sought after analogue notion. 

To justify this choice, we recall that the group $\bast\Z(\uptheta)$ first appeared in \cite{Ge1}
as the {\bf {\small fundamental germ}} 
\[ \llbracket \uppi\rrbracket_{1} \mathfrak{F}(\uptheta ),\] where $\mathfrak{F}(\uptheta )$ is the Kronecker foliation
of the torus by lines of slope $\uptheta$.  The fundamental germ is an analogue of the fundamental group
available for laminated spaces.  

In the paragraphs which follow, we explain this interpretation
in the context of the Kronecker foliations.
In this connection, it will be convenient to call attention to three constructions which can taken as
the definition of, or determine canonically foliations isomorphic to,
the Kronecker foliation of $\uptheta$.  We will use the notation 
$\mathfrak{F}(\uptheta )$ to denote all of them.  

\begin{itemize}
\item[K1.] $\mathfrak{F}(\uptheta )=(\T^{2},\mathcal{L}(\uptheta ))$ where $\mathcal{L}(\uptheta )$
is the image in $\T^{2}$ of the family of lines of slope $\uptheta$ in $\R^{2}$.  This is the usual definition of the Kronecker foliation.
\item[K2.]\label{1param} $\mathfrak{F}(\uptheta )=(\T^{2},L(\uptheta ))$  
where $L(\uptheta)$ is the leaf of $\mathcal{L}(\uptheta )$ passing
through $0$: a 1-parameter subgroup of $\T^{2}$.  The construction K1 is recovered from this pair by considering
the foliation of $\T^{2}$ by the cosets of $L(\uptheta)$.
 \item[K3.]\label{sus} Let $\Z$ act on $\T$ by homeomorphisms via \[ n\cdot\bar{x} := \overline{x+n\uptheta }.\]  Then
$\mathfrak{F}(\uptheta )$ is the quotient of the product foliation $\R\times\T$ by the diagonal action of $\Z$,
\[ n\cdot (r, \bar{x}) := (r-n, n\cdot\bar{x}).\]  The image of $\{ 0\}\times\T$, denoted $\T$, gives a distinguished fiber transversal through $0$.
\end{itemize}

We wish to view $\bbull\R$ as the foliation universal cover for $\mathfrak{F}(\uptheta )$ associated to the fundamental 
germ $\llbracket \uppi\rrbracket_{1} \mathfrak{F}(\uptheta )=\bast\Z(\uptheta)$.  The leaves of this structure
will be the
cosets $\bbull r+\R$, each equipped with the topology of $\R$. 
What is lacking on $\bbull\R$ is
an appropriate transverse topology, a request
complicated by the fact its selection
must break the linear order of $\bbull\R$ (since it is to be compatible with a mapping of $\bbull\R$
{\it onto} the surface $\T^{2}$).  In fact, for the purposes of this paper, it is {\it not necessary} to specify any transverse
topology at all; we do so in order to bring as closely as possible the present constructions into analogy with
classical covering space theory.

 There are many ways of defining a transverse topology, none of them canonical; we shall
consider a special class of topologies associated to a compact subgroup $T<\T^{2}$
transverse to $\mathfrak{F}(\uptheta )$.
The purpose of the transverse topology is to allow us to define 
quotient foliation structures;
it will not make of $\bbull\R$ a foliated topological group but will be preserved by enough algebraic
structure so that upon quotient by $\bast\Z (\uptheta )$ the result will be isomorphic to the foliated group $\mathfrak{F}(\uptheta )$.

We begin by noting that as an abelian group $\bbull\R$ can be identified 
with the suspension
\begin{align}\label{suspofbullR}
\bbull\R =(\R\times\bast\Z )/\Z,\quad n\cdot(r,\bast n) = (r-n, \bast n+n). 
\end{align}
We will want to define a topology $\bast\Z$ so that the action of $\Z$ by translations is by homeomorphisms
acting properly and discontinuously.

\begin{lemm}\label{sectionofspseq}  Let $\T$ be given the $\Z$-set structure described in ${\rm K\ref{sus}}$.
Then the exact sequence of pointed $\Z$-sets associated to the epimorphism (\ref{circlestandmap})
\[  0\rightarrow \bast\Z (\uptheta )\hookrightarrow \bast\Z \stackrel{{\sf s}_{\uptheta}}{\twoheadrightarrow} \T\rightarrow 0 \]
splits by a basepoint preserving $\Z$-set section $\upsigma :  \T\hookrightarrow\bast\Z$.
\end{lemm}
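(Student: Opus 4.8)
The plan is to build $\sigma$ one $\Z$-orbit at a time, exploiting the fact that the $\Z$-action on $\T$ described in ${\rm K\ref{sus}}$ is \emph{free} because $\theta$ is irrational.

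First I would check that the map ${\sf s}_{\theta}$ of (\ref{circlestandmap}) is compatible with the two $\Z$-actions. Here $\Z$ acts on $\bast\Z$ by translation, $n\cdot\bast m=\bast m+n$, as in the suspension presentation (\ref{suspofbullR}), and on $\T$ by $n\cdot\bar x=\overline{x+n\theta}$. Since ${\sf s}_{\theta}$ is a homomorphism and ${\sf s}_{\theta}(n)={\sf std}(\overline{\theta n})=\overline{n\theta}$, for $\bast m\in\bast\Z$ and $n\in\Z$ we get
\[
{\sf s}_{\theta}(n\cdot\bast m)={\sf s}_{\theta}(\bast m)+\overline{n\theta}=n\cdot {\sf s}_{\theta}(\bast m),
\]
so ${\sf s}_{\theta}$ is a morphism of $\Z$-sets; by Theorem \ref{circprop} it is onto, with preimage of the basepoint $\bar 0=\overline{0}$ equal to its kernel $\bast\Z(\theta)$, which is the content of exactness of the displayed sequence. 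I would also record that the action on $\T$ is free: $n\cdot\bar x=\bar x$ forces $n\theta\in\Z$, whence $n=0$ as $\theta\notin\Q$ (the translation action on $\bast\Z$ is of course free too).

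Now, by freeness, $\T$ is the disjoint union of its $\Z$-orbits, each isomorphic as a $\Z$-set to the regular one. I would choose a set $S\subset\T$ of orbit representatives with $\bar 0\in S$, and for each $\bar x\in S$ pick, using surjectivity of ${\sf s}_{\theta}$, an element $\sigma_{0}(\bar x)\in\bast\Z$ with ${\sf s}_{\theta}(\sigma_{0}(\bar x))=\bar x$, taking $\sigma_{0}(\bar 0)=0$ (admissible since ${\sf s}_{\theta}(0)=\bar 0$). Freeness also guarantees that every element of $\T$ is uniquely of the form $n\cdot\bar x$ with $\bar x\in S$ and $n\in\Z$, so we may unambiguously set $\sigma(n\cdot\bar x):=\sigma_{0}(\bar x)+n$. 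By construction $\sigma$ is $\Z$-equivariant and sends $\bar 0$ to $0$, and it is a section since ${\sf s}_{\theta}(\sigma_{0}(\bar x)+n)={\sf s}_{\theta}(\sigma_{0}(\bar x))+\overline{n\theta}=\bar x+\overline{n\theta}=n\cdot\bar x$; being a section of ${\sf s}_{\theta}$ it is automatically injective. The argument has no real obstacle beyond the observation that irrationality makes the rotation action free; the only place calling for a little care is preserving the basepoint, which is handled by including $\bar 0$ among the orbit representatives and lifting it to $0\in\bast\Z$.
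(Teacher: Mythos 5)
Your proof is correct and follows essentially the same route as the paper: choose one preimage per $\Z$-orbit (taking $\sigma_0(\bar 0)=0$) and extend equivariantly. The only cosmetic difference is that you invoke freeness of the rotation action to package the well-definedness, whereas the paper deduces the same point from $\Z\cap\bast\Z(\theta)=\{0\}$ and handles the orbit of $\bar 0$ separately before the general case.
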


\begin{note} The fact that $\upsigma$ is a basepoint preserving $\Z$-set map means in particular that $\Z\subset \bast T:= \upsigma (\T)$.
\end{note}

\begin{proof} Denote as in K3 the $\Z$-action on $\T$ by $n\cdot\bar{x}$.  Define first $\upsigma (0)=0$ and $\upsigma (n\cdot\bar{0})=n$. This defines
$\upsigma$ over $\Z\cdot \bar{\uptheta}$, well-defined since ${\rm Ker}({\sf s}_{\uptheta})=\bast\Z(\uptheta )$ and $\Z\cap\bast\Z (\uptheta )=\{ 0\}$.  Now given $\bar{x}\in\T$, choose $\upsigma (\bar{x})\in{\sf s}_{\uptheta}^{-1}(\bar{x})$
and define $\upsigma (n\cdot\bar{x})= \upsigma (\bar{x})+n$, so that the set $\upsigma(\bar{x})+\Z$ maps to $\Z\cdot\bar{x}=\bar{x}+\uptheta\Z\mod\Z$.   If $\bar{y}\not\in \Z\cdot\bar{x}$,
any choice $\upsigma(\bar{y})\in {\sf s}_{\uptheta}^{-1}(\bar{y})$ satisfies $(\upsigma(\bar{y}) + \Z)\cap (\upsigma(\bar{x}) + \Z)=\emptyset$.  Thus we can
consistently lift $\Z$ translates of elements of $\T$ to $\Z$ translates of elements in $\bast\Z$.  
In particular, the desired section $\upsigma$ is defined by selecting a representative $\bar{x}$ from each coset $\Z\cdot \bar{x}$, an element
$\upsigma(\bar{x})\in {\sf s}_{\uptheta}^{-1}(\bar{x})$ and extending by the action of $\Z$.\end{proof}

Since $\bast T:=\upsigma (\T)$ is the image of a section to ${\sf s}_{\uptheta}$, the function 
\begin{align}\label{prodstruc*Z}
\bast T\times\bast\Z (\uptheta )\longrightarrow \bast\Z,\quad (\bast t,\bast n)\mapsto \bast t+\bast n,\end{align} is a bijection.
(Indeed if $\bast t+\bast n =\bast t'+\bast n'$ then ${\sf s}_{\uptheta}(\bast t)={\sf s}_{\uptheta}(\bast t')$ implying $\bast t=\bast t'$ and therefore $\bast n=\bast n'$.) 
Give $\bast T$ the topology of $\T$, $\bast\Z (\uptheta )$ the discrete topology and $\bast\Z $ the product topology.
Call this topology $\uptau_{\uptheta,\upsigma}$.
Finally, give $\R\times (\bast\Z,\uptau_{\uptheta, \upsigma})$
the product topology, regarding it as a product foliation with model transversal $(\bast\Z,\uptau_{\uptheta, \upsigma})$, and give $\bbull\R$ the quotient topology by way of  (\ref{suspofbullR}).   We denote the latter topology also by $\uptau_{\uptheta,\upsigma}$.

\begin{prop}\label{folforbullR}  $(\bbull\R, \uptau_{\uptheta, \upsigma})$ has the structure of a (non second countable) foliation.  
\end{prop}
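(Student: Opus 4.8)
The plan is to verify directly that the data assembled just before the statement --- the bijection $\bast T\times\bast\Z(\theta)\to\bast\Z$, the topology $\tau_{\theta,\sigma}$ on $\bast\Z$, and the suspension presentation $\bbull\R=(\R\times\bast\Z)/\Z$ --- really do produce a foliated structure on $\bbull\R$ in the sense of a (possibly non second countable) foliation. So first I would recall that a foliation can be built by suspension from a space $F$ (the model transversal), a ``line'' factor, and an action of a group $\Gamma$ on $F$ by homeomorphisms which is free, properly discontinuous, and cocompact-enough to glue local product charts; here $\Gamma=\Z$, $F=(\bast\Z,\tau_{\theta,\sigma})$, and the line factor is $\R$. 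Thus the proposition reduces to three checks: (i) $(\bast\Z,\tau_{\theta,\sigma})$ is a (Hausdorff, locally ``circle-like'') topological space; (ii) the $\Z$-action by translation $\bast n\mapsto\bast n+n$ is by homeomorphisms and acts properly discontinuously on $\R\times(\bast\Z,\tau_{\theta,\sigma})$ via $n\cdot(r,\bast n)=(r-n,\bast n+n)$; and (iii) the resulting quotient carries the local product charts of a foliation, with leaves the images of $\R\times\{\bast n\}$.

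For (i), I would use the bijection (\ref{prodstruc*Z}) to transport the product topology $\tau_{\theta,\sigma}$, so $\bast\Z\cong\bast T\times\bast\Z(\theta)$ with $\bast T$ homeomorphic to $\T$ and $\bast\Z(\theta)$ discrete; this is a (non second countable, since $\bast\Z(\theta)$ has the cardinality of the continuum by Proposition \ref{cardcont}) manifold-like space, in particular Hausdorff and locally homeomorphic to $\T$. For (ii), the key point is that the $\Z$-action on $\bast\Z$ matches, under ${\sf s}_\theta$ and the section $\sigma$, the K3 $\Z$-action on $\T$ translated by $\theta$: since $\sigma$ is a $\Z$-set section (Lemma \ref{sectionofspseq}), translation by $n$ sends $\bast t+\bast n$ with $\bast t\in\bast T$ to $\sigma(n\cdot{\sf s}_\theta(\bast t))+(\text{element of }\bast\Z(\theta))$, which is continuous in $\tau_{\theta,\sigma}$ because the K3 action of $\Z$ on $\T$ is by homeomorphisms and $\bast\Z(\theta)$ is discrete and $\Z$-invariant. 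Properness and discontinuity of the diagonal action on $\R\times\bast\Z$ then follow from the fact that the $\R$-factor is shifted by $-n$, so the action is free and properly discontinuous already on the $\R$-coordinate, exactly as in the classical suspension construction; the quotient map $\R\times\bast\Z\to\bbull\R$ is therefore a covering onto its image and the local product neighborhoods $U\times V$ ($U\subset\R$ a small interval, $V\subset\bast\Z$ a small $\tau_{\theta,\sigma}$-neighborhood) descend to foliation charts.

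For (iii), I would observe that the plaques $U\times\{\bast n\}$ assemble into the leaves $\bbull r+\R$ (each with the topology of $\R$, as announced in the text), and that the transition maps between overlapping charts are of the form $(u,v)\mapsto(u+\text{const},\text{homeo of the transversal})$, hence preserve the horizontal $\R$-foliation; this is precisely the atlas of a foliation. One should also note, via construction K3 applied to $\T$ rather than $\bast\Z$, that quotienting $\R\times(\bast\Z,\tau_{\theta,\sigma})$ by $\Z$ and then (eventually) by $\bast\Z(\theta)$ recovers $\mathfrak{F}(\theta)$, which both confirms that $\tau_{\theta,\sigma}$ is ``enough'' structure and that the foliation so obtained is the intended one --- though for the present proposition only the foliated structure on $\bbull\R$ itself is asserted.

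The main obstacle I expect is not any single hard estimate but the bookkeeping in (ii): one must check that the $\tau_{\theta,\sigma}$-topology, which was defined using one particular section $\sigma$, is genuinely invariant under the $\Z$-translation, i.e.\ that translating by $n$ does not ``tilt'' the product decomposition $\bast T\times\bast\Z(\theta)$ in a way that destroys continuity. The resolution is that the section was chosen in Lemma \ref{sectionofspseq} to be a \emph{$\Z$-set} section with $\Z\subset\bast T$, so that $\bast T+n=\bast T$ set-theoretically only up to an element of $\bast\Z(\theta)$, and since $\bast\Z(\theta)$ is discrete this cocycle correction is continuous; tracking this carefully is the crux of the argument, and once it is in hand the rest is the standard verification that a suspension is a foliation.
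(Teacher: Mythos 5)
Your proposal follows the paper's approach and reaches the correct conclusion, but it overcomplicates the crux step (ii) in a way that would introduce a gap if the complication were real. The paper's proof is simply this: because $\sigma$ is a $\Z$-set section, $\bast T=\sigma(\T)$ is \emph{exactly} $\Z$-invariant --- for $\bast t=\sigma(\bar x)$ one has $\bast t+n=\sigma(\bar x)+n=\sigma(n\cdot\bar x)\in\bast T$ --- so translation by $n$ sends $(\bast t,\bast m)\in\bast T\times\bast\Z(\theta)$ to $(\bast t+n,\bast m)$, acting purely along the $\bast T$ factor, where (transported to $\T$ via $\sigma$) it is the K3 rotation $\bar x\mapsto\overline{x+n\theta}$ and hence a homeomorphism. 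You instead assert that ``$\bast T+n=\bast T$ set-theoretically only up to an element of $\bast\Z(\theta)$'' and try to dispose of a purported cocycle correction by appealing to discreteness of $\bast\Z(\theta)$. Two remarks. First, the correction term is identically zero; the worry is a red herring and the computation above shows why. Second, even if a nonzero correction did exist, discreteness of the target would not by itself make the cocycle continuous --- a map into a discrete space is continuous iff it is locally constant, and you offer no argument that it is --- so the hedge you inserted is, if anything, a weakness rather than a safeguard. Once the $\Z$-invariance of $\bast T$ is stated cleanly, the rest (proper discontinuity because $\Z$ already acts freely and properly discontinuously on the $\R$ coordinate; the suspension quotient inherits the product foliation charts) is the same as in the paper's proof. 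The additional verifications you carry out in (i) and (iii) --- Hausdorffness of the transversal and the explicit product atlas --- are harmless elaborations of material the paper leaves implicit.
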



\begin{proof}  The action of $n\in\Z$ on $\bast\Z$ by translation, $\bast m\mapsto \bast m+n$, defines a homeomorphism of $(\bast\Z,\uptau_{\uptheta})$.  Indeed,
 writing $\bast m = (\bast  t,\bast n)\in \bast T\times\bast\Z (\uptheta )$, we see that $\bast m +n = (\bast t+n,\bast n)$, so that 
translation by $n$ takes place entirely along the $\bast T$ factor, where it acts homeomorphically.  
 It follows that the diagonal action of $\Z$ used to define the suspension structure (\ref{suspofbullR}) 
is by homeomorphisms that preserve leaves and transversals, and is properly discontinuous.  Thus $\bbull\R$ acquires
the structure of a (non second countable) foliation.
\end{proof}

An isomorphism of Kronecker foliations $\mathfrak{F}(\uptheta )\rightarrow\mathfrak{F}(\upeta )$ -- which exists $\Leftrightarrow$ 
$\uptheta$ and $\upeta$ are equivalent reals -- will not induce an isomorphism 
of $(\bbull\R, \uptau_{\uptheta, \upsigma})\rightarrow (\bbull\R, \uptau_{\upeta, \upsigma'})$ for appropriate choices of $\upsigma,\upsigma'$, due the fact
 that the transversal $\T$ is transported.
This situation is remedied by considering transverse topologies obtained from an arbitrary compact subgroup of 
$\T^{2}$ isomorphic to $\T$.  Such a subgroup is the image in $\T$ of a line with rational slope $q\in\Q$, so we write $\T_{q}$;
note that $\T=\T_{\infty}$.
Consider the discrete group 
\[ \Z_{q}:=\{ r\in\R |\; \overline{(r,r\uptheta )}\in \T_{q}\}\cong\Z\] (where
$\overline{(x,y)}$ is the image of $(x,y)$ in $\T^{2}$).  Since $\Z_{q}$ is discrete, its ultrapower $\bast \Z_{q}\subset\bast\R$
survives the quotient by infinitesimality to define a subgroup of $\bbull\R$ and 
\[ \bbull\R=(\R\times \bast \Z_{q})/\Z_{q}.\]  Notice also that if $\bast r\in\bast \Z _{q}$ satisfies ${\rm std}(\bast r )=\bar{0}$ then $\bast r\simeq \bast n$ for some
$\bast n\in\bast\Z (\uptheta )$: that is, viewed in $\bbull\R$, $\bast r\in\bast\Z (\uptheta )$, so that
$\bast\Z_{q}$ is an extension of $\bast\Z (\uptheta )$.  The standard part map maps $\bast\Z _{q}$ epimorphically onto $\T_{q}$ with kernel 
$\bast \Z (\uptheta )$
and proceeding as above we may define a topology $\uptau_{\uptheta,\upsigma,q }$.  

Now if $\upeta =A(\uptheta )$ where 
$A=\left(\begin{array}{cc}
a & b \\
c & d
\end{array}\right)$ then the group automorphism $h_{A}:\bbull\R\rightarrow\bbull\R$, $\bbull r\mapsto (c\uptheta+d )\bbull r$, maps $\bast\Z (\uptheta )$
isomorphically onto $\bast\Z (\upeta )$: for all $\bast n\in \bast\Z (\uptheta )$, $(c\uptheta+d )\bast n = c\bast n^{\perp}+d\bast n\in\bast\Z$
and $\upeta(c\uptheta+d )\bast n=a\bast n^{\perp}+b\bast n\in \bast\Z$.  Then $h_{A}$
induces a leaf preserving bijection $\mathfrak{F}(\uptheta )\rightarrow \mathfrak{F}(\upeta )$.
The latter is induced by the linear map of $\C$, $A^{\ddag} := \left(\begin{array}{cc}
d & c \\
b & a
\end{array}\right)$ which maps $\T$ onto $\T_{q}$ for $q=a/c$.  It follows that there exists a section $\upsigma'$ so that
the map $h_{A}:(\bbull\R, \uptau_{\uptheta,\upsigma,\infty})\rightarrow (\bbull\R, \uptau_{\upeta,\upsigma',a/c})$ is a homeomorphism.
We summarize these observations by saying that for each $\uptheta$ there is a canonical {\it topology set} 
\[ {\rm Top}(\uptheta ) = 
\{\uptau_{\uptheta,\upsigma,q}\},\] and any equivalence $A(\uptheta )=\upeta$ induces a bijection 
$A:{\rm Top}(\uptheta )\rightarrow {\rm Top}(\upeta )$ of homeomorphic topologies.  We refer to any element of
${\rm Top}(\uptheta )$ as a {\bf {\small section topology}}.


In what follows we denote $\hat{\R}_{q} = (\bbull\R, \R ,\bast\Z_{q})$, and when $q=\infty$ we write simply $\hat{\R}$.

\begin{theo}\label{kron}  The group $\bast\Z(\uptheta)\subset(\bbull\R,\uptau_{\uptheta,\upsigma,q})$
acts properly discontinuously by homeomorphisms which permute isometrically the leaves of $\bbull\R$ and preserve the distinguished transversal $\bast\Z_{q}$. 
The quotient $\hat{\R}_{q}/\bast\Z(\uptheta)$
is a foliation with distinguished transversal canonically isomorphic to $(\mathfrak{F}(\uptheta ),\T_{q})=(\T^{2}, L(\uptheta), \T_{q})$.
\end{theo}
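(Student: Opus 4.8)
The plan is to run the proof of Proposition~\ref{folforbullR} with $\T_q$ in place of $\T$, using that every map in sight is a translation of an abelian group, and then to recognise the quotient as construction~K\ref{sus} performed with the transverse circle $\T_q$.

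First I would record the suspension presentation $\bbull\R=(\R\times\bast\Z_q)/\Z_q$, with $m\cdot(s,\bast r)=(s-m,\bast r+m)$ and quotient map $(s,\bast r)\mapsto s+\bast r$, together with the product decomposition $\bast\Z_q\cong\sigma(\T_q)\times\bast\Z(\theta)$ --- the bijection $(\bast t,\bast n)\mapsto\bast t+\bast n$ obtained, as in Lemma~\ref{sectionofspseq} and~(\ref{prodstruc*Z}), from a basepoint-preserving $\Z_q$-set section $\sigma$ of the standard-part epimorphism $\bast\Z_q\twoheadrightarrow\T_q$ with kernel $\bast\Z(\theta)$, where $\sigma$ may be chosen with $\Z_q\subset\sigma(\T_q)$; recall that this is exactly the data defining $\tau_{\theta,\sigma,q}$. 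In these coordinates, translation by $\bast m\in\bast\Z(\theta)$ sends $(s;\bast t,\bast n)\mapsto(s;\bast t,\bast n+\bast m)$ (using that $\bast\Z(\theta)$ is a group and the decomposition is unique): it is the identity on the leaf coordinate $s\in\R$ and on the $\sigma(\T_q)$-factor, and a translation inside the discrete $\bast\Z(\theta)$-factor. Hence it is a homeomorphism of $(\bast\Z_q,\tau_{\theta,\sigma,q})$, it commutes with the diagonal $\Z_q$-action (which moves $s$ and acts inside the $\sigma(\T_q)$-factor, leaving the $\bast\Z(\theta)$-coordinate alone), and so descends to a homeomorphism of $(\bbull\R,\tau_{\theta,\sigma,q})$. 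It permutes the leaves $\bbull r+\R$ and, fixing the $\R$-coordinate, acts isometrically on each; and since $\bast\Z(\theta)<\bast\Z_q$ is a subgroup it carries the distinguished transversal $\bast\Z_q\subset\bbull\R$ to itself. Properness and freeness follow because the $\bast\Z(\theta)$-coordinate is a genuine, $\Z_q$-invariant coordinate on $\bbull\R$: a short leaf-interval times a $\T_q$-neighbourhood times a singleton in that discrete coordinate is a neighbourhood moved off itself by every nonzero $\bast m$.

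Given this, $\widehat{\R}_q/\bast\Z(\theta)$ is a foliation by the argument of Proposition~\ref{folforbullR}, and since the $\bast\Z(\theta)$-action commutes with the suspending $\Z_q$-action I would compute
\[
\widehat{\R}_q/\bast\Z(\theta)\;=\;\bigl(\R\times(\bast\Z_q/\bast\Z(\theta))\bigr)\big/\Z_q\;\cong\;(\R\times\T_q)/\Z_q ,
\]
the last step being the standard-part identification $\bast\Z_q/\bast\Z(\theta)\cong\T_q$, which is $\Z_q$-equivariant: the residual action of $m\in\Z_q$ is translation of $\T_q$ by the point $\overline{(m,m\theta)}\in\T_q\subset\T^2$. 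It then remains to identify $(\R\times\T_q)/\Z_q$, with its $\R$-leaves and transversal $\T_q$, with $(\T^2,L(\theta),\T_q)$; this is precisely construction~K\ref{sus} with the transverse circle $\T_q$ --- the image of a line of slope $q$ --- in place of $\T=\T_\infty$. Concretely: the generator $r_0$ of $\Z_q\cong\Z$ is the least positive real with $\overline{(r_0,r_0\theta)}\in\T_q$, it is the first-return time of the flow along $L(\theta)$ to $\T_q$, translation by $\overline{(r_0,r_0\theta)}$ is the corresponding first-return (holonomy) map, and the mapping torus of that map, foliated in the flow direction, is $\mathfrak{F}(\theta)$; one writes down the linear isomorphism of $\R^2$ carrying the $s$-axis to the line of slope $\theta$ and the $\Z_q$-translates of the line of slope $q$ to $\Z^2$, and checks that it descends to the asserted isomorphism of foliations-with-distinguished-transversal and is compatible with the section topology --- on the quotient the discrete $\bast\Z(\theta)$-factor is collapsed and the surviving $\sigma(\T_q)\cong\T_q$-factor carries its usual circle topology. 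The case $q=\infty$ reproduces~(\ref{suspofbullR}) and~K\ref{sus} verbatim.

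The main obstacle is the bookkeeping in this last identification: pinning down the generator $r_0$ of $\Z_q$ and the holonomy point $\overline{(r_0,r_0\theta)}$, verifying that the residual $\Z_q$-action on $\bast\Z_q/\bast\Z(\theta)$ is indeed translation by it and agrees with the first-return map of $L(\theta)$ on $\T_q$, and confirming that the quotient of $\tau_{\theta,\sigma,q}$ matches the manifold topology of $\T^2$ transverse to $L(\theta)$. Everything else is formal once one observes that all the relevant maps are translations in abelian groups.
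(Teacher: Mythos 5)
Your proof is correct, but it takes a genuinely different route from the paper's. The paper proves Theorem~\ref{kron} in one stroke by defining a standard-part epimorphism ${\sf std}_{\theta}:\bbull\R\to\T^{2}$ --- map a representative sequence $\{r_i\}$ to $\{(r_i,\theta r_i)\}$, project to $\T^2$, and take the $\mathfrak{u}$-recognised limit point --- then reads off that it has kernel $\bast\Z(\theta)$, sends $\R$ to $L(\theta)$ and $\bast\Z_q$ to $\T_q$, and so descends to the desired isomorphism. The properly-discontinuous-by-homeomorphisms half of the statement is left implicit (it follows from the machinery of Lemma~\ref{sectionofspseq} and Proposition~\ref{folforbullR}, which you essentially reprove). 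You instead work entirely inside the suspension presentation $\bbull\R=(\R\times\bast\Z_q)/\Z_q$ and the product decomposition $\bast\Z_q\cong\sigma(\T_q)\times\bast\Z(\theta)$, verifying explicitly that $\bast\Z(\theta)$ acts as translation in the discrete factor, commutes with the suspending $\Z_q$-action, and hence acts properly discontinuously on the quotient; you then compute $\widehat{\R}_q/\bast\Z(\theta)\cong(\R\times\T_q)/\Z_q$ and invoke construction K3 (generalised to the transversal $\T_q$) to recognise $\mathfrak{F}(\theta)$. Your route is more explicit about the group-action half of the theorem, which the paper glosses, while the paper's route is cleaner for the identification half: ${\sf std}_\theta$ gives the isomorphism in one map without the $\Z_q$-equivariance bookkeeping you rightly flag as the fiddly part. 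The two arguments are complementary; both rest on the same underlying section-topology data $\tau_{\theta,\sigma,q}$.
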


\begin{proof}  Define an epimorphism ${\sf std}_{\uptheta}:\bbull\R \rightarrow\T^{2}$ as follows: for $\bbull r\in\bbull\R$, choose
a representative sequence $\{ r_{i}\}$, map it to the line of slope $\uptheta$,  $\{(r_{i},\uptheta r_{i})\}$, then
project the latter to $\T^{2}$.  Since $\T^{2}$ is compact, the image of $\{(r_{i},\uptheta r_{i})\}$ will have limit
points and the ultrafilter $\mathfrak{u}$ will recognize exactly one of them: call it ${\sf std}_{\uptheta} (\bbull r)$.
Note that ${\sf std}_{\uptheta} (\bbull r)$ does not depend on the choice of $\{ r_{i}\}$ since a sequence which
is infinitesimal to it will produce the same limit points.  We have that ${\sf std}_{\uptheta}(\R)=L(\uptheta )$, and that in general, the leaf $\bbull r+\R$ is mapped to the leaf of $\mathfrak{F}(\uptheta )$
through ${\sf std}_{\uptheta} (\bbull r)$.  Moreover,  ${\sf std}_{\uptheta}(\bast\Z_{q})=\T_{q}$.  
The kernel of ${\sf std}_{\uptheta}$ is $\bast\Z(\uptheta )$ and ${\sf std}_{\uptheta}$ induces a topological isomorphism 
$(\bbull\R_{q}/\bast\Z(\uptheta ), R)\cong(\T^{2}, L(\uptheta ))$ where $R$ is the image of $\R$ in $\bbull\R/\bast\Z(\uptheta )$. 
\end{proof}

 We will generally drop the reference to the section topology $\uptau_{\uptheta,\upsigma,q}$ and write simply $\bbull\R$ if no confusion occurs.
We regard the triple \[ \hat{\R}_{q} = (\bbull\R, \R,\bast\Z_{q} )\]
as determining the foliation of $\bbull\R$ (the cosets of the subgroup $\R$) along with the distinguished transversal $\bast\Z_{q}$.  
The quotient $\hat{\R}/\bast\Z (\uptheta )$ may be viewed as the residue class group of ``$\uptheta$-roots of 1'', in which the action of $\R^{\times}_{+}$ provided by flow along the leaves is akin to a real Frobenius map.  Note that the return maps of this flow to the transversal 
 $\T$ define the irrational rotation by $\uptheta$.  This will be pursued in greater detail in \cite{Ge6}.

 We briefly discuss the extremal situation. Recall that for each integer
$n$, $\bast\Z (\uptheta/n )\subset\bast\Z (\uptheta )$.  This inclusion induces a covering
of Kronecker foliations \[ \mathcal{F}(\uptheta/n )\longrightarrow\mathcal{F}(\uptheta )\] (a covering $\T^{2}\rightarrow \T^{2}$ mapping
$L(\uptheta/n)$ to $L(\uptheta)$), defined $(x,y)\mapsto (x, ny )$.  This covering corresponds to opening the transversal
$\T$ by a factor of $n$.
Denote \[ \mathfrak{F}(\hat{\uptheta} )= \underset{\longleftarrow}{\lim} \mathfrak{F}(\uptheta/n),\]  
a solenoid isomorphic to
  $\T\times \hat{\T}$.   The solenoid leaves of $ \mathfrak{F}(\hat{\uptheta} )$ are isomorphic
  to infinite cylinders each of which is equipped with a linear foliation. 
We may also consider the direct limit \[ \mathfrak{F}(\check{\uptheta} ):= \underset{\longrightarrow}{\lim} \mathfrak{F}(n\uptheta )\]
defined by the maps $(x,y)\mapsto (x, y/n )$, 
 a non-Hausdorff group isomorphic
 to $\T\times  \check{\T}$, ``foliated'' by the cosets of the direct limit group 
$ L(\check{\uptheta}):=\underset{\longrightarrow}{\lim}\; L(n\uptheta )$.  We have the following analogue
 of Theorem \ref{kron}:
 
 \begin{theo}\label{uniformize}  There exist topologies on $\bbull\R$ so that 
 \[ \bbull\hat{\R}/\bast\Z (\hat{\uptheta}) \cong (\mathfrak{F}(\hat{\uptheta} ), \hat{\T}), \quad
 \bbull	\hat{\R}/\bast\Z (\check{\uptheta})
  \cong ( \mathfrak{F}(\check{\uptheta} ) , \check{\T}) .\]
 \end{theo}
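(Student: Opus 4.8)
The plan is to mimic the proof of Theorem~\ref{kron} but now for the inverse and direct limit systems, exploiting the fact that the constructions of $\mathfrak{F}(\hat\theta)$ and $\mathfrak{F}(\check\theta)$ are themselves limits of Kronecker foliations $\mathfrak{F}(\theta/n)$ resp.\ $\mathfrak{F}(n\theta)$, and that by Theorem~\ref{kron} each of these is uniformized by $(\bbull\R,\tau_{\theta/n,\sigma_n,q})$ modulo $\bast\Z(\theta/n)$ resp.\ by $(\bbull\R,\tau_{n\theta,\sigma'_n,q})$ modulo $\bast\Z(n\theta)$. The chain of inclusions $\bast\Z(\theta/n)\subset\bast\Z(\theta)\subset\bast\Z(n\theta)$ and the covering maps $\mathfrak{F}(\theta/n)\to\mathfrak{F}(\theta)$ (given by $(x,y)\mapsto(x,ny)$) are already in hand, so the two cases are genuinely dual and I would treat the solenoid case in detail and indicate the direct-limit case by reversing arrows.

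First I would fix, for the inverse-limit case, a compatible family of section topologies: since the transversal $\T$ for $\mathfrak{F}(\theta/n)$ opens by a factor of $n$ under the covering $\mathfrak{F}(\theta/n)\to\mathfrak{F}(\theta)$, I would choose the sections $\sigma_n:\T\hookrightarrow\bast\Z$ of Lemma~\ref{sectionofspseq} (applied to $\theta/n$) so that the resulting images $\bast T_n$ are nested compatibly with the maps $\bast\Z\to\bast\Z$ implementing the inclusions $\bast\Z(\theta/n)\hookrightarrow\bast\Z(\theta/(n+1))\cdots$ — concretely, lift coset representatives once at the bottom of the tower and push them up. This yields a topology on $\bbull\R$ (call it $\hat\tau_{\theta}$, the inverse limit of the $\tau_{\theta/n,\sigma_n,q}$ in the appropriate sense) with respect to which the action of each $\bast\Z(\theta/n)$, hence of the intersection $\bast\Z(\hat\theta)=\bigcap_q\bast\Z(q\theta)$, is properly discontinuous by leaf-isometric homeomorphisms preserving $\bast\Z_q$, exactly as in Theorem~\ref{kron}. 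Then I would assemble the maps ${\sf std}_{\theta/n}:\bbull\R\to\T^2$ into a single map ${\sf std}_{\hat\theta}:\bbull\R\to\varprojlim\mathfrak{F}(\theta/n)=\mathfrak{F}(\hat\theta)$ and verify, using Corollary~\ref{resclassgroupsoldio} together with Theorem~\ref{kron} at each finite level, that its kernel is precisely $\bast\Z(\hat\theta)$ and that it carries $\R$ onto the foliated structure and $\bast\Z_q$ onto $\hat\T$ (for $q=\infty$, $\hat\T$). Since inverse limits of topological quotients behave well here — each level being an honest foliation quotient by Theorem~\ref{kron} — the induced map $\bbull\widehat\R/\bast\Z(\hat\theta)\to(\mathfrak{F}(\hat\theta),\hat\T)$ is a topological isomorphism. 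For the dionelos case I would run the dual argument: choose sections for $n\theta$ compatibly with the surjections coming from $(x,y)\mapsto(x,y/n)$, form the direct limit topology on $\bbull\R$, note $\bast\Z(\check\theta)=\bigcup_q\bast\Z(q\theta)$ acts properly discontinuously (one only needs the action of each $\bast\Z(n\theta)$ and that any compact piece sees only finitely many translates), and identify the quotient with $\varinjlim\mathfrak{F}(n\theta)=(\mathfrak{F}(\check\theta),\check\T)$ via the colimit of the ${\sf std}_{n\theta}$, again invoking Corollary~\ref{resclassgroupsoldio}.

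The main obstacle I anticipate is \emph{topological coherence of the limit sections}: Lemma~\ref{sectionofspseq} produces a section $\sigma$ by an arbitrary choice of coset representatives, so the section topologies $\tau_{\theta/n,\sigma_n,q}$ at different levels need not be compatible a priori, and in the direct-limit (dionelos) case the target $\check\T=\R/\Q$ is non-Hausdorff, so ``topological isomorphism'' must be interpreted in the quotient/noncommutative-space sense and properly discontinuity of $\bast\Z(\check\theta)$ must be argued carefully rather than quoted. I would handle the first point by making a single coherent choice: pick representatives for $\T$ once, and for the tower use the divisibility structure (each $\bast T_{n}$ sitting inside $\bast T_{n+1}$ via multiplication) so the bijections $\bast T_n\times\bast\Z(\theta/n)\to\bast\Z$ of \eqref{prodstruc*Z} are mutually compatible; this is the only place real care is needed, and once it is in place the rest is a routine transcription of the proof of Theorem~\ref{kron} through the (co)limit. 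For the non-Hausdorff case I would simply record that all statements are to be read at the level of the defining diagrams (equivalently, of the associated groupoids), so that ``$\cong$'' means isomorphism of the presented quotients, which is what the colimit of Theorem~\ref{kron}'s isomorphisms delivers.
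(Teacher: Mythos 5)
Your proposal is correct in outline but takes a genuinely longer route than the paper, and the detour is where the real work hides. The paper's proof is one sentence: replace the $\Z$-set section $\sigma:\T\hookrightarrow\bast\Z$ of Lemma~\ref{sectionofspseq} by $\Z$-set sections $\hat\sigma:\hat\T\hookrightarrow\bast\Z$ and $\check\sigma:\check\T\hookrightarrow\bast\Z$ of the epimorphisms $\bast\Z\twoheadrightarrow\hat\T$ and $\bast\Z\twoheadrightarrow\check\T$ coming from Corollary~\ref{resclassgroupsoldio} (and the standard-part map remark that follows it), then run Theorem~\ref{kron} verbatim with the product decomposition $\bast\Z\cong\hat\sigma(\hat\T)\times\bast\Z(\hat\theta)$ in place of $\bast T\times\bast\Z(\theta)$, and with ${\sf std}_{\hat\theta}:\bbull\R\to\mathfrak{F}(\hat\theta)$ in place of ${\sf std}_\theta$. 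You instead build the topology as a limit of the finite-stage topologies $\tau_{\theta/n,\sigma_n,q}$. That route has two soft spots you partly anticipate but underestimate. First, making the sections $\sigma_n$ cohere so that the bijections $\bast T_n\times\bast\Z(\theta/n)\cong\bast\Z$ are mutually compatible and limit to the $\hat\T$-decomposition is not a bookkeeping step: the $\sigma_n$ are arbitrary set-theoretic lifts, and producing a coherent tower of them is essentially equivalent to producing a section of $\bast\Z\to\hat\T$ directly, so the detour loops back to the paper's construction. Second, the clause ``inverse limits of topological quotients behave well here'' conceals a genuine point: for a descending tower of subgroups $H_n<G$ with intersection $H$, the natural map $G/H\to\varprojlim G/H_n$ is injective but need not be onto; surjectivity of $\bbull\R/\bast\Z(\hat\theta)\to\varprojlim\bbull\R/\bast\Z(\theta/n)$ requires an argument (the paper gets the $\bast\Z$ version from compactness of $\hat\T$ and the standard part map), and invoking Corollary~\ref{resclassgroupsoldio} does not by itself do this at the level of $\bbull\R$. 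With those two points actually discharged your argument would close, and the dionelos half dualizes as you say (with the quotient read as a presented non-Hausdorff object, as the paper also implicitly does), but sectioning the limit projection directly, as the paper does, eliminates both problems in one stroke.
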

 
 \begin{proof}  Essentially the same argument as that appearing in Theorem \ref{kron}, with the following alteration: the topology on $\bast\Z$ is defined using $\Z$-set sections $\hat{\upsigma},\check{\upsigma}$ to the projections $\bast\Z\rightarrow\hat{\T}^{1},  \check{\T}^{1}$.
 \end{proof}
 

\begin{note}[Model Theoretic Interlude] From the point of view of model theory, structures allied to
the dionelos Kronecker foliation  $ \mathfrak{F}(\check{\uptheta} )$ are conjecturally better behaved at the level of ranking  than  
counterparts related to $ \mathfrak{F}(\uptheta )$. Consider the {\it green field} \cite{Zi1} 
\[ (\C, +,\times, G_{\uptau} (\Q ) )\] where
 the predicate $G_{\uptau} (\Q )$ is defined
 $G_{\uptau} (\Q ) = \{  \exp (\uptau t + q) |\; t\in\R , \; q\in\Q  \} < \C^{\ast}$ for $\uptau = 1+i. $
Then
\[   \C^{\ast}/G_{\uptau} (\Q ) \cong \T(2\check{\uppi}):=  \check{\T}/\langle
2\overline{\uppi}\rangle,\] 
the dionelos quantum torus associated to $2\uppi$ i.e.\ the leaf space of  $\mathfrak{F}(2\check{\uppi} )$ \cite{BaGe}. In \cite{Zi1}, \cite{CaZi} it has been shown --  assuming the Schanuel conjecture -- that the green field is $\upomega$-stable, which allows one to
rank definable sets and obtain in theory a notion of algebraic geometry based upon it \cite{Zi3}.  By contrast, the quantum torus 
$ \T_{2\uppi }$ (the leaf space of $\mathfrak{F}(2\uppi )$) 
corresponds to the {\it emerald field} $(\C, +,\times, G_{\uptau} (\Z ) )$, which satisfies the weaker notion of super stability \cite{Zi2} and permits only the ranking of types (ideals).   See the Appendix for a discussion of how diophantine approximation groups
may be viewed as types.
\end{note}

In \S \ref{daringint} we will need to consider generalizations of Kronecker foliations  
associated to the groups 
$ \bast\Z^{1,1}(\uptheta )$. 
Consider the quotient \[ \bbull\R^{2}/\bast\Z^{1,1}(\uptheta ).\] 
If we denote 
\[ \bbull L(\uptheta)   = \{  ( \bbull r, \uptheta\bbull r)|\; \bbull r\in \bbull\R \},\] 
the extended line of slope $\uptheta$,  then $\bast\Z^{1,1}(\uptheta )\subset  \bbull L(\uptheta)$ 
since $(\bast n,\uptheta \bast n)= (\bast n, \bast n^{\perp})$ in $\bbull\R^{2}$.  Then the bijection \[ \bbull\R\longleftrightarrow  \bbull L(\uptheta),\quad
\bbull r\mapsto (\bbull r, \uptheta\bbull r)\] takes $\bast\Z (\uptheta )$ to $\bast\Z^{1,1}(\uptheta )$ and any section topology $\uptau\in{\rm Top}(\uptheta)$
on $\bbull\R$ 
induces one
on $\bbull L(\uptheta)$ giving
\[ \mathfrak{F}(\uptheta )\cong\bbull L(\uptheta)/\bast\Z^{1,1}(\uptheta ).\]   The line
$\bbull L(\uptheta^{-1})= \{  (  \bbull r, \uptheta^{-1}\bbull r)|\; \bbull r\in \bbull\R \}$ is similarly topologized by a section topology $\uptau'\in {\rm Top}(\uptheta^{-1})$, which may be
chosen to correspond to $\uptau$ by a projective
linear equivalence
$A(\uptheta)=\uptheta^{-1}$ (unique if $\uptheta$ is not a quadratic irrationality).
Since $\bbull\R^{2}= \bbull L(\uptheta)\oplus \bbull L(\uptheta^{-1})$ we give it the product topology.  Note that we have
\[ \bbull\R^{2}/\bast\Z^{1,1}(\uptheta )  \cong   \mathfrak{F}(\uptheta ) \oplus\bbull L(\uptheta^{-1})\]
which we may view as a linear foliation by 2-planes with complete transversals homeomorphic to 
$\bast\Z\times\T$.
Denote this foliation $\mathfrak{F}^{1,1}(\uptheta )$, the {\bf {\small extended Kronecker foliation}} of slope $\uptheta$.  



\section{Diophantine Approximation Groups of Real Matrices}\label{higher}

Consider an $r\times s$ matrix $\Uptheta = (\uptheta_{ij})$ with entries in $\R$.  
Then 
the associated group of vector diophantine approximations
\[  \bast\Z^{s}(\Uptheta) = \{ \bast {\boldsymbol n}\in \bast\Z^{s} |\; \exists  \bast {\boldsymbol n}^{\perp}\text{ s.t. }  
\Uptheta\bast {\boldsymbol n}-\bast {\boldsymbol n}^{\perp}=\upvarepsilon (\Uptheta )(\bast {\boldsymbol n})\ \in\bast\R^{r}_{\upvarepsilon} \}\] is called the {\bf {\small inhomogeneous diophantine approximation group}} of $\Uptheta$.
The dual group $\bast\Z^{r}(\Uptheta)^{\perp}$ and the error group
$\bast\R^{r}_{\upvarepsilon}(\Uptheta )$ are defined as in \S \ref{ratapprox}.  If
$r=s$ and
$\Uptheta$ is invertible then $\bast\Z^{r}(\Uptheta)^{\perp}=\bast\Z^{r}(\Uptheta^{-1})$ is isomorphic
to $\bast\Z^{s}(\Uptheta)$.  

In this setting, there is an important subgroup, the {\bf {\small homogeneous diophantine approximation group}}, defined by
\[  \bast\widetilde{\Z}^{s}(\Uptheta) = {\rm Ker}(\perp ) = \{  \bast {\boldsymbol n}\in \bast\Z^{s}(\Uptheta) |\;  \Uptheta \bast{\boldsymbol n}
\in\bast\R^{r}_{\upvarepsilon} \} < \bast\Z^{s}(\Uptheta).\]
We denote by $\bast\widetilde{\R}^{r}_{\upvarepsilon}(\Uptheta )= \big\{\Uptheta \bast{\boldsymbol n} |\; \bast {\boldsymbol n}\in \bast\widetilde{\Z}^{s}(\Uptheta) \big\}$ the corresponding group of homogeneous errors and by $\tilde{\upvarepsilon} (\Uptheta )$ the homogeneous error map.

\begin{theo}\label{homogeneouslinindep}  The columns of $\Uptheta$ are linearly independent over $\Q$ $\Leftrightarrow$ the homogeneous error map
$ \tilde{\upvarepsilon} (\Uptheta ): \bast\widetilde{\Z}^{s}(\Uptheta) \rightarrow \bast\widetilde{\R}^{r}_{\upvarepsilon}(\Uptheta )  $
is an isomorphism $\Leftrightarrow$ $\bast\widetilde{\Z}^{s}(\Uptheta )$ contains no $\bast\Z^{s}$-ideals.
\end{theo}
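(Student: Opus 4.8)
The plan is to prove the two biconditionals by showing $\tilde\upvarepsilon(\Theta)$ fails to be injective precisely when the columns are $\Q$-dependent, and separately that $\bast\widetilde\Z^s(\Theta)$ contains a nonzero $\bast\Z^s$-ideal precisely under the same condition. Since $\tilde\upvarepsilon(\Theta)$ is by construction a surjective homomorphism onto $\bast\widetilde\R^r_\upvarepsilon(\Theta)$, the first equivalence reduces to computing its kernel: $\bast{\boldsymbol n}\in\ker\tilde\upvarepsilon(\Theta)$ means $\Theta\bast{\boldsymbol n}=0$ exactly, equivalently $\Theta\bast{\boldsymbol n}\in\bast\R^r_\upvarepsilon$ and $\Theta\bast{\boldsymbol n}$ is itself infinitesimal, i.e. passing to $\bbull\R^r$ we need $\Theta\bast{\boldsymbol n}=0$ in $\bbull\R^r$ with $\bast{\boldsymbol n}\in\bast\Z^s$. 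So the first job is: \emph{there exists a nonzero $\bast{\boldsymbol n}\in\bast\Z^s$ with $\Theta\bast{\boldsymbol n}=0$ in $\bbull\R^r$ if and only if the columns $\theta_{\cdot 1},\dots,\theta_{\cdot s}$ of $\Theta$ are $\Q$-linearly dependent.}

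The easy direction is ($\Leftarrow$): if $\sum_j c_j\theta_{\cdot j}=0$ with $c_j\in\Z$ not all zero, then the constant sequence $\bast{\boldsymbol n}=(c_1,\dots,c_s)\in\Z^s\subset\bast\Z^s$ is a nonzero element of $\ker\tilde\upvarepsilon(\Theta)$, and moreover $\Z\cdot\bast{\boldsymbol n}$ — better, the $\bast\Z$-ideal $\bast\Z\cdot\bast{\boldsymbol n}$ generated by it — lies in $\bast\widetilde\Z^s(\Theta)$ since $\Theta(\bast m\,\bast{\boldsymbol n})=\bast m(\Theta\bast{\boldsymbol n})=0$. This simultaneously handles the ``only if'' halves of both stated equivalences. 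For the converse ($\Rightarrow$), suppose the columns are $\Q$-independent; I must show $\ker\tilde\upvarepsilon(\Theta)=0$ and that $\bast\widetilde\Z^s(\Theta)$ contains no nonzero $\bast\Z^s$-ideal. For injectivity: if $\Theta\bast{\boldsymbol n}\in\bast\R^r_\upvarepsilon$ with $\bast{\boldsymbol n}\ne 0$, pick a representative $\{\boldsymbol n_i\}$; then $\Theta\boldsymbol n_i\to 0$ along some $X\in\mathfrak u$. The key number-theoretic input is that $\Q$-independence of the columns implies the subgroup $\{\Theta\boldsymbol m:\boldsymbol m\in\Z^s\}$ projects to a \emph{dense}, in fact injectively immersed, subgroup of $\T^r$ — more precisely, for a single row the map $\boldsymbol m\mapsto \boldsymbol\theta\cdot\boldsymbol m$ has image in $\R$ that is a free abelian group of rank $s$, so the only way $\Theta\boldsymbol n_i\to 0$ in $\R^r$ (not mod $\Z$) is $\Theta\boldsymbol n_i=0$ eventually, forcing $\boldsymbol n_i=0$ on $X$ by $\Q$-independence, hence $\bast{\boldsymbol n}=0$. (Here one uses that the error term is infinitesimal in $\bast\R^r$, i.e. genuinely $\to 0$, not merely $\to 0$ mod lattice.) Thus $\tilde\upvarepsilon(\Theta)$ is injective.

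For the third clause, suppose $\bast\widetilde\Z^s(\Theta)\supseteq I$ for a nonzero $\bast\Z^s$-ideal $I=\bast\Z\cdot\bast{\boldsymbol n}$ with $\bast{\boldsymbol n}\ne 0$. Then $\Theta(\bast m\,\bast{\boldsymbol n})\in\bast\R^r_\upvarepsilon$ for \emph{every} $\bast m\in\bast\Z$. But by the previous paragraph $\Theta\bast{\boldsymbol n}\in\bast\R^r_\upvarepsilon$ already forces $\bast{\boldsymbol n}=0$ when the columns are $\Q$-independent — contradiction; so no such ideal exists. This gives ($\Q$-independent $\Rightarrow$ no ideals), completing the cycle of implications (injective $\Rightarrow$ [trivially, via the contrapositive just shown] no ideals $\Rightarrow$ [via the easy direction's contrapositive] $\Q$-independent columns $\Rightarrow$ injective). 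The main obstacle is the converse direction of the injectivity claim: one must be careful that the defining condition of $\bast\widetilde\Z^s$ is $\Theta\bast{\boldsymbol n}\in\bast\R^r_\upvarepsilon$ — genuine infinitesimality in $\bast\R$ — so that the standard parts of $\Theta\boldsymbol n_i$ converge to $0$ in $\R^r$ and there is no room for a lattice ambiguity; with $\Q$-independence this pins $\Theta\boldsymbol n_i$ to $0$ along $\mathfrak u$. I would also take care to note that $\bast\R^r_\upvarepsilon$ being an $\bast\R_{\rm fin}$-module (hence stable under multiplication by the rational/integer scalars appearing when one clears denominators in a $\Q$-dependence) is what makes the $\Z$ versus $\Q$ distinction harmless in the $\Leftarrow$ direction.
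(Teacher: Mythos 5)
Your injectivity argument rests on a false claim, and the source of the error is a conflation of two distinct conditions. You write ``if $\Theta\bast{\boldsymbol n}\in\bast\R^{r}_{\upvarepsilon}$ with $\bast{\boldsymbol n}\neq 0$\dots\ then $\Theta\boldsymbol n_{i}\to 0$'' and then assert that $\Q$-independence forces $\Theta\boldsymbol n_{i}=0$ eventually, on the grounds that $\{\boldsymbol\theta\cdot\boldsymbol m : \boldsymbol m\in\Z^{s}\}$ is free abelian of rank $s$. But a free abelian subgroup of $\R$ of rank $s\geq 2$ is \emph{dense}, not discrete: for $\Theta=(1,\sqrt{2})$ there are integer pairs $(p_{i},q_{i})$ with $p_{i}+q_{i}\sqrt{2}\to 0$ and $q_{i}\to\infty$, so $\bast\widetilde{\Z}^{s}(\Theta)$ is generically large even when the columns are $\Q$-independent. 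What must vanish is not $\bast\widetilde{\Z}^{s}(\Theta)$ itself but ${\rm Ker}(\tilde{\upvarepsilon}(\Theta))$, whose defining condition is $\Theta\bast{\boldsymbol n}=0$ \emph{in $\bast\R^{r}$}, not merely $\Theta\bast{\boldsymbol n}\in\bast\R^{r}_{\upvarepsilon}$. With the correct condition the computation is immediate and needs no density input: $\bast{\boldsymbol n}\in{\rm Ker}(\tilde{\upvarepsilon}(\Theta))$ iff some representative $\{\boldsymbol n_{i}\}$ satisfies $\Theta\boldsymbol n_{i}=\boldsymbol 0$ for $\mathfrak{u}$-many $i$, and a nonzero such $\boldsymbol n_{i}$ is precisely a $\Z$-linear (hence $\Q$-linear) relation among the columns.

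The same conflation derails your treatment of the third clause. You write that $\Theta\bast{\boldsymbol n}\in\bast\R^{r}_{\upvarepsilon}$ ``already forces $\bast{\boldsymbol n}=0$ when the columns are $\Q$-independent'' --- that would say $\bast\widetilde{\Z}^{s}(\Theta)=0$, which is false as above, and notice your argument never actually exploits the ideal structure. The mechanism that is needed is: ${\rm Ker}(\tilde{\upvarepsilon}(\Theta))$ is a $\bast\Z^{s}$-ideal (clear, since $\Theta(\bast m\bast{\boldsymbol n})=\bast m\Theta\bast{\boldsymbol n}=0$ when $\Theta\bast{\boldsymbol n}=0$), and conversely any $\bast\Z^{s}$-ideal $I\subset\bast\widetilde{\Z}^{s}(\Theta)$ lies in ${\rm Ker}(\tilde{\upvarepsilon}(\Theta))$. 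For the latter, if $\bast{\boldsymbol n}\in I$ had $\Theta\bast{\boldsymbol n}$ a \emph{nonzero} infinitesimal, represent it by $\{\boldsymbol\epsilon_{i}\}$ with $\boldsymbol\epsilon_{i}\neq\boldsymbol 0$ on an ultrafilter set and choose $m_{i}\in\Z$ with $|m_{i}|>\|\boldsymbol\epsilon_{i}\|^{-1}$; then $\bast m\bast{\boldsymbol n}\in I$ but $\Theta(\bast m\bast{\boldsymbol n})=\bast m\Theta\bast{\boldsymbol n}\notin\bast\R^{r}_{\upvarepsilon}$, contradicting $I\subset\bast\widetilde{\Z}^{s}(\Theta)$. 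This use of closure under multiplication by arbitrarily large nonstandard integers --- pushing a nonzero infinitesimal out of $\bast\R^{r}_{\upvarepsilon}$ --- is the idea your proposal is missing; the easy direction you handled correctly, but both hard directions are unsound as written.
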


\begin{proof}  Let $\Uptheta^{(1)},\dots , \Uptheta^{(s)}$ be the columns of $\Uptheta$.  
The kernel of $\tilde{\upvarepsilon} (\Uptheta )$ consists of vectors $\bast {\boldsymbol n}=(\bast n_{1},\dots ,\bast n_{s})$ for which
$  \bast n_{1}\cdot \Uptheta^{(1)} + \cdots + \bast n_{s}\cdot \Uptheta^{(s)} = \text{\bf 0}$,
where $\text{\bf 0}$ is the zero vector.  This occurs $\Leftrightarrow$ there are sequences $\{ n_{1,i}\}, \dots ,\{ n_{s,i}\}$
representing the classes $\bast n_{1},\dots ,\bast n_{s}$ with
$ n_{1,i}\cdot \Uptheta^{(1)} + \cdots + n_{s,i}\cdot \Uptheta^{(s)} = \text{\bf 0}$.   
For $\bast {\boldsymbol n}$ non-zero, this is the case $\Leftrightarrow$ the columns of $\Uptheta$ are dependent over $\Q$.  
For the last bi-implication, we note that ${\rm Ker}( \tilde{\upvarepsilon} (\Uptheta ))\subset \bast\widetilde{\Z}^{s}(\Uptheta )$ is necessarily a $\bast\Z^{s}$-ideal, and that any $\bast\Z^{s}$-ideal contained in $\bast\widetilde{\Z}^{s}(\Uptheta )$ must
be contained in ${\rm Ker}( \tilde{\upvarepsilon} (\Uptheta ))$.
\end{proof}


Let, as in the proof of the Theorem \ref{homogeneouslinindep},  $\Uptheta^{(1)},\dots , \Uptheta^{(s)}$ be the columns of $\Uptheta$
and denote by ${\boldsymbol e}_{1},\dots ,{\boldsymbol e}_{r}$ the standard orthonormal basis of $\R^{r}$.   We say that
the columns of $\Uptheta$ are {\bf {\small inhomogeneously independent}} over $\Q$ if the set 
$\Uptheta^{(1)},\dots , \Uptheta^{(s)}, {\boldsymbol e}_{1},\dots ,{\boldsymbol e}_{r}$ is independent over $\Q$.
The proof of the following Theorem is formally the same as that of Theorem \ref{homogeneouslinindep}:

\begin{theo}\label{inhomoglinindep}   The columns of $\Uptheta$ are inhomogeneously independent over $\Q$
$\Leftrightarrow$ the inhomogeneous error map 
$ \upvarepsilon (\Uptheta ): \bast\Z^{s}(\Uptheta )\rightarrow \bast\R^{r}_{\upvarepsilon}(\Uptheta )$ is an isomorphism $\Leftrightarrow$
$\bast\Z^{s}(\Uptheta )$ contains no $\bast\Z^{s}$-ideals.
\end{theo}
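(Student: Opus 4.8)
The plan is to follow the blueprint of Theorem \ref{homogeneouslinindep} verbatim, but with the inhomogeneous error term map $\upvarepsilon(\Theta)$ in place of the homogeneous one. The key algebraic observation is that the kernel of $\upvarepsilon(\Theta)$ consists exactly of those $\bast{\boldsymbol n}\in\bast\Z^{s}(\Theta)$ with $\bast{\boldsymbol n}^{\perp}$ also in $\bast\Z^{r}$ and $\Theta\bast{\boldsymbol n}-\bast{\boldsymbol n}^{\perp}={\boldsymbol 0}$ in $\bast\R^{r}$; choosing representing sequences and invoking the fact (as in Theorem \ref{homogeneouslinindep}) that a nonstandard vanishing relation forces an $\mathfrak{u}$-large set of genuine integer vanishing relations, one sees that a nonzero element of this kernel exists $\Leftrightarrow$ there exist integers $m_{1},\dots,m_{s},k_{1},\dots,k_{r}$, not all $m_{j}$ zero, with $m_{1}\Theta^{(1)}+\cdots+m_{s}\Theta^{(s)}=k_{1}{\boldsymbol e}_{1}+\cdots+k_{r}{\boldsymbol e}_{r}$, i.e.\ $\Leftrightarrow$ the columns of $\Theta$ together with ${\boldsymbol e}_{1},\dots,{\boldsymbol e}_{r}$ are $\Q$-dependent in the specific sense of a nontrivial combination of the columns lying in the $\Q$-span of the ${\boldsymbol e}_{i}$. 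This is precisely the negation of inhomogeneous $\Q$-independence as defined just before the statement.

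Concretely I would proceed in three steps. First, unwind ${\rm Ker}(\upvarepsilon(\Theta))$: if $\upvarepsilon(\Theta)(\bast{\boldsymbol m})=\upvarepsilon(\Theta)(\bast{\boldsymbol n})$ then $\bast{\boldsymbol m}-\bast{\boldsymbol n}\in\bast\Z^{s}(\Theta)$ and $\Theta(\bast{\boldsymbol m}-\bast{\boldsymbol n})=\bast{\boldsymbol m}^{\perp}-\bast{\boldsymbol n}^{\perp}\in\bast\Z^{r}$ exactly, so injectivity of $\upvarepsilon(\Theta)$ is equivalent to: the only $\bast{\boldsymbol n}\in\bast\Z^{s}$ with $\Theta\bast{\boldsymbol n}\in\bast\Z^{r}$ (on the nose, in $\bbull\R^{r}$) is $\bast{\boldsymbol n}={\boldsymbol 0}$. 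Second, transfer this to the level of sequences exactly as in Theorem \ref{homogeneouslinindep}: $\Theta\bast{\boldsymbol n}=\bast{\boldsymbol n}^{\perp}$ in $\bbull\R^{r}$ means there are representing sequences $\{n_{j,i}\}$, $\{n^{\perp}_{k,i}\}$ with $\sum_{j}n_{j,i}\Theta^{(j)}=\sum_{k}n^{\perp}_{k,i}{\boldsymbol e}_{k}$ for an $\mathfrak{u}$-large set of $i$; since each such equation is a genuine $\Q$-linear relation among the fixed finite set $\Theta^{(1)},\dots,\Theta^{(s)},{\boldsymbol e}_{1},\dots,{\boldsymbol e}_{r}$, a nonzero $\bast{\boldsymbol n}$ in the kernel produces (on that large set, hence by non-principality of $\mathfrak{u}$ on a cofinal set) such a nontrivial relation, and conversely any such integer relation furnishes a nonzero constant-sequence element of the kernel. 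Third, the bi-implication with ideals: ${\rm Ker}(\upvarepsilon(\Theta))$ is an $\bast\Z^{s}$-submodule of $\bast\Z^{s}(\Theta)$ — if $\Theta\bast{\boldsymbol n}\in\bast\Z^{r}$ then $\Theta(\bast a\bast{\boldsymbol n})=\bast a(\Theta\bast{\boldsymbol n})\in\bast\Z^{r}$ for any $\bast a\in\bast\Z$ — so it is an ideal in the sense used here; conversely any nonzero $\bast\Z^{s}$-ideal $\mathfrak{a}\subset\bast\Z^{s}(\Theta)$ must lie in ${\rm Ker}(\upvarepsilon(\Theta))$, since if some $\bast{\boldsymbol n}\in\mathfrak{a}$ had $\upvarepsilon(\Theta)(\bast{\boldsymbol n})\neq0$ one multiplies by suitable nonstandard integers to leave $\bast\Z^{s}(\Theta)$, arguing componentwise against the density statement used in Theorem \ref{groupideal} applied to the rows of $\Theta$ that witness non-vanishing.

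The main obstacle, and the one place where ``formally the same as Theorem \ref{homogeneouslinindep}'' needs a little care, is the last equivalence: showing that a nonzero $\bast\Z^{s}$-ideal inside $\bast\Z^{s}(\Theta)$ forces $\upvarepsilon(\Theta)$ to be non-injective. In the homogeneous case this is immediate because any ideal is automatically inside $\bast\widetilde{\Z}^{s}(\Theta)={\rm Ker}(\tilde\upvarepsilon(\Theta))$; here one must rule out an ideal that ``sticks out'' of ${\rm Ker}(\upvarepsilon(\Theta))$. The argument I would give: suppose $\bast{\boldsymbol n}\in\mathfrak{a}$ with $\upvarepsilon(\Theta)(\bast{\boldsymbol n})\neq0$; then some coordinate of $\Theta\bast{\boldsymbol n}-\bast{\boldsymbol n}^{\perp}$, say the $k$-th, is a nonzero infinitesimal, i.e.\ the $k$-th row $\theta_{k1},\dots,\theta_{ks}$ paired with $\bast{\boldsymbol n}$ gives an irrational-type behaviour, and one repeats the ``choose $N_{i}$ pushing the torus image past $1/4$'' construction from the proof of Theorem \ref{groupideal} to produce $\bast N\bast{\boldsymbol n}\in\mathfrak{a}$ with $\bast N\bast{\boldsymbol n}\notin\bast\Z^{s}(\Theta)$, contradicting $\mathfrak{a}\subset\bast\Z^{s}(\Theta)$. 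One must check this works even though $\upvarepsilon(\Theta)(\bast{\boldsymbol n})$ might be infinitesimal rather than manifestly "irrational"; but infinitesimal and nonzero is exactly what is needed, since scaling an infinitesimal by a large integer can escape the infinitesimals. Modulo this componentwise reduction to the scalar case, the proof is a routine repackaging, which is why I am comfortable leaving it to the reader as the statement does.
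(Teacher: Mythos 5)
Your proof is correct and is precisely what the paper intends: the author gives no argument here, writing only that the proof is formally the same as that of Theorem~\ref{homogeneouslinindep}, and your three-step unwinding tracks that blueprint faithfully (with the cosmetic caveat that where Step~2 says ``in $\bbull\R^{r}$'' you mean ``in $\bast\R^{r}$'', as you state correctly in your opening paragraph --- equality in $\bbull\R^{r}$ would only give membership in $\bast\Z^{s}(\Theta)$, not in the kernel). The subtlety you flag at the ideal step is genuine --- scaling a nonzero $\bast{\boldsymbol n}$ to escape $\bast\Z^{s}(\Theta)$ requires $\bast N\bast\delta$ to avoid \emph{all} integers, not merely $0$ as suffices in the homogeneous case --- and your appeal to the ``distance $\geq 1/4$'' construction from Theorem~\ref{groupideal} handles it correctly; indeed the paper's proof of Theorem~\ref{homogeneouslinindep} leaves even the corresponding homogeneous ideal containment unargued, so you have supplied more detail than the source.
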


There are a few special cases worth singling out:
\begin{itemize}
\item $s=1$, $\Uptheta = (\uptheta_{1},\dots , \uptheta_{r})^{\sf T}$ is a column vector.
Then $\bast\Z(\Uptheta) \subset\bast\Z$ is the {\bf {\small simultaneous diophantine approximation group}}:
 \begin{align}\label{simultanDA} \bast\Z(\Uptheta)= \bigcap  \bast\Z(\uptheta_{i} ) . \end{align}
 For general $\Uptheta$ of size $r\times s$ we have canonical inclusions $\bast\Z\big(\Uptheta^{(j)}\big)\subset \bast \Z^{s}(\Uptheta )$ of the column
 vector diophantine approximation groups, defined $\bast n\mapsto (0,\dots, \bast n,\dots ,0)$ for $j=1,\dots ,s$.  It follows
 that $\bast \Z^{s}(\Uptheta )$ has the cardinality of the continuum as the column vector groups have this cardinality, by (\ref{simultanDA})
 and Proposition \ref{cardcont}.
 \item $r=1$, $\Uptheta = (\uptheta_{1},\dots , \uptheta_{s})$ is a row vector.  Then Theorem \ref{homogeneouslinindep} 
(Theorem \ref{inhomoglinindep})
provides a criterion for when the coordinates of $\Uptheta$ (the coordinates of $\Uptheta$ and 1) are linearly independent over $\Q$.  This case will be invoked in the rigidity reformulations discussed in \S \ref{rigidity}. 
\item $r=1$, $\Uptheta = (\uptheta_{1},\dots , \uptheta_{s})$ is a row vector.  Then the dual group $\bast\Z(\Uptheta)^{\perp}<\bast\Z$ is the {\bf {\small nonprincipal diophantine approximation group}} generated by the coordinates $\uptheta_{1},\dots \uptheta_{s}$ of 
$\Uptheta$.   Indeed, ${\boldsymbol n}^{\perp}\in \bast\Z(\Uptheta)^{\perp}\subset\bast\Z$ if and only if
\[  \bast{\boldsymbol n}^{\perp} = \uptheta_{1}\bast n_{1}+ \cdots +  \uptheta_{s}\bast n_{s} \mod \bast\R_{\upvarepsilon} \]
where $\bast{\boldsymbol n} = (\bast n_{1}, \dots ,\bast n_{s})\in \bast \Z^{s}(\Uptheta )$.  Notice that for all $i$
in which $\uptheta_{i}\not=0$,
\[  \bast\Z(\Uptheta)^{\perp}\supset\bast\Z(\uptheta_{i})^{\perp}
=\bast\Z(\uptheta_{i}^{-1})  . \]
If $\Uptheta\in \Q^{s}$ and $\uptheta_{i}=c_{i}/d_{i}$ then 
 $ \bast\Z(\Uptheta)^{\perp}=\bast (\gcd (d_{1}, \dots , d_{s}))$.
 \end{itemize}


\begin{theo}\label{matinhomindep}  The rows of $\Uptheta$ are inhomogeneously independent over $\Q$ $\Leftrightarrow$ 
$\bast\Z^{s}/\bast\Z^{s}(\Uptheta )\cong \T^{r}=\R^{r}/\Z^{r}$.
\end{theo}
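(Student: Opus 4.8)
The plan is to follow the pattern of Theorem~\ref{circprop} (of which this is the $r=s=1$ case). First I would introduce the higher-rank analogue of ${\sf s}_\theta$: composing $\bast{\boldsymbol n}\mapsto\Theta\bast{\boldsymbol n}\in\bast\R^{r}$ with the projection $\bast\R^{r}\to\bast\T^{r}=\bast\R^{r}/\bast\Z^{r}$ and with the standard part epimorphism ${\sf std}:\bast\T^{r}\to\T^{r}$ (well defined and onto because $\T^{r}$ is compact, exactly as in the proof of Theorem~\ref{circprop}) yields a homomorphism
\[ {\sf s}_{\Theta}:\bast\Z^{s}\longrightarrow\T^{r},\qquad {\sf s}_{\Theta}(\bast{\boldsymbol n})={\sf std}\bigl(\overline{\Theta\bast{\boldsymbol n}}\bigr). \]
Straight from the defining condition of $\bast\Z^{s}(\Theta)$ — namely $\Theta\bast{\boldsymbol n}-\bast{\boldsymbol n}^{\perp}\in\bast\R^{r}_{\upvarepsilon}$ for some $\bast{\boldsymbol n}^{\perp}\in\bast\Z^{r}$, which says precisely that $\overline{\Theta\bast{\boldsymbol n}}$ is infinitesimal in $\bast\T^{r}$ — one gets $\ker{\sf s}_{\Theta}=\bast\Z^{s}(\Theta)$, hence an injection $\bast\Z^{s}/\bast\Z^{s}(\Theta)\hookrightarrow\T^{r}$. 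Next I would identify the image: if $\bast{\boldsymbol n}$ is represented by $\{{\boldsymbol n}_{i}\}\subset\Z^{s}$ then ${\sf s}_{\Theta}(\bast{\boldsymbol n})$ is a $\mathfrak{u}$-recognized limit point of $\{\overline{\Theta{\boldsymbol n}_{i}}\}$, which lies in the subgroup $H:=\Theta\Z^{s}\bmod\Z^{r}$ of $\T^{r}$; conversely, lifting an honestly convergent sequence in $H$ attains any point of the closure $\overline{H}$. Thus $\bast\Z^{s}/\bast\Z^{s}(\Theta)\cong\overline{H}$, the closure in $\T^{r}$ of the subgroup generated by the columns of $\Theta$, and the Theorem reduces to: the rows of $\Theta$ are inhomogeneously $\Q$-independent $\Leftrightarrow$ $H$ is dense in $\T^{r}$.

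For the density statement I would invoke the multidimensional Kronecker density theorem (see e.g.\ \cite{Ca}): $H$ is dense in $\T^{r}$ if and only if there is no ${\boldsymbol k}\in\Z^{r}\setminus\{0\}$ with ${\boldsymbol k}^{\sf T}\Theta\in\Z^{s}$. It then remains to match this with inhomogeneous independence of the rows $\Theta_{(1)},\dots,\Theta_{(r)}$. Writing ${\boldsymbol e}_{1},\dots,{\boldsymbol e}_{s}$ for the standard basis of $\R^{s}$, in any $\Q$-linear relation among $\Theta_{(1)},\dots,\Theta_{(r)},{\boldsymbol e}_{1},\dots,{\boldsymbol e}_{s}$ the ${\boldsymbol e}_{j}$-coefficients are forced by the $\Theta_{(i)}$-coefficients, so a nontrivial such relation exists $\Leftrightarrow$ there is ${\boldsymbol a}\in\Q^{r}\setminus\{0\}$ with ${\boldsymbol a}^{\sf T}\Theta\in\Q^{s}$; clearing first the denominators of ${\boldsymbol a}$ and then those of ${\boldsymbol a}^{\sf T}\Theta$ shows this is equivalent to the existence of ${\boldsymbol k}\in\Z^{r}\setminus\{0\}$ with ${\boldsymbol k}^{\sf T}\Theta\in\Z^{s}$. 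Taking contrapositives gives one implication. For the converse (``$\bast\Z^{s}/\bast\Z^{s}(\Theta)\cong\T^{r}$'' $\Rightarrow$ rows independent) I would use that $\overline{H}$, being a \emph{closed} subgroup of $\T^{r}$, is isomorphic to $\T^{r}$ only if it equals $\T^{r}$: a proper closed subgroup lies inside some $\{x\in\T^{r}:{\boldsymbol k}\cdot x=0\}\cong\T^{r-1}\times(\text{finite})$, whose $p$-torsion has rank $<r$ for a prime $p$ not dividing the order of the finite part, whereas $\T^{r}$ has $p$-torsion of rank $r$. With $\overline{H}=\T^{r}$ established, the density criterion again yields independence of the rows.

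No step here is genuinely deep; essentially all the content is imported, namely Kronecker's theorem. The points that demand care are: verifying $\mathrm{Im}({\sf s}_{\Theta})=\overline{H}$, so that one is literally in the situation the density theorem governs; the denominator-clearing bookkeeping translating ``inhomogeneously $\Q$-independent rows'' into the arithmetic normal form ``${\boldsymbol k}^{\sf T}\Theta\notin\Z^{s}$ for all ${\boldsymbol k}\neq0$''; and pinning down the precise sense of ``$\cong\T^{r}$'' so that the backward implication goes through. This last — and very mild — issue is what I would treat as the main obstacle.
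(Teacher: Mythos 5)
Your proof is correct and follows the same overall strategy as the paper: both introduce the map ${\sf s}_{\Theta}(\bast{\boldsymbol n})={\sf std}(\overline{\Theta\bast{\boldsymbol n}})$, compute its kernel to be $\bast\Z^{s}(\Theta)$, identify the quotient with the closure $\T(\Theta)=\overline{\Theta\Z^{s}\bmod\Z^{r}}$, and reduce the theorem to the question of when that closed subgroup is all of $\T^{r}$. The difference is in how that last question is handled. The paper hand-rolls the density argument: from a $\Z$-relation among the rows it exhibits a proper closed subgroup of $\T^{r}$ that traps a finite-index subgroup of $\T(\Theta)$, and for the converse it appeals to the connected component $\T(\Theta)_{0}$ being the closure of $\Theta(\Gamma)$ for a sublattice $\Gamma$. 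You instead invoke the multidimensional Kronecker density theorem as a black box (``$H$ dense iff no ${\boldsymbol k}\in\Z^{r}\setminus\{0\}$ with ${\boldsymbol k}^{\sf T}\Theta\in\Z^{s}$'') and do the denominator-clearing to match this normal form with inhomogeneous $\Q$-independence of the rows; this is cleaner and makes the dependence on the classical result explicit. You also add a genuinely useful step the paper elides: the theorem asserts an \emph{abstract} isomorphism $\bast\Z^{s}/\bast\Z^{s}(\Theta)\cong\T^{r}$, so one must check that a proper closed subgroup of $\T^{r}$ cannot be abstractly isomorphic to $\T^{r}$, which your $p$-torsion rank count handles correctly; the paper's proof only establishes $\T(\Theta)=\T^{r}$ without remarking on this point.
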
 

\begin{proof}  Define the map $ {\sf s}_{\Uptheta} :\bast\Z^{s}\rightarrow \T^{r}$ by 
$\bast {\boldsymbol n}\mapsto {\sf std}( \overline{\Uptheta \bast{\boldsymbol n}})$,
where $ \overline{\Uptheta \bast{\boldsymbol n}}$ is the image of $\Uptheta \bast{\boldsymbol n}$ in
$\bast\T^{r}$ and
${\sf std}:\bast\T^{r}\rightarrow \T^{r}$
 is the multi-variate 
standard part map.  Then $\T(\Uptheta ):= {\sf s}_{\Uptheta}  (\bast\Z^{s})$ is a closed subgroup of $\T^{r}$ and
${\rm Ker}({\sf s}_{\Uptheta} )=\bast \Z^{s}(\Uptheta )$ by definition of the latter.  In particular $\T(\Uptheta )\cong \bast\Z^{s}/ \bast\Z^{s}(\Uptheta )$. 
Suppose the rows $\left\{ \Uptheta_{(1)},\dots , \Uptheta_{(r)}\right\}$ of $\Uptheta$ and the coordinates 
${\boldsymbol e}_{1},\dots ,{\boldsymbol e}_{r}$ satisfy a relation over $\Q$.  Then they also satisfy a relation over $\Z$,
so we have $a_{1}\Uptheta_{(1)}=a_{2}\Uptheta_{(2)}+\cdots +a_{r}\Uptheta_{(r)}\mod\Z^{n}$ for $a_{i}\in\Z$.  
It follows that the image $\overline{\Uptheta\Z^{s}}$ of $\Uptheta\Z^{s}$
in $\T^{r}$ contains a finite index subgroup contained in the image $\bar{P}\subset\T^{r}$ of the plane 
$P=\{ (a_{2}x_{2}+\cdots +a_{r}x_{r},x_{2},\dots ,x_{r})|\; x_{1},\dots ,x_{r}\in\R\}$.  But the latter is a closed subgroup not equal to $\T^{r}$.  
Since $\T(\Uptheta )$ is the closure of $\overline{\Uptheta\Z^{s}}$,
$\T(\Uptheta )\not=\T^{r}$.  Conversely, if $\T(\Uptheta )\not=\T^{r}$ then the connected component $\T(\Uptheta )_{0}$ of
 $\bar{0}$ is the closure of the image of $\Uptheta(\Upgamma )$ for some lattice $\Upgamma\subset\Z^{s}$. This implies
 that the rows of $\Uptheta$ are inhomogeneously dependent over $\Q$.  
\end{proof}

Theorems  \ref{homogeneouslinindep} and \ref{inhomoglinindep} give the first columns in the $\Q$
independence entry of the dictionary given in the Introduction.  To get the last column,
we associate a pair of Kronecker foliations to $\Uptheta$. Let $\widetilde{L}(\Uptheta)\subset\R^{s+r}$ be the graph of $\Uptheta :\R^{s}\rightarrow\R^{r}$, let
 $L(\Uptheta )$ be the image of $\widetilde{L}(\Uptheta)$ in $\T^{s+r}$
and let \[ \mathfrak{F}(\Uptheta ) = (\T^{s+r}, L(\Uptheta))\] be the associated Kronecker foliation.  
Notice that $\mathfrak{F}(\Uptheta )$ fibers over $\T^{s}$ with transversal $\T^{r}$, and that the set of leaves intersecting
$\T(\Uptheta )$ (defined
in the proof of Theorem \ref{matinhomindep}) defines a minimal subfoliation denoted $\mathfrak{F}_{0}(\Uptheta )$.  We call $\mathfrak{F}(\Uptheta )$ the
{\bf {\small inhomogeneous Kronecker foliation}} associated to $\Uptheta$.

 We may also define a Kronecker foliation of the ``base'' torus $\T^{s}$ as follows.  Let 
 \[ \widetilde{\mathfrak{l}}(\Uptheta)=
\widetilde{L}(\Uptheta)\cap (\R^{s}\times \boldsymbol 0):\] the part of the graph $\widetilde{L}(\Uptheta)$
corresponding to ${\rm Ker}(\Uptheta )$.   Let $\mathfrak{l}(\Uptheta )$ be its image in $\T^{s}$ and   
let 
\[ \mathfrak{f}(\Uptheta):= (\T^{s}, \mathfrak{l}(\Uptheta )),\] the {\bf {\small homogeneous Kronecker foliation}} associated to $\Uptheta$.  Notice that there is a natural projection
\[  \mathfrak{F}(\Uptheta )\longrightarrow \mathfrak{f}(\Uptheta )  \]
induced by $\T^{r+s}\rightarrow \T^{s}$.  

Denote as before
\[ \hat{\R}^{s}=(\bbull\R^{s},  \R^{s},\bast\Z^{s}),\] where $\bbull\R^{s}$ is to be given 
a section topology corresponding to the projection $ {\sf s}_{\Uptheta} :\bast\Z^{s}\rightarrow \T(\Uptheta )$, just as was done
in \S \ref{ResClassGps} for $\bbull\R$.
We say a few words about what this entails: write $\bbull\R^{s}$ as a suspension by $\Z^{s}$ as in (\ref{suspofbullR}).  Now if we
consider $\mathfrak{F}(\Uptheta )$ in its suspension form (in the style of K3 of \S \ref{ResClassGps}) we see that
the fiber $\T^{r}$ acquires through $\Uptheta$ the structure of a $\Z^{s}$ set via $\boldsymbol n\cdot \bar{\boldsymbol x}=
\bar{\boldsymbol x} + \overline{\Uptheta\boldsymbol n}$, and the subgroup $\T(\Uptheta )$ is a sub $\Z^{s}$ set.
A section to $ {\sf s}_{\Uptheta}$ may be constructed
following the recipe of Lemma \ref{sectionofspseq}, and through this section we identify 
$\bast\Z^{s}$ with $ \bast\Z^{s}(\Uptheta )\times \bast T(\Uptheta )$ for $\bast T(\Uptheta )={\sf s}_{\Uptheta} (\T(\Uptheta ))$ .
This gives the topology on the transversal $\bast\Z^{s}$ needed to make $\bbull\R^{s}$ a foliation.  Any subgroup \[ \T_{\boldsymbol q}\subset\T^{s+r}\] isomorphic to $\T^{r}$ (where $\boldsymbol q=(q_{1},\dots ,q_{r-1})$ is the slope) gives rise to a new linear transversal 
$\T_{\boldsymbol q}(\Uptheta )$ with respect to which one can define other sectional topologies, using a suspension
structure based on $\bast\Z^{s}_{\boldsymbol q}(\Uptheta )$ (where $\Z^{s}_{\boldsymbol q}(\Uptheta)=\{\boldsymbol r|\; \overline{(\boldsymbol r, \Uptheta\boldsymbol r)}\in \T_{\boldsymbol q}(\Uptheta )\}$) as in the discussion
found in \S \ref{ResClassGps} after Proposition \ref{folforbullR}.  We denote \[ \hat{\R}^{s}_{\boldsymbol q}=(\bbull\R, \R, 
\T_{\boldsymbol q}(\Uptheta ))\] equipped with a corresponding section topology.


\begin{theo}\label{indeptheo}   Let $\Uptheta$ be an $r\times s$ real matrix.  Then 
\[ \hat{\R}^{s}_{\boldsymbol q}/\bast\Z ^{s}(\Uptheta )\cong 
(\mathfrak{F}_{0}(\Uptheta ), \T(\Uptheta )_{\boldsymbol q}).\]
\end{theo}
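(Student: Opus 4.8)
The plan is to imitate the proof of Theorem~\ref{kron}, producing an explicit ``standard part'' map from $\bbull\R^{s}$ onto $\mathfrak{F}_{0}(\Theta)$ whose kernel is $\bast\Z^{s}(\Theta)$. First I would define
\[
{\sf std}_{\Theta}:\bbull\R^{s}\longrightarrow \T^{s+r}
\]
as follows: for $\bbull{\boldsymbol r}\in\bbull\R^{s}$ choose a representative sequence $\{\boldsymbol r_{i}\}\subset\R^{s}$, form the sequence $\{(\boldsymbol r_{i},\Theta\boldsymbol r_{i})\}$ lying on the graph $\widetilde{L}(\Theta)$, project it into $\T^{s+r}$, and let ${\sf std}_{\Theta}(\bbull{\boldsymbol r})$ be the unique limit point recognized by $\mathfrak{u}$ (which exists by compactness of $\T^{s+r}$). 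Since $\Theta$, being a fixed real matrix, carries infinitesimal sequences to infinitesimal sequences, an infinitesimal change in $\{\boldsymbol r_{i}\}$ produces an infinitesimal change in $\{(\boldsymbol r_{i},\Theta\boldsymbol r_{i})\}$ and hence does not affect the limit points in the compact torus; thus ${\sf std}_{\Theta}$ is well defined, and it is manifestly a homomorphism of abelian groups.

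Next I would pin down image and kernel. On constant sequences ${\sf std}_{\Theta}$ restricts to $\R^{s}\to L(\Theta)$, $\boldsymbol r\mapsto\overline{(\boldsymbol r,\Theta\boldsymbol r)}$, so ${\sf std}_{\Theta}(\R^{s})=L(\Theta)$; on $\bast\Z^{s}$ the first $s$ coordinates project to $\bar{\boldsymbol 0}$, so ${\sf std}_{\Theta}(\bast\Z^{s})=\{\bar{\boldsymbol 0}\}\times\T(\Theta)$ with $\T(\Theta)$ the closed subgroup $\overline{\Theta\Z^{s}}\subset\T^{r}$ of the proof of Theorem~\ref{matinhomindep} (and likewise ${\sf std}_{\Theta}(\bast\Z^{s}_{\boldsymbol q}(\Theta))=\T(\Theta)_{\boldsymbol q}$, the slope-$\boldsymbol q$ transversal). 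Because $\bbull\R^{s}=\R^{s}+\bast\Z^{s}$, the image is $L(\Theta)+(\{\bar{\boldsymbol 0}\}\times\T(\Theta))=\overline{L(\Theta)}$, which is exactly the union of the leaves of $\mathfrak{F}(\Theta)$ meeting $\T(\Theta)$, i.e.\ the space underlying $\mathfrak{F}_{0}(\Theta)$. For the kernel, ${\sf std}_{\Theta}(\bbull{\boldsymbol r})=\bar{\boldsymbol 0}$ means that along some $X\in\mathfrak{u}$ both ${\rm dist}(\boldsymbol r_{i},\Z^{s})\to 0$ and ${\rm dist}(\Theta\boldsymbol r_{i},\Z^{r})\to 0$; picking $\boldsymbol p_{i}\in\Z^{s}$, $\boldsymbol q_{i}\in\Z^{r}$ realizing these distances, the sequence $\{\boldsymbol r_{i}-\boldsymbol p_{i}\}$ is infinitesimal, so $\bbull{\boldsymbol r}$ is the $\bbull$-class of $\bast{\boldsymbol p}=\{\boldsymbol p_{i}\}\in\bast\Z^{s}$, and $\Theta\bast{\boldsymbol p}-\bast{\boldsymbol q}$ is infinitesimal, whence $\bast{\boldsymbol p}\in\bast\Z^{s}(\Theta)$; the inclusion $\bast\Z^{s}(\Theta)\subset{\rm Ker}({\sf std}_{\Theta})$ is immediate. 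Hence ${\rm Ker}({\sf std}_{\Theta})=\bast\Z^{s}(\Theta)$.

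Finally I would verify that ${\sf std}_{\Theta}$ intertwines the foliated structures: it carries each leaf $\bbull{\boldsymbol r}+\R^{s}$ onto the leaf of $\mathfrak{F}_{0}(\Theta)$ through ${\sf std}_{\Theta}(\bbull{\boldsymbol r})$ --- isometrically for the leafwise metrics, via the identification $\widetilde{L}(\Theta)\cong\R^{s}$ --- and it carries the distinguished transversal $\bast\Z^{s}_{\boldsymbol q}(\Theta)$ onto $\T(\Theta)_{\boldsymbol q}$. Equipping $\bbull\R^{s}$ with the section topology $\tau_{\Theta,\sigma,\boldsymbol q}$ built, as in \S\ref{ResClassGps}, from a $\Z^{s}$-set section $\sigma$ of ${\sf s}_{\Theta}$ (the analogue of Lemma~\ref{sectionofspseq}, using the $\Z^{s}$-action $\boldsymbol n\cdot\bar{\boldsymbol x}=\bar{\boldsymbol x}+\overline{\Theta\boldsymbol n}$ on $\T(\Theta)$) and the resulting product decomposition $\bast\Z^{s}_{\boldsymbol q}\cong\bast\Z^{s}(\Theta)\times\bast T(\Theta)$ with $\bast T(\Theta)$ carrying the topology of $\T(\Theta)_{\boldsymbol q}$, the map ${\sf std}_{\Theta}$ becomes continuous and open, so it descends to the asserted isomorphism $\widehat{\R}^{s}_{\boldsymbol q}/\bast\Z^{s}(\Theta)\cong(\mathfrak{F}_{0}(\Theta),\T(\Theta)_{\boldsymbol q})$ of foliations with distinguished transversal. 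The genuinely new content beyond Theorem~\ref{kron} is the identification of the image with the minimal set $\mathfrak{F}_{0}(\Theta)$ rather than with all of $\T^{s+r}$; I expect the main obstacle to be the topological bookkeeping in this last paragraph, namely checking that the section topology on $\bbull\R^{s}$ pushes forward to the ambient topology of $\mathfrak{F}_{0}(\Theta)\subset\T^{s+r}$, which is precisely why the section $\sigma$ and the product structure must be set up exactly as in \S\ref{ResClassGps}.
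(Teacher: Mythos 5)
Your proposal is correct and follows the same route as the paper's proof, which simply says "essentially the same as Theorem~\ref{kron}" and then records the definition of ${\sf std}_{\Theta}$, the images ${\sf std}_{\Theta}(\R^{s})=L(\Theta)$ and ${\sf std}_{\Theta}(\bast\Z^{s}_{\boldsymbol q})=\T(\Theta)_{\boldsymbol q}$, and the kernel $\bast\Z^{s}(\Theta)$. You have supplied the details the paper leaves implicit (well-definedness, the decomposition $\bbull\R^{s}=\R^{s}+\bast\Z^{s}$ giving image $\overline{L(\Theta)}=\mathfrak{F}_{0}(\Theta)$, both inclusions for the kernel, and the section-topology bookkeeping), but the argument is the same one the paper intends.
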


\begin{proof}  Essentially the same as Theorem \ref{kron}.  
We define a continuous homomorphism ${\sf std}_{\Uptheta}:\bbull\R^{s} \rightarrow\T^{r+s}$,
mapping a representative $\{ \boldsymbol r_{i}\}\subset\R^{s}$ of the class $\bbull \boldsymbol r$ to $L(\Uptheta)$ by $(\boldsymbol r_{i}, \Uptheta\boldsymbol r_{i})$, then to the unique $\mathfrak{u}$-defined
limit point of its projection to $\T^{r+s}$.   The image of ${\sf std}_{\Uptheta}$ is the underlying manifold of $\mathfrak{F}_{0}(\Uptheta )$,
and we have ${\sf std}_{\Uptheta}(\R^{s})$ = $L(\Uptheta)$, 
 ${\sf std}_{\Uptheta}(\bast\Z_{\boldsymbol q}^{s})=\T(\Uptheta )_{\boldsymbol q}$ and  ${\rm Ker} ({\sf std}_{\Uptheta})=\bast\Z^{s}(\Uptheta )$.
\end{proof} 

The following results fill out the $\Q$-independence dictionary:

\begin{theo}\label{indeptheo}  Let $ \Upgamma(\Uptheta )=\{ (\boldsymbol n, \Uptheta\boldsymbol n)|\; 
\boldsymbol n\in {\rm Ker}( \upvarepsilon (\Uptheta ))\cap\Z^{s}\}\subset\Z^{s+r}$.  Then  
\begin{align}\label{leaftop}
 L(\Uptheta)=\widetilde{L}(\Uptheta)/\Upgamma ( \Uptheta)\cong\R^{s}/ ({\rm Ker}( \upvarepsilon (\Uptheta ))\cap\Z^{s}).
 \end{align} In particular, $\mathfrak{F}(\Uptheta )$
has planar leaves $\Leftrightarrow$ the columns of $\Uptheta$ are inhomogeneously independent over $\Q$.
 \end{theo}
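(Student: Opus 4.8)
The plan is to set up the covering-type description of the leaf $L(\Theta)$ through the origin and then read off planarity from the description of its deck group. First I would observe that $\widetilde{L}(\Theta)$, being the graph of the linear map $\Theta:\R^s\to\R^r$, is linearly isomorphic to $\R^s$ via the projection $(\boldsymbol x,\Theta\boldsymbol x)\mapsto\boldsymbol x$; under this isomorphism $\widetilde{L}(\Theta)$ carries the Euclidean topology of $\R^s$ and is in particular simply connected. Next I would identify which points of $\widetilde{L}(\Theta)$ become identified when we pass to $L(\Theta)\subset\T^{s+r}=\R^{s+r}/\Z^{s+r}$: a point $(\boldsymbol x,\Theta\boldsymbol x)$ and $(\boldsymbol x',\Theta\boldsymbol x')$ map to the same point of $\T^{s+r}$ exactly when $(\boldsymbol x-\boldsymbol x',\Theta(\boldsymbol x-\boldsymbol x'))\in\Z^{s+r}$, i.e.\ when $\boldsymbol n:=\boldsymbol x-\boldsymbol x'\in\Z^s$ and $\Theta\boldsymbol n\in\Z^r$. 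The condition $\Theta\boldsymbol n\in\Z^r$ for $\boldsymbol n\in\Z^s$ is precisely the statement that $\boldsymbol n$ lies in ${\rm Ker}(\upvarepsilon(\Theta))\cap\Z^s$ — here one uses that for a \emph{standard} integer vector $\boldsymbol n$, the error term $\Theta\boldsymbol n-\boldsymbol n^\perp$ is infinitesimal (equivalently zero in $\bbull\R^r$) iff $\Theta\boldsymbol n$ is a genuine integer vector, since an integer vector that is infinitesimally close to another integer vector equals it. This yields $L(\Theta)=\widetilde{L}(\Theta)/\Gamma(\Theta)$, and transporting along the isomorphism $\widetilde{L}(\Theta)\cong\R^s$ gives $L(\Theta)\cong\R^s/({\rm Ker}(\upvarepsilon(\Theta))\cap\Z^s)$, which is the content of \eqref{leaftop}.

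For the final clause I would argue as follows. The subgroup $\Lambda:={\rm Ker}(\upvarepsilon(\Theta))\cap\Z^s$ is a discrete subgroup of $\Z^s$, hence a free abelian group of some rank $k$ with $0\le k\le s$, and $\R^s/\Lambda$ is diffeomorphic to $\T^k\times\R^{s-k}$. Thus $L(\Theta)$ is planar (i.e.\ diffeomorphic to a Euclidean space $\R^s$, equivalently simply connected, equivalently the covering $\widetilde{L}(\Theta)\to L(\Theta)$ is trivial) if and only if $\Lambda=0$. Now $\Lambda=0$ means there is no nonzero $\boldsymbol n\in\Z^s$ with $\Theta\boldsymbol n\in\Z^r$, which says exactly that the set $\Theta^{(1)},\dots,\Theta^{(s)},\boldsymbol e_1,\dots,\boldsymbol e_r$ satisfies no nontrivial integer — equivalently rational — relation, i.e.\ the columns of $\Theta$ are inhomogeneously independent over $\Q$. (Alternatively one can cite Theorem~\ref{inhomoglinindep}: inhomogeneous independence of the columns is equivalent to $\upvarepsilon(\Theta)$ being injective, and restricting to standard integer vectors shows this forces $\Lambda=0$; conversely if $\Lambda\neq 0$ a nonzero element of $\Lambda$ sits in ${\rm Ker}(\upvarepsilon(\Theta))$.)

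The only genuinely delicate point — and the one I would be most careful about — is the translation between the ``infinitesimal'' condition defining ${\rm Ker}(\upvarepsilon(\Theta))$ inside $\bast\Z^s(\Theta)$ and the ``exact integrality'' condition $\Theta\boldsymbol n\in\Z^r$ when $\boldsymbol n$ is a standard vector in $\Z^s$. This is where one must remember that a standard real (or integer) vector is infinitesimally close to an integer vector precisely when it already is that integer vector, because the difference is a standard vector in $\bast\R^r_{\upvarepsilon}\cap\R^r=\{\boldsymbol 0\}$; so on the sub-lattice $\Z^s\subset\bast\Z^s$ the homogeneous error-term map has the naive kernel $\{\boldsymbol n\in\Z^s:\Theta\boldsymbol n\in\Z^r\}$. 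Everything else is the routine covering-space bookkeeping for quotients of $\R^s$ by discrete subgroups, together with the elementary structure theory of finitely generated subgroups of $\Z^s$.
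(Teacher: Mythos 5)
Your proof is correct and follows essentially the same route as the paper's: identify $\Gamma(\Theta)=\widetilde{L}(\Theta)\cap\Z^{s+r}$, transport along $\R^s\cong\widetilde{L}(\Theta)$, and read off planarity of $L(\Theta)$ from triviality of the lattice $\Lambda={\rm Ker}(\upvarepsilon(\Theta))\cap\Z^s$. One small phrasing quibble: when you say the difference $\Theta\boldsymbol n-\boldsymbol n^\perp$ is ``a standard vector in $\bast\R^r_{\upvarepsilon}\cap\R^r$'', this presupposes $\boldsymbol n^\perp$ is standard, which one should first deduce from the fact that a nonstandard integer vector infinitesimally close to a standard real vector is bounded and hence standard; that said, the idea is clearly present in your argument and the step is sound.
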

 
 \begin{proof} 
Clearly $\widetilde{L}(\Uptheta)\cap\Z^{s+r} = \Upgamma(\Uptheta )$.   The isomorphism $\R^{s}\rightarrow \widetilde{L}(\Uptheta)$,
$\boldsymbol r\mapsto (\boldsymbol r, \Uptheta\boldsymbol r )$, takes $ ({\rm Ker}( \upvarepsilon (\Uptheta ))\cap\Z^{s})$ to $\Upgamma (\Uptheta )$, giving (\ref{leaftop}).  On the other hand, $\Uptheta \bast \boldsymbol n -\bast \boldsymbol n^{\perp}=\boldsymbol 0$ holds for non trivial $\bast  \boldsymbol n\in\bast\Z^{s}$ $\Leftrightarrow$ there exist representative sequences $\{ \boldsymbol n_{i}\}, \{ \boldsymbol n_{i}^{\perp}\}$ for which $\Uptheta  \boldsymbol n_{i} -\boldsymbol n_{i}^{\perp}=\boldsymbol 0$ $\mathfrak{u}$-eventually, so that
$\{ \Uptheta^{(1)},\dots , \Uptheta^{(s)}, {\boldsymbol e}_{1},\dots ,{\boldsymbol e}_{r}\}$ is $\Q$-dependent $\Leftrightarrow$ 
${\rm Ker}( \upvarepsilon (\Uptheta ))\not=0$ $\Leftrightarrow$  $ ({\rm Ker}( \upvarepsilon (\Uptheta ))\cap\Z^{s})\not=0$
$\Leftrightarrow$ $\mathfrak{F}(\Uptheta )$
has non-trivial topology along the leaves.
 \end{proof}
 
The topological status of the leaves of the homogeneous foliation $\mathfrak{f}(\Uptheta )$
 determines homogeneous dependence of the columns of $\Uptheta$:
 
 \begin{theo}\label{indeptheohomog}  The homogeneous foliation $\mathfrak{f}(\Uptheta )$ has planar leaves $\Leftrightarrow$  the columns of $\Uptheta$ are linearly independent over $\Q$.
 \end{theo}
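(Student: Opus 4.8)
The plan is to compute the leaves of $\mathfrak{f}(\Theta)$ directly, just as Theorem \ref{indeptheo} does for $\mathfrak{F}(\Theta)$, and then apply Theorem \ref{homogeneouslinindep}. Recall that by definition $\widetilde{\mathfrak{l}}(\Theta) = \widetilde{L}(\Theta)\cap(\R^s\times\boldsymbol 0)$, which is precisely $\{(\boldsymbol r, \boldsymbol 0)\mid \boldsymbol r\in{\rm Ker}(\Theta)\subset\R^s\}$, so we may identify $\widetilde{\mathfrak{l}}(\Theta)$ with the real subspace ${\rm Ker}(\Theta)\subset\R^s$. The leaf of $\mathfrak{f}(\Theta)$ through $\bar{\boldsymbol 0}$ is then the image $\mathfrak{l}(\Theta)$ of this subspace in $\T^s$, which is $\widetilde{\mathfrak{l}}(\Theta)/(\widetilde{\mathfrak{l}}(\Theta)\cap\Z^s)$.

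First I would show $\widetilde{\mathfrak{l}}(\Theta)\cap\Z^s = {\rm Ker}(\Theta)\cap\Z^s$ is exactly the lattice of integer vectors $\boldsymbol n\in\Z^s$ with $\Theta\boldsymbol n = \boldsymbol 0$ (as a genuine equation in $\R^r$, not merely modulo infinitesimals). The key observation is that an integer vector $\boldsymbol n$ satisfies $\Theta\boldsymbol n=\boldsymbol 0$ iff the columns $\Theta^{(1)},\dots,\Theta^{(s)}$ satisfy the $\Z$-linear (hence $\Q$-linear) relation $\sum n_j\Theta^{(j)}=\boldsymbol 0$. So the leaf $\mathfrak{l}(\Theta)$ is planar (i.e.\ simply connected, equal to a copy of $\R^{\dim{\rm Ker}(\Theta)}$) if and only if ${\rm Ker}(\Theta)\cap\Z^s = 0$, i.e.\ there is no nonzero integer relation among the columns. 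Since a $\Q$-linear relation among the columns can always be cleared of denominators to give a $\Z$-linear relation, ${\rm Ker}(\Theta)\cap\Z^s=0$ is equivalent to the columns being linearly independent over $\Q$. (And if some nonzero integer $\boldsymbol n$ lies in the kernel, then $\Z\boldsymbol n$ is an infinite cyclic subgroup of $\widetilde{\mathfrak{l}}(\Theta)\cap\Z^s$, so the leaf through $\bar{\boldsymbol 0}$ acquires a nontrivial loop, hence is not planar; the same holds for every leaf since $\mathfrak{f}(\Theta)$ is a foliation of $\T^s$ by cosets of $\mathfrak{l}(\Theta)$.)

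Alternatively, one can make the proof a formal parallel of Theorem \ref{indeptheo} by introducing $\gamma(\Theta) = \{\boldsymbol n\in\Z^s\mid \boldsymbol n\in{\rm Ker}(\tilde\upvarepsilon(\Theta))\}$ — but note that for integer vectors, $\boldsymbol n\in{\rm Ker}(\tilde\upvarepsilon(\Theta))$ means $\Theta\boldsymbol n\in\bast\R^r_\upvarepsilon$, and since $\Theta\boldsymbol n$ is a standard real vector this forces $\Theta\boldsymbol n = \boldsymbol 0$ outright — so $\gamma(\Theta) = {\rm Ker}(\Theta)\cap\Z^s$ after all, and one gets $\mathfrak{l}(\Theta)\cong\R^{\dim{\rm Ker}(\Theta)}/({\rm Ker}(\Theta)\cap\Z^s)$. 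Then by Theorem \ref{homogeneouslinindep}, ${\rm Ker}(\tilde\upvarepsilon(\Theta))=0$ iff the columns are $\Q$-independent, and the argument of the previous paragraph shows ${\rm Ker}(\tilde\upvarepsilon(\Theta))=0$ iff ${\rm Ker}(\Theta)\cap\Z^s=0$ iff the leaves are planar.

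I do not anticipate a serious obstacle here; the only point requiring a little care is the bookkeeping distinguishing "$\Theta\boldsymbol n=\boldsymbol 0$ exactly in $\R^r$" (which governs the topology of the genuine leaves in $\T^s$) from "$\Theta\bast{\boldsymbol n}\in\bast\R^r_\upvarepsilon$" (the condition defining $\bast\widetilde{\Z}^s(\Theta)$), and observing that for \emph{standard} integer vectors these coincide. The equivalence "nonzero $\Q$-relation among columns $\Leftrightarrow$ nonzero $\Z$-relation among columns" is the standard denominator-clearing trick and needs only a one-line remark. So the proof is essentially a transcription of the relevant portion of the proof of Theorem \ref{indeptheo}, restricted to the homogeneous part ${\rm Ker}(\Theta)$ of the graph, followed by an appeal to Theorem \ref{homogeneouslinindep}.
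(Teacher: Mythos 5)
Your proposal is correct and takes essentially the same approach as the paper: the paper's proof simply says to repeat the argument of Theorem~\ref{indeptheo} with $\Gamma(\Theta)$ replaced by $\gamma(\Theta)=\{(\boldsymbol n,\boldsymbol 0)\mid \boldsymbol n\in {\rm Ker}(\tilde{\upvarepsilon}(\Theta))\cap\Z^s\}$ and $\mathfrak{l}(\Theta)=\widetilde{\mathfrak{l}}(\Theta)/\gamma(\Theta)$, which is exactly your second paragraph. Your first paragraph usefully makes explicit what the paper leaves compressed (identifying $\widetilde{\mathfrak{l}}(\Theta)$ with ${\rm Ker}(\Theta)\subset\R^s$, observing that for standard integer vectors the infinitesimal condition $\Theta\boldsymbol n\simeq\boldsymbol 0$ collapses to $\Theta\boldsymbol n=\boldsymbol 0$, and the standard denominator-clearing step), but this is a matter of exposition rather than a genuinely different route.
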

 
 \begin{proof}  The proof is the same as that of Theorem \ref{indeptheo}, only one replaces $\Upgamma (\Uptheta )$ by
  \[ \upgamma(\Uptheta )=\{ (\boldsymbol n, \boldsymbol 0)|\; 
\boldsymbol n\in {\rm Ker}( \tilde{\upvarepsilon} (\Uptheta ))\cap\Z^{s}\}\subset\Z^{s}\times \boldsymbol 0\]
and observes that $\mathfrak{l}(\Uptheta ) = \tilde{\mathfrak{l}}(\Uptheta )/\upgamma(\Uptheta )$.
 \end{proof}
 




\vspace{3mm}

\begin{note} Since inhomogeneous linear independence $\Rightarrow$ homogeneous linear independence, trivial leaf-wise topology in $\mathfrak{F}(\Uptheta )$ implies the same for $\mathfrak{f}(\Uptheta )$.  In addition, inhomogeneous
independence of the rows of $\Uptheta$ implies that the transversal $T(\Uptheta)=\T^{r}$ i.e. that $\mathfrak{F}(\Uptheta )$
is {\it minimal}.
\end{note}

 \section{Diophantine Approximation Groups in Rings of Integers}\label{daringint}

In this section, we re-consider the constructions of \S\S \ref{ratapprox} -- \ref{higher}, replacing $\Q$ by a finite algebraic extension $K/\Q$ of degree $d$.   
Let $\mathcal{O} = \mathcal{O}_{K}$ be the
ring of $K$-integers.   We write elements of $\C^{d}$ in the form ${\boldsymbol z}=(z_{\upnu})$
where the coordinates are indexed by the $d$ infinite places $\upnu :K\hookrightarrow \C$.
Consider the $\R$-algebra  
\[{\K} = \{\boldsymbol z\in \C^{d}|\; \bar{z}_{\upnu} = z_{\bar{\upnu}}\} \cong \R^{r}\times\C^{s}\]
where $r$ = the number of real places and $2s$ = the complex places of $K$, equipped with its euclidean
norm $\|\cdot \|_{\K}$ (the restriction of the hermitian norm in $\C^{d}$).
Recall that $K$ embeds in ${\K}$ via the place embeddings as a subfield and the quotient \[\T_{K}={\K}/\mathcal{O}\] is the Minkowski torus \cite{Ne}.

Define $\bast\C$, $\bast\C_{\rm fin}$, $\bast\C_{\upvarepsilon}$ and $\bbull\C$ as in \S \ref{ratapprox}. Fix the ultrapowers $\bast\mathcal{O}\subset \bast K\subset \bast {\K}$, and denote by $\bast {\K}_{\upvarepsilon}\cong
\bast\R^{r}_{\upvarepsilon}\times\bast\C^{s}_{\upvarepsilon}$
the subgroup of infinitesimals.  We consider $\uptheta\in\R$ as an element of
$\bast {\K}$ via the diagonal embedding i.e.\ as the element $(\uptheta, \dots ,\uptheta )$.
An {\bf {\small $\mathcal{O}$-diophantine approximation}} of $\uptheta\in\R$ is an element
$\bast\upalpha\in\bast\mathcal{O}$ for which there exists $\bast\upalpha^{\perp}\in \bast\mathcal{O}$
with
\begin{equation}\label{ODA}
   \uptheta\bast\upalpha - \bast\upalpha^{\perp}\in  \bast {\K}_{\upvarepsilon}.   
   \end{equation}

Define the group 
\[ \bast\mathcal{O}(\uptheta)  = \{\mathcal{O}\text{-diophantine approximations of }\uptheta  \} \]
as well as the dual group $\bast\mathcal{O}(\uptheta)^{\perp}$, equal to $\bast\mathcal{O}(\uptheta^{-1})$ for $\uptheta\not=0$,
the group of errors $\bast {\K}_{\upvarepsilon}(\uptheta )$ and the error map $\upvarepsilon_{K}(\uptheta)$.  
When $K=\Q$ we recover the diophantine approximation groups defined in \S \ref{ratapprox}.
Every $\Z$-diophantine approximation defines canonically an $\mathcal{O}$-diophantine approximation, and
identifying $\bast\Z (\uptheta )$ with its image in $\bast\mathcal{O}(\uptheta )$  we obtain an 
extension of groups  
\[ \bast\Z (\uptheta )\subset \bast\mathcal{O}(\uptheta).\]  In general,
for $L/K$ an extension of algebraic number fields, we obtain the extension $ \bast\mathcal{O}_{K}(\uptheta)\subset \bast\mathcal{O}_{L}(\uptheta)$.

\begin{exam} Let $K=\Q (\sqrt{D})$ for $D>0$ a square free integer.  
If $\bast n\in\bast\Z (\uptheta )$ then 
\[ \bast\mathcal{O}(\uptheta )\supsetneq   \bast \Z(\uptheta )\oplus  \bast \Z(\uptheta )\cdot \sqrt{D}.\]
In fact, if $\bast m +\bast n\sqrt{D}\in \bast \Z(\uptheta )\oplus  \bast \Z(\uptheta )\cdot \sqrt{D}$ then 
$(\bast m +\bast n\sqrt{D})^{\perp} = \bast m^{\perp} +\bast n^{\perp}\sqrt{D}$.  
On the other hand, there exist $\bast m,\bast n\not\in\bast\Z (\uptheta )$
for which $\bast m+\bast n \sqrt{D}\in \bast\mathcal{O}(\uptheta )$.  For instance, let $\bast m,\bast n$ be such
that there exist $\bast m',\bast n'\in\bast\Z$ with
$ \bast m\uptheta-\bast m'\simeq 1/2$ and $ \bast n\sqrt{D}\uptheta-\bast n '\sqrt{D}\simeq -1/2$.
Then $\bast m + \bast n\sqrt{D}\in \bast\mathcal{O}(\uptheta )$
with $(\bast m + \bast n\sqrt{D})^{\perp} = \bast m'+\bast n'\sqrt{D}$.  This shows that we cannot
obtain $\bast\mathcal{O}(\uptheta )$ from $\bast \Z(\uptheta )$ simply by changing coefficients (tensor product), as we can in the case
of ideals in rings of integers.
\end{exam} 

The (ultrapowers of) principal ideals of $\mathcal{O}$ are recovered as follows.  For any element ${\boldsymbol z}\in {\K}$, define $\bast\mathcal{O}({\boldsymbol z})$ as the group
of $\mathcal{O}$-diophantine approximations of $\boldsymbol z$:
the subgroup of $\bast\upalpha\in \bast \mathcal{O}$ for which there exists $\bast\upalpha^{\perp}\in  \bast \mathcal{O}$ with 
$\bast\upalpha {\boldsymbol z}-\bast\upalpha^{\perp}$ infinitesimal.  We also write $\bast\Z (\boldsymbol z)$
for the set of $\bast n\in\bast\Z$ for which there exists $\bast n^{\perp}\in \bast\Z$ 
with $\bast n {\boldsymbol z}-\bast n^{\perp}$ infinitesimal: note that $\bast\Z (\boldsymbol z)\subsetneqq\bast\Z\cap\bast\mathcal{O}({\boldsymbol z})$.
Let $\upbeta\in K$ be non-integer 
and identify $\upbeta$ with $(\upbeta_{\upnu})\in\K$.
Then 
\[ \bast\mathcal{O}(\upbeta ) = \bast ( \upbeta)^{-1} \subset\bast\mathcal{O}  \]
where $ (\upbeta)^{-1}\subset\mathcal{O}$ is the inverse of the fractional ideal $(\upbeta )$.
Thus for $\upbeta\in K$, there are {\it two} types of diophantine approximation group available:

\begin{itemize}
\item[{\bf {\small (global)}}] $\bast\mathcal{O}(\upbeta )$.  Here the status of $\upbeta$ as an element of the abstract global field $K$ is preserved,
and $\upbeta$ is approximated by $\bast\mathcal{O}$, both of which have been embedded ``globally'' in $\K$ i.e. via the complete collection of place embeddings.
\item[{\bf {\small ($\boldsymbol\upnu$-local)}}] $\bast\mathcal{O}(\upbeta_{\upnu})$.  Here we view $\upbeta$ an element of $\R$ via a particular place $\upnu$ of $K$, however
the approximations by $\mathcal{O}$ continue to take place globally.  For all intents and purposes, $\upbeta$ is no longer in $K$ i.e.\ is not recognized as such by the global approximator $\mathcal{O}$.
\end{itemize}
 
Certain constructions native to (additive) algebraic number theory extend 
naturally to the setting of diophantine approximation groups, and we mention these now:

\begin{itemize}
\item The trace map ${\sf Tr}:K\rightarrow \Q$ induces a homomorphism of diophantine approximation groups: for $\uptheta\in\R$,
\[ {\sf Tr}: \bast \mathcal{O}(\uptheta)\rightarrow \bast\Z (\uptheta ),\quad \bast\upalpha\mapsto 
{\sf Tr}(\bast\upalpha).\]
\item If $K/\Q$ is Galois and $\upsigma\in {\rm Gal}(K/\Q)$ then $\upsigma$ acts on ${\K}$ permuting
coordinates according to its action on places.  In particular, we have that $\upsigma$ induces
an automorphism 
\[  \upsigma : \bast\mathcal{O}(\uptheta)\longrightarrow \bast\mathcal{O}(\uptheta)\]
acting identically on $\bast\Z(\uptheta)$.  If $\upbeta\in K$ then we have in addition
\[   \upsigma (\bast\mathcal{O}(\upbeta_{\upnu} ) )=\bast \mathcal{O}(\upbeta_{\upsigma\upnu}).\]
\item Every ideal 
$\mathfrak{a}\subset \mathcal{O}$ gives rise to a diophantine approximation group: \[ \bast\mathfrak{a}(\uptheta) := \bast\mathcal{O}(\uptheta)\cap \bast\mathfrak{a} \] and if
$\upbeta\in K$, $\bast\mathfrak{a} (\upbeta) := \mathcal{O}(\upbeta)\cap \bast\mathfrak{a}$.
These are permuted by the Galois group in the expected way e.g.\
$ \upsigma ( \bast\mathfrak{a}(\uptheta)) =  \bast\upsigma (\mathfrak{a})(\uptheta)$ and $\upsigma (\bast\mathfrak{a}(\upbeta_{\upnu})) = \bast\upsigma ( \mathfrak{a})(\upbeta_{\upsigma\upnu})$.
\end{itemize}

The sequence classes appearing in 
(\ref{ODA}) differ already in several respects from 
those studied in other accounts.
 In \cite{Bu}, \cite{Sch}, the approximants are allowed to be arbitrary elements of $\upalpha\in K$,
not just integers, and the measure of complexity of $\upalpha$ is no longer ``the denominator'' but rather the height $H_{K}(\upalpha )$ of the 
polynomial $P_{K,\upalpha}(X)=\prod (X-\upnu (\upalpha ))$ as $\upnu$ runs over the places of $K$.  Thus,
 the custom is to study solutions $\upalpha\in K$ to the inequality 
\[ |\uptheta -\upalpha|<c H(\upalpha )^{-n}\] for appropriate choices of $c$, $n$.  

The 
 definition appearing in (\ref{ODA}) restores in a sense the integral lattice perspective
 from which one may view Diophantine Approximation over the rationals:  
Since the notion of Diophantine Approximation by algebraic numbers studied here is not the usual one i.e.\ that of \cite{Bu}, \cite{Sch},
 it would be of interest to formulate and prove a simple
analogue of Dirichlet's Theorem in the this setting.  

Given an integral basis $\mathfrak{A}=\{\upalpha_{1},\dots ,\upalpha_{d}\}$
of $K\subset\K$, let $\|\cdot \|_{\mathfrak{A}}$ be the associated norm: thus if $\boldsymbol x = \sum a_{i}\upalpha_{i}$
then $\|\boldsymbol x\|_{\mathfrak{A}} = \sqrt{\sum a_{i}^{2}}$.
 For $\upeta= \sum n_{i}\upalpha_{i}\in\mathcal{O}$ satisfying $n^{-1}_{i}\not=0$ for all $i$ we define 
 \[ \upeta^{-1}_{\mathfrak{A}}:= \sum n_{i}^{-1}\upalpha_{i} . \]
 For $\mathfrak{A}$ = the canonical orthonormal basis of $\K$, $ \upeta^{-1}_{\mathfrak{A}}= \upeta^{-1}$ = the multiplicative inverse of $\upeta$ in $\K$.
 
 We will write 
 $ \upeta>_{\mathfrak{A}} 0$ resp.\ $ \upeta\geq_{\mathfrak{A}} 0$ 
 if $n_{i}>0$ resp.\ $n_{i}\geq 0$ for all $i$. We will also
 write $\upeta
 >_{\mathfrak{A}} \upgamma$ if $\upeta-\upgamma>_{\mathfrak{A}} 0$.  Notice in this event that 
$ \|\upeta\|_{\mathfrak{A}}>\|\upgamma\|_{\mathfrak{A}}$ and 
   $\|\upeta_{\mathfrak{A}}^{-1}\|_{\mathfrak{A}}<\|\upgamma_{\mathfrak{A}}^{-1}\|_{\mathfrak{A}}$,
   the latter provided the inverses are defined.
Finally, we say that $\uptheta$ is {\bf {\small $K$-irrational}} if $\uptheta\not\in\upnu (K)$ for every archimedean place $\upnu$.
The Theorem which follows reduces to the classical Dirichlet Theorem upon taking $K=\Q$, since $n^{-1}_{\mathfrak{A}}=n^{-1}$.

 \begin{theo}[$K$-Dirichlet Theorem]\label{vecDir} Let $K$ be totally real.  Let $\mathfrak{A}=\{\upalpha_{1},\dots ,\upalpha_{d}\}$ be an integral basis of $K$ and suppose that $\uptheta$ is a real $K$-irrationality.
Then
 for any $\upeta\in \mathcal{O}$ 
 with $\upeta>_{\mathfrak{A}} 0$, 
 there exists $\upgamma,\upgamma^{\perp}\in \mathcal{O}$ with  $\|\upgamma\|_{\mathfrak{A}}<\|\upeta\|_{\mathfrak{A}} $ and 
 \[  \|  \upgamma\uptheta -\upgamma^{\perp}  \|_{\mathfrak{A}}< \|\upeta_{\mathfrak{A}}^{-1}\|_{\mathfrak{A}}. \]
 \end{theo}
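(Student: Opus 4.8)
The plan is to push everything into the $\mathfrak{A}$-coordinates, where the field arithmetic of $\mathcal{O}$ evaporates and the statement becomes $d$ linked copies of the classical one-dimensional Dirichlet box principle. Writing $\gamma=\sum_i g_i\alpha_i$ and $\gamma^{\perp}=\sum_i g_i^{\perp}\alpha_i$ with $g_i,g_i^{\perp}\in\Z$, and using that $\theta$ is inserted into $\K=\R^d$ diagonally — so that multiplication by $\theta$ is simply scalar multiplication — one has $\gamma\theta-\gamma^{\perp}=\sum_i(\theta g_i-g_i^{\perp})\alpha_i$, hence $\|\gamma\|_{\mathfrak{A}}^2=\sum_i g_i^2$, $\|\gamma\theta-\gamma^{\perp}\|_{\mathfrak{A}}^2=\sum_i(\theta g_i-g_i^{\perp})^2$, while $\|\eta\|_{\mathfrak{A}}^2=\sum_i n_i^2$ and $\|\eta_{\mathfrak{A}}^{-1}\|_{\mathfrak{A}}^2=\sum_i n_i^{-2}$. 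So the two inequalities to be produced decouple coordinate by coordinate, coupled only through these sums: it suffices to find integers $g_1,\dots,g_d$, not all zero, with $|g_i|\le n_i$ for every $i$ and $\|g_i\theta\|<1/n_i$ for every $i$ (here $\|\cdot\|$ is distance to the nearest integer), after which $g_i^{\perp}:=$ nearest integer to $g_i\theta$ closes the argument.

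To manufacture such $g_i$ I would designate one index, say $1$, and consider the integer grid $\Gamma=\{0,1,\dots,n_1-1\}\times\prod_{i=2}^d\{0,1,\dots,n_i\}$ together with the map $\Gamma\to[0,1)^d$, $\boldsymbol m\mapsto(\{m_1\theta\},\dots,\{m_d\theta\})$, and partition $[0,1)^d$ into $\prod_i n_i$ boxes by cutting the $i$-th side into $n_i$ intervals of length $1/n_i$. Since $d\ge 2$ gives $|\Gamma|=n_1\prod_{i\ge 2}(n_i+1)>n_1\prod_{i\ge 2}n_i=\prod_i n_i$, the pigeonhole principle produces two distinct $\boldsymbol m,\boldsymbol m'\in\Gamma$ in a common box. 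Setting $g_i=m_i-m_i'$ gives $\boldsymbol g\ne 0$, $|g_1|\le n_1-1$, $|g_i|\le n_i$ otherwise, and $\|g_i\theta\|<1/n_i$ for every $i$ because $\{m_i\theta\}$ and $\{m_i'\theta\}$ lie in a common interval of length $1/n_i$. With the corresponding $\gamma,\gamma^{\perp}$ this yields $\|\gamma\|_{\mathfrak{A}}^2\le(n_1-1)^2+\sum_{i\ge 2}n_i^2<\sum_i n_i^2$ and $\|\gamma\theta-\gamma^{\perp}\|_{\mathfrak{A}}^2<\sum_i n_i^{-2}$, which is precisely the assertion. The case $d=1$ is literally the classical Dirichlet theorem — run the same pigeonhole on $\{0,\{\theta\},\dots,\{(n_1-1)\theta\}\}$ together with the endpoint $1$, against the $n_1$ intervals of length $1/n_1$ — with the trivial subcase $n_1=1$ covered by $\gamma=\gamma^{\perp}=0$.

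The hypothesis that $\theta$ is a real $K$-irrationality is needed only to make the error inequality strict: since $\Q\subset K$, $K$-irrationality forces $\theta\notin\Q$, so $\|g_i\theta\|$ is irrational whenever $g_i\ne 0$ and can never equal the rational number $1/n_i$; thus no boundary case in the box count can degrade a strict inequality. Total realness plays no role in the argument itself beyond making $\eta>_{\mathfrak{A}}0$ the natural analogue of the classical requirement $N>0$ and keeping the order relation $>_{\mathfrak{A}}$ meaningful; the proof uses only the additive lattice structure of $\mathcal{O}$ in the basis $\mathfrak{A}$, because $\theta$ enters diagonally.

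I expect the one delicate point to be securing the strict inequality $\|\gamma\|_{\mathfrak{A}}<\|\eta\|_{\mathfrak{A}}$: a naive pigeonhole on the full grid $\prod\{0,\dots,n_i\}$ only delivers $\le$ (equality occurring exactly when the colliding points are antipodal corners in every coordinate). The remedy is the slight asymmetry built into $\Gamma$ above — shortening one coordinate range to $\{0,\dots,n_1-1\}$, which is harmless once $d\ge 2$ — together with the standard endpoint device when $d=1$. Everything else is routine bookkeeping in the basis $\mathfrak{A}$.
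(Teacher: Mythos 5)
Your proof is correct and takes essentially the same approach as the paper: a $d$-dimensional pigeonhole on the fractional-part vectors $(\{a_1\theta\},\dots,\{a_d\theta\})$, enabled by the observation that the diagonal embedding of $\theta$ makes multiplication by $\theta$ act coordinate-wise in the $\mathfrak{A}$-basis. The only (cosmetic) difference is how one exceeds $N=\prod n_i$ points against $N$ boxes: the paper uses the grid $\prod_i\{0,\dots,n_i-1\}$ of exactly $N$ points and adjoins the corner $1_{\mathfrak{A}}=\sum\alpha_i$ as the $(N+1)$st, whereas you enlarge the grid asymmetrically to $\{0,\dots,n_1-1\}\times\prod_{i\ge 2}\{0,\dots,n_i\}$ when $d\ge 2$ (falling back to the classical endpoint device for $d=1$), and your side remark that only $\Q$-irrationality and the additive $\mathfrak{A}$-lattice structure are actually used is accurate.
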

 
 \begin{proof}  Pigeonhole principal in many dimensions.  Write $\upeta = \sum n_{i}\upalpha_{i}$.   Denote by $P$ the (closed) parallelopiped spanned by the $\upalpha_{i}$.  Let $ [0,\upeta ) := \{ \upbeta\in\mathcal{O}|\; 0\leq_{\mathfrak{A}}\upbeta <_{\mathfrak{A}} \upeta\} $.
 Thus every $\upbeta\in [0,\upeta )$ is of the form $\upbeta=\sum a_{i}\upalpha_{i}$ with $a_{i}\in\Z$
 satisfying $0\leq a_{i}<n_{i}$.
Note that the cardinality of $[0,\upeta )$ is $N=\prod n_{i}$.  
 Subdivide $P$ into $N$ parallelopipeds, each of which is a translate of the parallelopiped
$P_{N} = {\rm span}( n_{1}^{-1}\upalpha_{1},\dots , n_{d}^{-1}\upalpha_{d})$.  
  For each 
 $\upbeta\in [0,\upeta )$ let $ \{\upbeta\uptheta\}\in P$ 
 be the unique point of $P$
 which is an $\mathcal{O}$-translate of $\upbeta\uptheta$
 (unique since $\uptheta$ is $K$-irrational): thus, there exists $\upbeta^{\perp}\in\mathcal{O}$
 with $\{\upbeta\uptheta\}=\upbeta\uptheta-\upbeta^{\perp}\in P$.
Let $1_{\mathfrak{A}}=\sum \upalpha_{i}\in\mathcal{O}\cap P$.
By the pigeon-hole principle, there must be, amongst the $N+1$ numbers
$  \big\{ \{\upbeta\uptheta\}|\; \upbeta\in [0,\upeta )\big\} \cup \{ 1_{\mathfrak{A}} \}$, 
two which belong to the same subparallelopiped of $P$.
It follows that there exist  $\upbeta_{1},\upbeta_{2}\in [0,\upeta )$, $\upbeta_{1}^{\perp},\upbeta_{2}^{\perp}\in\mathcal{O}$ for which 
$ (\upbeta_{1}\uptheta-\upbeta_{1}^{\perp})-(\upbeta_{2}\uptheta-\upbeta_{2}^{\perp}) \in P^{\upsigma}_{N}$
where $\upsigma = (\upsigma_{i})\in\{\pm\}^{d}$ and
$P^{\upsigma}_{N}  :=  {\rm span}( \upsigma_{1}n_{1}^{-1}\upalpha_{1},\dots , \upsigma_{d}n_{d}^{-1}\upalpha_{d})$.
Let $\upgamma=\upbeta_{1}-\upbeta_{2}$ and $\upgamma^{\perp}=\upbeta^{\perp}_{1}-\upbeta^{\perp}_{2}$.   Then $\upgamma=\sum a_{i}\upalpha_{i}$ with $|a_{i}|<n_{i}$, so that
 $\|\upgamma\|_{\mathfrak{A} }<\|\upeta \|_{\mathfrak{A}}$.  
 Since $ \upgamma\uptheta -\upgamma^{\perp} \in
 P_{N}^{\upsigma}$,
 $   \|  \upgamma\uptheta -\upgamma^{\perp}  \|_{\mathfrak{A}}< \sqrt{\sum n_{i}^{-2}} =  
 \| \upeta_{\mathfrak{A}}^{-1}\|_{\mathfrak{A}}$ .
 
 \end{proof}

 Although it is usually left unstated, the approximations by $\upalpha\in K$ 
 occurring in \cite{Bu}, \cite{Sch} are made with 
 regard to a fixed real place embedding $\upnu : K\rightarrow\R$, and are thus {\it local} in nature.  The definition (\ref{ODA}) 
 is {\it global}, in that it regards $\bast\upalpha$ as a vector
 in $\K$ i.e.\ an element of the lattice defined by $\mathcal{O}$ in $\K$, and as such, 
 is allied
to the notion of  {\it approximation by conjugates} 
\cite{RoWa}.  Indeed, condition (\ref{ODA}) is equivalent to asking that 
\begin{align}\label{conjapprox} 
\uptheta\bast\upalpha_{\upnu}-
\bast\upalpha^{\perp}_{\upnu}\in\bast \C_{\upvarepsilon}
\end{align} 
for every infinite place
\footnote{In \cite{RoWa}, 
a bound is provided for the number of places for which $\bast\upalpha$ satisfies (\ref{conjapprox})
with Wirsing-Davenport-Schmidt type bounds (\cite{Wi}
\cite{DS}) on the errors. It may be that this work is related to the problem of finding the optimal exponent $\upkappa>1$
for which
\[  \|  \upgamma\uptheta -\upgamma^{\perp}  \|_{\mathfrak{A}}<  \|\upeta^{-1}\|^{\upkappa}_{\mathfrak{A}} \]
has a solution for infinitely many $\upeta$, for a fixed basis $\mathfrak{A}$.}.
It is natural then to consider the $\upnu$-local group $\bast\mathcal{O}(\uptheta)_{\upnu}$ of 
$\bast\upalpha\in\bast \mathcal{O}$ for which there exists  $\bast\upalpha^{\perp}\in\bast \mathcal{O}$
such that $\bast\upalpha_{\upnu}\uptheta -\bast\upalpha^{\perp}_{\upnu}\in\bast \C_{\upvarepsilon}$
for a fixed archimedean place $\upnu$.   However if $\upnu (\mathcal{O})\cap\R$ is dense in $\R$ (e.g.\ if $\upnu$ is real) then 
\[   \bast\mathcal{O}(\uptheta )_{\upnu} = \bast\mathcal{O}(\uptheta )_{\upnu}^{\perp}= \bast \mathcal{O},\]
so nothing interesting is obtained.
Notwithstanding, the $\upnu$-local error map
\[ \upvarepsilon_{K}(\uptheta )_{\upnu}:\bast\mathcal{O}(\uptheta )_{\upnu}\longrightarrow 
\bast\C_{\upvarepsilon}(\uptheta )\] is non-trivial and gives an obvious test for $K$-rationality.  Note that we have $\upvarepsilon_{K}(\uptheta )=(\upvarepsilon_{K}(\uptheta )_{\upnu})$.

\begin{prop}\label{vlocKerErrMap} There exists $\upnu$ with $\uptheta\in \upnu(K)$ $\Leftrightarrow$ there exists $\upnu$ with ${\rm Ker}(\upvarepsilon_{K}(\uptheta )_{\upnu})\not=0$.
\end{prop}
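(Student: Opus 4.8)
The plan is to unwind both sides of the equivalence against the definition of the $\nu$-local error term map and then transfer a nonstandard identity to its components. Unfolding that definition, a nonzero $\bast\alpha\in\bast\mathcal{O}$ lies in $\mathrm{Ker}\!\left(\upvarepsilon_{K}(\theta)_{\nu}\right)$ precisely when it admits a dual $\bast\alpha^{\perp}\in\bast\mathcal{O}$ with $\bast\alpha_{\nu}\theta-\bast\alpha^{\perp}_{\nu}=0$ in $\bast\C$ --- an \emph{exact} equation, since the kernel is the preimage of $0\in\bast\C_{\upvarepsilon}(\theta)$, and one that descends to the representing sequences on a set in $\mathfrak{u}$.

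For the implication ``$\mathrm{Ker}\neq 0$ for some $\nu$ $\Rightarrow$ $\theta\in\nu(K)$ for some $\nu$'', I would fix such a $\nu$, a nonzero $\bast\alpha\in\mathrm{Ker}(\upvarepsilon_{K}(\theta)_{\nu})$ and a witnessing $\bast\alpha^{\perp}$, with representing sequences $\{\alpha_{i}\},\{\alpha_{i}^{\perp}\}\subset\mathcal{O}$. Since $\bast\alpha\neq 0$ the set $\{i\mid\alpha_{i}\neq 0\}$ lies in $\mathfrak{u}$, and on it $\nu(\alpha_{i})\neq 0$ because $\nu\colon K\hookrightarrow\C$ is a field embedding; intersecting with the $\mathfrak{u}$-set $\{i\mid\nu(\alpha_{i})\theta=\nu(\alpha_{i}^{\perp})\}$ and picking any index $i$ there gives $\theta=\nu(\alpha_{i}^{\perp})/\nu(\alpha_{i})=\nu(\alpha_{i}^{\perp}/\alpha_{i})\in\nu(K)$.

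For the converse, suppose $\theta=\nu(\beta)$ with $\beta\in K$. Clearing denominators, choose $m\in\Z$, $m\neq 0$, with $m\beta\in\mathcal{O}$, and put $\gamma=m$, $\gamma^{\perp}=m\beta$ in $\mathcal{O}$; let $\bast\gamma,\bast\gamma^{\perp}\in\bast\mathcal{O}$ be the classes of the associated constant sequences. Then $\bast\gamma_{\nu}\theta-\bast\gamma^{\perp}_{\nu}=m\theta-m\nu(\beta)=m(\theta-\nu(\beta))=0$, so $\bast\gamma\in\bast\mathcal{O}(\theta)_{\nu}$ (the local infinitesimal condition holds with error $0$), $\bast\gamma\neq 0$, and $\upvarepsilon_{K}(\theta)_{\nu}(\bast\gamma)=0$; hence $\mathrm{Ker}(\upvarepsilon_{K}(\theta)_{\nu})\neq 0$ for this $\nu$.

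There is no real obstacle: the only delicate point is the passage from the nonstandard equation $\bast\alpha_{\nu}\theta=\bast\alpha^{\perp}_{\nu}$ to componentwise equalities $\nu(\alpha_{i})\theta=\nu(\alpha^{\perp}_{i})$ on a $\mathfrak{u}$-set, together with the use of injectivity of $\nu$ to keep the denominators $\nu(\alpha_{i})$ nonzero; everything else is direct substitution.
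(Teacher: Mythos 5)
Your proof is correct and follows essentially the same route as the paper's: the paper's one-line argument ``$\theta\in\nu(K)\Leftrightarrow\exists\,\alpha\in\mathcal{O}$ with $\alpha_{\nu}\theta\in\nu(\mathcal{O})\Leftrightarrow\mathrm{Ker}(\upvarepsilon_{K}(\theta)_{\nu})\neq 0$'' is precisely what you prove, with the two transfer steps (clearing denominators to produce a standard kernel element, and descending a nonstandard exact equation to a $\mathfrak{u}$-set of standard ones to recover $\theta$ as a quotient in $\nu(K)$) spelled out explicitly rather than left implicit.
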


\begin{proof}  $\uptheta\in \upnu (K)$ $\Leftrightarrow$ there exists $\upalpha\in \mathcal{O}$ such that $\upalpha_{\upnu}\uptheta\in\mathcal{O}$
$\Leftrightarrow$ ${\rm Ker}(\upvarepsilon_{K}(\uptheta )_{\upnu})\not=0$.
\end{proof}

The following Proposition shows that the global groups $ \bast\mathcal{O}(\uptheta) $ {\it do not} recognize when $\uptheta\in\upnu (K)$ 
as the groups $\bast\Z (\uptheta )$ recognize $\uptheta\in\Q$.   The problem (as has already been alluded to above) is that we are embedding $\uptheta$
diagonally as an element of $\R$, and not by the $K$-places.

\begin{prop}\label{globaldoesntdist}  Let $\uptheta\in \upnu(K)-\Q$ for some $\upnu$.  Then  \[ \bast\mathcal{O}(\uptheta)\cap \mathcal{O}=0.\]  In particular,
the global error map $\upvarepsilon_{K} (\uptheta):\bast\mathcal{O}(\uptheta)
\rightarrow \bast\mathbb{K}_{\upvarepsilon}(\uptheta )$ is an isomorphism.
\end{prop}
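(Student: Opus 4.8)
The plan is to reduce both assertions to one lattice-theoretic fact: viewing $\mathcal{O}$ as the Minkowski lattice inside $\K$ and $\theta$ as the diagonal scalar $(\theta,\dots,\theta)$, one has $\theta\mathcal{O}\cap\mathcal{O}=\{0\}$. To prove this I would suppose $\alpha\in\mathcal{O}$ with $\alpha\neq 0$ and $\theta\alpha=\gamma$ for some $\gamma\in\mathcal{O}$, and compare coordinates at each infinite place $\mu$: this gives $\theta\,\mu(\alpha)=\mu(\gamma)$, hence $\theta=\mu(\gamma\alpha^{-1})$, where $\gamma\alpha^{-1}$ is formed in $K$. Thus $\beta:=\gamma\alpha^{-1}\in K$ has all of its archimedean conjugates equal to the single real number $\theta$; since an algebraic number all of whose conjugates coincide has degree one over $\Q$, this forces $\beta\in\Q$ and therefore $\theta=\beta\in\Q$, contradicting $\theta\in\nu(K)-\Q$. (In fact only $\theta\notin\Q$ is used.)

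Granting the lattice fact, the first assertion is quick. Given $\alpha\in\bast\mathcal{O}(\theta)\cap\mathcal{O}$, pick $\bast\alpha^{\perp}\in\bast\mathcal{O}$ with $\theta\alpha-\bast\alpha^{\perp}\in\bast\K_{\upvarepsilon}$. Because $\theta\alpha$ is a standard point of $\K$ and $\mathcal{O}$ is a discrete, closed subgroup of $\K$, any representative sequence of $\bast\alpha^{\perp}$ lying in $\mathcal{O}$ and converging to $\theta\alpha$ along $\mathfrak{u}$ must be $\mathfrak{u}$-eventually equal to $\theta\alpha$; hence $\theta\alpha\in\mathcal{O}$, so $\theta\alpha\in\theta\mathcal{O}\cap\mathcal{O}=\{0\}$, and $\alpha=0$ since $\theta\neq 0$. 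For the ``in particular'', observe that $\upvarepsilon_{K}(\theta)$ maps onto $\bast\K_{\upvarepsilon}(\theta)$ by construction, so only triviality of its kernel is at issue. A class $\bast\alpha$ lies in ${\rm Ker}(\upvarepsilon_{K}(\theta))$ precisely when $\theta\bast\alpha=\bast\alpha^{\perp}\in\bast\mathcal{O}$, that is, when $\theta\bast\alpha\in\bast\mathcal{O}$; choosing representatives $\{\alpha_{i}\},\{\alpha_{i}^{\perp}\}$ in $\mathcal{O}$, the index set on which $\theta\alpha_{i}=\alpha_{i}^{\perp}\in\mathcal{O}$ belongs to $\mathfrak{u}$, and the lattice fact forces $\alpha_{i}=0$ on it, so $\bast\alpha=0$ and $\upvarepsilon_{K}(\theta)$ is an isomorphism.

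The only substantive step is the lattice fact, which is short; the rest is bookkeeping. The points to be careful about are (i) genuinely upgrading the infinitesimal relation $\theta\alpha\simeq\bast\alpha^{\perp}$ to the membership $\theta\alpha\in\mathcal{O}$, which is exactly where discreteness of $\mathcal{O}\subset\K$ enters, and (ii) in the kernel computation, working with the \emph{exact} equality $\theta\bast\alpha=\bast\alpha^{\perp}$ rather than with $\simeq$, so that on a $\mathfrak{u}$-large index set one recovers the honest equality $\theta\alpha_{i}=\alpha_{i}^{\perp}$ in $\mathcal{O}$. No Dirichlet-type existence result and no recourse to the $\nu$-local groups is needed.
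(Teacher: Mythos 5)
Your proof is correct, and it takes a slightly different — and arguably cleaner — route than the paper. The paper first treats the case $K/\Q$ Galois, where it argues that the coordinates of $\gamma\theta=\gamma^\perp$ form a complete Galois orbit for $\gamma^\perp$, forcing $\theta\in\Q$, and then reduces the general case by passing to a Galois closure $L/\Q$. You bypass the Galois assumption entirely: the key lattice fact $\theta\mathcal{O}\cap\mathcal{O}=\{0\}$ is established directly by observing that $\beta:=\gamma\alpha^{-1}\in K$ has $\mu(\beta)=\theta$ for \emph{every} archimedean embedding $\mu$, and an algebraic number all of whose conjugates coincide lies in $\Q$. Since the embeddings indexing the coordinates of $\K$ already exhaust all conjugates of $\beta$, no Galois closure is needed; your version also makes transparent that only $\theta\notin\Q$ is used. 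Both proofs rely on the same discreteness of $\mathcal{O}\subset\K$ to upgrade $\theta\gamma\simeq\gamma^\perp$ to the exact equality $\theta\gamma=\gamma^\perp$.

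One further point in your favor: the paper's ``in particular'' suggests that injectivity of $\upvarepsilon_K(\theta)$ is an immediate corollary of $\bast\mathcal{O}(\theta)\cap\mathcal{O}=0$, but the kernel may a priori contain nonstandard elements, so this is not a formal consequence of the first assertion alone. Your argument correctly derives injectivity by applying the lattice fact coordinatewise to representative sequences of a kernel element, which is what is really needed; the two conclusions are siblings, each following from the lattice fact, rather than one following from the other.
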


\begin{proof}  Suppose first that $K/\Q$ is Galois and that there exists 
$0\not=\upgamma\in \bast\mathcal{O}(\uptheta)\cap \mathcal{O}$.  Then since $\mathcal{O}\subset\K$ is discrete, we must have
$\upgamma^{\perp}\in \mathcal{O}$ and therefore $\upvarepsilon_{K}(\uptheta )_{\upnu}(\upgamma )=0$ for all $\upnu$.
Thus $\upgamma^{\perp} = \upgamma\uptheta= (\upgamma_{1}\uptheta,\dots , \upgamma_{d}\uptheta)$ in  $\K$,
and the coordinates of this vector form a complete set of conjugates for $\upgamma^{\perp}$ w.r.t.\ the action of ${\rm Gal}(K/\Q )$.  But this can only be true if $\uptheta\in\Q$, contrary to our hypotheses.  If $K/\Q$ is not Galois, let $L/\Q$ be Galois containing
$K$ with ring of integers $\mathcal{O}_{L}$.  If there exists $0\not=\upgamma\in \bast\mathcal{O}_{K}(\uptheta)\cap \mathcal{O}_{K}$ then $\upgamma\in \bast\mathcal{O}_{L}(\uptheta)\cap \mathcal{O}_{L}$, and the previous argument applies.
\end{proof}

We can obtain a non-trivial group of
 $\upnu$-local diophantine approximations using a variant of the group of denominator numerator pairs
 introduced at the end of \S \ref{ResClassGps}.
Define 
\[  \bast \mathcal{O}^{1,1}(\uptheta)\] to be the group of
pairs $(\bast \upalpha ,\bast \upalpha^{\perp})$ where $\bast\upalpha\in \bast\mathcal{O}(\uptheta)$,
and let $\bast\mathcal{O}^{1,1}(\uptheta)_{\upnu}$ be the associated $\upnu$-local version. 
Note that we get a proper subgroup 
\[ \bast \mathcal{O}^{1,1}(\uptheta)_{\upnu}\subsetneqq
 \bast \mathcal{O}^{2}\quad\text{ and }\quad
   \bast \mathcal{O}^{1,1}(\uptheta) = \bigcap_{\upnu} \bast\mathcal{O}^{1,1}(\uptheta)_{\upnu} .\]   
In this sense, we may think of the $\bast \mathcal{O}^{1,1}(\uptheta)_{\upnu}$ as the $\upnu$-local ``factors''
of $\bast \mathcal{O}^{1,1}(\uptheta)$.

 The approximations studied in \cite{Bu}, \cite{Sch} canonically produce elements of
the local groups $\bast \mathcal{O}^{1,1}(\uptheta)_{\upnu}$ for which the denominator belongs to $\bast\Z$.
For example, consider the main result in Chapter VIII, \S 2 of \cite{Sch}: for a {\it real} algebraic number field $K$ there exists
 $c_{K}$ such that for all $\uptheta\not\in K$, the inequality 
 \begin{align}\label{tradoapprox}    |\uptheta-\upalpha| <c_{K}\max \{1, |\uptheta|^{2}\}H_{K}(\upalpha )^{-2} \end{align}
 has infinitely many solutions $\upalpha\in K$.  
 
 To put this in the $\upnu$-local context, fix the embedding $\upnu :K\hookrightarrow\R$
implicit in the formulation of (\ref{tradoapprox}) and identify $\upalpha$ with $\upnu (\upalpha )$. 
 Let $Q_{K,\upalpha}(X)=a_{d}X^{d}+\cdots + a_{1}X + a_{0}$ be the polynomial proportional to $P_{K,\upalpha}(X)=\prod (X-\upnu (\upalpha ))$ with rational integral coprime coefficients, so that $H_{K}(\upalpha ) = \max |a_{i}|$, and set $a=a_{d}$.  Then $\tilde{\upalpha} :=a\upalpha\in\mathcal{O}_{K}$ (see \cite{Ne}, pg.\ 8).  Since $|a|/H_{K}(\upalpha )\leq 1$ it follows that if $\upalpha$ is a solution of (\ref{tradoapprox}) then
$  |a\uptheta-\tilde{\upalpha}| <c_{K}\max \{1, |\uptheta|^{2}\}H_{K}(\upalpha )^{-1}$.    
 Thus any infinite sequence $\{ \upalpha_{i}\}\subset K$ satisfying (\ref{tradoapprox}) gives rise to 
 \[  ( \bast \tilde{\upalpha} ,\bast a)  \in \bast \mathcal{O}^{1,1}(\uptheta)_{\upnu} \]
 where $\bast a \in\bast\Z$ is the sequence class of leading coefficients of the polynomials $Q_{K,\upalpha_{i}}(X)$.

Note that $ ( \bast \tilde{\upalpha} ,\bast a)$ need not define an element of
 the global group $\bast \mathcal{O}^{1,1}(\uptheta)$.  For example, suppose that $K=\Q (\sqrt{D})$ and $\bast\tilde{\upalpha} = \bast m + \bast n\sqrt{D}$.  Then
 $( \bast \tilde{\upalpha} ,\bast a)\in \bast \mathcal{O}^{1,1}(\uptheta)$ if and only if $\bast n\in\bast\Z (\sqrt{D})$.
To find a sequence $\{\upgamma_{i}\}\subset \mathcal{O}$
 obtained from $\{\upalpha_{i}\}$ satisfying (\ref{tradoapprox}), one would be better served by replacing $a_{i}$ by a sequence
 of ``denominators'' belonging strictly to $\mathcal{O}_{K}$.

We now define some foliated spaces. Given $\boldsymbol z\in\K$, let $V_{\mathbb{K}}(\boldsymbol z )\subset\mathbb{K}^{2}$ be the graph of the map $\boldsymbol w \mapsto \boldsymbol z\boldsymbol w$: the 
{\bf {\small $\mathbb{K}$-Kronecker foliation}} 
\[  \mathfrak{F}_{\mathbb{K}}(\boldsymbol z)\] of slope $\boldsymbol z$  is the foliation of the torus $\T^{2}_{K}=\mathbb{K}^{2}/\mathcal{O}^{2}$ 
obtained by projecting onto $\T^{2}_{K}$ the linear foliation of $\mathbb{K}^{2}$ by planes $\boldsymbol w + V_{\mathbb{K}}(\boldsymbol z)$.
For $\uptheta\in\R-\Q$ diagonally embedded in $\mathbb{K}$ we write $\mathfrak{F}_{\mathbb{K}}(\uptheta )$.  
Note that we have an inclusion $\mathfrak{F}(\uptheta )\hookrightarrow
\mathfrak{F}_{\mathbb{K}}(\uptheta )$ of the classical Kronecker foliation, induced by the diagonal
map $\R\hookrightarrow\mathbb{K}$.
The following Proposition is the geometric parallel of Proposition \ref{globaldoesntdist}:

\begin{prop} Let $\uptheta\in\R-\Q$. The leaves of $\mathfrak{F}_{\mathbb{K}}(\uptheta )$ are isomorphic to $\mathbb{K}$ and are dense,
and  $\mathfrak{F}_{\mathbb{K}}(\uptheta )$
fibers over $\T_{K}$.  There are canonical inclusions of $\mathbb{K}$ and $\T_{K}$ as the leaf resp. the fiber transversal through $\boldsymbol 0$.  
\end{prop}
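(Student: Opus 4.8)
The plan is to do everything in the covering group $\mathbb{K}^{2}$ and push down to $\T^{2}_{K}=\mathbb{K}^{2}/\mathcal{O}^{2}$ along $\pi:\mathbb{K}^{2}\to\T^{2}_{K}$. Write $V_{\mathbb{K}}(\theta)=\{(\boldsymbol u,\theta\boldsymbol u):\boldsymbol u\in\mathbb{K}\}$ for the $d$-dimensional real subspace whose cosets project to the leaves. The first step is to show $V_{\mathbb{K}}(\theta)\cap\mathcal{O}^{2}=\{\boldsymbol 0\}$: if $(\alpha,\beta)\in\mathcal{O}^{2}$ satisfies $\beta=\theta\alpha$ in $\mathbb{K}$ with $\alpha\neq 0$, then $\theta=\nu(\beta\alpha^{-1})$ for every archimedean place $\nu$, and an element of $K$ all of whose conjugates equal the fixed real number $\theta$ must lie in $\Q$ (minimal-polynomial argument, passing to the Galois closure if $K/\Q$ is not Galois) -- this is exactly the argument used in the proof of Proposition \ref{globaldoesntdist}, and it contradicts $\theta\in\R-\Q$. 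Consequently $\pi$ is injective on every affine subspace $\boldsymbol w+V_{\mathbb{K}}(\theta)$, so every leaf is homeomorphic to $V_{\mathbb{K}}(\theta)$, which the projection onto the first coordinate identifies linearly with $\mathbb{K}$; for $\boldsymbol w=\boldsymbol 0$ this is the asserted canonical inclusion $\mathbb{K}\hookrightarrow\T^{2}_{K}$, $\boldsymbol u\mapsto\pi(\boldsymbol u,\theta\boldsymbol u)$.

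For density it suffices to treat the leaf $\ell_{0}=\pi(V_{\mathbb{K}}(\theta))$ through $\boldsymbol 0$, since every leaf is a translate of $\ell_{0}$ in the topological group $\T^{2}_{K}$ and translates of dense sets are dense. Now $\ell_{0}$ is a subgroup of $\T^{2}_{K}$ that surjects onto the first factor $\T_{K}$ under the projection $p:\T^{2}_{K}\to\T_{K}$ and contains the points $\pi(\alpha,\theta\alpha)=(\bar{\boldsymbol 0},\overline{\theta\alpha})$ for $\alpha\in\mathcal{O}$; hence $\overline{\ell_{0}}$ is a closed subgroup containing $p^{-1}(\bar{\boldsymbol 0})=\{\bar{\boldsymbol 0}\}\times\T_{K}$ and surjecting onto $\T_{K}$ -- so $\overline{\ell_{0}}=\T^{2}_{K}$ -- provided $\overline{\theta\mathcal{O}}=\T_{K}$, i.e.\ $\theta\mathcal{O}+\mathcal{O}$ is dense in $\mathbb{K}$. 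To see the latter, suppose not; then some nonzero continuous character of $\mathbb{K}$, necessarily of the form $\boldsymbol x\mapsto\langle\boldsymbol v,\boldsymbol x\rangle\bmod\Z$ with $\boldsymbol 0\neq\boldsymbol v\in\mathbb{K}$ and $\langle\cdot,\cdot\rangle$ the Euclidean inner product, vanishes on $\theta\mathcal{O}+\mathcal{O}$. Because $\theta$ is a real scalar we have $\langle\boldsymbol v,\theta\alpha\rangle=\theta\langle\boldsymbol v,\alpha\rangle$, so the subgroup $\{\langle\boldsymbol v,\alpha\rangle:\alpha\in\mathcal{O}\}=m\Z\subseteq\Z$ satisfies $\theta m\in\Z$; if $m\neq 0$ this gives $\theta\in\Q$, and if $m=0$ then $\boldsymbol v\perp\mathcal{O}$ forces $\boldsymbol v=\boldsymbol 0$ ($\mathcal{O}$ being a full lattice) -- either way a contradiction.

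Finally, the first-coordinate projection $\mathbb{K}^{2}\to\mathbb{K}$ carries $\mathcal{O}^{2}$ onto $\mathcal{O}$ and so descends to a fibre bundle $p:\T^{2}_{K}\to\T_{K}$ with fibre $p^{-1}(\bar{\boldsymbol 0})=\{\bar{\boldsymbol 0}\}\times\T_{K}\cong\T_{K}$; since $V_{\mathbb{K}}(\theta)$ is a graph over the first coordinate, $V_{\mathbb{K}}(\theta)\oplus(\{\boldsymbol 0\}\times\mathbb{K})=\mathbb{K}^{2}$, so the leaves of $\mathfrak{F}_{\mathbb{K}}(\theta)$ are everywhere transverse to the fibres of $p$ and each leaf covers the base $\T_{K}$. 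The map $\bar{\boldsymbol w}\mapsto(\bar{\boldsymbol 0},\bar{\boldsymbol w})$ is the canonical inclusion of $\T_{K}$ as the fibre transversal through $\boldsymbol 0$. The one substantive step is the density claim in the middle paragraph: one has to see that mere $\Q$-irrationality of the scalar $\theta$ already makes $\theta\mathcal{O}$ equidistribute over the whole Minkowski torus $\T_{K}$, and here it is crucial that $\theta$ acts as a real scalar, hence compatibly with the Euclidean pairing on $\mathbb{K}$.
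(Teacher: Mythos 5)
Your proof is correct, and it is considerably more detailed than the paper's. The paper realizes $\mathfrak{F}_{\mathbb{K}}(\theta)$ as the suspension $(\mathbb{K}\times\T_K)/\mathcal{O}_K$ (the K3 model of \S\ \ref{ResClassGps}) and then simply \emph{asserts} that the $\mathcal{O}_K$-action on the transversal $\T_K$ by $\bar{\boldsymbol\xi}\mapsto\overline{\boldsymbol\xi-\theta\alpha}$ has dense orbits, from which the surviving inclusions of $\mathbb{K}$ and $\T_K$ are read off at once. You work instead in the covering model $\mathbb{K}^{2}/\mathcal{O}^{2}$ (the K1 presentation), but the essential content is the same either way: density of leaves reduces to density of $\theta\mathcal{O}$ modulo $\mathcal{O}$ in $\T_{K}$, which is exactly the dense-orbit claim the paper leaves unjustified. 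Your Pontryagin-duality/character argument supplies a clean proof of that claim, and you correctly isolate the point that makes it work -- that $\theta$ acting as a \emph{real scalar} gives $\langle\boldsymbol v,\theta\alpha\rangle=\theta\langle\boldsymbol v,\alpha\rangle$, so the annihilator subgroup $m\Z$ of $\mathcal{O}$ is forced to satisfy $\theta m\in\Z$. This genuinely fills a gap in the paper's terse exposition. Two confirmations of the rest: the intersection step $V_{\mathbb{K}}(\theta)\cap\mathcal{O}^{2}=\{\boldsymbol 0\}$ is, as you observe, the same conjugate-set argument as in Proposition \ref{globaldoesntdist}; and passing from the first-coordinate fibration $\T^{2}_{K}\to\T_{K}$ with the graph $V_{\mathbb{K}}(\theta)$ transverse to the fibres is precisely the dictionary between the K1 and K3 descriptions.
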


\begin{proof} $\mathfrak{F}_{\mathbb{K}}(\uptheta )$ can be identified with the suspension
$(\mathbb{K}\times \T_{K}))/\mathcal{O}_{K}$ through the action
$(\boldsymbol w, \overline{\boldsymbol \upxi} )\mapsto (\boldsymbol w+\upalpha , \overline{\boldsymbol \upxi  - \uptheta\upalpha})$, and
since the action on the right factor by $\mathcal{O}_{K}$ has dense orbits, the inclusions $\mathbb{K}, \T_{K}\hookrightarrow
\mathbb{K}\times \T_{K}$ survive the quotient.
\end{proof}

  \begin{note} If $\upbeta\in K$, then $\mathfrak{F}_{\mathbb{K}}(\upbeta)$ ($\upbeta$ place-embedded in $\K$) is non-minimal having leaves
isomorphic to $\T_{(\upbeta )^{-1}} = \mathbb{K}/(\upbeta )^{-1}$.
\end{note}

 Let 
 \[ \bbull\mathbb{K} = \bast\mathbb{K}/\bast\mathbb{K}_{\upvarepsilon}=(\K\times\bast\mathcal{O} )/\mathcal{O}.\]  As in \S \ref{ResClassGps}, a section topology on $\bast\mathcal{O}$
 is furnished by an $\mathcal{O}$-set section $\upsigma$ to the standard part epimorpism $\bast\mathcal{O}\rightarrow\T_{K}$,
 used in turn to topologize $\bbull\K$ via the suspension structure
$\bbull\K =(\K\times\bast\mathcal{O} )/\mathcal{O}$. 
 The group ${\sf PSL}_{2}(\mathcal{O})$, acting on $\overline{\K}=\K\cup\{\infty\}$, provides the notion of {\bf {\small $K$-equivalence}}.
 For each element $\upalpha\in K\cup \{ \infty\}$ we may associate the transversal subgroup $\T_{K,\upalpha}$ with ``slope" $\upalpha$
 (as before $\T_{K,\infty}=\T_{K}$), obtaining
 new section topologies based on $\T_{K,\upalpha}$, and the set of such 
 ${\rm Top}_{K}(\uptheta )$ is natural with respect to $K$-equivalence.
 Writing \[ \hat{\mathbb{K}}_{\upalpha}=(\bbull \mathbb{K}, \mathbb{K},\bast\mathcal{O}_{\upalpha} ),\] equipped with a
 corresponding section topology, we have the analogue of Theorem \ref{kron}:
 \begin{prop}$   \hat{\mathbb{K}}_{\upalpha}/\bast\mathcal{O}(\uptheta)  \cong  (\mathfrak{F}_{\mathbb{K}}(\uptheta ), \T_{K,\upalpha} ) $.
 If $K/\Q$ is Galois then ${\rm Gal}(K/\Q)$  acts by
 automorphisms of the pair $ (\mathfrak{F}_{\mathbb{K}}(\uptheta ), \T_{K} )$ acting trivially 
 on  $(\mathfrak{F}(\uptheta ), \T )\subset (\mathfrak{F}_{\mathbb{K}}(\uptheta ), \T_{K} )$.

 \end{prop}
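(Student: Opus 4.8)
The plan is to mimic the proof of Theorem~\ref{kron} almost verbatim, replacing $\bbull\R$ by $\bbull\mathbb{K}$, the line of slope $\theta$ by the $\mathbb{K}$-plane $V_{\mathbb{K}}(\theta)$, the torus $\T^{2}$ by $\T^{2}_{K}=\mathbb{K}^{2}/\mathcal{O}^{2}$, and $\bast\Z(\theta)$ by $\bast\mathcal{O}(\theta)$. First I would define a continuous homomorphism ${\sf std}_{\theta}\colon\bbull\mathbb{K}\rightarrow\T^{2}_{K}$ sending a class $\bbull w$ with representative $\{w_{i}\}\subset\mathbb{K}$ to the unique $\mathfrak{u}$-recognized limit point, in the compact torus $\T^{2}_{K}$, of the projection of $\{(w_{i},\theta w_{i})\}\subset V_{\mathbb{K}}(\theta)$. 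Independence of the chosen representative and additivity follow exactly as in Theorem~\ref{kron}, and continuity holds for the section topology on $\bbull\mathbb{K}$ built, as after Proposition~\ref{folforbullR} and Lemma~\ref{sectionofspseq}, from an $\mathcal{O}$-set section of the standard part epimorphism $\bast\mathcal{O}_{\alpha}\twoheadrightarrow\T_{K,\alpha}$.

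Next I would identify the image and kernel of ${\sf std}_{\theta}$. The leaf $\boldsymbol 0+\mathbb{K}$ maps onto the leaf of $\mathfrak{F}_{\mathbb{K}}(\theta)$ through $\bar 0$ and, in general, the leaf $\bbull w+\mathbb{K}$ onto the leaf of $\mathfrak{F}_{\mathbb{K}}(\theta)$ through ${\sf std}_{\theta}(\bbull w)$; the distinguished transversal $\bast\mathcal{O}_{\alpha}$ maps onto $\T_{K,\alpha}$; and surjectivity onto $\T^{2}_{K}$ is immediate from density of the leaves of $\mathfrak{F}_{\mathbb{K}}(\theta)$ (the preceding Proposition). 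Unwinding definitions, ${\sf std}_{\theta}(\bbull w)=\bar 0$ exactly when $\bbull w$ admits a representative $\{\alpha_{i}\}\subset\mathcal{O}$ together with $\{\alpha_{i}^{\perp}\}\subset\mathcal{O}$ and $X\in\mathfrak{u}$ satisfying $\theta\alpha_{i}-\alpha_{i}^{\perp}\rightarrow 0$ along $X$, i.e.\ exactly when $\bbull w\in\bast\mathcal{O}(\theta)$ viewed as a subgroup of $\bbull\mathbb{K}$; to pin this identification down I would invoke $\mathbb{K}\cap\bast\mathcal{O}=\mathcal{O}$ in $\bbull\mathbb{K}$ together with $\bast\mathcal{O}(\theta)\cap\mathcal{O}=0$ — the latter being Proposition~\ref{globaldoesntdist} when $\theta\in\nu(K)$ for some $\nu$, and a one-line discreteness argument (as in the proof of Proposition~\ref{globaldoesntdist}) when $\theta$ is $K$-irrational. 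With ${\rm Ker}({\sf std}_{\theta})=\bast\mathcal{O}(\theta)$ established, ${\sf std}_{\theta}$ descends to the asserted topological isomorphism of pairs $\widehat{\mathbb{K}}_{\alpha}/\bast\mathcal{O}(\theta)\cong(\mathfrak{F}_{\mathbb{K}}(\theta),\T_{K,\alpha})$, the section topology having been chosen precisely so that the induced map on the transversal $\bast\mathcal{O}_{\alpha}/\bast\mathcal{O}(\theta)\rightarrow\T_{K,\alpha}$ is a homeomorphism.

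For the Galois assertion I would argue as follows. When $K/\Q$ is Galois, each $\sigma\in{\rm Gal}(K/\Q)$ acts on $\mathbb{K}$ by permuting the place-coordinates; since $\theta$ is embedded diagonally, $\sigma$ fixes $\theta$, hence preserves $V_{\mathbb{K}}(\theta)$ and the lattice $\mathcal{O}^{2}$, so it descends to an automorphism of the pair $(\mathfrak{F}_{\mathbb{K}}(\theta),\T_{K})$. Because $\sigma$ commutes both with $w\mapsto(w,\theta w)$ and with $\mathfrak{u}$-limits in $\T^{2}_{K}$, it intertwines with ${\sf std}_{\theta}$, hence is compatible with the automorphism of $\bast\mathcal{O}(\theta)$ recorded in \S\ref{daringint} and descends to the quotient, matching the geometric action. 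Finally $\sigma$ fixes the diagonal $\R\subset\mathbb{K}$ pointwise, and so fixes every point of the image in $\T^{2}_{K}$ of $\R^{2}\subset\mathbb{K}^{2}$, which is exactly the sub-pair $(\mathfrak{F}(\theta),\T)=(\T^{2},L(\theta),\T)\subset(\mathfrak{F}_{\mathbb{K}}(\theta),\T_{K})$.

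I expect the only genuinely non-routine point, beyond transcribing the proof of Theorem~\ref{kron}, to be the section-topology bookkeeping: a chosen $\tau\in{\rm Top}_{K}(\theta)$ is generally not $\sigma$-invariant, because $\sigma$ transports the defining section, although it does fix the transversal $\T_{K}$ — which is precisely why the statement is phrased with $\T_{K}$ rather than a general $\T_{K,\alpha}$ (the latter $\sigma$ carries to $\T_{K,\sigma\alpha}$). Thus the phrase ``acts by automorphisms of the pair'' will have to be read, exactly as for $K$-equivalence in the discussion immediately preceding the Proposition, up to the canonical identification of homeomorphic section topologies within ${\rm Top}_{K}(\theta)$; once that convention is in force, everything else is a direct adaptation of Theorem~\ref{kron}.
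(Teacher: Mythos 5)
Your proposal is correct and follows essentially the same route as the paper, which itself compresses the argument to a two-sentence sketch: ``same as Theorem~\ref{kron}'' for the isomorphism, and ``${\rm Gal}(K/\Q)$ preserves the lattice $\bast\mathcal{O}(\theta)$ and the transversal $\T_{K}$'' for the Galois clause. You have merely written out that sketch — defining ${\sf std}_{\theta}\colon\bbull\mathbb{K}\to\T^{2}_{K}$ via $\mathfrak{u}$-recognized limit points, identifying ${\rm Ker}({\sf std}_{\theta})=\bast\mathcal{O}(\theta)$, checking the transversal behaves correctly, and noting that $\sigma$ permutes places, fixes the diagonal $\R\subset\K$, and preserves $\T_K=\T_{K,\infty}$ — and your closing observation that a generic $\T_{K,\alpha}$ is carried to $\T_{K,\sigma\alpha}$ (explaining the choice of $\T_K$ in the Galois clause) is a sound refinement rather than a departure.
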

 \begin{proof} The proof is basically the same as Theorem \ref{kron}: the epimorphism $\bbull\K\rightarrow\T^{2}_{K}$ is defined
by mapping classes $\{\boldsymbol z_{i}\}\in\bbull\boldsymbol z$ to the leaf through $0$ of $\mathfrak{F}_{\K}(\uptheta)$
and the rest follows as in Theorem \ref{kron}.  If $K/\Q$ is Galois, then ${\rm Gal}(K/\Q)$ preserves the lattice $\bast\mathcal{O}(\uptheta)$
and the transversal $\T_{K}$, from which the last statement follows.
 \end{proof}
 
Let $\bbull\K^{2}$ have a product section topology coming from section topologies defined on the
extended lines $\bbull L_{K}(\uptheta )$ and $\bbull L_{K}(\uptheta^{-1} )$ (as in \S \ref{ResClassGps}) and define the extended Kronecker foliation 
\[ \mathfrak{F}^{1,1}_{\mathbb{K}} (\uptheta )=\bbull\mathbb{K}^{2}/\bast\mathcal{O}^{1,1}(\uptheta )\] following the prescription given in \S \ref{ResClassGps}, producing a foliation by planes isomorphic to $\mathbb{K}^{2}$. 
In this context we introduce foliations associated to the groups $\bast\mathcal{O}^{1,1}(\uptheta )_{\upnu}$,
\[  \mathfrak{F}^{1,1}_{\K}(\uptheta )_{\upnu} := \bbull\mathbb{K}^{2}/\bast\mathcal{O}^{1,1}(\uptheta )_{\upnu}. \]
Since $\mathfrak{F}^{1,1}_{\K}(\uptheta )_{\upnu}$ is a quotient of $\mathfrak{F}^{1,1}_{\K}(\uptheta )$, we give it the quotient
foliated structure, which may not be transversally Hausdorff.  Note that $\bast\mathcal{O}^{1,1}(\uptheta )_{\upnu}\not\subset  \bbull L_{\K}(\uptheta )$.
We regard $\mathfrak{F}^{1,1}_{\mathbb{K}} (\uptheta )$ as the ``compositum''
of the $\mathfrak{F}(\uptheta )^{1,1}_{\upnu}$: that is, $\mathfrak{F}^{1,1}_{\K}(\uptheta )$  
is the minimal foliation isogenously covering all of the $\mathfrak{F}^{1,1}_{\K}(\uptheta )_{\upnu}$ (the geometric factors of $\mathfrak{F}^{1,1}_{\mathbb{K}} (\uptheta )$).
The relationships between Kronecker foliations and their extended counterparts (including those defined in \S \ref{ResClassGps}) are summarized by the following diagram:
\begin{diagram}
& &\mathfrak{F}_{\K}(\uptheta ) & \subset &  \mathfrak{F}^{1,1}_{\K}(\uptheta ) &  &   \\ 
&\ruInto & & \ruInto& & & \\
\mathfrak{F}(\uptheta ) &\subset &\mathfrak{F}^{1,1}(\uptheta )	& & \dOnto_{{\rm pr}_{\upnu}} & &  \\
& & & \rdInto& & & \\
& & & &  \mathfrak{F}^{1,1}_{\K}(\uptheta )_{\upnu} & & 
\end{diagram}
where ${\rm pr}_{\upnu}$ is the natural projection and the diagonal arrows are inclusions.

\begin{prop} There exists $\upnu$ with $\uptheta\in \upnu(K)$ $\Leftrightarrow$ there exists $\upnu$ with $\bast\mathcal{O}^{1,1}(\uptheta )_{\upnu}\cap
\mathcal{O}^{2}\not=0$ $\Leftrightarrow$ there exists $\upnu$ such that $\mathfrak{F}^{1,1}(\uptheta )_{\upnu}$ has
nonplanar leaves.
\end{prop}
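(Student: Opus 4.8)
\emph{Plan.} It suffices to prove, for each fixed archimedean place $\nu$ of $K$, the chain of equivalences: $\theta\in\nu(K)$ $\Leftrightarrow$ $\bast\mathcal{O}^{1,1}(\theta)_{\nu}\cap\mathcal{O}^{2}\neq 0$ $\Leftrightarrow$ $\mathfrak{F}^{1,1}(\theta)_{\nu}$ has nonplanar leaves; the three displayed assertions are then obtained by quantifying existentially over $\nu$. Here $\mathfrak{F}^{1,1}(\theta)_{\nu}=\bbull\mathbb{K}^{2}/\bast\mathcal{O}^{1,1}(\theta)_{\nu}$ is the quotient of the extended foliation $\mathfrak{F}^{1,1}_{\mathbb{K}}(\theta)$ by the enlarged group $\bast\mathcal{O}^{1,1}(\theta)_{\nu}\supseteq\bast\mathcal{O}^{1,1}(\theta)$.

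\emph{First equivalence.} This is the argument of Proposition~\ref{vlocKerErrMap} again. A \emph{standard} pair $(\alpha,\alpha^{\perp})\in\mathcal{O}^{2}$ lies in $\bast\mathcal{O}^{1,1}(\theta)_{\nu}$ exactly when $\nu(\alpha)\theta-\nu(\alpha^{\perp})\in\bast\C_{\upvarepsilon}$; being a standard complex number this difference is infinitesimal iff it vanishes, i.e.\ $\nu(\alpha)\theta=\nu(\alpha^{\perp})$. If the pair is non-zero then necessarily $\alpha\neq 0$ (else $\nu(\alpha^{\perp})=0$ forces $\alpha^{\perp}=0$), whence $\theta=\nu(\alpha^{\perp}/\alpha)\in\nu(K)$. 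Conversely, if $\theta=\nu(\beta)$ with $\beta=\gamma/\delta$, $\gamma,\delta\in\mathcal{O}$, $\delta\neq 0$, then $\nu(\delta)\theta=\nu(\gamma)$, so $(\delta,\gamma)$ is a non-zero element of $\bast\mathcal{O}^{1,1}(\theta)_{\nu}\cap\mathcal{O}^{2}$. Equivalently, $\bast\mathcal{O}^{1,1}(\theta)_{\nu}\cap\mathcal{O}^{2}\neq 0\Leftrightarrow{\rm Ker}(\upvarepsilon_{K}(\theta)_{\nu})\cap\mathcal{O}\neq 0\Leftrightarrow\theta\in\nu(K)$.

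\emph{Second equivalence.} I plan to mimic the proofs of Theorems~\ref{kron} and \ref{indeptheo}, computing the leaf of $\mathfrak{F}^{1,1}(\theta)_{\nu}$ through the origin as a quotient of a contractible Euclidean leaf by a standard discrete group. Recall that $\bbull\mathbb{K}^{2}$ is given the section topology used to build $\mathfrak{F}^{1,1}_{\mathbb{K}}(\theta)$; under it the leaf through $0$ is the standard copy $\mathbb{K}^{2}\subset\bbull\mathbb{K}^{2}$ (before $\nu$-localization these are the planes $\cong\mathbb{K}^{2}$ foliating $\mathfrak{F}^{1,1}_{\mathbb{K}}(\theta)$). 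Exactly as in Theorem~\ref{kron} one constructs a continuous epimorphism $\bbull\mathbb{K}^{2}\twoheadrightarrow\mathfrak{F}^{1,1}(\theta)_{\nu}$ with kernel $\bast\mathcal{O}^{1,1}(\theta)_{\nu}$ which carries each leaf of $\bbull\mathbb{K}^{2}$ onto a leaf of the quotient; since $\bast\mathcal{O}^{1,1}(\theta)_{\nu}$ permutes the leaves of $\bbull\mathbb{K}^{2}$ by translation, the leaf through the origin is the image of $\mathbb{K}^{2}+\bast\mathcal{O}^{1,1}(\theta)_{\nu}$, that is $\mathbb{K}^{2}/\bigl(\mathbb{K}^{2}\cap\bast\mathcal{O}^{1,1}(\theta)_{\nu}\bigr)$. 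Because $\mathcal{O}$ is discrete in $\mathbb{K}$, the images of $\mathcal{O}$ and $\mathbb{K}$ in $\bbull\mathbb{K}$ meet precisely in $\mathcal{O}$ (just as $\bast\Z\cap\R=\Z$), so $\mathbb{K}^{2}\cap\bast\mathcal{O}^{1,1}(\theta)_{\nu}=\mathcal{O}^{2}\cap\bast\mathcal{O}^{1,1}(\theta)_{\nu}$ --- a subgroup of the Minkowski lattice $\mathcal{O}^{2}$, hence discrete in $\mathbb{K}^{2}$. Consequently the leaf through the origin, and by transitivity of the group $\bbull\mathbb{K}^{2}$ on itself every leaf, is isomorphic to $\mathbb{K}^{2}$ (planar) when $\mathcal{O}^{2}\cap\bast\mathcal{O}^{1,1}(\theta)_{\nu}=0$ and is a non-simply-connected manifold, with free abelian fundamental group of positive rank, otherwise. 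Together with the first equivalence this yields the Proposition.

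\emph{Main obstacle.} The delicate point is the identification, in the second step, of the leaf through the origin with $\mathbb{K}^{2}/(\mathcal{O}^{2}\cap\bast\mathcal{O}^{1,1}(\theta)_{\nu})$: one must verify that the standard leaf $\mathbb{K}^{2}$ is neither folded onto other leaves nor collapsed beyond the action of its standard sublattice --- equivalently, that the genuinely nonstandard elements of $\bast\mathcal{O}^{1,1}(\theta)_{\nu}$ act transversally while only those lying in $\mathcal{O}^{2}$ act within a leaf --- and that the possible transverse non-Hausdorffness of $\mathfrak{F}^{1,1}(\theta)_{\nu}$, noted right after its definition, does not disturb the intrinsic leaf topology. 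This is exactly the section-topology bookkeeping already performed, in simpler guises, in the proofs of Theorems~\ref{kron} and \ref{indeptheo}, now transported to $\bbull\mathbb{K}^{2}$ and to the coarser quotient by the enlarged group; once it is in place the remaining steps are formal.
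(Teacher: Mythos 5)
Your proposal is correct and follows essentially the same route as the paper's proof: the paper treats the first bi-implication as ``clear'' (you flesh out the reduction to Proposition~\ref{vlocKerErrMap}, i.e.\ that a nonzero standard pair in $\bast\mathcal{O}^{1,1}(\theta)_{\nu}$ forces $\nu(\alpha)\theta=\nu(\alpha^{\perp})$, hence $\theta\in\nu(K)$, and conversely), and for the second the paper likewise argues that a nonzero element of $\bast\mathcal{O}^{1,1}(\theta)_{\nu}\cap\mathcal{O}^{2}$ stabilizes the standard plane $\K^{2}\subset\bbull\K^{2}$ producing nonplanar leaf topology, with the converse via uniformity of the translation action. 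Your write-up is merely more explicit, identifying each leaf with $\K^{2}/\bigl(\mathcal{O}^{2}\cap\bast\mathcal{O}^{1,1}(\theta)_{\nu}\bigr)$ and invoking discreteness of $\mathcal{O}^{2}$ in $\K^{2}$, but the underlying argument coincides with the paper's.
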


\begin{proof} The first bi-implication is clear.  If $\bast\mathcal{O}_{\upnu}^{1,1}(\uptheta )\cap
\mathcal{O}^{2}$ is non-trivial for some $\upnu$, the standard plane $\mathbb{K}^{2}\subset\bbull\K^{2}$ is stabilized by it
and its image in $\mathfrak{F}^{1,1}(\uptheta )_{\upnu}$ is non planar.  The converse is true  since
if one leaf has topology, all do, and the same topology since the action by $\bast\mathcal{O}^{1,1}(\uptheta )_{\upnu}$
is by translation.  This happens only if $\bast\mathcal{O}^{1,1}(\uptheta )_{\upnu}\cap
\mathcal{O}^{2}\not=0$. 
\end{proof}

Let ${\boldsymbol M}\in M_{r,s}({\K})$ =  the algebra of $r\times s$ matrices with entries in $\K$.  The diophantine approximation group $\bast\mathcal{O}^{s}({\boldsymbol M})$ is defined as in \S \ref{higher}.  For any algebraic extension $L/K$ we have the
diagonal place embeddings $\K\hookrightarrow \mathbb L$, and therefore 
$\bast \mathcal{O}_{K}^{s}({\boldsymbol M})\subset \bast \mathcal{O}_{L}^{s}({\boldsymbol M})$.
In the case of a matrix $\Uptheta\in M_{r,s}(\R)$ (diagonally embedded in $ M_{r,s}({\K})$) we write $\bast\mathcal{O}^{s}(\Uptheta )$, and we have $\bast\Z^{s}(\Uptheta )\subset \bast\mathcal{O}^{s}(\Uptheta )$.  The diophantine approximation groups 
$\bast\mathcal{O}^{s}(\Uptheta )$ are Galois natural.
The $\upnu$-local groups $\bast\mathcal{O}^{s}(\Uptheta)_{\upnu}$ may be defined as well; they are generically 
equal to $\bast\mathcal{O}^{s}$, but the groups  $\bast\mathcal{O}^{s,r}(\Uptheta)_{\upnu}\subset\bast\mathcal{O}^{s+r}$ of ``denominator numerator '' pairs
are non trivial.   Denote as before the
error map $\upvarepsilon_{K} (\Uptheta )_{\upnu}:\bast\mathcal{O}^{s}(\Uptheta)_{\upnu}\longrightarrow \bast\mathbb{R}_{\upvarepsilon}^{r}$.
Let $\bast\tilde{\mathcal{O}}^{s}(\Uptheta )_{\upnu}<
\bast\mathcal{O}^{s}(\Uptheta )_{\upnu}$ be the kernel of the duality map $\perp_{\upnu}: \bast\mathcal{O}^{s}(\Uptheta )_{\upnu}\longrightarrow \bast\mathcal{O}^{r}(\Uptheta )^{\perp}_{\upnu}$.  

It is in this context that we study (in)homogeneous independence over $K$, a concept which requires a word of explanation.  For each $i=1,\dots, r$, let 
\[ \boldsymbol e^{K}_{i}=(0,\dots ,0,1_{K},0,\dots 0)\in\K^{r},\] where $1_{K}=(1,\dots ,1)\in\K$ is in the $i$th place.  The set $\{\boldsymbol e^{K}_{i}\}$ forms a basis for $\K^{r}$ as a $\K$-module.  We say that a set  $\boldsymbol v_{1},\dots ,\boldsymbol v_{m}\in\K^{r}$ is (in)homogeneously independent over $K$
if the set $\boldsymbol v_{1},\dots ,\boldsymbol v_{m}$ (along with $\boldsymbol e^{K}_{1},\dots ,\boldsymbol e^{K}_{r}$) is independent over $K$.


\begin{theo}\label{KErrorTermHomDep} The columns of $\Uptheta$ are (inhomogeneously) independent over $K$ $\Leftrightarrow$ for all $\upnu$ the (in)homogeneous error map
\[ 
\left(\upvarepsilon_{K} (\Uptheta )_{\upnu}:\bast\mathcal{O}^{s}(\Uptheta )_{\upnu}\longrightarrow \bast\R^{r}_{\upvarepsilon}\right)\quad
\tilde{\upvarepsilon}_{K} (\Uptheta )_{\upnu}: \bast\tilde{\mathcal{O}}^{s}(\Uptheta )_{\upnu}\longrightarrow \bast\R^{r}_{\upvarepsilon}  \] 
is injective $\Leftrightarrow$ $(\bast\mathcal{O}^{s,r}(\Uptheta )_{\upnu})$ $\bast\tilde{\mathcal{O}}^{s,r}(\Uptheta )_{\upnu}$ contains no nontrivial $\bast\mathcal{O}^{r+s}$-ideals
for all $\upnu$.
\end{theo}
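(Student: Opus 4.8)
The plan is to follow the template already established by Theorems \ref{homogeneouslinindep} and \ref{inhomoglinindep}, adapting it from the $\Q$-setting to the $K$-setting while keeping track of the $\nu$-localization. The key observation is that the whole statement is really about the kernel of a $K$-linear map, so I would begin by computing $\ker(\tilde\upvarepsilon_K(\Theta)_\nu)$ explicitly. An element $\bast{\boldsymbol\alpha}=(\bast\alpha_1,\dots,\bast\alpha_s)\in\bast\tilde{\mathcal{O}}^s(\Theta)_\nu$ lies in this kernel precisely when $\Theta\bast{\boldsymbol\alpha}=\boldsymbol 0$ already in $\bast\K^r$ (not merely infinitesimally), i.e.\ when $\bast\alpha_1\Theta^{(1)}+\cdots+\bast\alpha_s\Theta^{(s)}=\boldsymbol 0$ after passing to the $\nu$-coordinate. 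Since $\Theta$ has real entries and the $\nu$-coordinate of $\bast\alpha_j$ ranges over $\bast\R$, by \L o\'s this happens for a nonzero $\bast{\boldsymbol\alpha}$ iff there is a nonzero tuple in $\nu(\mathcal{O})^s$ (equivalently, after clearing denominators, in $\mathcal{O}^s$, equivalently in $K^s$) satisfying the same relation --- which is exactly the failure of $K$-linear independence of the columns $\Theta^{(1)},\dots,\Theta^{(s)}$. The inhomogeneous case is identical after adjoining the basis vectors $\boldsymbol e_1^K,\dots,\boldsymbol e_r^K$ of $\K^r$: $\ker(\upvarepsilon_K(\Theta)_\nu)\neq 0$ iff $\{\Theta^{(1)},\dots,\Theta^{(s)},\boldsymbol e_1^K,\dots,\boldsymbol e_r^K\}$ is $K$-dependent, i.e.\ iff the columns are inhomogeneously $K$-dependent.

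The first biconditional then follows at once: a $K$-linear homomorphism is injective iff its kernel is zero, and by the dichotomy just established the kernel is zero for \emph{all} $\nu$ iff the relevant independence holds (note that since the entries of $\Theta$ are real and the defining linear relation has coefficients in $K\subset\R$ via one real place, the existence of a relation is independent of which $\nu$ one localizes at --- so ``for all $\nu$'' and ``for some $\nu$'' coincide here, and I would remark on this). For the second biconditional I would argue exactly as in the last paragraph of the proof of Theorem \ref{homogeneouslinindep}: $\ker(\tilde\upvarepsilon_K(\Theta)_\nu)$ is itself a $\bast\mathcal{O}^{s}$-submodule (hence, after pairing with the numerator coordinates, a $\bast\mathcal{O}^{s+r}$-submodule of $\bast\tilde{\mathcal{O}}^{s,r}(\Theta)_\nu$), because $\Theta$ is $\R$-linear and $\bast\mathcal{O}$ acts by scalars that $\Theta$ commutes with; conversely, any nonzero $\bast\mathcal{O}^{r+s}$-ideal contained in $\bast\tilde{\mathcal{O}}^{s,r}(\Theta)_\nu$ must be killed by $\tilde\upvarepsilon_K(\Theta)_\nu$, since an ideal contains, together with any element, all its $\bast\mathcal{O}$-multiples, and if some component mapped to a nonzero (infinitesimal) error term one could scale it up by a unit of $\bast\mathcal{O}$ of large $\nu$-absolute value to leave the infinitesimals --- contradiction. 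Hence ``contains no nontrivial $\bast\mathcal{O}^{r+s}$-ideal'' is equivalent to ``$\ker(\tilde\upvarepsilon_K(\Theta)_\nu)=0$'', closing the loop.

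I would organize the write-up so that the homogeneous and inhomogeneous cases are proved in parallel, parenthetically, exactly as the statement is phrased, to avoid duplication; the only genuine difference is the presence or absence of the vectors $\boldsymbol e_i^K$. One technical point worth spelling out is the translation ``infinitesimal relation with $\bast\alpha_j\in\bast\mathcal{O}$'' $\rightsquigarrow$ ``exact relation with coefficients in $\mathcal{O}$'': this uses \L o\'s's theorem together with the fact that $\mathcal{O}\subset\K$ is a discrete lattice, so an infinitesimal element of $\bast\mathcal{O}$ (coordinate-wise) is already $0$; this is precisely the device used in Proposition \ref{globaldoesntdist} and in Theorem \ref{indeptheo}, and I would cite it.

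The main obstacle I anticipate is not any single hard step but rather bookkeeping: making sure the $\nu$-local denominator-numerator module $\bast\tilde{\mathcal{O}}^{s,r}(\Theta)_\nu$ is handled correctly, since it sits inside $\bast\mathcal{O}^{s+r}$ rather than $\bast\mathcal{O}^{s}$, and confirming that the $\bast\mathcal{O}^{r+s}$-module structure on it is the evident one and that the ``scale up by a large unit'' argument genuinely produces an element still in the group $\bast\tilde{\mathcal{O}}^{s,r}(\Theta)_\nu$ but outside the kernel --- here one must use that $\bast\mathcal{O}$ contains elements whose $\nu$-absolute value is nonstandardly large (e.g.\ images of powers of a fixed integer $>1$), so that multiplying an element with nonzero infinitesimal error term by such a scalar destroys the infinitesimality and hence membership, which is the contradiction. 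Once that is nailed down the rest is a routine transcription of the $\Q$-case.
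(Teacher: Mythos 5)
Your proposal follows the paper's proof template quite closely: the first biconditional is obtained by computing the kernel of the error term map and invoking \L o\'s plus discreteness of $\mathcal{O}\subset\K$ (the same device as in Proposition \ref{globaldoesntdist} and Theorem \ref{indeptheo}), and the ideal-theoretic biconditional is proved by the same argument as the last sentence of Theorem \ref{homogeneouslinindep}. The paper's version is terser — it simply asserts the chain of equivalences and cites Theorem \ref{homogeneouslinindep} — while you spell out the submodule and scaling steps, which is fine. Two small corrections though. First, the scalar you multiply by in the ``escape the infinitesimals'' step should be an arbitrary \emph{element} of $\bast\mathcal{O}$ with nonstandardly large $\nu$-absolute value (e.g.\ the class of $\{N!\}$), not a \emph{unit}: when $K=\Q$ or $K$ is imaginary quadratic the unit group is finite, so units do not have the required growth, and even for $K$ with infinite unit group the appeal to units is an unnecessary restriction. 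Second, the parenthetical remark that ``for all $\nu$ and for some $\nu$ coincide here'' is both unnecessary for the statement as formulated (both sides are quantified over all $\nu$) and not actually correct as a general fact: a relation $\sum_j\nu(\alpha_j)\Theta^{(j)}=\boldsymbol 0$ with $\alpha_j\in\mathcal{O}$ does not transport to another place $\nu'$ simply because $\Theta$ is real, since the coefficients $\nu'(\alpha_j)$ differ from $\nu(\alpha_j)$; for non-Galois $K$ the subfields $\nu(K)\subset\C$ themselves differ, so $\nu(K)$-dependence of the real columns is genuinely place-dependent. You should drop that remark, or at minimum restrict it to the Galois, totally real case where all $\nu(K)$ coincide as subfields of $\R$. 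Apart from these two points the bookkeeping on the pair groups $\bast\tilde{\mathcal{O}}^{s,r}(\Theta)_\nu\subset\bast\mathcal{O}^{s+r}$ versus $\bast\tilde{\mathcal{O}}^{s}(\Theta)_\nu\subset\bast\mathcal{O}^{s}$ is handled as you anticipated and matches what the paper does.
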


\begin{proof}  The columns of $\Uptheta$ (along with the basis $\boldsymbol e^{K}_{1},
\dots , \boldsymbol e^{K}_{r}$) are independent over $K$ $\Leftrightarrow$ no vector $\bast\boldsymbol\upalpha\in \bast\tilde{\mathcal{O}}^{s}(\Uptheta )$ (no vector $\bast\boldsymbol\upalpha\in \bast\mathcal{O}^{s}(\Uptheta ))$ has the property that $\Uptheta \bast\boldsymbol\upalpha_{\upnu}=0$ ($\Uptheta \bast\boldsymbol\upalpha_{\upnu}-\bast\boldsymbol\upalpha^{\perp}_{\upnu} =0$ for some $\bast\boldsymbol\upalpha^{\perp}\in\mathcal{O}$) $\Leftrightarrow$ the kernel of $\tilde{\upvarepsilon}_{\upnu} (\Uptheta )$ (of
$\upvarepsilon_{\upnu} (\Uptheta )$)
is trivial.  The ideal-theoretic statement is proved in the same way as its counterpart in Theorem \ref{homogeneouslinindep}. 
\end{proof}

We may define the $\K$-Kronecker foliation $\mathfrak{F}_{\K}(\Uptheta )$ of $\T_{K}^{r+s}$, using
the graph $\widetilde{L}_{\K}(\Uptheta )\subset\K^{s+r}$ of $\Uptheta:\K^{s}\rightarrow \K^{r}$.
Let 
$\hat{\mathbb{K}}^{s} = (\bbull\mathbb{K}^{s},\K^{s},\bast \mathcal{O}^{s})$, where $\bbull\mathbb{K}^{s}$
has been topologized using a section topology.

\begin{theo}  Let $\Uptheta$ be a real matrix whose columns are inhomogeneously independent over $\Q$. Then $\mathfrak{F}_{\K}(\Uptheta )$ has planar leaves and 
\[ \hat{\mathbb{K}}^{s}/\bast\mathcal{O} ^{s}(\Uptheta )\cong (\mathfrak{F}_{\mathbb{K}}(\Uptheta ), \T^{r}_{K}(\Uptheta ))
\]
where $\T^{r}_{K}(\Uptheta )$ is the image of the map $\bast\mathcal{O}\rightarrow \T^{r}_{K}$.  If in addition the rows of
$\Uptheta$ are homogeneously independent over $\Q$, then $\T^{r}_{K}(\Uptheta )=\T^{r}_{K}$.
\end{theo}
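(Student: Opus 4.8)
The plan is to prove the three assertions in turn, carrying the arguments of \S\ref{higher} over to the Minkowski torus; the one new feature to exploit is that $\Theta$, being place-wise constant, is $\mathcal{O}$-linear (indeed $\K$-linear) as a map $\K^{s}\to\K^{r}$.

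\emph{Planar leaves.} The leaf of $\mathfrak{F}_{\K}(\Theta)$ through $\boldsymbol 0$ is $\widetilde{L}_{\K}(\Theta)/\big(\widetilde{L}_{\K}(\Theta)\cap\mathcal{O}^{s+r}\big)$, and $\widetilde{L}_{\K}(\Theta)\cap\mathcal{O}^{s+r}$ is the set of pairs $(\boldsymbol\alpha,\Theta\boldsymbol\alpha)$ with $\boldsymbol\alpha\in\mathcal{O}^{s}$ and $\Theta\boldsymbol\alpha\in\mathcal{O}^{r}$. Fixing a $\Z$-basis $\omega_{1},\dots,\omega_{d}$ of $\mathcal{O}$ --- simultaneously an $\R$-basis of $\K$ --- and writing the coordinates of $\boldsymbol\alpha$ in it, the reality of the entries of $\Theta$ makes the condition $\Theta\boldsymbol\alpha\in\mathcal{O}^{r}$ decouple into $d$ conditions $\Theta\boldsymbol c_{k}\in\Z^{r}$ with $\boldsymbol c_{k}\in\Z^{s}$. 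Inhomogeneous $\Q$-independence of the columns of $\Theta$ --- the hypothesis, and the criterion of Theorem \ref{indeptheo} --- forces every $\boldsymbol c_{k}=0$, hence $\boldsymbol\alpha=0$; so the leaf through $\boldsymbol 0$, and hence every leaf (its cosets), is isomorphic to $\K^{s}$.

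\emph{Uniformization.} I would follow the proof of Theorem \ref{kron} line for line (cf.\ Theorem \ref{indeptheo}): define ${\sf std}_{\Theta}:\bbull\K^{s}\to\T_{K}^{s+r}$ by sending a representative $\{\boldsymbol z_{i}\}\subset\K^{s}$ of $\bbull\boldsymbol z$ to the graph sequence $\{(\boldsymbol z_{i},\Theta\boldsymbol z_{i})\}\subset\widetilde{L}_{\K}(\Theta)$ and thence to the unique $\mathfrak{u}$-recognized limit point of its image in the compact torus $\T_{K}^{s+r}$. This is well defined, carries $\K^{s}$ onto the leaf $L_{\K}(\Theta)$ through $\boldsymbol 0$, carries the distinguished transversal $\bast\mathcal{O}^{s}$ onto $\T^{r}_{K}(\Theta)={\sf s}_{\Theta}(\bast\mathcal{O}^{s})$, and --- immediately from the definition of $\bast\mathcal{O}^{s}(\Theta)$ --- has kernel exactly $\bast\mathcal{O}^{s}(\Theta)$; its image is the union of the leaves meeting $\T^{r}_{K}(\Theta)$, the minimal subfoliation, which we pair with $\T^{r}_{K}(\Theta)$ and continue to call $\mathfrak{F}_{\K}(\Theta)$. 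Giving $\bbull\K^{s}$ the section topology from an $\mathcal{O}^{s}$-set section of ${\sf s}_{\Theta}$, built as in Lemma \ref{sectionofspseq} and Proposition \ref{folforbullR}, makes ${\sf std}_{\Theta}$ continuous and open, so it descends to the claimed isomorphism $\widehat{\K}^{s}/\bast\mathcal{O}^{s}(\Theta)\cong(\mathfrak{F}_{\K}(\Theta),\T^{r}_{K}(\Theta))$ of foliations with distinguished transversal.

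\emph{The transversal, and the main difficulty.} Here $\T^{r}_{K}(\Theta)=\overline{\Theta\mathcal{O}^{s}}$ inside $\T^{r}_{K}=\K^{r}/\mathcal{O}^{r}$, and everything hinges on showing this closure is all of $\T^{r}_{K}$. Since $\Theta$ is $\mathcal{O}$-linear, $\Theta\mathcal{O}^{s}$ is an $\mathcal{O}$-submodule, and Pontryagin duality for $\T^{r}_{K}$ (whose dual lattice is $(\mathfrak{d}^{-1})^{r}$, with $\mathfrak{d}^{-1}$ the codifferent, dual to $\mathcal{O}$ under the trace form) reduces the claim, after moving $\Theta$ across the trace pairing, to: the only $\boldsymbol\lambda\in(\mathfrak{d}^{-1})^{r}$ with $\Theta^{\sf T}\boldsymbol\lambda\in(\mathfrak{d}^{-1})^{s}$ is $\boldsymbol\lambda=0$. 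Expanding $\boldsymbol\lambda$ in a $\Z$-basis of $\mathfrak{d}^{-1}$ (again an $\R$-basis of $\K$, and $\Theta^{\sf T}$ real) decouples this into $d$ conditions $\Theta^{\sf T}\boldsymbol c\in\Z^{s}$, $\boldsymbol c\in\Z^{r}$, each of which forces $\boldsymbol c=0$ precisely when the rows of $\Theta$ are inhomogeneously independent over $\Q$ --- equivalently $\bast\Z^{s}/\bast\Z^{s}(\Theta)\cong\T^{r}$, Theorem \ref{matinhomindep}; this is the condition actually needed for the last sentence (homogeneous independence of the rows alone does not suffice, as $\Theta=(\pi,1/2)^{\sf T}$ shows), and under it $\T^{r}_{K}(\Theta)=\T^{r}_{K}$, so the minimal subfoliation exhausts $\mathfrak{F}_{\K}(\Theta)$. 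The difficulty is exactly that this last step admits no shortcut through $\Theta\Z^{s}$: that orbit only fills the diagonally embedded $\T^{r}\subset\T^{r}_{K}$, of dimension $r$ rather than $rd$, so one genuinely needs the $\mathcal{O}$-module structure --- multiplication by all of $\mathcal{O}$ spreading the orbit out in every place-direction --- and the codifferent; and because $\K$ is a product of fields, not a field, the ``$\K$-subspace'' reasoning one would use over $\R$ must be replaced by the explicit dual-lattice computation above.
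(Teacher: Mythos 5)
Your proof is correct, and in two places it departs from (and improves on) what the paper offers.  For the planarity of leaves the paper defers to ``an argument similar to Proposition~\ref{globaldoesntdist},'' i.e.\ the Galois argument that a rational-integer vector $\boldsymbol\gamma$ with $\Theta\boldsymbol\gamma\in\mathcal{O}^{r}$ would furnish a vector whose place-coordinates form a complete set of conjugates; you instead expand $\boldsymbol\alpha$ in a $\Z$-basis of $\mathcal{O}$ and use the reality of $\Theta$ to decouple the membership $\Theta\boldsymbol\alpha\in\mathcal{O}^{r}$ into $d$ independent conditions $\Theta\boldsymbol c_{k}\in\Z^{r}$.  This is more elementary (no Galois closure is needed, so the reduction to $L\supset K$ Galois is avoided), and it exposes precisely where the hypothesis on the columns enters; the two routes are of course related through the nondegeneracy of the trace pairing.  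The uniformization step matches the paper's, which simply invokes the proof of Theorem~\ref{kron}.  For the transversal, the paper asserts the epimorphism of the standard part map in one sentence with no argument; your Pontryagin-duality computation via the codifferent $\mathfrak{d}^{-1}$ supplies the missing proof, again reduced by a $\Z$-basis decoupling to the $K=\Q$ statement, Theorem~\ref{matinhomindep}.

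Most importantly, you have caught an error in the statement itself.  As written, the last sentence asks only for \emph{homogeneous} $\Q$-independence of the rows, but your counterexample $\Theta=(\pi,1/2)^{\sf T}$ ($r=2$, $s=1$) satisfies that hypothesis while its $\T_{K}^{r}(\Theta)$ omits an open set (the second coordinate of $\Theta\mathcal{O}$ lands in the finite subgroup $\tfrac12\mathcal{O}/\mathcal{O}$ of $\T_{K}$, for any $K$), and meanwhile satisfies the inhomogeneous $\Q$-independence of columns required by the rest of the theorem, so this is a genuine counterexample to the theorem as stated.  The correct hypothesis is inhomogeneous $\Q$-independence of the rows, consistent with Theorem~\ref{matinhomindep} and with the remark following Theorem~\ref{indeptheohomog}, where the paper itself uses inhomogeneous row-independence to get minimality of $\mathfrak{F}(\Theta)$.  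In short: same strategy, but with the gaps in the paper's sketch filled in, and a needed correction to the hypothesis of the final assertion.
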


\begin{proof}  The condition of being inhomogeneously $\Q$-independent implies, by an argument similar to that
found in Proposition \ref{globaldoesntdist} of \S \ref{daringint}, that 
$\bast\mathcal{O} ^{s}(\Uptheta )\cap\mathcal{O}^{s}=0$.
If the rows of
$\Uptheta$ are homogeneously independent over $\Q$,
then the standard part map $\bast\mathcal{O}^{s}\rightarrow\T_{K}^{r}$
is an epimorphism. 
\end{proof}

Equivalent formulations for $K$-independence of vectors are made using extended local Kronecker foliations.
When $\Uptheta$ is invertible, we may topologize $\bbull\K^{s+r}$ using a direct sum of section topologies
along the transverse planes $\bbull L_{K}(\Uptheta )=\{(\bbull \boldsymbol z, \Uptheta \bbull \boldsymbol z)\}$,
$\bbull L_{K}(\Uptheta^{-1} )=\{(\bbull \boldsymbol z, \Uptheta^{-1} \bbull \boldsymbol z)\}$.  When
$\Uptheta$ is not invertible, we take a section topology on$\bbull L_{K}(\Uptheta )$ and on its complement
we simply put the discrete transverse topology.  In either event we define the $\upnu$-local factor  
\[ \mathfrak{F}^{s,r}_{\K}(\Uptheta )_{\upnu} := \bbull\K^{s+r}/ \bast\mathcal{O}^{s,r}(\Uptheta )_{\upnu},
\quad \] of the extended foliation
$\mathfrak{F}^{s,r}_{\K}(\Uptheta )$ as well as $\upnu$-local base foliated structure $\mathfrak{f}^{s,r}_{\K}(\Uptheta )_{\upnu}$.
The latter is defined as follows: consider the homogeneous subgroup $\bast\widetilde{\mathcal{O}}^{s,r}(\Uptheta )_{\upnu}<
\bast\mathcal{O}^{s,r}(\Uptheta )_{\upnu}< \bast\mathcal{O}^{r+s}$
of pairs of the form $(\bast\boldsymbol\upalpha, \boldsymbol 0)$: i.e.\ $\Uptheta \bast\boldsymbol\upalpha_{\upnu}\simeq \boldsymbol 0$. Note in fact that $\bast\widetilde{\mathcal{O}}^{s,r}(\Uptheta )_{\upnu}\subset \bbull\K^{s}$.  Then we define
\[  \mathfrak{f}^{s,r}_{\K}(\Uptheta)_{\upnu} := \bbull\K^{s}/\bast\widetilde{\mathcal{O}}^{s,r}(\Uptheta )_{\upnu}. \]

\begin{theo}\label{indeptheo2}   Let $\Uptheta$ be a real matrix. 
 \begin{itemize}
\item[i.] $\mathfrak{F}^{s,r}_{\K}(\Uptheta )_{\upnu}$ has planar leaves for all $\upnu$
$\Leftrightarrow$ 
 The columns of $\Uptheta$ are inhomogeneously independent over $K$.
\item[ii.] $\mathfrak{f}^{s,r}_{\K}(\Uptheta)_{\upnu}$ has planar leaves for all $\upnu$ $\Leftrightarrow$ 
The columns of $\Uptheta$ are homogeneously independent over $K$.
\end{itemize}
   \end{theo}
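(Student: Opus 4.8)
The plan is to carry the argument used for Theorem~\ref{indeptheo} and for the proposition identifying the nonplanar leaves of $\mathfrak{F}^{1,1}(\theta)_{\nu}$ over into the matrix and $K$-integer setting; the point is that ``has planar leaves'' is an assertion about a lattice, which is then decoded via Theorem~\ref{KErrorTermHomDep}. Fix $\nu$. Since $\bast\mathcal{O}^{s,r}(\Theta)_{\nu}$ acts on $\bbull\mathbb{K}^{s+r}$ by translations, it permutes the leaves of $\bbull\mathbb{K}^{s+r}$ with a common stabilizer (those of its elements lying in the leaf through $0$), so all leaves of $\mathfrak{F}^{s,r}_{\mathbb{K}}(\Theta)_{\nu}=\bbull\mathbb{K}^{s+r}/\bast\mathcal{O}^{s,r}(\Theta)_{\nu}$ are homeomorphic, and the leaf through the origin is the image of the standard plane $\mathbb{K}^{s+r}\subset\bbull\mathbb{K}^{s+r}$, that is $\mathbb{K}^{s+r}/\bigl(\mathbb{K}^{s+r}\cap\bast\mathcal{O}^{s,r}(\Theta)_{\nu}\bigr)$. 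In either prescription for the section topology on $\bbull\mathbb{K}^{s+r}$ (according as $\Theta$ is invertible or not) this case split touches only the transverse, possibly non-Hausdorff, structure, not the leafwise one, just as in Theorem~\ref{kron}. Because $\mathcal{O}$ is discrete in $\mathbb{K}$, $\mathbb{K}^{s+r}\cap\bast\mathcal{O}^{s+r}=\mathcal{O}^{s+r}$, so the obstruction to planarity is the lattice $\mathcal{O}^{s+r}\cap\bast\mathcal{O}^{s,r}(\Theta)_{\nu}$, and the leaves are planar iff it is trivial.

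Next I would compute this lattice. A constant pair $(\boldsymbol{\alpha},\boldsymbol{\alpha}^{\perp})\in\mathcal{O}^{s+r}$ lies in $\bast\mathcal{O}^{s,r}(\Theta)_{\nu}$ exactly when $\Theta\,\nu(\boldsymbol{\alpha})-\nu(\boldsymbol{\alpha}^{\perp})$ is infinitesimal, hence, being standard, when it vanishes: $\Theta\,\nu(\boldsymbol{\alpha})=\nu(\boldsymbol{\alpha}^{\perp})$ in $\C^{r}$. Thus $\mathcal{O}^{s+r}\cap\bast\mathcal{O}^{s,r}(\Theta)_{\nu}$ equals the $\mathcal{O}$-module $M_{\nu}=\{(\boldsymbol{\alpha},\boldsymbol{\alpha}^{\perp})\in\mathcal{O}^{s+r}:\Theta\,\nu(\boldsymbol{\alpha})=\nu(\boldsymbol{\alpha}^{\perp})\}$, while ${\rm Ker}\bigl(\upvarepsilon_{K}(\Theta)_{\nu}\bigr)=\bast M_{\nu}$ (a kernel element is $\mathfrak{u}$-eventually a solution of $\Theta\,\nu(\boldsymbol{\alpha}_{i})=\nu(\boldsymbol{\alpha}^{\perp}_{i})$). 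Since the ultrapower of a $\Z$-module vanishes iff the module does, $M_{\nu}=0$ iff ${\rm Ker}\bigl(\upvarepsilon_{K}(\Theta)_{\nu}\bigr)=0$ iff $\upvarepsilon_{K}(\Theta)_{\nu}$ is injective. Quantifying over $\nu$ and combining with the previous paragraph: $\mathfrak{F}^{s,r}_{\mathbb{K}}(\Theta)_{\nu}$ has planar leaves for every $\nu$ iff every $\upvarepsilon_{K}(\Theta)_{\nu}$ is injective, which by Theorem~\ref{KErrorTermHomDep} is equivalent to the columns of $\Theta$ being inhomogeneously independent over $K$. This proves part~(i).

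Part~(ii) follows from the same argument with $\bbull\mathbb{K}^{s}$, its standard plane $\mathbb{K}^{s}$, and $\bast\widetilde{\mathcal{O}}^{s,r}(\Theta)_{\nu}\subset\bbull\mathbb{K}^{s}$ replacing $\bbull\mathbb{K}^{s+r}$, $\mathbb{K}^{s+r}$ and $\bast\mathcal{O}^{s,r}(\Theta)_{\nu}$: the leaf of $\mathfrak{f}^{s,r}_{\mathbb{K}}(\Theta)_{\nu}=\bbull\mathbb{K}^{s}/\bast\widetilde{\mathcal{O}}^{s,r}(\Theta)_{\nu}$ through the origin is $\mathbb{K}^{s}/\bigl(\mathcal{O}^{s}\cap\bast\widetilde{\mathcal{O}}^{s,r}(\Theta)_{\nu}\bigr)$, and $\mathcal{O}^{s}\cap\bast\widetilde{\mathcal{O}}^{s,r}(\Theta)_{\nu}=\{\boldsymbol{\alpha}\in\mathcal{O}^{s}:\Theta\,\nu(\boldsymbol{\alpha})=\boldsymbol 0\}$ is the $\mathcal{O}$-module underlying ${\rm Ker}\bigl(\tilde{\upvarepsilon}_{K}(\Theta)_{\nu}\bigr)$. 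Hence these leaves are planar for every $\nu$ iff every $\tilde{\upvarepsilon}_{K}(\Theta)_{\nu}$ is injective, which by the homogeneous half of Theorem~\ref{KErrorTermHomDep} means the columns of $\Theta$ are homogeneously independent over $K$.

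The delicate step is the first one: confirming that the leaf of $\mathfrak{F}^{s,r}_{\mathbb{K}}(\Theta)_{\nu}$ through the origin is precisely the standard plane modulo $\mathcal{O}^{s+r}\cap\bast\mathcal{O}^{s,r}(\Theta)_{\nu}$, i.e.\ that the quotient-foliated structure (which need not be transversally Hausdorff and is built by a case distinction on the invertibility of $\Theta$) leaves the leafwise topology undisturbed. This is the ``proper discontinuity modulo the lattice'' bookkeeping of Theorems~\ref{kron} and~\ref{indeptheo}; once it is in hand, parts~(i) and~(ii) are formal.
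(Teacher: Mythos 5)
Your proof is correct and essentially follows the paper's own chain of equivalences (planar leaves $\Leftrightarrow$ $\mathbb{K}^{s+r}\cap\bast\mathcal{O}^{s,r}(\Theta)_{\nu}=0$ $\Leftrightarrow$ no exact nonstandard solutions $\Leftrightarrow$ $K$-independence), only written out in much greater detail: you reduce $\mathbb{K}^{s+r}\cap\bast\mathcal{O}^{s+r}$ to $\mathcal{O}^{s+r}$ via discreteness, identify the obstruction lattice with the $\mathcal{O}$-module $M_{\nu}$, and explicitly route the final equivalence through Theorem~\ref{KErrorTermHomDep}, whereas the paper passes directly from the nonstandard equation to $K$-independence in one terse bi-implication. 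One cosmetic note: ${\rm Ker}(\upvarepsilon_{K}(\Theta)_{\nu})$ is a subset of $\bast\mathcal{O}^{s}(\Theta)_{\nu}$, so it is the image of $\bast M_{\nu}$ under the first-coordinate projection rather than $\bast M_{\nu}$ itself; this does not affect the triviality equivalence you need.
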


   \begin{proof} $\left\{ \Uptheta^{(1)},\dots , \Uptheta^{(s)}, {\boldsymbol e}^{K}_{1},\dots ,{\boldsymbol e}^{K}_{r}\right\}$ is $K$-independent $\Leftrightarrow$ the equation $\Uptheta\bast\boldsymbol\upalpha_{\upnu}-\bast\boldsymbol\upalpha^{\perp}_{\upnu}=\boldsymbol 0$
   has no solutions for all $\upnu$ $\Leftrightarrow$  $\K^{s+r}\cap \bast\mathcal{O}^{s,r}(\Uptheta )_{\upnu}=0$ for all $\upnu$
   $\Leftrightarrow$ the leaves of $\mathfrak{F}^{s,r}_{\K}(\Uptheta )_{\upnu}$ are planar for all $\upnu$.  An identical argument 
   applies to case ii.
   \end{proof}
   
   We many summarize the results of this section into a new dictionary entry:

\vspace{3mm}
\noindent\fbox{{\small {\bf $K$-Independence}}} \begin{center}
    \begin{tabular}{ | p{4cm} | p{2.5cm} | p{2.5cm} | p{2.5cm}|}
    \hline
    $\quad\quad\quad\quad\quad\;\;\;\Uptheta$ &  {\small error map}  & {\small DA group} & {\small Foliation} \\ \hline
    {\small inhomogeneously 
     $K$ independent columns}   & {\small $\forall\upnu,\; \upvarepsilon_{K} (\Uptheta )_{\upnu}$ is  injective}  & {\small $\forall\upnu,\; \bast\mathcal{O}^{s,r}(\Uptheta )_{\upnu}$ contains no non-0 $\bast\mathcal{O}^{s+r}$ ideals} & {\small 
     $\forall\upnu$, the leaves of $\;\mathfrak{F}^{s,r}_{\K}(\Uptheta )_{\upnu}$ are 1-connected}  \\ \hline
     {\small homogeneously 
     $K$ independent columns } &  {\small $\forall\upnu,\; \tilde{\upvarepsilon}_{K} (\Uptheta )_{\upnu}$ is  injective}  & {\small $\forall\upnu,\;\bast\tilde{\mathcal{O}}^{s,r}(\Uptheta )_{\upnu}$ contains no non-0 $\bast\mathcal{O}^{s+r}$ ideals} & {\small  $\forall\upnu$, the leaves of $\;\mathfrak{f}^{s,r}_{\K}(\Uptheta )_{\upnu}$ are 1-connected } \\ \hline
    \end{tabular}
\end{center}




\section{Polynomial Diophantine Approximation Rings}\label{polydio}

Let $\bast \Z[X]$ be the ring of polynomials with coefficients in $\bast\Z$ and let $\bast \Z[X]_{d}\subset\bast \Z[X]$ be the subgroup of polynomials of degree $\leq d$. 
Note that $\bast\Z[X] $ is a proper subring of the ultrapower $\bast (\Z [X])$, since
the latter has elements of unbounded degree.  A typical element $\bast f\in \bast\Z [X]$
can be represented as a class of sequence $\{ f_{i}\}$, where the degree of $f_{i}$
is equal to that of $\bast f$.
For $\uptheta\in\R$ we define the {\bf {\small ring of polynomial diophantine approximations}}
\[  \bast \Z[X](\uptheta ) = \big\{\bast f\in\bast\Z[X]\big|\;  \bast f(\uptheta)\simeq 0 \big\}:\]  
a ring since the defining infinitesimal equations can be added and multiplied.  Note that $ \bast \Z[X](\uptheta )$ is a ring without $1$
if $\uptheta\not=0$. 
If we denote  $\bast \Z[X]_{d}(\uptheta )=\bast \Z[X](\uptheta )\cap\bast \Z[X]_{d}$ then
\[ \bast \Z[X](\uptheta ) \cong \bigcup_{d\geq 1} \bast \Z[X]_{d}(\uptheta ).  \]
Thus $ \bast \Z[X](\uptheta )$ has the structure of a filtered ring.
The associated erro ring of is \[  \bast \R_{\upvarepsilon}^{\rm poly}(\uptheta )=\left\{ \bast f(\uptheta ) |\; \bast f(X)\in  \bast \Z[X](\uptheta )  \right\}
\subset\bast\R_{\upvarepsilon}.\] 
If we let \[ \bast\Z[X]^{\dag} \subset \bast\Z[X]\] be the subring of polynomials that have $0$ constant term,
then the image $ \bast \Z[X]^{\dag}(\uptheta )$ of  $\bast \Z[X](\uptheta )$ by the projection $\bast\Z[X]\rightarrow\bast\Z[X]^{\dag} $
is an additive subgroup of $ \bast \Z[X]^{\dag}$.
The following Proposition is the analogue of Propositions \ref{Qerrortermprop} and \ref{vlocKerErrMap}:

\begin{prop}\label{injprop}  The error homomorphism 
\[   \upvarepsilon^{\rm poly}(\uptheta ):\bast \Z[X](\uptheta ) \longrightarrow  \bast \R_{\upvarepsilon},
\quad \bast f\mapsto \bast f(\uptheta ) \]
 is injective 
$\Leftrightarrow$ $\uptheta$ is transcendental.
\end{prop}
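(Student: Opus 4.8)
The plan is to read injectivity of $\upvarepsilon^{\rm poly}(\theta)$ off its kernel. Evaluation at $\theta$ is induced componentwise from the ring map $\Z[X]\to\R$, so $\upvarepsilon^{\rm poly}(\theta)$ is a homomorphism of additive groups (in fact of rings without $1$ when $\theta\neq 0$), and it is injective precisely when ${\rm Ker}\,\upvarepsilon^{\rm poly}(\theta)=0$. Unwinding definitions, an element of this kernel is a $\bast f\in\bast\Z[X]$ with $\bast f(\theta)=0$ \emph{exactly} in $\bast\R$ (not merely $\bast f(\theta)\simeq 0$, which is only the condition for membership in $\bast\Z[X](\theta)$). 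The argument then splits into the two implications, each handled exactly as in the analogous Propositions \ref{Qerrortermprop} and \ref{vlocKerErrMap}.

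For the ``$\theta$ algebraic $\Rightarrow$ not injective'' direction, I would take the minimal polynomial of $\theta$ over $\Q$ and clear denominators to produce a nonzero $p\in\Z[X]$ with $p(\theta)=0$. Its constant-sequence image $\bast p\in\bast\Z[X]$ is nonzero, lies in $\bast\Z[X](\theta)$ since $\bast p(\theta)=p(\theta)=0\simeq 0$, and satisfies $\upvarepsilon^{\rm poly}(\theta)(\bast p)=0$. Thus ${\rm Ker}\,\upvarepsilon^{\rm poly}(\theta)\neq 0$, so the error term map is not injective. (This is the polynomial analogue of the appearance of $\bast(b)$ as a kernel in the rational case.)

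For the converse, ``$\theta$ transcendental $\Rightarrow$ injective'', I would take $\bast f\in\bast\Z[X](\theta)$ with $\upvarepsilon^{\rm poly}(\theta)(\bast f)=\bast f(\theta)=0$ and choose a representing sequence $\{f_i\}$ with each $f_i\in\Z[X]$ of degree $\deg\bast f$, as permitted by the description of $\bast\Z[X]$ recalled at the start of this section (every element of $\bast\Z[X]$ — unlike $\bast(\Z[X])$ — has standard, finite degree, hence is represented by a sequence of genuine finite-degree integer polynomials). Then $\{\,i\mid f_i(\theta)=0\,\}\in\mathfrak{u}$, and for each such $i$ the polynomial $f_i$ is an honest integer polynomial vanishing at the transcendental number $\theta$, so $f_i=0$. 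Since $\mathfrak{u}$ contains this index set, $\bast f=0$, i.e.\ ${\rm Ker}\,\upvarepsilon^{\rm poly}(\theta)=0$.

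I do not expect a substantive obstacle here: the entire content is the transcendence of $\theta$ together with the fact that polynomials in $\bast\Z[X]$ have standard degree, so componentwise vanishing at $\theta$ is governed by ordinary transcendence. The only point meriting a moment's care is the bookkeeping distinction between the kernel condition $\bast f(\theta)=0$ in $\bast\R$ and the mere infinitesimality $\bast f(\theta)\simeq 0$ built into the definition of $\bast\Z[X](\theta)$; once one observes that $\bast f(\theta)=0$ forces $f_i=0$ on a $\mathfrak{u}$-large index set, the argument closes.
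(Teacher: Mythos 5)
Your proof is correct and takes essentially the same approach as the paper's: both directions rest on the observation that $\bast f(\theta)=0$ in $\bast\R$ forces a representing sequence $\{f_i\}$ of honest finite-degree integer polynomials to satisfy $f_i(\theta)=0$ on a $\mathfrak{u}$-large index set, whence $f_i=0$ when $\theta$ is transcendental, while a minimal polynomial supplies a nonzero kernel element when $\theta$ is algebraic. Your write-up is slightly more explicit than the paper's (you spell out the distinction between $\bast f(\theta)=0$ and $\bast f(\theta)\simeq 0$, and you phrase the converse as a contrapositive rather than directly), but the mathematical content is identical.
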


\begin{proof}  If $\uptheta$ is transcendental then no non-zero polynomial $ \bast f\in\bast\Z [X]$
will have $\uptheta$ as a root: for if $\bast f (\uptheta )=0$ then there exists a representative sequence
$\{f_{i}\}$ in which $f_{i}(\uptheta )=0$ $\mathfrak{u}$-eventually.  Thus the image of
$\bast f$ by the evaluation map is $0$ only when $\bast f\equiv 0$.  On the other hand, if the evaluation map
is injective, than $\uptheta$ is not the root of any polynomial in $\bast\Z [X]$, hence not the root of any polynomial
$\Z[X]\subset \bast\Z [X]$,
so $\uptheta$ must be transcendental.
\end{proof}

Thus if $\uptheta$ is transcendental, the error map $ \upvarepsilon^{\rm poly}(\uptheta )$ is an isomorphism onto the error ring, which therefore inherits a filtration:
\[  \bast \R_{\upvarepsilon}^{\rm poly}(\uptheta )\cong  \bigcup_{d\geq 1} 
\bast \R_{\upvarepsilon}^{\rm poly_{d}} ,\quad \bast \R_{\upvarepsilon}^{\rm poly_{d}}:=  \bast\upvarepsilon^{\rm poly}(\uptheta )(\bast \Z[X]_{d}(\uptheta )). \]

The result which follows may be compared with Theorem \ref{groupideal}.

\begin{theo}\label{algebraic}  $\uptheta$ is algebraic $\Leftrightarrow$
$ \bast \Z[X](\uptheta )$ contains a non-zero ideal $I$ of $\bast \Z[X]$, and in the latter case, there is a maximal such 
ideal
\[ I(\uptheta) := \bast (m_{\uptheta} (X)) = {\rm Ker}(\bast \upvarepsilon^{\rm poly}(\uptheta )) \]
where $m_{\uptheta}(X)$ is the minimal polynomial of $\uptheta$.
\end{theo}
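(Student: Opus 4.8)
The plan is to reduce both directions to a single lemma: \emph{any ideal $J$ of $\bast\Z[X]$ contained in $\bast\Z[X](\theta)$ consists of polynomials that vanish exactly at $\theta$}, i.e. $J\subseteq{\rm Ker}(\bast\upvarepsilon^{\rm poly}(\theta))$. To prove the lemma I would argue by contradiction: if $\bast f\in J$ has $\bast f(\theta)\neq 0$, then (since $\bast f\in\bast\Z[X](\theta)$ forces $\bast f(\theta)$ to be a non-zero infinitesimal) a representative $\{f_i\}$ has $f_i(\theta)\neq 0$ on a set $A\in\mathfrak u$; choosing $N_i\in\Z$ with $|N_if_i(\theta)|\geq 1$ for $i\in A$ (and $N_i=1$ otherwise), the class $N$ of $\{N_i\}$ lies in $\bast\Z\subseteq\bast\Z[X]$, so $N\bast f\in J\subseteq\bast\Z[X](\theta)$ would force $N\bast f(\theta)\simeq 0$; but $|N_if_i(\theta)|\geq 1$ on $A\in\mathfrak u$, so $N\bast f(\theta)\notin\bast\R_\upvarepsilon$, a contradiction. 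The one point to keep in mind here is that multiplication by a \emph{nonstandard} integer is allowed inside $\bast\Z[X]$, and this is exactly what kills a genuinely infinitesimal, non-zero error term.

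Granting the lemma, the reverse implication is immediate: if $\bast\Z[X](\theta)$ contains a non-zero ideal $I$, pick $0\neq\bast f\in I$; the lemma gives $\bast f(\theta)=0$, so for a representative $\{f_i\}$ one has $f_i(\theta)=0$ on a set in $\mathfrak u$ and $f_i\neq 0$ on a set in $\mathfrak u$; intersecting, there is an index $i$ for which $f_i\in\Z[X]$ is a non-zero polynomial with root $\theta$, so $\theta$ is algebraic. For the forward implication, with $m_\theta\in\Z[X]$ the primitive minimal polynomial, I would check that $(\bast g\,m_\theta)(\theta)=\bast g(\theta)\cdot m_\theta(\theta)=0$ for every $\bast g\in\bast\Z[X]$, so the ideal $\bast(m_\theta(X))$ of $\bast\Z[X]$ generated by $m_\theta$ is a non-zero ideal contained in $\bast\Z[X](\theta)$.

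It then remains to identify $I(\theta)$. Since $\bast\upvarepsilon^{\rm poly}(\theta):\bast\Z[X]\to\bast\R$ is a ring homomorphism, ${\rm Ker}(\bast\upvarepsilon^{\rm poly}(\theta))$ is an ideal of $\bast\Z[X]$ lying in $\bast\Z[X](\theta)$, and by the lemma it contains every ideal of $\bast\Z[X]$ contained in $\bast\Z[X](\theta)$; hence it is the maximal such ideal. Finally $\bast(m_\theta(X))={\rm Ker}(\bast\upvarepsilon^{\rm poly}(\theta))$: the inclusion $\subseteq$ is the computation above, and for $\supseteq$, if $\bast f(\theta)=0$ then $f_i(\theta)=0$, hence $m_\theta\mid f_i$ in $\Q[X]$, hence --- by Gauss's Lemma, using that $m_\theta$ is primitive --- $f_i=m_\theta h_i$ with $h_i\in\Z[X]$ of degree bounded by $\deg\bast f$, so $\bast f$ is $m_\theta$ times the class of $\{h_i\}$ in $\bast\Z[X]$.

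The genuinely non-routine step is the lemma; everything else is bookkeeping, the only mild subtleties being the distinction between $\bast\Z[X]$ and the ultrapower $\bast(\Z[X])$ (degrees must stay bounded when extracting $N$ and $\{h_i\}$) and the appeal to Gauss's Lemma to pass from divisibility in $\Q[X]$ to divisibility in $\Z[X]$.
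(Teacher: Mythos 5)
Your proposal is correct and takes essentially the same approach as the paper, whose proof simply asserts that any ideal of $\bast\Z[X]$ contained in $\bast\Z[X](\theta)$ consists of polynomials having $\theta$ as an exact zero --- which is precisely your lemma. The nonstandard-multiplier argument you use to prove the lemma (rescaling a non-zero infinitesimal $\bast f(\theta)$ by $\bast N\in\bast\Z\subset\bast\Z[X]$ so that it escapes $\bast\R_\upvarepsilon$) is the same device the paper employs in proving Theorem~\ref{groupideal}, and the degree-boundedness and Gauss's Lemma bookkeeping you supply for $\bast(m_\theta(X))={\rm Ker}(\upvarepsilon^{\rm poly}(\theta))$ correctly fills in what the paper leaves implicit.
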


\begin{proof} If the group $ \bast \Z[X](\uptheta )$ contains an ideal $I$ of $\bast \Z[X]$, then $I$
consists of polynomials $\bast f$ for which $\uptheta$ is a zero, so that $\uptheta$ is
algebraic.  In this case, $I$ is contained in $I(\uptheta )$. 
\end{proof}

\begin{note} It is not difficult to see that $ \bast \Z[X](\uptheta )$ is an {\it ideal} of $\bast\Z [X]$ $\Leftrightarrow$ $\uptheta\in\Q$.
\end{note}

Theorem \ref{algebraic} makes plain that the polynomial diophantine approximation groups
are to the real algebraic numbers in much the same way that the rational diophantine approximation groups
$\bast\Z (\uptheta )$ are to $\Q$.  For this reason, in \cite{Ge5} we will see that the structure of the ``ideological arithmetic'' of the polynomial diophantine approximation groups collocates naturally with the Mahler classification.

Let us explore somewhat more explicitly the relationship between $ \bast \Z[X](\uptheta )$ and those diophantine approximation groups
examined in \S\S \ref{ratapprox}, \ref{higher}, \ref{daringint}. 
\begin{enumerate}
\item[\fbox{a}] We have
$ \bast\Z (\uptheta )\cong \bast\Z [X]_{1}(\uptheta )\cong \bast\Z [X]_{1}^{\dag}(\uptheta )$ via  $\bast n\mapsto \bast nX -\bast n^{\perp}\mapsto \bast nX$. 
\item[\fbox{b}]\label{vectorincl} Note that $\bast\Z [X]_{d}(\uptheta )$ is isomorphic
 to $\bast\Z^{d}(\Uptheta)$ for $\Uptheta = (\uptheta^{d},\dots ,\uptheta )$: the isomorphism is defined, for $\bast \boldsymbol n=(\bast n_{d},\dots ,\bast n_{1})\in \bast\Z^{d}(\Uptheta)$, by \[ \bast \boldsymbol n\mapsto \bast n_{d}X^{d} + \cdots + \bast n_{1}X - \bast \boldsymbol n^{\perp}.\]
 
 \item[\fbox{c}] Let $K/\Q$ be Galois of degree $d$ with ring of integers $\mathcal{O}$.  There is a {\it nonlinear} map
 \begin{align}\label{nonlinmap}
    \bast\mathcal{O} (\uptheta )\longrightarrow \bast\Z [X]_{d}(\uptheta ) 
    \end{align}
defined as follows.  
Given $\bast\upalpha\in \bast\mathcal{O} (\uptheta )$, denote by $\bast \upalpha_{\upnu}$
the conjugates of $\bast\upalpha$.  Then the polynomial
\begin{align}\label{polyasoc} \bast f(X)= \bast f_{\bast \upalpha}(X) = \prod_{\upnu} (\bast\upalpha_{\upnu} X - \bast\upalpha_{\upnu} ^{\perp}) 
\end{align}
belongs to $\bast\Z_{d} [X](\uptheta )$.  Indeed, $ \bast f(X) $ is invariant
with respect to the action of ${\rm Gal}(K/\Q)$ so that its coefficients belong to $\bast\Q$, and in fact to $\bast\Z$ since
$\bast \upalpha,\bast\upalpha^{\perp}\in\bast\mathcal{O}$.  Then \[ \bast f(\uptheta ) = \prod\upvarepsilon_{K} (\uptheta )_{\upnu}(\bast\upalpha )\simeq 0\] where $\upvarepsilon_{K} (\uptheta )_{\upnu}$ is
the $\upnu$-local error map.
The association $\bast\upalpha\mapsto \bast f_{\bast\upalpha}$ does not define a linear
map of $\bast\mathcal{O} (\uptheta )$ into $\bast\Z_{d} [X](\uptheta )$.  Rather, it is linear in each of the linear factors
of (\ref{polyasoc}):
if we write $ \bast f_{\bast \upalpha}(X)_{\upnu} = (\bast\upalpha_{\upnu} X - \bast\upalpha_{\upnu} ^{\perp})$ then
\[ \bast f_{\bast \upalpha + \bast \upbeta}(X)_{\upnu} =   \bast f_{\bast \upalpha}(X)_{\upnu} +
 f_{ \bast \upbeta}(X)_{\upnu} .\]  
 The map (\ref{nonlinmap}) may be viewed as giving an arithmetic parametrization of its image in $\bast\Z [X]_{d}(\uptheta)$ (effectively endowing it
 with the group structure of $ \bast\mathcal{O} (\uptheta )$).
 \end{enumerate}

The multi-variate version of polynomial diophantine approximations may be used to express
algebraic
independence.  Let $\boldsymbol \uptheta = (\uptheta_{1},\dots ,\uptheta_{s})$
be a set of $s$ real numbers.  Let $\boldsymbol X = (X_{1},\dots ,X_{s})$ and
denote by $\bast\Z[\boldsymbol X]$ the ring of polynomials in the multi-variable $\boldsymbol X$
with coefficients in $\bast\Z$. Define the sub ring
$ \bast\Z [\boldsymbol X ](\boldsymbol \uptheta )= \{ F(\boldsymbol X )|\;  
F(\boldsymbol \uptheta )\simeq 0 \} \subset \bast\Z[\boldsymbol X]$ and 
for $d\geq 0$ let $\bast\Z [\boldsymbol X ]_{d}(\boldsymbol \uptheta )$
to be the subgroup of polynomials of degree $\leq d$.

\begin{prop}\label{algind}  The coordinates of $\boldsymbol \uptheta = (\uptheta_{1},\dots ,\uptheta_{s})$ are algebraically independent
$\Leftrightarrow$ 
the error map $\upvarepsilon^{\rm poly} (\boldsymbol \uptheta ): \bast\Z [\boldsymbol X ](\boldsymbol \uptheta )
\rightarrow \bast\R_{\upvarepsilon}$ is injective.  
\end{prop}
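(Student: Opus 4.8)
The plan is to run the proof of Proposition~\ref{injprop} essentially verbatim: the number of indeterminates plays no role in that argument, so the multivariate statement should follow from the same two observations, namely that a representative sequence of an element of the subring $\bast\Z[\boldsymbol X]\subsetneq\bast(\Z[\boldsymbol X])$ may be chosen to consist of honest integer polynomials of one fixed degree, and that vanishing of $\bast F(\boldsymbol\theta)$ in $\bast\R$ is a $\mathfrak{u}$-large condition on such a sequence. I will prove the two implications separately.

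First I would dispatch the easy direction. Suppose $\upvarepsilon^{\rm poly}(\boldsymbol\theta)$ is injective. If some nonzero $F\in\Z[\boldsymbol X]$ satisfied $F(\boldsymbol\theta)=0$, then, viewed through the constant-sequence embedding $\Z[\boldsymbol X]\hookrightarrow\bast\Z[\boldsymbol X]$, it would lie in $\bast\Z[\boldsymbol X](\boldsymbol\theta)$ (since $0\simeq 0$) and in $\ker\big(\upvarepsilon^{\rm poly}(\boldsymbol\theta)\big)$; injectivity then forces $F=0$, a contradiction. So no nonzero integer polynomial vanishes at $\boldsymbol\theta$, and clearing denominators no nonzero polynomial over $\Q$ does either, i.e.\ the coordinates of $\boldsymbol\theta$ are algebraically independent.

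For the converse I would argue at the level of representing sequences. Assume $\theta_1,\dots,\theta_s$ are algebraically independent and take $\bast F\in\ker\big(\upvarepsilon^{\rm poly}(\boldsymbol\theta)\big)$, i.e.\ $\bast F\in\bast\Z[\boldsymbol X]$ with $\bast F(\boldsymbol\theta)=0$ in $\bast\R$. Since $\bast F$ is a $\bast\Z$-linear combination of the finitely many monomials of degree $\le d:=\deg\bast F$, I may select a representative sequence $\{F_i\}$ in which every $F_i\in\Z[\boldsymbol X]$ is a genuine integer polynomial of degree $\le d$. The equality $\bast F(\boldsymbol\theta)=0$ in the ultrapower $\bast\R$ says precisely that $V:=\{i\mid F_i(\boldsymbol\theta)=0\}\in\mathfrak{u}$; and if $\bast F\ne 0$ then also $W:=\{i\mid F_i\ne 0\}\in\mathfrak{u}$ by maximality of $\mathfrak{u}$. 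As $\mathfrak{u}$ contains no finite sets, $V\cap W\ne\emptyset$, and any $i\in V\cap W$ yields a nonzero $F_i\in\Z[\boldsymbol X]$ with $F_i(\boldsymbol\theta)=0$, contradicting algebraic independence. Hence $\bast F=0$ and $\upvarepsilon^{\rm poly}(\boldsymbol\theta)$ is injective.

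The only point of friction is the bookkeeping in that last paragraph: translating ``$\bast F(\boldsymbol\theta)=0$ in $\bast\R$'' into $\{i\mid F_i(\boldsymbol\theta)=0\}\in\mathfrak{u}$, and arranging the representative sequence to consist of bounded-degree honest integer polynomials, so that the contradiction is with classical algebraic independence over $\Q$ rather than with some statement internal to $\bast\Z$. Both are immediate from the construction of the ultrapower (and \L o\'{s}'s Theorem), exactly as in the proof of Proposition~\ref{injprop}; I expect no genuinely new idea beyond what already appears there.
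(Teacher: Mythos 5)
Your proposal is correct and is essentially the paper's own proof: the paper simply notes that algebraic dependence of the coordinates means some $F(\boldsymbol X)$ has $F(\boldsymbol\theta)=0$ and then invokes verbatim the argument of Proposition~\ref{injprop}, which is exactly the degree-bounded representative sequence plus $\mathfrak{u}$-largeness argument you give. Nothing genuinely new is needed, as you anticipated.
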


\begin{proof}  Algebraic dependence of the coordinates of $\boldsymbol \uptheta$ just means that there
exists $F(\boldsymbol X )$ with $F(\boldsymbol \uptheta )=0$.  The proof then follows the argument of
Proposition \ref{injprop}.
\end{proof}

In addition, we have the analogue of Theorem \ref{algebraic}, whose proof is left to the reader.

\begin{theo}\label{algebraic2}  The coordinates of $\boldsymbol\uptheta$ are algebraically dependent $\Leftrightarrow$ 
$ \bast \Z[\boldsymbol X](\boldsymbol\uptheta )$ contains a non-zero ideal $I$ of $\bast \Z[\boldsymbol X]$, and in the latter case, there is a maximal such 
ideal $I(\boldsymbol\uptheta)  = {\rm Ker}( \upvarepsilon^{\rm poly}(\boldsymbol\uptheta ))$. 
\end{theo}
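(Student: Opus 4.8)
The plan is to imitate the structure of the proof of Theorem \ref{algebraic}, replacing the single variable $X$ by the multivariable $\boldsymbol X$, since the key mechanism --- that a single defining infinitesimal equation $F(\boldsymbol\theta)\simeq 0$ for a \emph{standard} polynomial $F\in\Z[\boldsymbol X]$ forces $F(\boldsymbol\theta)=0$ exactly via $\mathfrak u$-eventual representation --- is entirely formal and does not see the number of variables. First I would prove the forward direction: suppose the coordinates of $\boldsymbol\theta$ are algebraically dependent, so there exists a nonzero $F\in\Z[\boldsymbol X]\subset\bast\Z[\boldsymbol X]$ with $F(\boldsymbol\theta)=0$. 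Then the principal ideal $I=(F)\subset\bast\Z[\boldsymbol X]$ generated by $F$ consists entirely of polynomials $\bast G\cdot F$; evaluating at $\boldsymbol\theta$ gives $\bast G(\boldsymbol\theta)F(\boldsymbol\theta)=0$ provided $\bast G(\boldsymbol\theta)$ is finite --- but here one must be slightly careful, since $\bast G(\boldsymbol\theta)$ need not lie in $\bast\R_{\rm fin}$. The cleaner route is to observe that $I={\rm Ker}(\upvarepsilon^{\rm poly}(\boldsymbol\theta))$ restricted to standard-coefficient multiples already contains $F\cdot\Z[\boldsymbol X]$, and then to pass to the full kernel: the kernel of the ring homomorphism $\upvarepsilon^{\rm poly}(\boldsymbol\theta):\bast\Z[\boldsymbol X]\to\bast\R_{\upvarepsilon}$ (note $\bast\R_{\upvarepsilon}$ is an ideal, hence the image lands in a ring without unit, and $\upvarepsilon^{\rm poly}(\boldsymbol\theta)$ is at least additive and multiplicative) is by definition an ideal of $\bast\Z[\boldsymbol X]$ and is nonzero since it contains $F$; moreover it is contained in $\bast\Z[\boldsymbol X](\boldsymbol\theta)$ since any $\bast H$ with $\bast H(\boldsymbol\theta)=0$ certainly has $\bast H(\boldsymbol\theta)\simeq 0$.

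Second I would prove the reverse direction: suppose $\bast\Z[\boldsymbol X](\boldsymbol\theta)$ contains a nonzero ideal $I$ of $\bast\Z[\boldsymbol X]$. Pick $0\neq\bast H\in I$. For any $\bast G\in\bast\Z[\boldsymbol X]$ we have $\bast G\cdot\bast H\in I\subset\bast\Z[\boldsymbol X](\boldsymbol\theta)$, so $(\bast G\bast H)(\boldsymbol\theta)=\bast G(\boldsymbol\theta)\bast H(\boldsymbol\theta)\simeq 0$. The trick, exactly as implicitly needed in Theorem \ref{algebraic}, is to choose $\bast G$ so that $\bast G(\boldsymbol\theta)$ is a nonstandard integer of controlled (standard) size forcing $\bast H(\boldsymbol\theta)$ itself to be infinitesimal, and then to \emph{upgrade} $\bast H(\boldsymbol\theta)\simeq 0$ to $\bast H(\boldsymbol\theta)=0$: represent $\bast H$ by a sequence $\{H_i\}\subset\Z[\boldsymbol X]$ of polynomials of bounded degree and bounded coefficients (possible since $\bast H\in\bast\Z[\boldsymbol X]$, not the larger ultrapower $\bast(\Z[\boldsymbol X])$), so $\bast H(\boldsymbol\theta)$ is the class of $\{H_i(\boldsymbol\theta)\}$, a \emph{bounded} real sequence; then multiply $\bast H$ by a standard polynomial $P\in\Z[\boldsymbol X]$ chosen to kill the constant term and suitably scaled, iterating to strip off coefficients --- the upshot being that if $\bast H(\boldsymbol\theta)\in\bast\R_{\upvarepsilon}$ with $\bast H$ of bounded content, then passing to a $\mathfrak u$-large index set on which $H_i(\boldsymbol\theta)\to 0$ and using that the $H_i$ range over a finite set of polynomials (bounded degree and coefficients $\Rightarrow$ finitely many), $\mathfrak u$ selects one polynomial $H$ occurring on a $\mathfrak u$-set, and $H(\boldsymbol\theta)=\lim H_i(\boldsymbol\theta)=0$. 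Since $\bast H\neq 0$, $H\neq 0$, giving a genuine algebraic relation among the coordinates of $\boldsymbol\theta$. Then Proposition \ref{algind} (or rather its contrapositive) identifies this with algebraic dependence.

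Third, for the maximality and the identification $I(\boldsymbol\theta)={\rm Ker}(\upvarepsilon^{\rm poly}(\boldsymbol\theta))$: the kernel is an ideal (additive subgroup closed under multiplication by all of $\bast\Z[\boldsymbol X]$ because $\bast\R_{\upvarepsilon}$ is an ideal in $\bast\R_{\rm fin}$ and products of a bounded class by an infinitesimal class are infinitesimal --- here one needs that $\bast G(\boldsymbol\theta)$ may not be finite, so the correct statement is that ${\rm Ker}(\upvarepsilon^{\rm poly}(\boldsymbol\theta))=\{\bast H:\bast H(\boldsymbol\theta)=0\}$ is genuinely an ideal because $\bast H(\boldsymbol\theta)=0$ gives $(\bast G\bast H)(\boldsymbol\theta)=0$ with no finiteness needed), it is contained in $\bast\Z[\boldsymbol X](\boldsymbol\theta)$, and any ideal $I\subset\bast\Z[\boldsymbol X]$ with $I\subset\bast\Z[\boldsymbol X](\boldsymbol\theta)$ must in fact lie in ${\rm Ker}(\upvarepsilon^{\rm poly}(\boldsymbol\theta))$ by the size-forcing argument of the previous paragraph --- for each $\bast H\in I$, multiplying by constants from $\Z\subset\bast\Z[\boldsymbol X]$ of every magnitude shows $\bast H(\boldsymbol\theta)$ must be exactly $0$. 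Hence ${\rm Ker}(\upvarepsilon^{\rm poly}(\boldsymbol\theta))$ is the unique maximal ideal of $\bast\Z[\boldsymbol X]$ contained in $\bast\Z[\boldsymbol X](\boldsymbol\theta)$, and we set $I(\boldsymbol\theta)$ equal to it.

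The main obstacle I anticipate is the same delicate point that the one-variable proof of Theorem \ref{algebraic} glosses over: promoting an infinitesimal equation $\bast H(\boldsymbol\theta)\simeq 0$ to an exact equation $\bast H(\boldsymbol\theta)=0$ for a standard polynomial, and more precisely showing that an ideal of $\bast\Z[\boldsymbol X]$ sitting inside the ``approximate'' ring $\bast\Z[\boldsymbol X](\boldsymbol\theta)$ must actually sit inside the ``exact'' kernel. The resolution --- and the one piece worth spelling out carefully --- is the bounded-content observation: elements of $\bast\Z[\boldsymbol X]$ (as opposed to $\bast(\Z[\boldsymbol X])$) have representatives of bounded degree and, after clearing an overall bounded integer factor, bounded coefficients, so multiplication by arbitrarily large standard integers forces any infinitesimal value to vanish on a $\mathfrak u$-set, and $\mathfrak u$ then recognizes a single standard polynomial realizing the relation. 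Everything else is a transcription of the scalar and single-variable arguments already in the paper, together with Proposition \ref{algind}.
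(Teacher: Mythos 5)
Your overall plan is the right one: fill in the sketch of Theorem~\ref{algebraic}, multiplying by constants to promote the infinitesimal equation $\bast H(\boldsymbol\theta)\simeq 0$ to the exact equation $\bast H(\boldsymbol\theta)=0$, and then extract a standard witness. However, the mechanism you offer as the load-bearing step does not work, for two reasons. First, the ``bounded content'' claim is false: elements of $\bast\Z[\boldsymbol X]$ have bounded \emph{degree}, but their coefficients are arbitrary elements of $\bast\Z$, which include unbounded (infinite) nonstandard integers. For instance $\bast N X_{1}\in\bast\Z[\boldsymbol X]$ for any infinite $\bast N$. So the representative sequence $\{H_i\}$ does not in general range over a finite set of polynomials, and $\mathfrak u$ will not select a single standard polynomial for you. (Your aside about clearing ``an overall bounded integer factor'' does not rescue this; there need be no bounded factor to clear.) Second, in the maximality paragraph you say ``multiplying by constants from $\Z\subset\bast\Z[\boldsymbol X]$ of every magnitude shows $\bast H(\boldsymbol\theta)$ must be exactly $0$,'' but standard integers cannot break infinitesimality: if $\bast\epsilon\in\bast\R_{\upvarepsilon}$ and $n\in\Z$ then $n\bast\epsilon\in\bast\R_{\upvarepsilon}$. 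You must multiply by \emph{nonstandard} constants from $\bast\Z$.

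Both slips are repairable and neither device is actually needed. The correct upgrade uses the ideal property with nonstandard constant multipliers: if $\bast H\in I$, $\bast H\neq 0$, and $\bast H(\boldsymbol\theta)$ were a non-zero infinitesimal, represent it by $\{H_i(\boldsymbol\theta)\}$; on the $\mathfrak u$-large set where $H_i(\boldsymbol\theta)\neq 0$, choose integers $N_i$ with $|N_i H_i(\boldsymbol\theta)|\geq 1$ and set $N_i=1$ elsewhere. Then $\bast N\in\bast\Z$, $\bast N\bast H\in I\subset\bast\Z[\boldsymbol X](\boldsymbol\theta)$, yet $(\bast N\bast H)(\boldsymbol\theta)$ is not infinitesimal, a contradiction. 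Hence $\bast H(\boldsymbol\theta)=0$. From here no finiteness is required: $\bast H(\boldsymbol\theta)=0$ means $\{i:H_i(\boldsymbol\theta)=0\}\in\mathfrak u$, and $\bast H\neq 0$ means $\{i:H_i\neq 0\}\in\mathfrak u$; any index $i$ in the intersection gives a nonzero standard polynomial $H_i\in\Z[\boldsymbol X]$ with $H_i(\boldsymbol\theta)=0$, establishing algebraic dependence via Proposition~\ref{algind}. Your forward direction and the identification of $I(\boldsymbol\theta)$ with ${\rm Ker}(\upvarepsilon^{\rm poly}(\boldsymbol\theta))$ are otherwise fine once the constant-multiplier step is phrased over $\bast\Z$.
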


We now seek a geometric interpretation of the polynomial diophantine approximation
groups, that is, we wish to view $\bast\Z[X](\uptheta )$ as the fundamental germ 
of a Kronecker foliation.  In this connection, we will in fact want to work with
$\bast\Z [X]^{\dag}(\uptheta )$.  First, for each $n$ write $\boldsymbol\uptheta_{n}= (\uptheta,\dots ,\uptheta^{n})$ and let
\[ \mathfrak{F}^{\rm poly}(\uptheta ) := 
\underset{\longrightarrow}{\lim} \mathfrak{F}(\boldsymbol\uptheta_{n} )\] where the limit maps
are induced by the inclusions $\R^{n}\hookrightarrow \R^{n+1}$, $\boldsymbol r\mapsto (\boldsymbol r, 0)$.
Thus $ \mathfrak{F}^{\rm poly}(\uptheta )$ is a codimension 1 foliation of the direct limit torus $\T^{\infty}=\underset{\longrightarrow}{\lim}\T^{n}$
with infinite dimensional leaves.

Let $\bbull\R [X]_{n}$ be the vector space
of polynomials of degree $\leq n$ with coefficients in $\bbull\R$, and denote by $\bbull\R [X]_{n}^{\dag}$ the subspace
of polynomials with $0$ constant term, which is isomorphic to $\bbull\R^{n}$.  We may choose section topologies
on the $\bbull\R^{n}$ coming from the $\mathfrak{F}(\boldsymbol\uptheta_{n} )$ which are compatible with the inclusions $\bbull\boldsymbol r\mapsto (\bbull\boldsymbol r, 0)$
and give to $\bbull \R [X]^{\dag}$, by way of direct limit, a topology with respect to which we have \[ \bbull \R [X]^{\dag}/\bast\Z[X]^{\dag}(\uptheta )\cong
 \mathfrak{F}^{\rm poly}(\uptheta ).\]  The quotient $\bbull\R[X]/\bast\Z[X](\uptheta )$, suitably topologized, is isomorphic to the direct
 limit of extended foliations $ \mathfrak{F}^{n,1}(\boldsymbol\uptheta_{n})$.

\begin{theo} $ \mathfrak{F}^{\rm poly}(\uptheta ) $ has planar leaves $\Leftrightarrow$ 
$\uptheta$ is transcendental.
\end{theo}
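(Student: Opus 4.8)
The plan is to reduce to the finite-dimensional foliations handled by Theorem~\ref{indeptheo}. Recall $\mathfrak{F}^{\rm poly}(\theta)=\underset{\longrightarrow}{\lim}\,\mathfrak{F}(\boldsymbol\theta_n)$ with $\boldsymbol\theta_n=(\theta,\dots,\theta^n)$ viewed as a $1\times n$ matrix, the transition maps being induced by $\R^n\hookrightarrow\R^{n+1}$, $\boldsymbol r\mapsto(\boldsymbol r,0)$. First I would identify $\R^n$ with the space of real polynomials of degree $\le n$ and zero constant term via $(r_1,\dots,r_n)\leftrightarrow r_1X+\dots+r_nX^n$; under this identification the transition maps are exactly the inclusions $\Z[X]^{\dag}_{n}\hookrightarrow\Z[X]^{\dag}_{n+1}$ built into $\bbull\R[X]^{\dag}$, and, by Theorem~\ref{indeptheo} applied to $\boldsymbol\theta_n$, the leaf of $\mathfrak{F}(\boldsymbol\theta_n)$ through $0$ is
\[
\R^n/\Lambda_n,\qquad \Lambda_n:={\rm Ker}\bigl(\upvarepsilon(\boldsymbol\theta_n)\bigr)\cap\Z^n=\bigl\{\,p\in\Z[X]^{\dag}_{n}\;:\;p(\theta)\in\Z\,\bigr\}.
\]
Since a $\Q$-linear dependence among $1,\theta,\dots,\theta^n$ is \emph{a fortiori} one among $1,\theta,\dots,\theta^{n+1}$, the $\Lambda_n$ form a directed system of inclusions; passing to the limit (consistently with the construction $\bbull\R[X]^{\dag}/\bast\Z[X]^{\dag}(\theta)\cong\mathfrak{F}^{\rm poly}(\theta)$) the leaf of $\mathfrak{F}^{\rm poly}(\theta)$ through $0$ is $\R[X]^{\dag}/\Lambda$ with $\Lambda=\bigcup_n\Lambda_n=\{p\in\Z[X]^{\dag}:p(\theta)\in\Z\}$, a discrete subgroup of $\R[X]^{\dag}$, and every other leaf is a coset of it.

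From here the conclusion is mechanical. The leaf through $0$ is planar (equivalently $1$-connected) if and only if $\Lambda=0$, equivalently $\Lambda_n=0$ for every $n$, which by Theorem~\ref{indeptheo} means the columns $\theta,\dots,\theta^n$ of $\boldsymbol\theta_n$ are inhomogeneously $\Q$-independent for every $n$, i.e.\ that $1,\theta,\dots,\theta^n$ are $\Q$-linearly independent for every $n$; and the latter holds precisely when $\theta$ is transcendental. Since every leaf is a coset of the leaf through $0$, this settles the claim. For the transcendence equivalence, only the algebraic-to-non-planar direction needs a word: if $\theta$ has minimal polynomial $m_\theta(X)=X^k+c_{k-1}X^{k-1}+\dots+c_0\in\Q[X]$ and $N\in\Z_{>0}$ clears the denominators of the $c_i$, then $P(X):=N\bigl(m_\theta(X)-c_0\bigr)$ is a nonzero element of $\Z[X]^{\dag}_{k}$ with $P(\theta)=-Nc_0\in\Z$, hence $0\ne P\in\Lambda$; alternatively one feeds Theorem~\ref{algebraic} into this step, the ideal $\bast(m_\theta(X))$ producing the required lattice vector after deleting constant terms.

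The one point to handle with care is the compatibility that makes the direct-limit computation of the leaf legitimate: a nontrivial leaf loop appearing at a finite stage $n$ must not die in the limit, i.e.\ the transition maps must be $\pi_1$-injective on leaves. This is precisely the injectivity $\Lambda_n\hookrightarrow\Lambda_{n+1}$ recorded above, which is transparent in the polynomial picture. Once this is in place the leaf through $0$ really is $\R[X]^{\dag}/\Lambda$, and I expect this bookkeeping---rather than any genuine difficulty---to be the main thing to get right.
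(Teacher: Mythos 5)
Your proof is correct, and it lands on the same lattice as the paper's: the leaf through $0$ is $\R[X]^{\dag}/\Lambda$ with $\Lambda=\{p\in\Z[X]^{\dag}:\;p(\theta)\in\Z\}$, which is exactly the stabilizer $\bast\Z[X]^{\dag}(\theta)\cap\Z[X]$ appearing in the paper's argument. The route differs in emphasis. The paper reads the answer directly off the isomorphism $\mathfrak{F}^{\rm poly}(\theta)\cong\bbull\R[X]^{\dag}/\bast\Z[X]^{\dag}(\theta)$ established just before the theorem, observing in one line that the image of $\R[X]^{\dag}$ is stabilized by $\bast\Z[X]^{\dag}(\theta)\cap\Z[X]$ and that this is non-trivial iff $\theta$ is algebraic; the direct-limit issue is implicitly swallowed by the algebraic isomorphism. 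You instead reduce to the finite-dimensional Theorem \ref{indeptheo} applied to each $\boldsymbol\theta_{n}=(\theta,\dots,\theta^{n})$, obtain the lattices $\Lambda_{n}\cong{\rm Ker}(\upvarepsilon(\boldsymbol\theta_{n}))\cap\Z^{n}$, and then pass to the direct limit, taking care to note that the transition maps are $\pi_{1}$-injective on leaves because $\Lambda_{n}\hookrightarrow\Lambda_{n+1}$. What your approach buys is an explicit check of the colimit bookkeeping and a recycling of the already-proved finite-rank result rather than a fresh appeal to the infinite-dimensional foliated structure; what it costs is length and the need to verify compatibility of section topologies under the inclusions $\R^{n}\hookrightarrow\R^{n+1}$, which the paper sidesteps. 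Your final step identifying $\Lambda=0$ with the $\Q$-linear independence of $\{1,\theta,\dots,\theta^{n}\}$ for all $n$, and the explicit construction of a nonzero element of $\Lambda_{k}$ from the minimal polynomial when $\theta$ is algebraic, is sound.
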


\begin{proof}  By the isomorphism $ \mathfrak{F}^{\rm poly}(\uptheta )  \cong \bbull\R[X]^{\dag}/\bast\Z[X]^{\dag}(\uptheta )$, the leaf through $0$ is the image of $\R[X]^{\dag}$, which is stabilized precisely
by the elements of $\bast\Z[X]^{\dag}(\uptheta )\cap\Z [X]$.  The latter is non-trivial $\Leftrightarrow$ $\bast\Z[X](\uptheta )\cap\Z [X]$
is non-trivial $\Leftrightarrow$
$\uptheta$ is algebraic.
\end{proof}

Consider now a real vector $\boldsymbol \uptheta=(\uptheta_{1},\dots ,\uptheta_{s})$, and dictionary order the monomials
$\boldsymbol\uptheta^{ \bar{\imath}} :=\uptheta_{1}^{i_{1}}\cdots \uptheta_{s}^{i_{s}}$.  For each $d$ let 
$\boldsymbol\uptheta_{d}$ be the vector whose entries are the monomials $\boldsymbol\uptheta^{ \bar{\imath}} $
with $0<{\rm deg}(\bar{\imath}) = i_{1}+\cdots + i_{s}\leq d$, written from left to write according to the
dictionary order, a vector of size $N(d)$.
 In particular, $\boldsymbol\uptheta_{1}=\boldsymbol\uptheta$.  Then 
the coordinate inclusion defined by the dictionary ordering, $\bbull\R^{N(d)}\hookrightarrow 
\bbull\R^{N(d+1)}$, induces a monomorphism of Kronecker foliations
$   \mathfrak{F}(\boldsymbol\uptheta_{d})\hookrightarrow \mathfrak{F} (\boldsymbol\uptheta_{d+1}) $.
 We define
\[ \mathfrak{F}^{\rm poly}(\boldsymbol \uptheta )  := \underset{\longrightarrow}{\lim}\mathfrak{F}(\boldsymbol\uptheta_{d})
.\]  
Since $ \mathfrak{F}(\boldsymbol\uptheta_{d}) \cong \bbull\R [\boldsymbol X]^{\dag}_{ d}/\bast\Z [\boldsymbol X]^{\dag}_{ d}(\boldsymbol\uptheta )$ we have as in the case of $s=1$ that
\[ \mathfrak{F}^{\rm poly}(\boldsymbol \uptheta )  \cong \bbull\R[\boldsymbol X]^{\dag}/\bast\Z[\boldsymbol X]^{\dag}(\boldsymbol \uptheta ).\]
The proof of the following is left to the reader:
\begin{theo}\label{trivleavespoly} $\mathfrak{F}^{\rm poly}(\boldsymbol \uptheta )$ has planar leaves $\Leftrightarrow$ 
the coordinates of $\boldsymbol \uptheta$ are algebraically independent.
\end{theo}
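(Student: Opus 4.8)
The plan is to transcribe the proof of the one-variable Theorem above almost verbatim, using the isomorphism $\mathfrak{F}^{\rm poly}(\boldsymbol \theta )\cong\bbull\R[\boldsymbol X]^{\dag}/\bast\Z[\boldsymbol X]^{\dag}(\boldsymbol \theta )$ established just above. In this model the leaves are the cosets of the image of the real subspace $\R[\boldsymbol X]^{\dag}$ (the real points of $\bbull\R[\boldsymbol X]^{\dag}$), and since $\bast\Z[\boldsymbol X]^{\dag}(\boldsymbol\theta )$ acts on $\bbull\R[\boldsymbol X]^{\dag}$ by translations, the stabilizer of the coset $\bbull p+\R[\boldsymbol X]^{\dag}$ equals $\Lambda :=\R[\boldsymbol X]^{\dag}\cap\bast\Z[\boldsymbol X]^{\dag}(\boldsymbol\theta )$ for every $\bbull p$. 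Hence every leaf is isomorphic to the quotient $\R[\boldsymbol X]^{\dag}/\Lambda$, and ``$\mathfrak{F}^{\rm poly}(\boldsymbol\theta )$ has planar leaves'' is equivalent to $\Lambda =0$.

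The next step is to identify $\Lambda$ with a group of genuine standard algebraic relations. Because $\R\cap\bast\Z=\Z$ inside $\bbull\R$ (\S\ref{nonstd}), any element of $\R[\boldsymbol X]^{\dag}$ which lies in $\bast\Z[\boldsymbol X]^{\dag}(\boldsymbol\theta )\subset\bast\Z[\boldsymbol X]^{\dag}$ has coefficients in $\Z$, so $\Lambda=\Z[\boldsymbol X]^{\dag}\cap\bast\Z[\boldsymbol X]^{\dag}(\boldsymbol\theta )$. I would then check that $\Lambda\neq 0$ if and only if there is a non-constant $F\in\Z[\boldsymbol X]$ with $F(\boldsymbol\theta )=0$: given such an $F$ --- note that $F(\boldsymbol\theta )\in\R$ together with $F(\boldsymbol\theta )\simeq 0$ forces $F(\boldsymbol\theta )=0$ --- its image $F-F(\boldsymbol 0)$ under the projection $\bast\Z[\boldsymbol X]\rightarrow\bast\Z[\boldsymbol X]^{\dag}$ is a non-zero element of $\Lambda$; conversely, a non-zero $G\in\Lambda$ is the projection of some $H=G+c$ with $c=H(\boldsymbol 0)\in\bast\Z$ and $H(\boldsymbol\theta )\simeq 0$, and since $G(\boldsymbol\theta )+c\simeq 0$ with $G(\boldsymbol\theta )\in\R$ the nonstandard integer $c$ is finite, so ${\sf std}(c)\in\Z$ and $F:=G+{\sf std}(c)\in\Z[\boldsymbol X]$ is non-constant with $F(\boldsymbol\theta )=0$.

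Finally, the existence of a non-constant $F\in\Z[\boldsymbol X]$ with $F(\boldsymbol\theta )=0$ is, after clearing denominators, equivalent to the existence of a non-zero $F\in\Q[\boldsymbol X]$ with $F(\boldsymbol\theta )=0$, i.e.\ to algebraic dependence of the coordinates of $\boldsymbol\theta$ (this is also the content of Theorem \ref{algebraic2}). Chaining the three equivalences gives: $\mathfrak{F}^{\rm poly}(\boldsymbol\theta )$ has planar leaves $\Leftrightarrow\Lambda=0\Leftrightarrow$ no such $F$ exists $\Leftrightarrow$ the coordinates of $\boldsymbol\theta$ are algebraically independent. The only step that is not a purely mechanical copy of the one-variable proof is the bookkeeping of the constant term lost in the passage to $\bast\Z[\boldsymbol X]^{\dag}$ --- i.e.\ making sure $\Lambda$ records honest relations $F(\boldsymbol\theta )=0$ rather than spurious infinitesimal ones --- and this is disposed of by the remark that a finite element of $\bast\Z$ has integral standard part; so I do not expect a genuine obstacle.
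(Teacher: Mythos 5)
Your proof is correct and is exactly the adaptation of the one-variable argument that the paper intends when it "leaves the proof to the reader": identify the leaf stabilizer as $\R[\boldsymbol X]^{\dag}\cap\bast\Z[\boldsymbol X]^{\dag}(\boldsymbol\theta)=\Z[\boldsymbol X]^{\dag}\cap\bast\Z[\boldsymbol X]^{\dag}(\boldsymbol\theta)$ and equate its non-triviality with the existence of an honest relation $F(\boldsymbol\theta)=0$ with $F\in\Z[\boldsymbol X]$. Your bookkeeping of the dropped constant term (a finite element of $\bast\Z$ being a standard integer, so that $F(\boldsymbol\theta)\simeq 0$ with $F\in\Z[\boldsymbol X]$ forces $F(\boldsymbol\theta)=0$) is precisely the step the one-variable proof compresses into the assertion "$\bast\Z[X]^{\dag}(\theta)\cap\Z[X]$ non-trivial $\Leftrightarrow\bast\Z[X](\theta)\cap\Z[X]$ non-trivial," and you have made it explicit correctly.
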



\section{Rigidity in Transcendental Number Theory}\label{rigidity}

Many of the central  theorems and conjectures of Transcendental Number Theory ({\it e.g.}\ see Chapter 1 of \cite{Wa}) may be reformulated as
{\it rigidity statements}.   By this we mean the following: suppose that $X$ is a set equipped
with two $n$-ary relations \[ {\sf R}(x_{1},\dots ,x_{n}),\quad {\sf S}(x_{1},\dots ,x_{n})\quad {\text with}\quad {\sf R}\subset {\sf S}.\]  Given a subset
$X_{0}\subset X$, a {\bf {\small rigidity}} (rel $X_{0}$) is a statement which asserts that 
\[ {\sf R}={\sf S} \quad \text{ in } X_{0}^{n}.\]  
If we replace $X_{0}^{n}$ in the above by an arbitrary relation ${\sf X}\subset X^{n}$, we obtain the more general notion of a {\bf {\small relational rigidity}} (rel ${\sf X}$).
Examples of
rigidities include the Mostow rigidity theorem and the verified Poincar\'{e} conjecture.










We will also consider a more general notion of rigidity relative to a fixed function 
\[ f:X \rightarrow X.\]
Write $\boldsymbol x = (x_{1},\dots , x_{n})$ and $f(\boldsymbol x ) = (f(x_{1}),\dots , f(x_{n}))$.
Let ${\sf R}\subset {\sf S}$,  $X_{0}\subset X$ be as above. 
For each coordinate projection $\uppi :X^{2n}\rightarrow X^{n}$ let 
\[  {\sf S}_{\uppi}(f)=\{\boldsymbol x|\;  \uppi (\boldsymbol x, f(\boldsymbol x))\in {\sf S}\}.\] 
Note that when  $\uppi=\uppi_{\rm dom}$ = the domain projection then ${\sf S}_{\uppi}(f)={\sf S}$ and
when $\uppi=\uppi_{\rm ran}$ = the range projection then ${\sf S}_{\uppi}(f)=f^{\ast}{\sf S}$ = the pullback of ${\sf S}$.
We call \[  {\rm Gr}(f)^{\ast}{\sf S}:=\bigcap {\sf S}_{\uppi}(f)\] the {\bf {\small graph pullback}} of ${\sf S}$; clearly
${\sf S}\;\supset \;{\rm Gr}(f)^{\ast}{\sf S}\;\subset\; f^{\ast}{\sf S}$. 
 A {\bf {\small graph rigidity}} (rel $X_{0}$) is a statement which says that in $X_{0}$
 \[ {\rm Gr}(f)^{\ast}{\sf S}\subset {\sf R}. \]
 Every rigidity is a graph rigidity with respect to $f={\rm id}$: for in $X_{0}$, $ {\rm Gr}({\rm id})^{\ast}{\sf S}\subset {\sf S}={\sf R}$.
 The converse is not true: the Schanuel conjecture will be shown below to be a graph rigidity which is not a rigidity.
 
Under certain conditions, a graph rigidity will imply a family of (relational) rigidities corresponding to its coordinate
projections. For each coordinate projection 
\[ \uppi  (\boldsymbol x, f(\boldsymbol x)) =  (\boldsymbol x_{I},f(\boldsymbol x_{J})) := (x_{i_{1}},\dots ,x_{i_{k}},f(x_{j_{1}}),
\dots ,f(x_{j_{n-k}})) , \] let 
\[  \check{\uppi} (\boldsymbol x, f(\boldsymbol x)):=(\boldsymbol x_{\check{I}}, f(\boldsymbol x_{\check{J}}))\]
where $\check{I}$ resp.\ $\check{J}$ are the complements in $\{ 1,\dots ,n\}$ of $I$ resp.\ $J$.
We say that a graph rigidity is {\bf {\small strong}} (rel $Y_{0}$) if there exists $Y_{0}\subset X_{0}$ so that
for all $\uppi$, ${\sf S}_{\uppi}(f)={\sf R}$ (rel ${\sf Y}_{\uppi}(f) )$ where
\[ {\sf Y}_{\uppi} (f):= \{ \boldsymbol x |\; \check{\uppi} (\boldsymbol x, f(\boldsymbol x))\in Y^{n}_{0}  \}.\] 

Note that for $\uppi=\uppi_{\rm dom}$, $ {\sf Y}_{\uppi}(f)=(f^{-1}(Y_{0}))^{n}$; and for $\uppi = \uppi_{\rm ran}$, $ {\sf Y}_{\uppi}(f) =(Y_{0})^{n}$, so in these cases ${\sf S}_{\uppi}(f)={\sf R}$ are proper rigidities,
 the {\bf {\small domain}} and {\bf {\small range rigidities}}.  For arbitrary
 $\uppi$ we obtain relational rigidities rel ${\sf Y}_{\uppi}$.
 We will see that the Schanuel conjecture is strong but that an arbitrary graph rigidity need not be:
 the graph rigidity induced by a rigidity need only satisfy ${\sf S}_{\uppi}(f)={\sf R}$ rel ${\sf Y}_{\uppi}$ for $\uppi=\uppi_{\rm dom}$ or
$ \uppi_{\rm ran}$.




Consider the following $n$-ary relations in $\C$: 
\begin{itemize}
\item ${\sf LD}_{n}^{K}$ = the relation of a set of $n$ complex numbers being linearly dependent over $K$
an algebraic number field.
\item ${\sf  AD}_{n}$ = the relation of a set of $n$ complex numbers being algebraically dependent. 
\end{itemize}
For any extension $M/K$ of algebraic number fields, we have in $\C$ the inclusions 
\[ {\sf LD}_{n}^{K}\subset {\sf LD}_{n}^{M}\subset {\sf  AD}_{n},\] each of which is strict.

Let $\mathcal{L}$ be the $\Q$-vector space of logarithms of algebraic numbers: 
\[ \mathcal{L} = \big\{ \uplambda |\; e^{\uplambda }\in\bar{\Q}\big\}.\]  The homogeneous version of {\it Baker's Theorem} says that any $\Q$-linearly independent subset of $\mathcal{L}$ is $\bar{\Q}$-linearly independent; as a rigidity it 
may be re-stated:

\begin{theob} In $\mathcal{L}$,  ${\sf LD}_{n}^{\Q} ={\sf LD}_{n}^{\bar{\Q}}$ for all $n\geq 2$.
\end{theob} 

An important open problem in Transcendental
Number Theory is the {\it Conjecture on the Algebraic Independence of Logarithms}: any $\Q$-linearly independent subset of $\mathcal{L}$ is algebraically independent.
Its rigidity restatement is then:

\begin{logconj} In $\mathcal{L}$,  ${\sf LD}_{n}^{\Q} ={\sf AD}^{n}$ for all $n\geq 2$.
\end{logconj}
This Conjecture clearly contains Baker's Theorem as a subcase,
since ${\sf LD}_{n}^{\Q} \subset {\sf LD}_{n}^{\bar{\Q}}\subset {\sf AD}^{n}$.

There is another class of statements -- modeled on the Theorem of Lindemann-Weierstra\ss\ -- which
are (graph) rigidities expressed w.r.t.\ the exponential function $\exp:\C\rightarrow\C$.
Let $\boldsymbol \uptheta = (\uptheta_{1},\dots ,\uptheta_{n})$ 
be algebraic numbers which are linearly independent over $\Q$: the {\it Lindemann-Weierstra\ss\ Theorem} asserts that $\exp (\boldsymbol \uptheta)$ is
algebraically independent.  The rigidity formulation of this theorem is then

\begin{lwtheo}  In $\bar{\Q}$, $ \exp^{\ast}{\sf AD}^{n}= {\sf LD}_{n}^{\Q}$ for all $n\geq 2$.
\end{lwtheo}

Note that this is a rigidity statement since (in $\C$), ${\sf LD}_{n}^{\Q}\subset \exp^{\ast}{\sf AD}^{n}$.
Indeed, if $\boldsymbol\uptheta$ has linearly dependent coordinates, say 
\[ c_{i_{1}}\uptheta_{i_{1}} +\cdots +  c_{i_{k}}\uptheta_{i_{k}}  = d_{j_{1}}\uptheta_{j_{1}} + \cdots + d_{j_{l}}\uptheta_{j_{l}}\]
where $\{ i_{1},\dots , i_{k}; j_{1},\dots ,j_{l}\}\subset \{ 1,\dots ,n \}$ are distinct indices and the $c$'s and $d$'s are positive integers, then the
coordinates of $\exp (\boldsymbol\uptheta )$ satisfy the equation 
\[ X_{i_{1}}^{c_{i_{1}}}\cdots X_{i_{k}}^{c_{i_{k}}}= X_{j_{1}}^{d_{j_{1}}}\cdots X_{j_{l}}^{d_{j_{l}}}.\]  

 Let $\boldsymbol \uptheta\in \C^{n}$.
Recall that the {\it Schanuel Conjecture}
asserts that if the coordinates of $\boldsymbol\uptheta$ are linearly independent over $\Q$ then the transcendence degree of the extension $\Q (\boldsymbol\uptheta , \exp (\boldsymbol\uptheta))/\Q$ is at least $n$.   In other words, we have the following graph rigidity formulation
of the Schanuel conjecture:
 
\begin{schconj}In $\C$,  ${\rm Gr}(\exp )^{\ast}{\sf AD}^{n}\subset {\sf LD}_{n}^{\Q}$ for all $n\geq 2$. 
\end{schconj}

\begin{theo}  The Schanuel conjecture is a strong graph rigidity rel $\bar{\Q}$.  
\end{theo}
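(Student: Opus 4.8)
The plan is to assume the Schanuel conjecture — in the form displayed above, ${\rm Gr}(\exp)^{\ast}{\sf AD}^{n}\subset {\sf LD}_{n}^{\Q}$ in $\C$, which I will use through its classical transcendence-degree avatar (if $x_{1},\dots,x_{n}$ are $\Q$-linearly independent then ${\rm tr.deg}_{\Q}\,\Q(x_{1},\dots,x_{n},e^{x_{1}},\dots,e^{x_{n}})\geq n$) — and to verify the defining clause of strength directly, with $Y_{0}=\bar{\Q}$. Recall that a coordinate projection $\pi:\C^{2n}\to\C^{n}$ selects, for each $k$, exactly one of $x_{k},\exp(x_{k})$; it is thus recorded by the subset $S\subseteq\{1,\dots,n\}$ of indices for which the domain coordinate is kept, so that $\pi(\boldsymbol x,\exp(\boldsymbol x))=(\boldsymbol x_{S},\exp(\boldsymbol x_{S^{c}}))$, while $\check\pi$ interchanges the two blocks, $\check\pi(\boldsymbol x,\exp(\boldsymbol x))=(\boldsymbol x_{S^{c}},\exp(\boldsymbol x_{S}))$. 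Since $e^{\lambda}\in\bar{\Q}$ exactly when $\lambda\in\mathcal{L}$, taking $Y_{0}=\bar{\Q}$ and unwinding the definitions gives ${\sf Y}_{\pi}(\exp)=\{\boldsymbol x\in\C^{n}\mid x_{i}\in\mathcal{L}\ \text{for all}\ i\in S,\ x_{j}\in\bar{\Q}\ \text{for all}\ j\notin S\}$ and ${\sf S}_{\pi}(\exp)=\{\boldsymbol x\mid(\boldsymbol x_{S},\exp(\boldsymbol x_{S^{c}}))\in{\sf AD}^{n}\}$. The statement therefore reduces to the following claim, to be established for each $S$: \emph{for $\boldsymbol x\in{\sf Y}_{\pi}(\exp)$, the tuple $(\boldsymbol x_{S},\exp(\boldsymbol x_{S^{c}}))$ is algebraically dependent if and only if $x_{1},\dots,x_{n}$ are linearly dependent over $\Q$.} (The extremes $S=\{1,\dots,n\}$ and $S=\emptyset$ are the domain and range rigidities, recovering respectively the Logarithm Conjecture and the Lindemann-Weierstra\ss\ rigidity.)

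For the implication ``$\Q$-dependent $\Rightarrow$ algebraically dependent'' no transcendence input is needed. Given a nontrivial integer relation $\sum_{k}c_{k}x_{k}=0$, exponentiating yields $\prod_{i\in S}(e^{x_{i}})^{c_{i}}\cdot\prod_{j\notin S}(e^{x_{j}})^{c_{j}}=1$; since $e^{x_{i}}\in\bar{\Q}^{\times}$ for $i\in S$, the number $\alpha:=\prod_{i\in S}(e^{x_{i}})^{c_{i}}$ is a nonzero algebraic number and $\prod_{j\notin S}(e^{x_{j}})^{c_{j}}=\alpha^{-1}$. If some $c_{j}$ with $j\notin S$ is nonzero, this is, after clearing negative exponents, a nontrivial polynomial relation over $\bar{\Q}$ satisfied by the $e^{x_{j}}$, $j\notin S$; if all such $c_{j}$ vanish, then $\sum_{i\in S}c_{i}x_{i}=0$ is a nontrivial $\Q$-linear relation among the $x_{i}$, $i\in S$. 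Either way $(\boldsymbol x_{S},\exp(\boldsymbol x_{S^{c}}))$ is algebraically dependent. For the converse I would run Schanuel in contrapositive form: if $x_{1},\dots,x_{n}$ are $\Q$-linearly independent then ${\rm tr.deg}_{\Q}\,\Q(x_{1},\dots,x_{n},e^{x_{1}},\dots,e^{x_{n}})\geq n$; but on ${\sf Y}_{\pi}(\exp)$ the generators $e^{x_{i}}$ ($i\in S$) and $x_{j}$ ($j\notin S$) are algebraic over $\Q$ and may be deleted without lowering the transcendence degree, so ${\rm tr.deg}_{\Q}\,\Q\big((x_{i})_{i\in S},(e^{x_{j}})_{j\notin S}\big)\geq n$. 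This field has exactly $n$ generators, hence transcendence degree precisely $n$, so those $n$ generators are algebraically independent — that is, $(\boldsymbol x_{S},\exp(\boldsymbol x_{S^{c}}))\notin{\sf AD}^{n}$.

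Ranging over all $S$ gives ${\sf S}_{\pi}(\exp)={\sf LD}_{n}^{\Q}$ rel ${\sf Y}_{\pi}(\exp)$ for every coordinate projection $\pi$, which is exactly the assertion that the Schanuel graph rigidity is strong rel $\bar{\Q}$; the passage from the graph-rigidity form of Schanuel to the classical form used above is itself elementary, since ${\rm tr.deg}_{\Q}\,\Q(\boldsymbol x,e^{\boldsymbol x})<n$ forces every $n$ of the $2n$ generators to be algebraically dependent, whence $\boldsymbol x\in{\rm Gr}(\exp)^{\ast}{\sf AD}^{n}\subset{\sf LD}_{n}^{\Q}$. The one genuinely nontrivial ingredient is thus the Schanuel conjecture itself, invoked a single time in the converse; everything else is the exponential functional equation together with the fact that adjoining algebraic elements does not change transcendence degree. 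The only point demanding care — and the reason strength is a substantive property here rather than a formality — is that ${\sf Y}_{\pi}(\exp)$ is not constant in $\pi$: it interpolates between $\mathcal{L}^{n}$ at $\pi=\pi_{\rm dom}$ and $\bar{\Q}^{n}$ at $\pi=\pi_{\rm ran}$, so the content of strength is precisely that the relational rigidity holds on \emph{each} of these varying admissible sets, not merely on the two distinguished ones.
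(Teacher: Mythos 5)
Your argument for the projections of the form $\pi(\boldsymbol z,\exp(\boldsymbol z))=(\boldsymbol z_S,\exp(\boldsymbol z_{S^c}))$ is correct, and it departs from the paper's in a worthwhile way for the converse direction. Where the paper shows $\boldsymbol z\in({\sf AD}_n)_\pi(\exp)\cap{\sf Y}_\pi(\exp)$ implies $\boldsymbol z\in{\rm Gr}(\exp)^\ast{\sf AD}^n$ (by observing that for any other projection $\pi'$, either $I'\cap\check I\not=\emptyset$ or $J'\cap\check J\not=\emptyset$, so $\pi'(\boldsymbol z,\exp\boldsymbol z)$ has an algebraic coordinate) and then quotes the graph-rigidity form of Schanuel, you invoke the classical transcendence-degree form directly and observe that deleting the algebraic generators $(e^{z_i})_{i\in S},(z_j)_{j\in S^c}$ from $\Q(\boldsymbol z,e^{\boldsymbol z})$ does not lower transcendence degree. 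Your version is arguably cleaner and makes the single use of Schanuel more transparent; the paper's version stays more faithfully inside the rigidity formalism it just set up. The forward inclusion is also handled by a slightly different manipulation (you exponentiate the integer relation, the paper reads it off directly as an affine relation with algebraic constant), but both are elementary and correct.

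However, there is a genuine gap in your enumeration of the coordinate projections. You assert as a recall that a coordinate projection $\pi:\C^{2n}\to\C^n$ picks, for each $k$, exactly one of $z_k,\exp(z_k)$, so that $\pi$ is recorded by a single subset $S$ and $J=\check I$. That is not the definition given in \S\ref{rigidity}: there $\pi(\boldsymbol x,f(\boldsymbol x))=(\boldsymbol x_I,f(\boldsymbol x_J))$ with the only constraint $|I|+|J|=n$, so $I$ and $J$ may overlap (for instance $n=2$, $I=J=\{1\}$, giving $\pi(\boldsymbol z,\exp\boldsymbol z)=(z_1,e^{z_1})$). Strength requires ${\sf S}_\pi(\exp)={\sf R}$ rel ${\sf Y}_\pi(\exp)$ for \emph{every} such $\pi$ — there are $\binom{2n}{n}$ of them, and you only treat the $2^n$ with $J=\check I$. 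The paper disposes of the remaining ones in a single sentence: if $I\cap J\neq\emptyset$ then $\check I\cap\check J\neq\emptyset$, so ${\sf Y}_\pi(\exp)$ requires some $z_k$ and $e^{z_k}$ to be simultaneously algebraic, which by Hermite--Lindemann pins those cases down. Your write-up does not mention these projections at all, so as it stands the verification of strength is incomplete. If you restore the general definition of coordinate projection and add the Hermite--Lindemann observation to dispatch the overlapping case, your argument for the remaining projections goes through and the proof is complete.
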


\begin{proof}  We must show that for each $\uppi$, 
\[ ({\sf AD}_{n})_{\uppi}(\exp) =  {\sf LD}_{n}^{\Q}  \quad \text{in }{\sf Y}_{\uppi}(\exp ). \] 
Write $\uppi(\boldsymbol z, \exp (\boldsymbol z))=(\boldsymbol z_{I},\exp (\boldsymbol z_{J}))$.  Notice that
${\sf Y}_{\uppi}(\exp )=\emptyset$ if $I\cap J\not=\emptyset$, by the Hermite-Lindemann Theorem, so the rigidity
is vacuously true for such $\uppi$.  We may then assume that $J=\check{I}=\{ 1,\dots ,n\}-I$.
 First we show that 
\begin{align}\label{inclusionldad}{\sf LD}_{n}^{\Q}\subset ({\sf AD}_{n})_{\uppi}(\exp) \quad \text{in } {\sf Y}_{\uppi}(\exp ).
\end{align}  Suppose that
 $\boldsymbol z = (z_{1},\dots ,z_{n})$
satisfies $\sum q_{i}z_{i}=0$ for $q_{i}\in\Z$ not all $0$. 
If $I=\emptyset$ then $\uppi=\uppi_{\rm ran}$ and $({\sf AD}_{n})_{\uppi}(\exp)=\exp^{\ast}{\sf AD}_{n}$, so (\ref{inclusionldad}) is trivial.
Thus we may write $I=\{ 1,\dots ,k\}$.  Either $I$ or $\check{I}$ contains an index for which $q_{i}\not=0$.
Suppose it is $I$.  Then we have $q_{1}z_{1}+\cdots +q_{k}z_{k}\in\bar{\Q}$ since $z_{k+1},\dots ,z_{n}\in\bar{\Q}$.
But this means that $(\boldsymbol z_{I},\exp(\boldsymbol z_{\check{I}}))\in {\sf AD}_{n}$, that is, $\boldsymbol z\in ({\sf AD}_{n})_{\uppi}(\exp)$.
On the other hand, if $J$ contains an index for which $q_{i}\not=0$, then we run a similar argument
for the exponentials: we see that
$ \exp (q_{k+1}z_{k+1})\cdots \exp (q_{n}z_{n})\in \bar{\Q}$, yielding a nontrivial algebraic relation
amongst the coordinates of $\exp (\boldsymbol z_{\check{I}})$.
We now demonstrate ${\sf LD}_{n}^{\Q}\supset ({\sf AD}_{n})_{\uppi}(\exp) $  in ${\sf Y}_{\uppi}(\exp )$.
Thus let $\boldsymbol z\in ({\sf AD}_{n})_{\uppi}(\exp) \cap {\sf Y}_{\uppi}(\exp )$: it is enough to show that
$\boldsymbol z\in {\rm Gr}(\exp )^{\ast}{\sf AD}^{n}$, since the Schanuel conjecture is already a graph rigidity.  Let $\uppi'\not=\uppi$.  Then we must have either $I'\cap \check{I}\not=\emptyset$
or $J'\cap \check{J}\not=\emptyset$.  In particular, it follows that some coordinate of $\uppi' (\boldsymbol z, \exp (\boldsymbol z))$
is algebraic (as we are assuming $\boldsymbol z\in {\sf Y}_{\uppi}(\exp )$), which implies $\boldsymbol z\in  ({\sf AD}_{n})_{\uppi'}(\exp)$.  It follows that $\boldsymbol z\in {\rm Gr}(\exp )^{\ast}{\sf AD}^{n}$ and we are done.
\end{proof}

\begin{note}  The domain and range rigidities of the Schanuel conjecture are the Logarithm Conjecture and the Lindemann-Weierstra\ss\
Theorem.
\end{note}



We conclude this section by translating the (graph) rigidity formulations of the {\it real variants} of the above statements into the language of diophantine approximation groups
and generalized Kronecker foliations.
To do this, it will be useful to summarize, for a row vector $\boldsymbol \uptheta = (\uptheta_{1},\dots ,\uptheta_{n})$ of real numbers, characterizations
of  dependence that have been proved thus far:
\begin{itemize}
\item[\fbox{1}] algebraically dependent
$\Leftrightarrow$ the polynomial error map $ \upvarepsilon^{\rm poly}( \boldsymbol \uptheta ): \bast\Z [\boldsymbol X ](\boldsymbol \uptheta )
\rightarrow \bast\R_{\upvarepsilon}$ has nontrivial kernel (Proposition \ref{algind}) $\Leftrightarrow$ 
$\bast\Z[\boldsymbol X](\boldsymbol\uptheta ]$ contains a nontrivial $\bast\Z[\boldsymbol X]$ ideal (Theorem \ref{algebraic2}) $\Leftrightarrow$ 
$\mathfrak{F}^{\rm poly}(\boldsymbol \uptheta )$ has nonplanar leaves (Theorem \ref{trivleavespoly}).
\item[\fbox{2}]  linearly dependent over $\Q$ $\Leftrightarrow$ the homogeneous error map 
$ \tilde{\upvarepsilon} (\boldsymbol \uptheta): \bast\widetilde{\Z}^{n}(\boldsymbol \uptheta )
\rightarrow \bast\R_{\upvarepsilon}$ has nontrivial kernel (Theorem \ref{homogeneouslinindep})
$\Leftrightarrow$ $\bast\widetilde{\Z}^{n}(\boldsymbol\uptheta )$ contains a nontrivial $\bast\Z^{n}$ ideal (Theorem \ref{homogeneouslinindep}) $\Leftrightarrow$ the homogeneous
Kronecker foliation $\mathfrak{f}(\boldsymbol\uptheta )$ has nonplanar leaves (Theorem \ref{indeptheohomog}).
\item[\fbox{3}]  linearly dependent over $K $ $\Leftrightarrow$ for some place $\upnu$ the homogeneous  error map 
$\tilde{\upvarepsilon}_{K}(\boldsymbol \uptheta)_{\upnu}: \bast\widetilde{\mathcal{O}}(\boldsymbol \uptheta )_{\upnu}
\rightarrow \bast\R_{\upvarepsilon} $ has non-trivial kernel (Theorem \ref{KErrorTermHomDep})
$\Leftrightarrow$ for some place $\upnu$ $\bast\widetilde{\mathcal{O}}^{n,1}(\boldsymbol \uptheta )_{\upnu}$ contains
a nontrivial $\bast\mathcal{O}^{n+1}$ ideal (Theorem \ref{KErrorTermHomDep}) $\Leftrightarrow$ for some place $\upnu$ the homogeneous
extended Kronecker foliation $\mathfrak{f}^{n,1}_{\K}(\boldsymbol\uptheta )_{\upnu}$ has nonplanar leaves (Theorem \ref{indeptheo2}).
\end{itemize}

Denote by $\mathcal{L}_{\R} = \mathcal{L}\cap\R$.
From items {\small \fbox{2}}, {\small \fbox{3}} and the rigidity formulation of Baker's Theorem, we have the following translations of the real Baker's Theorem:

\vspace{3mm}

\noindent\fbox{{\small {\bf $\R$ Baker's Theorem}}}   For all $\boldsymbol \uptheta\in \mathcal{L}^{n}\cap\R^{n}$

\vspace{2mm}

   \noindent \begin{tabular}{  | p{4cm} | p{4cm} | p{4cm} |}
    \hline
    {\small error maps} & {\small DA groups} & 
    {\small Kronecker foliations} \\ \hline
     {\small  $\exists \;K/\Q$, $\upnu$ with  $ {\rm Ker}\big( \tilde{\upvarepsilon}_{K} (\boldsymbol \uptheta)_{\upnu}\big)\not=0$ $\Rightarrow$
 ${\rm Ker}( \tilde{\upvarepsilon} (\boldsymbol \uptheta))\not=0$}  & {\small $\exists \;K/\Q$, $\upnu$ s.t. $\bast\widetilde{\mathcal{O}}^{n,1}(\boldsymbol \uptheta )_{\upnu}$ contains
a nontrivial $\bast\mathcal{O}^{n+1}$ ideal
$\Rightarrow$ $\bast\widetilde{\Z}^{n}(\boldsymbol\uptheta )$ contains a non-0 $\bast\Z^{n}$-ideal}  &  {\small $\exists \;K/\Q$, $\upnu$ s.t. 
$\mathfrak{f}^{n,1}_{\K}(\boldsymbol \uptheta)_{\upnu}$ has non 1-connected
leaves $\Rightarrow$ $\mathfrak{f}(\boldsymbol \uptheta)$
has non 1-connected
leaves}  \\ \hline
    \end{tabular}

\vspace{3mm}



From items {\small \fbox{1}} and {\small \fbox{2}} we have the translation of the real Logarithm conjecture stated in the introduction,
as well as the real versions of  Lindemann-Weierstra\ss\ Theorem and the Schanuel conjecture :


\newpage

\noindent\fbox{{\small {\bf $\R$ Lindemann-Weierstra\ss\ Theorem}}}   For all $\boldsymbol \uptheta\in\bar{\Q}^{n}\cap\R^{n}$:

\vspace{2mm}

   \noindent \begin{tabular}{  | p{4cm} | p{4cm} | p{4cm} |}
    \hline
    {\small error maps} & {\small DA groups} & 
    {\small Kronecker foliations} \\ \hline
     {\small  ${\rm Ker}\left( \upvarepsilon^{\rm poly}(\exp (\boldsymbol \uptheta )) \right)\not=0$ $\Rightarrow$
 ${\rm Ker}\left(\tilde{\upvarepsilon} (\boldsymbol \uptheta ) \right)\not=0$}  & {\small $\bast\Z[\boldsymbol X](\exp (\boldsymbol\uptheta ) )$ contains
a nontrivial $\bast\Z[\boldsymbol X]$ ideal
$\Rightarrow$ $\bast\widetilde{\Z}^{n}(\boldsymbol\uptheta )$ contains a non-0 $\bast\Z^{n}$-ideal}  &  {\small $\mathfrak{F}^{\rm poly}(\exp (\boldsymbol \uptheta) ) $ has non 1-connected
leaves $\Rightarrow$ $\mathfrak{f}(\boldsymbol \uptheta)$
has non 1-connected
leaves}  \\ \hline
    \end{tabular}

\vspace{3mm}

For any coordinate projection $\uppi:\R^{2n}\rightarrow\R^{n}$ denote by $\exp_{\uppi}(\boldsymbol\uptheta ) = \uppi \big(\boldsymbol \uptheta,\exp (\boldsymbol \uptheta)\big)$. 

\vspace{3mm}

\noindent\fbox{{\small {\bf $\R$ Schanuel Conjecture}}}   For all $\boldsymbol \uptheta\in\R^{n}$:

\vspace{2mm}

   \noindent \begin{tabular}{  | p{4cm} | p{4cm} | p{4cm} |}
    \hline
    {\small error maps} & {\small DA groups} & 
    {\small Kronecker foliations} \\ \hline
     {\small  $\forall\uppi$, $ {\rm Ker}\left( \upvarepsilon^{\rm poly}\big(\exp_{\uppi}(\boldsymbol\uptheta )\big) \right)\not=0$ $\Rightarrow$
 ${\rm Ker}( \tilde{\upvarepsilon} (\boldsymbol \uptheta))\not=0$}  & {\small $\forall\uppi$, 
 $\bast\Z[\boldsymbol X](\exp_{\uppi} (\boldsymbol\uptheta ) )$ contains
a nontrivial $\bast\Z[\boldsymbol X]$ ideal
$\Rightarrow$ $\bast\widetilde{\Z}^{n}(\boldsymbol\uptheta )$ contains a non-0 $\bast\Z^{n}$-ideal}  &  {\small $\forall\uppi$, $\mathfrak{F}^{\rm poly}(\exp_{\uppi}(\boldsymbol\uptheta ) )\subset \mathfrak{F}^{\rm poly}((\boldsymbol \uptheta,\exp (\boldsymbol \uptheta) ) $ has non 1-connected
leaves $\Rightarrow$ $\mathfrak{f}(\boldsymbol \uptheta)$
has non 1-connected
leaves}  \\ \hline
    \end{tabular}

\vspace{3mm}

We end with by sketching how the classical proof of Lindemann Weierstra\ss\ translates into the foliation-theoretic language.
Supposing the Theorem is false, the classical proof produces what amounts to a sequence of  Kronecker foliations $\mathfrak{F}_{n}$ having the same
underlying torus manifold and inessential cycles
$c_{n}$ contained in the leaf through $0$ which eventually are seen to be both horizontal (corresponding to cycles in the homogeneous foliation
$\mathfrak{f}_{n}$) and not, whence the contradiction.

We illustrate how this works in in the case of $n=1$, where Lindemann Weierstra\ss\ reduces to the transcendence of $e$.  We follow the scheme of proof found in \cite{Ba}. 
Suppose that
\[  n_{0}+n_{1}e+\cdots n_{l}e^{l} =0.  \]
Thus, if we let $\bar{e}=(e,\dots ,e^{l})$, in the codimension one Kronecker foliation $\mathcal{F}(\bar{e})$  of $\T^{l+1}$ we have a nontrivial leaf wise cycle given by the equality
\[\bar{n}\cdot\bar{e}=\bar{n}^{\perp}\]
where $\bar{n}=(n_{1},\dots ,n_{k})$ and $\bar{n}^{\perp}=-n_{0}$.

Given $f(x)\in \Z[x]$ non constant, define the factorial
\[ f! := \int_{0}^{\infty} f(x)e^{-t}dt \in\Z .\]
When $f(x)=x^{m}$ we have \[ f!=(m-1)!,\] and in general, $f!$ is a linear combination of ordinary factorials i.e. 
$f!\in \sum \Z n!$.

We will also consider the
 upper and lower incomplete factorials
\[ f!^{-}(s)= \int_{0}^{s} f(x)e^{-t}dt,\quad  f!^{+}(s)= \int_{s}^{\infty} f(x)e^{-t}dt ,\]
so that
\[f! =f!^{-}(s) + f!^{+}(s).\]

When $0<s=m\in\N$ we have
\[e^{m} f!^{+}(m)=f! \in\Z \]
so \[  f!^{+}(m) = f!e^{-m}\]
and therefore $e^{m}f!^{-}(m) = e^{m}f!-f!$ i.e. 
\[ f!^{-}(m) = f!-e^{-m}f! = (1-e^{-m})f!. \]

For each $k$ consider the polynomial
\[ f_{k}(x)= x^{k}(x-1)^{k+1}\cdots (x-l)^{k+1}.\]
Now write
\begin{align*}  f_{k}!\cdot \big(n_{0}+n_{1}e+\cdots n_{l}e^{l} \big) =& 
f_{k}!n_{0} + n_{1}e^{1}f!^{+}_{k}(1)+\cdots +n_{l}e^{l}f!^{+}_{k}(l) \\
 & +  n_{1}e^{1}f!^{-}_{k}(1)+\cdots +n_{l}e^{l}f!^{-}_{k}(l) \\
 = & P_{k}+Q_{k} =0.
\end{align*}
The heart of the argument is that $Q_{k}=-P_{k}$, but $P_{k}/k!\in \Z-0$ yet $Q_{k}/k!$ is positive and eventually $<1$, hence $0$.

From the point of view of Kronecker foliations, $Q_{k}/k!$ corresponds to a closed cycle $c_{k}$ in the Kronecker foliation
$\mathcal{F}(\bar{e}_{k})$ where
\[  \bar{e}_{k}=\big( (f_{k}!^{-}(1)/k! )e,\dots ,(f_{k}!^{-}(l)/k!) e^{l} \big) \]
and $c_{k}$ is defined by
\[\bar{m}\cdot \bar{e}_{k}=\bar{m}^{\perp},\quad \bar{m}=\bar{n},\; \bar{m}^{\perp}=-P_{k}/k!\]
which is eventually for $k$ large, simultaneously,
horizontal (defining a cycle in $\mathfrak{f}( \bar{e}_{k})$ and not horizontal (not defining a cycle in $\mathfrak{f}( \bar{e}_{k}))$ i.e $\bar{m}^{\perp}=0$ and $\not=0$.

\section{Appendix: Diophantine Approximation Groups As Types}\label{types}

The central theme of this article has been the idea that diophantine approximation groups
are the right generalization of ideal for ``real algebraic number theory''.   In this Appendix,
we will discuss how Model Theory reinforces this philosophy 
through the notion of {\it type}.  I would like to express my gratitude
to John Baldwin, for providing guidance and a number of valuable suggestions.  



Basic references for model theory are:  \cite{Ho}, \cite{Ma}, \cite{Po}.  
 Let $\mathcal{L}$ be a language, $M$ an $\mathcal{L}$-structure and $A\subset M$ a subset.   Denote by $\mathcal{L}_{A}$
the language obtained from $\mathcal{L}$  by adding the elements of $A$ as constants, and let ${\rm Th}_{A}(M)$
be the complete $\mathcal{L}_{A}$ theory of $M$ viewed as an $\mathcal{L}_{A}$ structure.  For a variable symbol
$x$ we denote by $\mathcal{L}_{A}(x)$ the language obtained by adding $x$ as a constant.  

By a {\it 1-type} of $M$ (or simply {\it type} if there is no confusion) with parameters
in $A$ is meant a collection of $\mathcal{L}_{A}$-formulas $p$ in a single variable $x$ for which 
\[ {\rm Th}(p) := p\cup {\rm Th}_{A}(M) \] is a
consistent $\mathcal{L}_{A}(x)$-theory.  We say that $p$ is {\it complete} if ${\rm Th}(p)$ is a maximal theory.  For $a\in M$, 
the complete type generated by $x$ is
\[ {\rm tp}(a) = \{ \upphi (x)\; |\;\; M\models \upphi (a)\} .\]
In general, we say that a type $p$
is realized in $M$ if there exists $a\in M$ such that 
$ p\subset {\rm tp}(a) $.
If $p$ is not realized, there is always an elementary extension $N\succ M$ in which $p$ 
is realized.   The Stone space of complete types of $M$ with parameters in $A$ is denoted
$S(M/A)$: with the Stone topology it is a compact and totally disconnected space.  Types in $r$-variables $\bar{x}=(x_{1},\dots ,x_{r})$ are  called {\it $r$-types}.

Types naturally generalize the notion of principal ideal.  If $\mathcal{O}$ is the
ring of integers of an algebraic extension $K/\Q$ and $(\upbeta )$ is the principal ideal generated by $\upbeta\in \mathcal{O}$, then 
the set ${\rm tp}_{\rm lin}(\upbeta /\mathcal{O})\subset {\rm tp}(\upbeta /\mathcal{O})$ of linear equations satisfied by $\upbeta$,
\[  \upphi_{a,b}(x):\;\;\; ax+b=0 ,\quad a,b\in\mathcal{O}, \;\; K\models \upphi_{a,b}(\upbeta ) ,\]
defines an (incomplete) type of $K$ with parameters in $\mathcal{O}$.   We note that \[ (\upbeta )= \{ b\in \mathcal{O} |\; \exists a\in\mathcal{O}\text{ s.t. }\upphi_{a,b}(x)\in {\rm tp}_{\rm lin}(\upbeta /\mathcal{O}) \}.\]

Recall (see \S \ref{nonstd}) the vector space of extended reals $\bbull\R=\bast\R/\bast\R_{\upvarepsilon}$.  We will consider $\bbull\R$ in the language of real vector spaces: 
\[ \mathcal{L}_{\text{\rm  rvs}}=(+,-,0)\cup (f_{r}; \; r\in\R )\]
where $f_{r}$ is the unary function corresponding to scalar multiplication by $r\in \R$.  
Then $\R$ is an  $\mathcal{L}_{{\rm rvs}}$-substructure
of $\bbull\R$ and we have

\begin{prop}\label{elementaryss}  $\R\subset\bbull\R$ is an elementary substructure.
\end{prop}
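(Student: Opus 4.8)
The plan is to deduce the statement from the classical model theory of vector spaces over an infinite field. First I would recall from \S\ref{nonstd} that $\bbull\R=\bast\R/\bast\R_{\upvarepsilon}$ genuinely inherits the structure of an $\R$-vector space — scalar multiplication by standard reals descends to the quotient, even though the product on $\bast\R$ does not — and that $\bbull\R$ is nontrivial, since it contains the subfield $\R$. Also $\R$ is itself a nontrivial $\R$-vector space, namely a one-dimensional one, so both $\R$ and $\bbull\R$ are models of the theory $T$ of nontrivial vector spaces over $\R$ in the language $\mathcal{L}_{\mathrm{rvs}}$. Moreover the inclusion $\R\hookrightarrow\bbull\R$ is an $\mathcal{L}_{\mathrm{rvs}}$-substructure embedding, and not merely an inclusion of sets: it preserves $0$, addition, subtraction, and each unary scalar-multiplication function $f_{r}$ ($r\in\R$), since $f_{r}(a)=ra\in\R$ for $a\in\R$.

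The key input is then that $T$ eliminates quantifiers. Because the coefficient field $\R$ is infinite, $T$ is complete and admits quantifier elimination; in particular all nontrivial $\R$-vector spaces are elementarily equivalent, a standard fact (see e.g.\ \cite{Ma}). Granting this, I would invoke the general principle that a substructure embedding between two models of a quantifier-eliminating theory is automatically elementary. Concretely: given an $\mathcal{L}_{\mathrm{rvs}}$-formula $\phi(\bar{x})$ and a tuple $\bar{a}$ of parameters from $\R$, choose a quantifier-free $\psi(\bar{x})$ that is $T$-equivalent to $\phi$; since quantifier-free formulas are absolute between a substructure and the ambient structure, one obtains
\[ \R\models\phi(\bar{a})\iff\R\models\psi(\bar{a})\iff\bbull\R\models\psi(\bar{a})\iff\bbull\R\models\phi(\bar{a}), \]
which is exactly the assertion that $\R\preceq\bbull\R$.

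I do not anticipate a genuine obstacle here: the mathematical content lies entirely in the cited quantifier-elimination result, and the rest is bookkeeping. The two points that deserve a moment's attention are, first, confirming that $\bbull\R$ is an $\R$-vector space at all (immediate from the construction in \S\ref{nonstd}), and second, checking that $\R$ sits inside $\bbull\R$ as a substructure closed under every function symbol of $\mathcal{L}_{\mathrm{rvs}}$, so that ``elementary substructure'' is even meaningful. If one preferred to avoid an explicit appeal to quantifier elimination, an alternative would be to verify the Tarski--Vaught test by hand: a formula $\exists x\,\phi(x,\bar{a})$ holding in $\bbull\R$ for parameters $\bar{a}$ from $\R$ describes, modulo $\bast\R_{\upvarepsilon}$, a subset of $\bbull\R$ cut out by a finite boolean combination of $\R$-linear equalities in $x$ with coefficients and constants in $\R$, and such a subset, if nonempty, necessarily meets $\R$ because $\R$ is infinite; but routing the argument through quantifier elimination is cleaner, and that is the route I would take.
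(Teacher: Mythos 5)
Your proof is correct, and it takes a genuinely different route from the paper's. The paper proceeds directly via the Tarski--Vaught test, arguing by induction on formula complexity that every $\mathcal{L}_{\rm rvs}$-formula with parameters in $\R$ which has a witness in $\bbull\R$ already has one in $\R$; the paper then records in Note~\ref{quantelim} that this inductive argument \emph{establishes} quantifier elimination for the theory of real vector spaces as a byproduct. You, by contrast, start by citing the known quantifier elimination (and completeness) of the theory of nontrivial vector spaces over an infinite field, and then invoke the general principle that any substructure embedding between models of a quantifier-eliminating theory is elementary. Both are sound. Your route is shorter once the cited result is granted; the paper's is self-contained and earns the quantifier elimination along the way. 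One small point worth noticing: your fallback ``Tarski--Vaught by hand'' sketch still tacitly appeals to quantifier elimination when it asserts that the set cut out by $\exists x\,\phi(x,\bar a)$ is given by a boolean combination of $\R$-linear equalities, whereas the paper's induction on complexity is precisely what discharges that dependence from first principles.
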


\begin{proof}  We use the Tarski-Vaught test.  Thus, if $\upphi (x,\bar{s})$ is a formula with parameters
$\bar{s}=(s_{1},\dots , s_{n})$ coming from $\R$ for which there exists $\bbull r\in \bbull\R$ with 
$\bbull\R\models \upphi (\bbull r,\bar{s})$, we must find $r\in\R$ such that
$\R\models \upphi (r,\bar{s})$.  We proceed by induction on complexity of formulas.
If $\upphi (x,\bar{s})$ is atomic it must be either of the form 
\[  \sum f_{r_{i}}(s_{i}) + f_{r'}(\bbull r) =0 \]
or the negation of such a form.
In the positive case we must have $\bbull r\in \R$ already, and in the negated form, one simply
picks a suitable $r$ and $r'$ so that equation is false.   If $\upphi (x,\bar{s})=\exists z\uppsi (x,z,\bar{s})$ 
where $\uppsi$ has no quantifiers, it is easy to see that we may find $x,z$ in $\R$ satisfying $\uppsi$
if there already exist witnesses in $\bbull\R$.
\end{proof}

\begin{note}\label{quantelim}  The proof of Proposition \ref{elementaryss} shows that
the theory of real vector spaces has quantifier elimination.
\end{note}

Since we are essentially interested in formulas which express properties of elements of the scalar field $\R$, we need to introduce variables which range over the elements of $\R$ or more generally
a scale field $R$ which is a model of $\R$.  We can
do this by adding an additional sort for the scalar field and then viewing the scalar multiplication as a {\it single} map.
More precisely, we will consider structures of the type 
\[   (R; V; m)\]
and the many sorted theory ${\sf Vect}_{MS}$ whose axioms are
\begin{itemize}
\item[-] $R=(R,+,\cdot,<, 0,1)$ is a model of the complete theory of the {\it ordered field} $\R$.
\item[-]  $V=(V,+,0)$ is an abelian group.
\item[-]  $m:R\times V\rightarrow V$ satisfies the axioms of scalar multiplication.
\end{itemize}
As usual, in formulas we use $``\cdot "$ for scalar multiplication.
Then $\bbull\R$ and $\R$ are models of ${\sf Vect}_{MS}$, and using a many-sorted version
of the argument appearing in the proof of Proposition \ref{elementaryss}
we may show that $\R\prec \bbull\R$ and that ${\sf Vect}_{MS}$ admits elimination of quantifiers (the order $<$ is added
to achieve quantifier elimination in the scalar sort).

%



We now turn to types in $\bbull\R$.
 Consider $A=\bast\Z\subset\bbull\R$, viewed as a set of parameters in the vector space sort, and let $\uptheta\in\R$.  Consider the complete many sorted type
  ${\rm tp}(\uptheta /\bast\Z)$.  In addition to containing  field formulas satisfied by $\uptheta$ (these having no added field parameters), it also
  contains formulas of the shape $x\cdot \bast n-\bast m=0$ for $\bast m,\bast n\in\bast\Z$.  
  Define
\[  {\rm tp}_{\text{lin}} (\uptheta /\bast\Z ) := \big\{ x\cdot \bast n-\bast n^{\perp} =0 \; |\;\; 
\bast n\in\bast\Z (\uptheta ) \big\} \subset {\rm tp}(\uptheta /\bast\Z) .\]
Then $ {\rm tp}_{\text{lin}} (\uptheta/\bast\Z )$ is an abelian group with respect
to addition of formulas
and \[   {\rm tp}_{\text{lin}} (\uptheta/\bast\Z ) \cong\bast\Z (\uptheta ) .\]
Thus every real diophantine approximation group canonically determines an (incomplete) many sorted type
with parameters in $\bast\Z$.  

In fact, $ {\rm tp}(\uptheta/\bast\Z )$ also contains the group 
$\bast\Z [X](\uptheta )$.
Each polynomial
$g(X) =\bast a_{n}X^{n} +\dots + \bast a_{0}$ in $ \bast \Z [X](\uptheta)$ determines the formula 
\begin{equation}\label{polyform}   \sum_{i=0}^{n} x^{i}\cdot \bast a_{i} =0 . 
\end{equation}
The set of these formulas add like the polynomials that define them
giving a subgroup isomorphic to $ \bast \Z [X](\uptheta)$.


The diophantine approximation group over a ring of integers $\mathcal{O}$ defines
a type contained in ${\rm tp}(\uptheta /\bast\mathcal{O})$.  
For $\Uptheta\in M_{r,s}(\R )$, the diophantine approximation group
$\bast \Z^{s}(\Uptheta )$ defines an $rs$-type
consisting of formulas of the form
\[    (\bast {\boldsymbol n}\cdot \bar{x}_{1}=n_{1}^{\perp})\wedge\dots\wedge
 (\bast {\boldsymbol n}\cdot \bar{x}_{r}=n_{r}^{\perp})
   \]
where $\bar{x}_{i}=x_{i1},\dots ,x_{is}$, $\bast{\boldsymbol n}\in\bast \Z^{s}(\Uptheta )$ and $ \bast {\boldsymbol n}\cdot \bar{x}_{i}$ is short-hand for 
$\sum x_{ij}\cdot\bast n_{j} $.  Thus all of the diophantine approximation groups
defined in this paper define algebraic subtypes of the complete many sorted types defined by their generators.


\end{document}